\newtheorem{Theorem}{Theorem}[section]
\newtheorem{Corollary}[Theorem]{Corollary}
\newtheorem{Proposition}[Theorem]{Proposition}
\newtheorem{Lemma}[Theorem]{Lemma}
\theoremstyle{definition}
\newtheorem{Definition}[Theorem]{Definition}
\newtheorem{Example}[Theorem]{Example}
\theoremstyle{remark}
\newtheorem{Remark}[Theorem]{Remark}
\DeclareMathOperator{\DM}{DM}
\DeclareMathOperator{\vol}{{{vol}}}
\newcommand{\abs}[1]{\left|#1\right|}
\DeclareMathOperator{\rank}{rk}
\DeclareMathOperator{\interior}{int}
\DeclareMathOperator{\todd}{Todd}
\DeclareMathOperator{\toddper}{{T}\widetilde{{od}}{d}}
\DeclareMathOperator{\toddperbox}{{T}\widetilde{{od}}{d}^{box}}
\newcommand{\psiper}{ \widetilde{\psi} }
\newcommand{\tors}{t} 
\DeclareMathOperator{\res}{Res}
\DeclareMathOperator{\var}{Var} 
\newcommand{\st}{s.\,t.\ } 
\newcommand{\ie}{\textit{i.\,e.\ }} 
\newcommand{\eg}{\textit{e.\,g.\ }} 
\newcommand{\vpf}{i} 
\newcommand{\sg}[1]{\left\langle #1 \right\rangle} 
\newcommand{\N}{\mathbb{N}}
\newcommand{\Z}{\mathbb{Z}}
\newcommand{\R}{\mathbb{R}}
\newcommand{\CC}{\mathbb{C}}
\newcommand{\Bcal}{\mathcal{B}}
\newcommand{\Ccal}{\mathcal{C}}
\newcommand{\Dcal}{\mathcal{D}}
\newcommand{\Hcal}{\mathcal{H}}
\newcommand{\Zcal}{\mathcal{Z}}
\newcommand{\Pcal}{\mathcal{P}}
\newcommand{\Jcal}{\mathcal{J}}
\newcommand{\Vcal}{\mathcal{V}}
\newcommand{\eps}{\varepsilon}
\newcommand{\Bper}{ 
\widetilde{\mathcal{B}}} 
\newcommand{\dJ}{\mathop{ \Jcal^\nabla\! } } 
\newcommand{\dJC}{\mathop{ \Jcal^\nabla_\CC\! } } 
\newcommand{\cJ}{\mathop{ \Jcal^\partial\! } } 
\newcommand{\cJC}{\mathop{ \Jcal^\partial_\CC\! } } 
\newcommand{\Pper}{ 
\widetilde{\mathcal{P}}} 
\newcommand{\Dpw}{ D_\mathrm{pw} } 
\DeclareMathOperator{\ideal}{{{ideal}}}
\DeclareMathOperator{\spa}{{{span}}}
\DeclareMathOperator{\cone}{{{cone}}}
\DeclareMathOperator{\BB}{ \mathbb B}
\newcommand{\cfrak}{\mathfrak c}
\newcommand{\tutte}{{\mathfrak T}} 
\newcommand{\aritutte}{{\mathfrak M}} 
\newcommand{\multari}{{\mathfrak m}} 
\newcommand{\diff}[1]{\frac{\partial}{\partial #1}}
\DeclareMathOperator{\hilb}{{{Hilb}}}
\DeclareMathOperator{\clos}{{{cl}}} 
\DeclareMathOperator{\sym}{Sym} 
\newcommand{\pair}[2]{\langle #1,#2 \rangle}
\newcommand{\discpair}[2]{ {\langle #1,#2 \rangle_\nabla}} 
\newcommand{\discpairP}[2]{ {\langle #1,#2 \rangle_{\Pper}}} 
\newcommand{\Xintro}{%
Let $X\subseteq U\cong \R^d$ be a finite list of vectors that spans $U$\!.
}
\newcommand{\XintroList}{%
Let $X = (x_1,\ldots, x_N) \subseteq U\cong \R^d$ be a list of vectors that spans $U$\!.
}
\newcommand{\XintroN}{%
Let $X\subseteq U\cong \R^d$ be a  list of $N$ vectors that spans $U$\!.
}
\newcommand{\XintroLattice}{%
Let $X\subseteq   \Lambda\subseteq U\cong \R^d$ be a finite list of vectors that spans $U$\!.
}
\newcommand{\XintroLatticeUnimod}{%
Let $X\subseteq   \Lambda\subseteq U\cong \R^d$ be a finite list of vectors that is unimodular and spans $U$\!.
}
\newcommand{\XintroAbelianGroup}{%
Let $G$ be a finitely generated abelian group and let
$X$ be a finite list of elements of $G$ that generates a subgroup of finite index.
}
\newcommand{\XintroAbelianGroupN}{%
Let $G$ be a finitely generated abelian group and let
$X$ be a  list of $N$ elements of $G$ that generates a subgroup of finite index.
}
\email{lenz@maths.ox.ac.uk}
\author{Matthias Lenz
}
\thanks{The author was supported by a Junior Research Fellowship
 of Merton College (University of Oxford).
}
\title
{%
Splines, lattice points, and arithmetic matroids
}
\date{\today}
\address{%
Mathematical Institute, University of Oxford, 
Andrew Wiles Building\\
Woodstock Road\\
Oxford 
OX2 6GG\\
United Kingdom
}
\keywords{lattice polytope, vector partition function, Todd operator, Brion--Vergne formula,  
 arithmetic matroid, zonotopal algebra}
\subjclass[2010]{Primary: 
05B35, 
19L10, 
52B20; 
Secondary:
13B25,  
14M25, 
16S32,  
41A15, 
47F05, 
52B40, 
52C35
}
\begin{document}

\begin{abstract}
Let $X$ be a $(d\times N)$-matrix.
 We consider the variable polytope  
 $\Pi_X(u) = \{ w \ge 0 : X w = u \}$. It is known that the function
 $T_X$ that assigns to a parameter $u \in \R^d$
 the volume of the polytope $\Pi_X(u)$ is piecewise polynomial.
 The  
 Brion--Vergne formula implies that the number of
 lattice points in $\Pi_X(u)$ can be obtained by applying a certain 
 differential operator to the function $T_X$.
 In this article we slightly improve 
 the Brion--Vergne formula and we  study   
 two spaces of differential operators that arise in this context: 
 the space of relevant differential operators (\ie operators that do not annihilate $T_X$)
  and the space of nice differential operators (\ie operators that leave $T_X$ continuous).
 These two spaces are finite-dimensional homogeneous vector spaces and their
 Hilbert series are evaluations of the Tutte polynomial of the arithmetic matroid defined by the matrix $X$.
They are closely related to the $\Pcal$-spaces studied by 
 Ardila--Postnikov and Holtz--Ron in the context of zonotopal algebra and power ideals.
\end{abstract}

\maketitle

\section{Introduction}

The problem of determining the number of integer points in a convex polytope  appears in many areas of mathematics
including 
commutative algebra, combinatorics, representation theory, statistics, and combinatorial optimisation (see \cite{deloera-2005} for a survey).
The number of integer points in a polytope can be seen as a discrete version of its volume.
In this article we will study the relationship between these two quantities using the language of 
vector partition functions and multivariate splines. We will also study related combinatorial and algebraic structures.

Let $X\subseteq \Z^d$ be a finite list of vectors that all lie on the same side of some hyperplane.
For $u\in \R^d$, we consider the variable polytope
$\Pi_X(u) = \{ w \in \R^N_{\ge 0} : X w = u \}$. The multivariate spline (or truncated power) $T_X : \R^d\to \R$ measures the volume
of these polytopes, whereas the vector partition function $\vpf_X :\Z^d \to \Z$ counts the number of  integral points they contain.
 These two functions have been studied by many authors.
 The combinatorial and algebraic aspects are stressed in the book  
 by De Concini and Procesi~\cite{concini-procesi-book}.
A standard reference from the 
approximation theory point of view is the book by
 de~Boor, H\"ollig, and Riemenschneider~\cite{BoxSplineBook}. Another good reference is Vergne's survey article on integral
points in polytopes~\cite{vergne-2003}.

 Khovaniskii and Pukhlikov proved a remarkable formula that relates the volume and the number of integer points in the polytope $\Pi_X(u)$ in the 
 case where the list $X$ is unimodular, \ie every basis for $\R^d$ that can be selected from $X$ has determinant $\pm 1$ \cite{pukhlikov-khovanski-1992}.
  The connection is made via Todd operators, 
  \ie differential operators  of type $\frac{\partial_x}{1 - e^{\partial_x}}$.
 The formula is closely related to the Hirzebruch--Riemann--Roch Theorem for smooth projective toric varieties %
(see \cite[Chapter 13]{cox-little-schenck-2011}).   
Brion and Vergne have extended the Khovaniskii--Pukhlikov formula to arbitrary rational polytopes \cite{brion-vergne-1997}.

Starting with the work of de~Boor--H\"ollig \cite{boor-hoellig-1982} and Dahmen--Micchelli \cite{dahmen-micchelli-1985,dahmen-micchelli-1985b}
in the 1980s, various authors have studied $\Dcal$-spaces, \ie  vector spaces of multivariate polynomials spanned by the local pieces of these splines and 
various other related spaces. This includes spaces of differential operators that act on the splines, the so-called $\Pcal$-spaces. 
Recently,  Holtz and Ron have developed a theory of \emph{zonotopal algebra}  
that describes the relationship between some of these spaces and various combinatorial structures including the matroid and the zonotope defined by the list~$X$~\cite{holtz-ron-2011} .
Ardila--Postnikov have studied $\Pcal$-spaces in the context of power ideals \cite{ardila-postnikov-2009}.
Related work has also appeared in the literature on hyperplane arrangements, see \eg \cite{berget-2010,orlik-terao-1994}.
Recent work of 
De~Concini--Procesi--Vergne \cite{deconcini-procesi-vergne-2010b, deconcini-procesi-vergne-2011, deconcini-procesi-vergne-infinitesimal-2013}
and Cavazzani--Moci \cite{cavazzani-moci-2013} shows that some of these spaces can be 
``geometrically realised'' as   equivariant cohomology or  $K$-theory of certain differentiable manifolds.

In a previous article, the author has identified the space of differential operators
with constant coefficients that  leave the spline $T_X$ continuous in the case where the list $X$ is unimodular and used this to slightly improve the Khovanskii--Pukhlikov formula~\cite{lenz-todd-online-2014}. 

 The goal of this paper is twofold. 
 Firstly, we will generalise the results in \cite{lenz-todd-online-2014} to the case where the list $X$ is no longer required to be unimodular.
 We will obtain a slight generalisation of the Brion--Vergne formula and we will identify two types of periodic $\Pcal$-spaces, \ie spaces of differential operators
 with periodic coefficients that appear naturally in this context.

 Secondly, we will study combinatorial properties of these spaces in the spirit of  zonotopal algebra. It will turn 
 out that these spaces are strongly related to  \emph{arithmetic matroids} 
 that were recently discovered by D'Adderio--Moci~\cite{moci-adderio-2013}.
 
 An extended abstract of this paper has appeared in the proceedings of the conference
 \mbox{FPSAC~2014} \cite{lenz-fpsac-2014}.

\subsection*{Organisation of the article.}
In the following paragraphs,
 some known results will be labelled by 
an $r$ and a natural number. The  generalisations of these statements that will be proven in this paper 
will be labelled by an $R$ and the same natural number.
The remainder of this article is organised as follows:
\begin{asparaitem}
\item
in Section~\ref{Section:Notation} we will introduce our notation and 
review  some facts about splines and vector partition functions. This includes the definition of the 
 Dahmen--Micchelli spaces $\Dcal(X)$ and $\DM(X)$ that are spanned by the local pieces of  splines and vector partition functions, respectively.
 We will also recall the definitions of the  spaces $\Pcal(X)$ and $\Pcal_-(X)$ that act on the splines as partial differential operators with 
 constant coefficients and we will recall that their Hilbert series are evaluations of the Tutte polynomial of the %
 matroid defined by $X$ (r1). We will also recall the definition of a pairing under which $\Dcal(X)$ and $\Pcal(X)$ are dual vector spaces (r2).
\item
In Section~\ref{Section:TUMresults} we will review some results from 
\cite{lenz-interpolation-online-2013,lenz-todd-online-2014}, where the author 
has studied the relationship between the Khovanskii--Pukhlikov formula and the spaces $\Pcal_-(X)$ and $\Pcal(X)$ 
in the case where the list $X$ is unimodular. In this case, one can replace the (complicated)
 Todd operator that appears in the Khovanskii--Pukhlikov formula by a (simpler) element of $\Pcal(X)$ (r3). 
 The space $\Pcal_-(X)$ can be characterised as the space of differential operators the leave the spline continuous (r4).
The section ends with an outlook on how we will generalise these results in this paper.
\item 
In Section~\ref{Section:MoreBackground}  we will recall the definitions of generalised toric arrangements, arithmetic matroids, and their Tutte polynomials.
\item 
In Section~\ref{Section:GeneralCase}  we will prove a refined Brion--Vergne formula (R3).
\item In Section~\ref{Section:ArithmeticMatroids}
we will introduce the internal periodic $\Pcal$-space $\Pper_-(X)$ and the central periodic $\Pcal$-space $\Pper(X)$ 
and prove some results about these spaces. We will construct various bases for these spaces and state that 
their Hilbert series is an evaluation of the arithmetic Tutte polynomial defined by the list $X$ (R1).
\item
In Section~\ref{Section:PeriodicDuality} we will define a pairing between the spaces $\Pper(X)$ and $\DM(X)$ under which they are dual vector spaces (R2).
\item In Section~\ref{Section:WallCrossingProofs} we will discuss a wall-crossing formula for splines due to Boysal--Vergne \cite{boysal-vergne-2009}
 and use it
to prove that the space $\Pper_-(X)$ can be characterised as the space of differential operators with periodic coefficients that leave the spline continuous (R4).
\item In Section~\ref{Section:DeletionContraction} we will define deletion and contraction for the periodic $\Pcal$-spaces and we will use this technique
 to  prove  that the  Hilbert series of the internal space is an evaluation of the arithmetic Tutte polynomial (part of R1).
\item
 Section~\ref{Section:Examples} contains some more complicated examples. Shorter examples are interspersed throughout the text.
\end{asparaitem}

\subsection*{Acknowledgements}
The author would like to thank Lars Kastner and Zhiqiang Xu 
for helpful conversations.

\section{Preliminaries}
\label{Section:Notation}
In this section we will introduce our notation and review some facts about splines, vector partition functions, and related algebraic structures.
 The notation is similar to the one used in \cite{concini-procesi-book}.
  We fix a $d$-dimensional real vector space $U$ and a lattice
  $\Lambda\subseteq U$.
  Let $X=(x_1,\ldots, x_N) \subseteq \Lambda$ be a finite list of
  vectors that spans $U$. 
 The list $X$ is called \emph{unimodular} with respect to $\Lambda$ if and only if 
  every basis for $U$ that can be selected from $X$ is also
 a lattice basis for $\Lambda$.
Note that $X$ can be identified with a linear map $X : \R^N \to U$. 
Let $u\in U$. We define the variable polytopes
\begin{align}
 \Pi_X(u) &:=  \{ w \in \R^N_{\ge 0} :   X w = u  \} 
 \quad \text{ and } \quad  \Pi^1_X(u) :=  \Pi_X(u) \cap [0,1]^N.
\end{align}
Note that every convex polytope can be written in the form  $\Pi_X(u)$ for suitable $X$ and $u$.
 The dimension of these two polytopes is at most $N-d$. Now we define functions 
 $\vpf_X : \Lambda\to\Z_{\ge 0}$ and $B_X,T_X : U \to \R_{\ge 0}$, namely the
\begin{align}
\text{\emph{vector partition function} }
\vpf_X(u) &:= \abs{\Pi_X(u) \cap \Z^N}, \displaybreak[2] \\
\text{ the \emph{box spline} } B_X(u) &:= \det(XX^T)^{-1/2}\vol\nolimits_{N-d}{\Pi^1_X(u)}, \displaybreak[2]  \\
\text{
 and the \emph{multivariate spline} }
 T_X(u) &:= \det(XX^T)^{-1/2}\vol\nolimits_{N-d}{\Pi_X(u)}.
 \end{align}
 Note that we have to assume that $0$ is not contained in the convex hull of $X$ in order for $T_X$ and $\vpf_X$ to be well-defined.
 Otherwise,  $\Pi_X(u)$ may be unbounded.
 It makes sense to define $\vpf_X$ only on $\Lambda$ as $\Pi_X(u)\cap \Z^N = \emptyset$ for $ u \not \in \Lambda $.

   The \emph{zonotope} $Z(X)$ and the \emph{cone} $\cone(X)$ are  defined as
 \begin{equation}
 Z(X) := \left\{ \sum_{i=1}^N \lambda_i x_i : 0\le \lambda_i \le 1  \right\}  
 \quad\text{ and } \quad \cone(X) := \left\{ \sum_{i=1}^N \lambda_i x_i :   \lambda_i  \ge 0  \right\}. 
 \end{equation}
We denote the set of interior lattice points of $Z(X)$ by $\Zcal_-(X) := \interior(Z(X)) \cap \Lambda$. 
Here are the first three examples.
\begin{Example}
\label{Example:twoOnes}
 Let $X=(1,1)$. Then $T_X(u)=u$ for $u\ge 0$, $\vpf_X(u)=u+1$ for $u\in \Z_{\ge 0}$ and $B_X$ is the piecewise linear  function with
  maximum $B_X(1)=1$  whose support is the zonotope $Z(X)=[0,2]$ and that is smooth on $\R\setminus \{0,1,2\}$. 
\end{Example}
\begin{Example}
\label{Example:onetwo}
 Let $X = (1,2)$. Then $T_X(u)=\frac u2$ for $u\ge 0$, $\vpf_X(u) = \frac u2 + \frac 34 + (-1)^u \frac 14$ for $u\in \Z_{\ge 0}$ and $B_X$
 is the piecewise linear function with
  $B_X(1)= B_X(2)=\frac 12$  whose support is the zonotope $Z(X)=[0,3]$  and that is smooth on $\R\setminus \{0,1,2,3\}$. 
 \end{Example}
 \begin{Example}[Zwart--Powell]
 \label{Example:ZPelementA}
  We consider the matrix 
  $X=\begin{pmatrix}
                             1 & 0 & 1 & -1 \\ 0 & 1 & 1 & 1
                            \end{pmatrix}$.
The corresponding box spline is known in the literature as the
 Zwart--Powell element.  Its support is the zonotope $Z(X)$.
The functions $T_X$ and $\vpf_X$ agree with  certain non-zero (quasi-)polynomials
on three different polyhedral cones.
The three cones and the corresponding (quasi)-polynomials are depicted in 
Figure~\ref{Figure:BoxSplineValuesC}. 
\begin{figure}[tbp]
  \begin{center}
   \input{./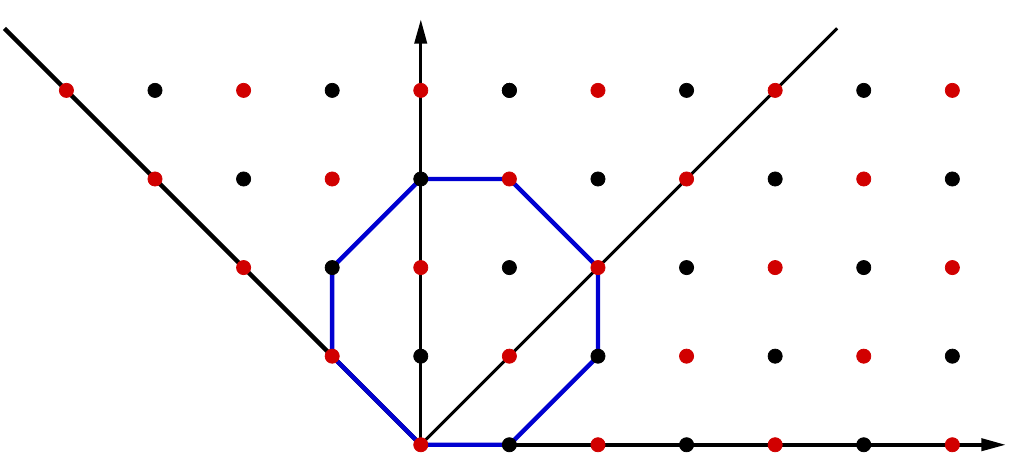_t}
  \end{center}
  \caption{The multivariate spline (black) and the vector partition function (cyan) 
  corresponding to the Zwart--Powell element (cf.~Example~\ref{Example:ZPelementA}).
They have three different
non-zero  local pieces each. The zonotope $Z(X)$ is shown as well.
}
 \label{Figure:BoxSplineValuesC}
\end{figure} 

\end{Example}

\subsection{Commutative algebra}

 The symmetric algebra over $U$ is denoted by $\sym(U)$.
  We fix a basis $s_1,\ldots, s_d$ for the lattice $\Lambda$. This makes it possible to identify $\Lambda$ with $\Z^d$,
  $U$ with $\R^d$, $\sym(U)$ with the polynomial ring
   $\R[s_1,\ldots, s_d]$, and $X$ with a $(d\times N)$-matrix. Then
   $X$ is unimodular if and only if every non-singular   $(d\times d)$-submatrix of this matrix has determinant $1$ or $-1$.
 The base-free setup is more convenient when working with quotients of
 vector spaces.
 
 We denote the dual vector space by $V = U^*$ and we fix a basis $t_1,\ldots, t_d$ that is dual to the basis for 
  $U$. An element of $\sym(U)$ can be seen as a differential operator on $\sym(V)$,  \ie
   $\sym(U) \cong \R[s_1,\ldots, s_d] \cong \R[\diff{t_1},\ldots, \diff{t_d}]$.
   For $f\in \sym(U)$ and $p \in \sym(V)$ we  write $f(D) p$ to denote the polynomial 
   in $\sym(V)$ that is obtained when $f$ acts on $p$ as a differential operator.
  It is known that the two spline functions are piecewise polynomial and that their local pieces are contained in $\sym(V)$. 
  We will mostly use elements of $\sym(U)$ as differential operators 
   on these local pieces.
   Sometimes we will consider the complexified spaces $U_\CC:= U\otimes \CC$, $V_\CC:=V\otimes \CC$,
    $\sym(U_\CC)\cong \CC[s_1,\ldots, s_d]$, and $\sym(V_\CC):=\CC[t_1,\ldots, t_d]$.

 \smallskip
 Note that the group ring of $\Lambda$ over a ring $R$ is isomorphic to the ring of Laurent polynomials in $d$ variables over $R$. In particular
 $\Z[\Lambda]\cong \Z[a_1^{\pm 1},\ldots, a_d^{\pm 1}]$ and $\CC[\Lambda]\cong \CC[a_1^{\pm 1},\ldots, a_d^{\pm 1}]$.
 We will write $\Ccal_R[\Lambda]$ to denote the set of all functions $f: \Lambda\to R$. In particular, we will use the sets
 $\Ccal_\Z[\Lambda]$  and $\Ccal_\CC[\Lambda]$.
The lattice $\Lambda$ 
acts on $\Ccal_\Z[\Lambda]$ and $\Ccal_\CC[\Lambda]$ via translations. For $\lambda\in \Lambda$ we define the translation operator $\tau_\lambda$ by
 $\tau_\lambda f := f(\cdot - \lambda)$. 
   This extends to an action of $\Z[\Lambda]$ on $\Ccal_\Z[\Lambda]$ and of $\CC[\Lambda]$ on $\Ccal_\CC[\Lambda]$.
 We define the \emph{difference operator} 
 $ \nabla_\lambda := 1 - \tau_\lambda$ and for $Y\subseteq X$, $\nabla_Y:=\prod_{\lambda\in Y} \nabla_\lambda$.
 
 Let $x\in \Lambda$ and $f\in \sym(U_\CC)$. Then $f|_\Lambda \in \Ccal_\CC[\Lambda]$.
 Note that $\nabla_x (f|_\Lambda)$ is a discrete analogue of $\diff{x} f$. The relationship between difference and differential operators
 will play an important role in this paper.

 \subsection{Piecewise (quasi-)polynomial functions}
\label{Subsection:quasipolynomials}
 
 In this subsection we will review some facts about piecewise polynomial and piecewise quasipolynomial functions. 
 The definitions here follow  \cite{concini-procesi-book} and \cite{deconcini-procesi-vergne-2013}.

  A hyperplane in $U$  that is spanned by a sublist  $Y \subseteq X$ is called an \emph{admissible hyperplane}. A shift
  of such a hyperplane by a vector  $\lambda \in \Lambda$ is called an \emph{affine admissible hyperplane}.
  An \emph{alcove} $\cfrak\subseteq U$ is a connected component of the complement of the union of all affine admissible hyperplanes.
 A vector $w\in U$ is called \emph{affine singular} if it is contained in any affine admissible hyperplane.
A vector $w\in U$ is called \emph{affine regular} if it is not affine singular. %
  We call $w$ \emph{short affine regular} if it is so short that  it is contained in an alcove whose closure contains the origin. 
 A point $p\in U$ is called \emph{strongly regular} if $p$ is not contained in any $\cone(Y)$ where $Y\subseteq X$ and $Y$ spans a subspace of 
  dimension at most $d-1$.
  A connected component of the set of strongly regular points is called a \emph{big cell}.%

 In Example~\ref{Example:twoOnes} the alcoves are the open intervals $(j,j+1)$ for $j\in \Z$. In 
 Example~\ref{Example:ZPelementA}  there are four big cells, three of them are convex cones that are contained in the support of $T_X$.

 For a set $A\subseteq U$, we denote the topological \emph{closure} of $A$ in the standard topology by $\clos(A)$.

  A function defined on the affine regular points (resp.\ strongly regular points) is called \emph{piecewise polynomial} with respect to the alcoves
  (resp.\ with respect to the big cells)
  if for each alcove (resp.\ big cell) $\cfrak$, the restriction $f|_\cfrak$ 
  coincides with a polynomial.%
   
  Note that a function which is piecewise polynomial with respect to the big cells is automatically 
  piecewise polynomial with respect to the alcoves since the closure of each big cell is the union of
  countably many closures of alcoves.

 A function on a lattice $\Lambda$ is called a \emph{quasipolynomial} (or periodic polynomial) 
 if there exists a sublattice $\Lambda^0$ \st $f$ restricted to each coset of $\Lambda^0$ is
 (the restriction of) a polynomial.
 A quasipolynomial on the vector space $U$ can be written as a linear combination   of exponential polynomials, \ie
 functions of type $e^{\phi(u)}g(u)$, where $g\in \sym(V)$ and $\phi\in V=U^*$ is rational, \ie $\phi(u)\in \mathbb{Q}$ for all $u\in \Lambda$. 
 
 A function $f: \Lambda \to \CC$ is called \emph{piecewise quasipolynomial} %
with respect to the alcoves (resp.\ with respect to the big cells) if  for each alcove (resp.\ big cell)
the restriction $f|_{\clos(\cfrak) \cap \Lambda}$ 
  coincides with a quasipolynomial.

\subsection{Piecewise polynomial functions and continuity.}
\label{Subsection:ContinuityLimits}

A function that is piecewise polynomial with respect to the alcoves is only defined 
on the affine regular points. We 
 will however be most interested in evaluations and derivatives of these functions 
at points in the lattice $\Lambda$, which are affine singular. 
In this subsection we will use limits to define these evaluations.

 Let $h$ be a piecewise polynomial function and let 
 $\lambda$ be an affine singular point. If 
 $\lim_{\eps \to 0} h(\lambda+ \eps w)=\lim_{\eps\to 0} h(\lambda+ \eps w') =:c_\lambda$ for all  affine regular vectors
 $w$, $w'$, then we call $h$ continuous in $\lambda$ and define $h(\lambda):=c_\lambda$.
 In general, 
 we can use a limit
 procedure  as follows.
 We fix an affine regular vector $w$ and define $\lim_w h(\lambda) := \lim_{\eps \searrow 0} h(\lambda+ \eps w)$.

Differentiation can be defined in a similar way. 
 We fix an affine regular vector  $w\in U$. %
  Let $u \in \Lambda$. 
  Let $\cfrak \subseteq U$ be an alcove \st %
  $u$ and $u+ \eps w$ are contained in its closure for some small $\eps>0$ and 
  let $h_\cfrak$ be the polynomial that agrees with $h$ on the closure of $\cfrak$. 
 For a differential operator $p(D) \in \sym(U)$ we define
 \begin{align}
  \lim_w p(\Dpw) h (u) :=  p(D) h_\cfrak(u) %
 \end{align}
 (pw stands for piecewise). 
  More information on this construction can be found in \cite{deconcini-procesi-vergne-2013} where it was  introduced.

  Note that the choice of the vector $w$ is important. For example, for the list $X=(1)$,
  $\lim_w B_{X}(0)$ is either $1$ or $0$ depending on whether $w$ is positive or negative.

\subsection{Zonotopal spaces}
\label{Subsection:ZonotopalSpaces}

In this subsection we will define the spaces  $\Dcal(X)$ and $\DM(X)$ which will turn out to be the spaces
spanned by the local pieces of $T_X$ and $\vpf_X$. We will also define the space $\Pcal(X)$ which is dual to $\Dcal(X)$.

 Recall that the list of vectors $X$ is contained in a vector space $U \cong \R^d$ and that we denote the dual space by $V$.
We start by defining a pairing between the symmetric algebras $\sym(U) \cong \R[s_1,\ldots, s_d]$ and 
$\sym(V) \cong \R[t_1,\ldots, t_d]$:
\begin{equation}
\label{eq:pairing}
\begin{split}
 \pair{\cdot}{\cdot} : \R[s_1,\ldots, s_d] \times \R[t_1,\ldots, t_d] &\to \R  \\
 \pair{ p }{ f } := \left( p\left(\diff{t_1},\ldots, \frac{\partial}{\partial t_d} \right)  f \right) (0), 
\end{split}
\end{equation}
\ie we let $p$ act on $f$ as a differential operator and take the degree zero part of the result.
Note that this pairing  extends to a pairing
 $\pair{\cdot}{\cdot} : \R[[s_1,\ldots, s_d]] \times \R[t_1,\ldots, t_d] \to \R$.

A sublist $C\subseteq X$ is called a \emph{cocircuit} if  $\rank(X\setminus C) < \rank(X) $ and $C$ is inclusion-minimal with this property.

 A vector $u\in U$ corresponds to  a linear form $p_u \in \sym(U)$.
For a sublist $Y\subseteq X$, we define $p_Y := \prod_{y\in Y} p_y$.  For example, if $Y=((1,0),(1,2))$, then 
 $p_Y=s_1(s_1 + 2s_2)$. Furthermore, $p_\emptyset := 1$.

 \begin{Definition}
 \Xintro  We define
 \begin{align}
    \text{the \emph{cocircuit ideal} } \; \cJ(X) &:= \ideal\{  p_Y : Y \text{ cocircuit}  \} \;\text{ and}%
    \\
    \notag
    \text{the \emph{continuous $\Dcal$-space} } \; \Dcal(X) &:= \{ f : \sym(V) : p(D) f = 0 \text{ for all } p \in \cJ(X)  \}. %
 \end{align}
 Equivalently, $\Dcal(X)$ is the orthogonal complement of $\cJ(X)$ under the pairing $\pair{}{}$.
\end{Definition}

 We define the \emph{rank} of a sublist $Y\subseteq X$ as
 the 
 dimension 
 of the vector space spanned by $Y$. We denote it by
  $\rank(Y)$.
 Now we define the  
\begin{align}
\text{\emph{central $\Pcal$-space} } \Pcal(X) &:= \spa\{ p_Y :  \rank(X\setminus Y)= \rank(X) \}
\label{equation:CentralP}
\\
\text{and the \emph{internal $\Pcal$-space} }   \Pcal_-(X) &:= \bigcap_{x\in X} \Pcal(X\setminus x).
\label{equation:InternalP}
\end{align}
The space $\Pcal(X)$ first appeared in approximation theory \cite{akopyan-saakyan-1988,boor-dyn-ron-1991,dyn-ron-1990}. 
The space $\Pcal_-(X)$ was introduced in
\cite{holtz-ron-2011}.

\begin{Proposition}[\cite{dyn-ron-1990, holtz-ron-2011}]
\label{Proposition:JPdecomposition}
\Xintro 
Then  $\sym(U) = \Pcal(X) \oplus \cJ(X)$.
\end{Proposition}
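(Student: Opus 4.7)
The plan is to reduce the direct sum decomposition to a statement about the perfect pairing $\langle\cdot,\cdot\rangle$ on each graded piece. First I would observe that both $\cJ(X)$ (as a homogeneous ideal generated by homogeneous elements $p_Y$) and $\Pcal(X)$ (as the span of homogeneous polynomials $p_Y$) are graded subspaces of $\sym(U)$, so it suffices to verify the decomposition in each degree $k$. The assertion $\sym(U)_k = \Pcal(X)_k \oplus \cJ(X)_k$ is equivalent to saying that the natural composition $\Pcal(X)_k \hookrightarrow \sym(U)_k \twoheadrightarrow \sym(U)_k/\cJ(X)_k$ is an isomorphism.

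Next, using the perfect duality between $\sym(U)_k$ and $\sym(V)_k$ induced by the pairing \eqref{eq:pairing}, together with the identification $\Dcal(X)_k = \cJ(X)_k^\perp$ from the definition, the quotient $\sym(U)_k/\cJ(X)_k$ is dual to $\Dcal(X)_k$. Thus the proposition reduces to showing that the restriction of the pairing to $\Pcal(X)_k \times \Dcal(X)_k$ is perfect. I would establish this via two ingredients: (i) proving $\dim \Pcal(X)_k = \dim \Dcal(X)_k$ for each $k$, which I would accomplish by computing both Hilbert series as the same evaluation of the Tutte polynomial, via matched deletion-contraction recursions on the matroid of $X$; and (ii) establishing non-degeneracy, which I would approach by induction on $|X|$, using short exact sequences relating $\Pcal(X)$ to $\Pcal(X \setminus x)$ and analogous sequences for $\Dcal(X)$, verifying that they are dual under the pairing. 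The base case is when $X$ is a basis of $U$, where $\Pcal(X) = \spa\{1\} = \R$ and $\Dcal(X) = \R$ and duality is immediate.

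Once non-degeneracy is in hand, the direct sum follows quickly: any $p \in \Pcal(X) \cap \cJ(X)$ must pair trivially with every element of $\Dcal(X) = \cJ(X)^\perp$, so by non-degeneracy $p = 0$; combined with the dimension count this yields $\sym(U)_k = \Pcal(X)_k \oplus \cJ(X)_k$ in each degree. The main obstacle is ingredient (ii), the non-degeneracy, since the dimension count alone cannot rule out overlap between $\Pcal(X)$ and $\cJ(X)$; this is where the genuine combinatorial content of the matroid structure of $X$ must be used, and where the careful compatibility between the multiplication-by-$p_x$ and differentiation-by-$\partial_{x}$ operations underlying the deletion-contraction exact sequences becomes essential.
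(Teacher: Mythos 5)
Your reduction --- split into graded pieces, identify $\sym(U)_k/\cJ(X)_k$ with $\Dcal(X)_k^*$, then show the restricted pairing $\Pcal(X)_k \times \Dcal(X)_k$ is perfect via a dimension count plus non-degeneracy --- is logically sound. The problem is that both ingredients, as you propose to obtain them, secretly rest on the conclusion. The deletion-contraction sequence $0 \to \Pcal(X\setminus x)[1] \to \Pcal(X) \to \Pcal(X/x) \to 0$ of Proposition~\ref{Proposition:DeletionContractionP} is not elementary: well-definedness of the maps and surjectivity of $\pi_x$ are easy, but exactness at the middle term is essentially equivalent to what you want to prove. Concretely, if $p\in\Pcal(X)$ satisfies $\pi_x(p)=0$ then $p=p_x q$ for some $q\in\sym(U)$; applying the inductive decomposition $\sym(U)=\Pcal(X\setminus x)\oplus\cJ(X\setminus x)$ to write $q=q_1+q_2$, one finds $p_x q_2 = p - p_x q_1 \in \Pcal(X)\cap\cJ(X)$, using that $p_x\Pcal(X\setminus x)\subseteq\Pcal(X)$ and $p_x\cJ(X\setminus x)\subseteq\cJ(X)$. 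To conclude $q_2=0$, and hence $q\in\Pcal(X\setminus x)$, you need precisely $\Pcal(X)\cap\cJ(X)=0$ for $X$ itself --- the conclusion of the proposition, not part of the inductive hypothesis. The Tutte-polynomial Hilbert-series identity (Theorem~\ref{Proposition:HilbertSeriesTuttePolynomialP}) you invoke for the dimension count is itself deduced in this paper from the same exact sequences, so it provides no independent input either.

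For context, the paper does not prove this proposition; it cites \cite{dyn-ron-1990,holtz-ron-2011}. The arguments there avoid the circularity by producing an explicit dual pair of families: the set $\{Q_B : B\in\BB(X)\}$ of Proposition~\ref{Proposition:Pbasis} spanning $\Pcal(X)$, and a matching family of $\abs{\BB(X)}$ elements of $\Dcal(X)$ (built from local pieces of $T_X$ or from exponential solutions of $\cJ(X)$), chosen so that the Gram matrix of the pairing is triangular with nonzero diagonal under an ordering of bases by external activity. That triangularity yields $\dim\Pcal(X)=\dim\Dcal(X)=\abs{\BB(X)}$ and non-degeneracy simultaneously, with no reference to the exact sequences --- which are then derived as a consequence of the decomposition, not as a precursor to it. To repair your plan you would need such an explicit dual-basis construction, or alternatively the power-ideal characterization of $\Pcal(X)$ used by Holtz--Ron and Ardila--Postnikov, either of which proves middle-exactness directly rather than as a corollary of the decomposition.
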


 \begin{Theorem}[\cite{dyn-ron-1990,jia-1990}] %
\label{Proposition:PDduality}
\Xintro
Then the spaces $\Pcal(X)$ and $\Dcal(X)$ are dual under the pairing $\pair{\cdot}{\cdot}$, \ie the map
\begin{align}
 \begin{split}
 \Dcal(X) &\to \Pcal(X)^* \\
  f &\mapsto \pair{\cdot}{f}
 \end{split}
\end{align}
is an isomorphism.
\end{Theorem}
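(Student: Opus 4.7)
The plan is to exploit the decomposition $\sym(U) = \Pcal(X) \oplus \cJ(X)$ from Proposition~\ref{Proposition:JPdecomposition} together with the non-degeneracy of the apolar pairing $\pair{\cdot}{\cdot}$. Concretely, after fixing dual bases $s_1,\ldots,s_d$ and $t_1,\ldots,t_d$, one has $\pair{s^\alpha}{t^\beta} = \alpha!\, \delta_{\alpha,\beta}$, so the pairing restricts to a perfect pairing on each graded piece $\sym^k(U)\times \sym^k(V)$.

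First I would verify injectivity of the map $\Phi : \Dcal(X)\to \Pcal(X)^*$, $f\mapsto \pair{\cdot}{f}|_{\Pcal(X)}$. Suppose $\Phi(f)=0$; then $\pair{p}{f}=0$ for every $p\in \Pcal(X)$. Because $f\in\Dcal(X)$ is, by definition, orthogonal to the ideal $\cJ(X)$, we also have $\pair{q}{f}=0$ for all $q\in\cJ(X)$. The decomposition $\sym(U)=\Pcal(X)\oplus\cJ(X)$ lets us write any $r\in\sym(U)$ as $r=p+q$ with $p\in\Pcal(X)$ and $q\in\cJ(X)$, so $\pair{r}{f}=0$ for all $r\in\sym(U)$; non-degeneracy of $\pair{\cdot}{\cdot}$ then forces $f=0$.

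Next I would handle surjectivity via a dimension count, working degree by degree. The space $\Pcal(X)$ is spanned by the products $p_Y$ with $|Y|\le N-d$, so it sits in the finite total-degree part $\bigoplus_{k\le N-d}\sym^k(U)$. For each $k$ the restriction of $\pair{\cdot}{\cdot}$ to $\sym^k(U)\times\sym^k(V)$ is perfect, so the orthogonal of $\cJ(X)_k=\cJ(X)\cap\sym^k(U)$ inside $\sym^k(V)$ has dimension $\dim\sym^k(U)-\dim\cJ(X)_k=\dim(\sym(U)/\cJ(X))_k$; combining over $k$ and using Proposition~\ref{Proposition:JPdecomposition} to identify $\sym(U)/\cJ(X)\cong \Pcal(X)$, one obtains $\dim\Dcal(X)=\dim\Pcal(X)<\infty$. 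Together with the injectivity proved above, this finishes the proof.

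The only slightly subtle point, which I expect to be the main obstacle, is keeping track of the grading when translating between $\sym(U)$, its quotient by $\cJ(X)$, and the subspace $\Dcal(X)\subseteq\sym(V)$: neither $\sym(U)$ nor $\sym(V)$ is finite-dimensional, so the orthogonality argument and the dimension count both have to be carried out in each graded component before being summed. Everything else reduces to the decomposition already provided and the standard fact that $\pair{s^\alpha}{t^\beta}=\alpha!\,\delta_{\alpha,\beta}$.
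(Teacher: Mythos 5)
The paper does not prove this statement; it is cited from \cite{dyn-ron-1990,jia-1990}, and the only ingredients recorded in the text are the equivalent characterisation of $\Dcal(X)$ as the orthogonal complement $\cJ(X)^\perp$ under $\pair{\cdot}{\cdot}$ and the decomposition of Proposition~\ref{Proposition:JPdecomposition}. Your argument is a correct, self-contained derivation from exactly those two facts, and it is in substance the classical Macaulay inverse-system duality. One point you should state explicitly rather than leave as a side remark: the degree-by-degree dimension count $\dim\Dcal(X)=\sum_k\dim\bigl(\sym(U)/\cJ(X)\bigr)_k$ is legitimate only because $\cJ(X)$ is a \emph{homogeneous} ideal (its generators $p_Y$ are products of linear forms, hence homogeneous of degree $\abs{Y}$), so that $\Dcal(X)=\cJ(X)^\perp$ decomposes as $\bigoplus_k \cJ(X)_k^\perp$ with each summand contained in the finite-dimensional graded piece $\sym^k(V)$. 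Without that homogeneity, the orthogonal complement inside the infinite-dimensional $\sym(V)$ would not be controlled by the codimensions of the graded pieces, and the surjectivity argument would collapse. You allude to this issue in your closing paragraph; making the homogeneity explicit is the missing sentence that turns the sketch into a complete proof.
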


 Recall that $\Z[\Lambda]$ acts via translation on $\Ccal_\Z[\Lambda]= \{ f:\Lambda \to \Z\}$.
 For $p\in \Z[\Lambda]$ and $f\in\Ccal_\Z[\Lambda]$, we will sometimes write $p(\nabla)f$ to denote the function that is obtained when $p$ acts on $f$.
 \begin{Definition}
 \XintroLattice Then we define
 \begin{align}
      \text{the \emph{discrete cocircuit ideal} } \dJ(X) &:= \ideal\{  \nabla_Y : Y \text{ cocircuit}  \} \subseteq \Z[\Lambda] \\
    \text{and the \emph{discrete $\Dcal$-space} }   \DM(X) &:= \{ f \in \Ccal_\Z[\Lambda] :  p (\nabla) f = 0 \text{ for all } p \in \dJ(X)   \}. \notag
 \end{align}
\end{Definition}
\begin{Remark}
 The spaces $\Dcal(X)$ and $\DM(X)$ are sometimes called \emph{continuous} and \emph{discrete Dahmen-Micchelli spaces}.
\end{Remark}

 \begin{Definition}
   We will write $\Dcal_\CC(X)$, $\DM_\CC(X)$, $\Pcal_\CC(X)$, $\dJC(X)$ etc.\ to denote the complexified versions of these vector spaces and ideals.
 \end{Definition}

 \begin{Remark}
  If $X$ is unimodular, then
  $\Dcal_\CC(X)|_\Lambda = \DM_\CC(X)$. 
  This is a special case of %
  Proposition~\ref{Proposition:DMdecomposition} below.
 \end{Remark}
Recall that $Z(X)$ denotes the zonotope defined by $X$.
 For $w\in U$, we define $\Zcal(X,w) := (Z(X) - w ) \cap \Lambda$.
 \begin{Theorem}[Theorem 13.21 in \cite{concini-procesi-book}]
 \label{Theorem:DMdefinedbyDelta}
 \XintroLattice
  $\DM(X)$ is a free abelian group %
  consisting of quasipolynomials. Its dimension is equal to $\vol(Z(X))$.
  For any affine regular vector $w$, 
  evaluation of the functions in $\DM(X)$ on the set 
  $  \Zcal(X,w)  $
  establishes a linear
  isomorphism of $\DM(X)$ with the abelian group of all $\Z$-valued functions on $\Zcal(X,w)$.
 \end{Theorem}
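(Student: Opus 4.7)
The plan proceeds in three stages: (1) show that elements of $\DM(X)$ are quasipolynomials; (2) compute its rank via deletion--contraction and match it with $\abs{\Zcal(X,w)}$; (3) prove the evaluation map is an isomorphism by carrying the same recursion through both sides.

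For the first stage, I work over $\CC$. The ideal $\dJC(X) \subseteq \CC[\Lambda] \cong \CC[a_1^{\pm 1},\ldots,a_d^{\pm 1}]$ cuts out a subscheme of the algebraic torus $(\CC^*)^d$. Because $X$ spans $U$, the cocircuit difference operators exhaust enough directions that $V(\dJC(X))$ is zero-dimensional. A standard argument on difference equations (Fourier decomposition over the finitely many characters of $\Lambda$ appearing in $V(\dJC(X))$) shows that every $f \in \DM_\CC(X)$ is a finite sum of exponential polynomials $u \mapsto \phi(u) g(u)$, which are quasipolynomials in the sense of Subsection~\ref{Subsection:quasipolynomials}. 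This gives the quasipolynomiality claim.

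For the second and third stages, I induct on $N=\abs{X}$. Fix $x \in X$ that is neither a loop nor a coloop (the other cases are handled with minor modifications of the sequence below). Cocircuits of $X$ split into those avoiding $x$ (which are exactly cocircuits of $X \setminus x$) and those containing $x$ (which, via the projection $\pi:\Lambda \to \Lambda/\Z x$, correspond to cocircuits of $X/x$). Using $\nabla_Y \nabla_x = \nabla_{Y \cup x}$ this yields a short exact sequence
\begin{equation*}
 0 \longrightarrow \DM(X/x) \xrightarrow{\;\pi^*\;} \DM(X) \xrightarrow{\;\nabla_x\;} \DM(X \setminus x) \longrightarrow 0.
\end{equation*}
Combined with the classical zonotope recursion $\vol(Z(X)) = \vol(Z(X \setminus x)) + \vol(Z(X/x))$ and induction, this gives $\dim_\CC \DM_\CC(X) = \vol(Z(X))$. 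On the combinatorial side, for an affine regular $w$ the translated zonotope $Z(X)-w$ admits a half-open decomposition mirroring the above recursion, yielding a disjoint union
\begin{equation*}
 \Zcal(X,w) = \Zcal(X \setminus x, w) \,\sqcup\, \bigl(x + \pi^{-1}\bigl(\Zcal(X/x, \pi(w))\bigr)\bigr)
\end{equation*}
(after choosing $w$ compatibly), so $\abs{\Zcal(X,w)} = \vol(Z(X))$ by induction. The evaluation map $\mathrm{ev}:\DM(X) \to \Z^{\Zcal(X,w)}$ is compatible with both decompositions: restriction to the first summand intertwines with $\nabla_x$, and restriction to the shifted summand intertwines with $\pi^*$. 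An induction hypothesis applied to both $X \setminus x$ and $X/x$ then gives that $\mathrm{ev}$ is an isomorphism, which simultaneously forces $\DM(X)$ to be a free abelian group of the stated rank.

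The main obstacle is the bookkeeping in the inductive step: verifying precisely that cocircuits of $X$ behave as claimed under deletion and contraction (so that the short exact sequence is correctly identified), matching this with the combinatorial decomposition of $\Zcal(X,w)$, and arranging the choice of affine regular $w$ so that it remains regular for both $X \setminus x$ and $X/x$ (a small perturbation suffices). A secondary subtlety is that one must track integrality throughout, rather than only proving the complexified statement; this comes down to checking that $\pi^*$ and $\nabla_x$ are defined over $\Z$ and that the evaluation isomorphism on $\Z$-valued functions is inherited from the inductive hypothesis.
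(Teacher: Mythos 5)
The paper does not prove this result; it is quoted verbatim from \cite{concini-procesi-book} (Theorem~13.21), so there is no internal proof here to compare yours against. What you wrote is therefore a free-standing attempt. Its overall shape is reasonable: quasipolynomiality from the zero-dimensionality of $V(\dJC(X))$ (which is Lemma~\ref{Lemma:VarietyVertices} of this paper), and then a deletion--contraction recursion on $\DM(X)$ matched against a recursion on the lattice points of $Z(X)-w$. But the hard content of the theorem is buried inside two steps that you assert rather than prove, and one of your supporting claims is false as stated.

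The crux is the exactness of $0 \to \DM(X/x)\xrightarrow{\pi^*}\DM(X)\xrightarrow{\nabla_x}\DM(X\setminus x)\to 0$. The direction of the maps is right and their well-definedness can be salvaged, even though your cocircuit bookkeeping has deletion and contraction swapped: cocircuits of $X$ avoiding $x$ are exactly the cocircuits of the \emph{contraction} $X/x$, not of $X\setminus x$, while cocircuits of $X\setminus x$ are the minimal nonempty sets of the form $C\setminus x$ with $C$ a cocircuit of $X$. The real issue is that surjectivity of $\nabla_x$ and exactness in the middle are precisely the ``antidifference'' problem that the theorem encodes; declaring the sequence exact moves the entire burden into an unproved lemma. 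Separately, the decomposition $\Zcal(X,w)=\Zcal(X\setminus x,w)\sqcup\bigl(x+\pi^{-1}(\Zcal(X/x,\pi(w)))\bigr)$ cannot be correct: $\pi^{-1}$ of a finite subset of $\Lambda/\Z x$ is an infinite union of $\Z x$-cosets. What is needed is a half-open paving of $Z(X)-w$ in which each point of $\Zcal(X/x,\pi(w))$ contributes a specified \emph{finite} fiber of lattice points --- and when $x$ is not primitive in $\Lambda$ this is where the multiplicity $\multari(x)$ enters, so ``one representative per coset'' will not do. Until you prove the exactness of the $\DM$-sequence, replace the $\pi^{-1}$ formula with a correct paving, supply a base case, and handle the torsion that can appear in $\Lambda/\Z x$, the induction does not close.
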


In \cite{holtz-ron-2011} it was shown that 
if $X$ is unimodular then   $\dim \Pcal_-(X) = \abs{\Zcal_-(X)}$ and $\dim\Pcal(X)=\dim\Dcal(X)= \vol Z(X)$.
More specifically, the following is known.
\begin{Theorem}[\cite{ardila-postnikov-2009, holtz-ron-2011}]
\label{Proposition:HilbertSeriesTuttePolynomialP}
\XintroN
Then
\begin{align}
 \hilb(\Pcal_-(X),q) &=  q^{ N - d}{\mathfrak T}_X(0,q^{-1})  \text{ and } 
\\ 
  \hilb( \Pcal(X), q) &= \hilb( \Dcal(X), q) =  q^{ N - d }{\mathfrak T}_X(1, q^{-1}),
\end{align}
where $\tutte_X( \alpha, \beta) := \sum_{A\subseteq X} ( \alpha - 1 )^{ r - \rank(A) }( \beta - 1 )^{ \abs{A}- \rank(A) }$ denotes the Tutte polynomial of the matroid defined by $X$ and 
$\hilb(\bullet,q)$ denotes the Hilbert series of the graded vector space $\bullet$. 
\end{Theorem}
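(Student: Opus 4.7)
The first equality $\hilb(\Pcal(X), q) = \hilb(\Dcal(X), q)$ is immediate from Theorem~\ref{Proposition:PDduality}: since $\Pcal(X)$ and $\Dcal(X)$ are finite-dimensional graded vector spaces dual to each other under the grading-compatible pairing $\pair{\cdot}{\cdot}$, they have identical Hilbert series in each degree.

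For the Tutte polynomial identities, my plan is induction on the corank $N - d$ via deletion-contraction. Recall the Tutte recursion: for $x \in X$ neither a loop nor a coloop, $\tutte_X = \tutte_{X\setminus x} + \tutte_{X/x}$, and for a coloop $\tutte_X = \alpha\, \tutte_{X/x}$. Specialising at $\alpha=1$ and multiplying by $q^{N-d}$, the identity to verify for $\Pcal(X)$ becomes
\[ \hilb(\Pcal(X), q) = q \cdot \hilb(\Pcal(X\setminus x), q) + \hilb(\Pcal(X/x), q) \qquad (x \text{ not a coloop}), \]
together with $\hilb(\Pcal(X),q) = \hilb(\Pcal(X/x),q)$ for a coloop. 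The base case $N = d$ (when $X$ is a basis) is immediate: $\Pcal(X) = \R \cdot 1$, $\Pcal_-(X) = 0$, and $\tutte_X = 1$.

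To obtain the non-coloop recursion I would construct the short exact sequence of graded vector spaces
\[ 0 \longrightarrow \Pcal(X \setminus x)[-1] \xrightarrow{\ \cdot\, p_x\ } \Pcal(X) \longrightarrow \Pcal(X / x) \longrightarrow 0, \]
where $[-1]$ indicates that multiplication by the degree-one element $p_x$ shifts degrees up by one. Injectivity is automatic because $\sym(U)$ is an integral domain. The inclusion $p_x \cdot \Pcal(X \setminus x) \subseteq \Pcal(X)$ is easy on generators: if $(X \setminus x) \setminus Y$ spans $U$, then so does $X \setminus (Y \cup \{x\})$ since $x$ is not a coloop. The right-hand map is induced by passing to the quotient $U/\spa(x)$ (under which the $y \in X \setminus x$ become the vectors of $X/x$); surjectivity and the identification of the kernel with the image of $\cdot p_x$ are the delicate points and require partitioning the generators $p_Y$ of $\Pcal(X)$ by whether $x \in Y$.

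The formula for $\Pcal_-(X)$ is handled in parallel, using the description $\Pcal_-(X) = \bigcap_{y \in X} \Pcal(X \setminus y)$ to derive the analogous deletion-contraction recursion matching $q^{N-d}\tutte_X(0, q^{-1})$; in particular, when $X$ possesses a coloop one obtains $\Pcal_-(X) = 0$, consistent with $\tutte_X(0, \cdot) = 0$. The main obstacle throughout is the exactness of the short exact sequence above, and specifically the identification $\Pcal(X)/(p_x \cdot \Pcal(X\setminus x)) \cong \Pcal(X/x)$: this is the technical heart of the theorem and requires a careful combinatorial argument showing that every generator $p_Y$ of $\Pcal(X)$ with $x \notin Y$ projects to a generator of $\Pcal(X/x)$, while those with $x \in Y$ account exactly for the image of $\cdot p_x$.
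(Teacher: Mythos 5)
Your proposal takes exactly the route the paper indicates: the theorem is cited from Ardila--Postnikov and Holtz--Ron, but the text remarks that it ``can be deduced from'' Proposition~\ref{Proposition:DeletionContractionP}, which is precisely the pair of short exact sequences you construct. Your first equality via the duality pairing, and the translation of the Tutte deletion-contraction recursion into the Hilbert-series recursion forced by exactness, are both correct, as is your observation that $\Pcal_-(X)=0$ whenever $X$ has a coloop.

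Two points to repair. First, the base case: for $X$ a basis of $d$ elements, $\tutte_X(\alpha,\beta)=\alpha^d$, not $1$. The evaluations $\tutte_X(1,\cdot)=1$ and $\tutte_X(0,\cdot)=0$ (for $d\ge 1$) are all you actually use, so the conclusion survives, but the stated identity is false as written. Second, you treat non-coloops and coloops but not loops, and a spanning list may well contain the zero vector. If $x=0\in X$, then $p_x=0$, the map $\cdot\,p_x$ is identically zero, and your short exact sequence does not hold; this case requires a separate argument. One checks directly from the definitions that $\Pcal(X)=\Pcal(X\setminus 0)$ and $\Pcal_-(X)=\Pcal_-(X\setminus 0)$, while on the Tutte side $\tutte_X=\beta\,\tutte_{X\setminus 0}$, so substituting $\beta=q^{-1}$ contributes a factor $q^{-1}$ that is exactly absorbed by the change of prefactor $q^{N-d}\mapsto q^{(N-1)-d}$. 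With those two fixes, your sketch is sound; leaving the exactness in the middle as the ``technical heart'' is acceptable scope, since the paper itself cites Proposition~\ref{Proposition:DeletionContractionP} rather than proving it.
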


Let $x\in X$. We call the list $X\setminus x$ the \emph{deletion} of $x$. 
 The image of $X\setminus x$ under the canonical projection $\pi_x : U \to U/\spa(x) =:U/x$ 
 is called the \emph{contraction} of $x$. It is denoted by $X/x$. %

The projection $\pi_x$ induces a map $ \sym(U)\to \sym(U/x)$ that we will also denote by $\pi_x$.
 If we identify $\sym(U)$ with the polynomial ring $\R[s_1,\ldots,s_d]$ 
  and $x=s_d$, then
  $\pi_x$ is the map from 
 $\R[s_1,\ldots, s_{d}]$ to $\R[s_1,\ldots, s_{d-1}]$ that sends $s_d$ to zero and $s_1,\ldots, s_{d-1}$ to themselves.

 Theorem~\ref{Proposition:HilbertSeriesTuttePolynomialP} can be deduced  from the following proposition.
  \begin{Proposition}[\cite{ardila-postnikov-2009,ardila-postnikov-errata-2012}]
  \label{Proposition:DeletionContractionP}
  \Xintro
  Let $x\in X$ be an element that is non-zero. %
  Then the following sequences of graded vector spaces are exact:
   \begin{align}
   \label{eq:exactSequencePcentral}
    0 \to \Pcal ( X\setminus x )[1]  \stackrel{\cdot p_x}{\longrightarrow}& \Pcal (X) \stackrel{\pi_x}{\longrightarrow} \Pcal (X/x) \to 0
\\
    \label{eq:exactSequencePInt}
 \text{and } \quad  0 \to \Pcal_-( X\setminus x )[1]  \stackrel{\cdot p_x}{\longrightarrow} &\Pcal_-(X) \stackrel{\pi_x}{\longrightarrow} \Pcal_-(X/x) \to 0.
   \end{align}
  Here, $[1]$ means that the degree of the graded vector space should be shifted up by one. 
 \end{Proposition}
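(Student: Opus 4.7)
The plan is to verify the central sequence (\ref{eq:exactSequencePcentral}) directly and to deduce the internal sequence (\ref{eq:exactSequencePInt}) from it by intersecting over the elements of $X$. I will tacitly assume $x$ is neither a loop nor a coloop of $X$; in the coloop case the sequence fails as stated (dimension counting already disagrees) and must be interpreted differently, as handled in the cited erratum \cite{ardila-postnikov-errata-2012}.

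For (\ref{eq:exactSequencePcentral}), the easy ingredients are verified on generators using the rank identity $\rank(\pi_x(S)) = \rank(S\cup\{x\})-1$. Well-definedness of $\cdot p_x$ follows because $p_x p_Y = p_{Y\cup\{x\}}$ with $\rank(X\setminus(Y\cup\{x\})) = \rank((X\setminus x)\setminus Y) = \rank(X)$. Well-definedness of $\pi_x$ follows because either $x\in Y$ (and then $\pi_x(p_Y)=0$) or $x\notin Y$ and the rank identity shows $p_{\pi_x(Y)}\in \Pcal(X/x)$. Composition vanishes since $\pi_x(p_x)=0$. Injectivity of $\cdot p_x$ is automatic since $\sym(U)$ is an integral domain. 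Surjectivity of $\pi_x$ is obtained by lifting each generator $p_{Y'}\in\Pcal(X/x)$ to any $Y\subseteq X\setminus x$ with $\pi_x(Y)=Y'$ and reapplying the rank identity to conclude $p_Y\in\Pcal(X)$.

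The main obstacle is exactness in the middle. Given $f\in\Pcal(X)$ with $\pi_x(f)=0$, one has a unique factorisation $f=p_x g$ in $\sym(U)$, and the issue is to show $g\in\Pcal(X\setminus x)$. My preferred route is to dualise via Theorem~\ref{Proposition:PDduality} and instead establish the equivalent exact sequence of $\Dcal$-spaces
\[
0 \to \Dcal(X/x) \hookrightarrow \Dcal(X) \xrightarrow{p_x(D)} \Dcal(X\setminus x)[-1] \to 0,
\]
in which $\Dcal(X/x)$ is identified with the subspace of $\sym(V)$ annihilated by the directional derivative $p_x(D)$. The key combinatorial input is the bijection between cocircuits of $X$ containing $x$ and cocircuits of $X\setminus x$ via $C\leftrightarrow C\setminus\{x\}$, which yields $p_C = p_x\, p_{C\setminus x}$, together with the identification of cocircuits of $X$ avoiding $x$ with cocircuits of $X/x$ via $\pi_x$. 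These facts combined with the characterisation $\Dcal(\cdot)=\cJ(\cdot)^\perp$ give well-definedness of both maps, vanishing composition, and the kernel computation; surjectivity of $p_x(D)$ is obtained by writing each $h\in\Dcal(X\setminus x)$ as $p_x(D)f$ via an explicit antiderivative in the $x$-direction and adjusting by an element of $\sym(x^\perp)$ to land in $\Dcal(X)$. Transporting the conclusion back through the non-degenerate pairing $\pair{\cdot}{\cdot}$ delivers exactness of (\ref{eq:exactSequencePcentral}).

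Finally, to derive the internal sequence (\ref{eq:exactSequencePInt}), I would apply (\ref{eq:exactSequencePcentral}) to $X$ and to each $X\setminus y$ with $y\ne x$ and intersect. Both maps restrict to the intersections defining $\Pcal_-$: for $\cdot p_x$, the central case for $X\setminus y$ gives $p_x\cdot\Pcal((X\setminus y)\setminus x)\subseteq\Pcal(X\setminus y)$ when $y\ne x$, while the case $y=x$ is tautological; for $\pi_x$, the identification $(X\setminus y)/x=(X/x)\setminus\pi_x(y)$ makes the relevant squares commute, so $\pi_x$ descends to the intersections. Exactness in the middle and surjectivity of $\pi_x$ for the internal sequence then follow by a short diagram chase combining the central exact sequences across all $y\in X$ with the characterisation of $\Pcal_-$ as an intersection.
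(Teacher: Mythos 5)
The paper does not prove Proposition~\ref{Proposition:DeletionContractionP}; it cites \cite{ardila-postnikov-2009,ardila-postnikov-errata-2012} for it, and the nearest internal analogues are the proofs of Propositions~\ref{Proposition:DeletionContractionCentralPeriodic} and~\ref{Proposition:DeletionContractionInternalPeriodic}. Those proofs stay on the $\Pcal$/$\Pper$ side: well-definedness of $\cdot p_x$ and of $\pi_x$, the vanishing composition, and surjectivity of $\pi_x$ are checked on the explicit generators $p_Y$, and exactness in the middle is extracted from a dimension count against the (arithmetic) Tutte polynomial; the internal case needs, in addition, the inhomogeneous independent set from Lemma~\ref{Lemma:fzInternalIndependent}. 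Your plan is structurally different — dualise through the pairing of Theorem~\ref{Proposition:PDduality} and prove exactness of $0 \to \Dcal(X/x)\to\Dcal(X)\xrightarrow{p_x(D)}\Dcal(X\setminus x)\to 0$. That route is legitimate in principle: $\cdot p_x$ and $\pi_x$ are adjoint to $p_x(D)$ and the inclusion, and exactness of a finite-dimensional sequence passes to its dual. Your preliminary checks (well-definedness, vanishing composition, surjectivity of $\pi_x$, injectivity of $\cdot p_x$, and the coloop caveat) are fine, except that the map $C\mapsto C\setminus x$ from cocircuits of $X$ containing $x$ to cocircuits of $X\setminus x$ is only an injection, not a bijection: cocircuits of $X\setminus x$ whose closure contains $x$ are not of this form.

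The genuine gap is exactness of the dual sequence. Surjectivity of $p_x(D):\Dcal(X)\to\Dcal(X\setminus x)$ is the crux, and ``take an antiderivative in the $x$-direction and adjust by an element of $\sym(x^\perp)$'' is not a proof. Fixing an antiderivative $\tilde f_0$ of $h$ handles every cocircuit $Y\ni x$ automatically (since $Y\setminus x$ is a cocircuit of $X\setminus x$), but for cocircuits $Y\not\ni x$ one only gets that $p_Y(D)\tilde f_0$ is constant in the $x$-direction, so there is a whole family of mismatches $g_Y:=p_Y(D)\tilde f_0\in\sym(\operatorname{ann} x)$ to cancel; a single correction $k\in\sym(\operatorname{ann} x)$ must satisfy $p_{\pi_x(Y)}(D)k=-g_Y$ simultaneously for all such $Y$, and you have not shown that this overdetermined system is compatible. (The clean ways to finish are either to invoke $p_x(D)T_X=T_{X\setminus x}$ together with Theorem~\ref{Proposition:Dlocalpieces}, or to count dimensions --- but the latter is circular here, since the paper explicitly deduces Theorem~\ref{Proposition:HilbertSeriesTuttePolynomialP} from the very Proposition you are proving.) The final step is also a gap: deriving the internal sequence~\eqref{eq:exactSequencePInt} ``by a short diagram chase'' is not a valid argument, because intersecting a family of short exact sequences does not in general yield a short exact sequence; exactness in the middle and surjectivity of $\pi_x$ restricted to $\Pcal_-(X)=\bigcap_y\Pcal(X\setminus y)$ are precisely the delicate points. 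This is exactly why the periodic internal analogue in the paper needs the additional independence input, and why \cite{ardila-postnikov-errata-2012} exists at all.
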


\begin{Proposition}[\cite{dyn-ron-1990}%
 ]
\label{Proposition:Pbasis}
\XintroList
  A basis for $\Pcal(X)$ is given by  
  $\Bcal(X) := \{ Q_B   : B \in \BB(X) \}$,
 where $Q_B:=p_{ X\setminus (B \cup E(B))}$ and $E(B)$ denotes the set of externally active elements in $X$ with respect to the basis $B$,
 \ie 
 $E(B):=\{  x_j \in X\setminus B : x_j\not\in \spa\{ x_i : x_i \in B\text{ and } i < j   \} \}$.
\end{Proposition}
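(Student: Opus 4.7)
The plan has three stages: (i) membership, (ii) cardinality, and (iii) linear independence established by a deletion--contraction induction.

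Membership is immediate from definition~\eqref{equation:CentralP}: since $B$ is a basis, $\rank(B \cup E(B)) = d = \rank(X)$, so $Q_B = p_{X \setminus (B \cup E(B))} \in \Pcal(X)$.

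For cardinality, I would invoke the activity decomposition of the Tutte polynomial, $\tutte_X(\alpha,\beta) = \sum_{B \in \BB(X)} \alpha^{i(B)} \beta^{e(B)}$, combined with Theorem~\ref{Proposition:HilbertSeriesTuttePolynomialP} to obtain
\[
\hilb(\Pcal(X),q) = \sum_{B \in \BB(X)} q^{N-d-e(B)}.
\]
Since $\deg Q_B = |X \setminus (B \cup E(B))| = N - d - e(B)$, the Hilbert series of $\operatorname{span} \Bcal(X)$ would match $\hilb(\Pcal(X),q)$ degree-by-degree. It therefore suffices to establish linear independence.

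For linear independence, I would induct on $N = |X|$ using the exact sequence~\eqref{eq:exactSequencePcentral} from Proposition~\ref{Proposition:DeletionContractionP}. The base case $N = d$ is immediate: $\Bcal(X) = \{1\}$ and $\Pcal(X) = \R$. For the inductive step, pick a non-zero $x \in X$ not parallel to any other element of $X$ (the exceptional case where all non-zero elements of $X$ are parallel reduces to $d = 1$ and is handled directly), and choose the ordering on $X$ so that $x$ sits at the appropriate extremum. Partition $\BB(X)$ into bases containing $x$ and those missing it. For $B \not\ni x$, a direct computation shows $Q_B^X = p_x \cdot Q_B^{X \setminus x}$, placing these elements in the image of multiplication by $p_x$. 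For $B \ni x$, the projected set $B/x := \pi_x(B \setminus x)$ is a basis of $X/x$, and $\pi_x(Q_B^X) = Q_{B/x}^{X/x}$. The inductive hypothesis applied to $\Pcal(X \setminus x)$ and $\Pcal(X/x)$, together with the exactness of~\eqref{eq:exactSequencePcentral}, then concludes the induction.

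The main obstacle will be the careful tracking of external activity under deletion and contraction of $x$, as these operations interact asymmetrically with the activity definition: an element $y$ that is externally active in $X$ need not have $\pi_x(y)$ externally active in $X/x$, because $y$ might differ from an element of $\spa\{x_i \in B : i < \mathrm{index}(y)\}$ by a multiple of $x$. One must pick the ordering carefully and handle parallel copies of $x$ via the preliminary reduction so that the two identities $Q_B^X = p_x \cdot Q_B^{X \setminus x}$ and $\pi_x(Q_B^X) = Q_{B/x}^{X/x}$ hold on the nose. Should they hold only modulo the image of $\cdot\, p_x$, a filtration argument on the exact sequence~\eqref{eq:exactSequencePcentral}, combined with the degree-by-degree Hilbert series match established above, would still close the induction.
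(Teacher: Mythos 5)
The paper does not itself prove this proposition --- it is quoted from Dyn--Ron --- so there is no in-paper argument to compare against; I evaluate your proposal on its own terms. Your overall strategy (membership, degree count via the activity decomposition of the Tutte polynomial together with Theorem~\ref{Proposition:HilbertSeriesTuttePolynomialP}, linear independence by induction along the exact sequence~\eqref{eq:exactSequencePcentral}) is the standard route to this result, and stages (i) and (ii) are sound. Stage (iii), however, has a genuine gap.

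Your preliminary reduction to an ``$x$ not parallel to any other element of $X$'' is both unnecessary and incomplete: a list such as $X=((1,0),(1,0),(0,1),(0,1))$ has no such element, yet it is not covered by your exceptional case, which only treats the situation where \emph{all} nonzero elements are pairwise parallel. Worse, re-ordering $X$ is not a legitimate move here, because $E(B)$, and hence the proposed basis $\Bcal(X)$, depend on the order of the list, so re-ordering changes the statement being proved. The correct fix is to delete and contract the extremal element of the \emph{given} order directly, with no parallelism hypothesis: take $x=x_N$ when external activity is phrased as $x_j\in\spa\{x_i\in B:i>j\}$, or $x=x_1$ for the $i<j$ phrasing. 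With that choice both of your identities hold on the nose. For instance with $x=x_N$: if $B\not\ni x_N$ then $x_N\notin E(B)$ and $E_X(B)=E_{X\setminus x_N}(B)$, so $Q_B^X=p_{x_N}Q_B^{X\setminus x_N}$; if $B\ni x_N$ then $x_N$ already lies in every span $\spa\{x_i\in B:i>j\}$ with $j<N$, so passing to the quotient by $x_N$ cannot create new membership, whence $E_{X/x_N}(B/x_N)=\pi_{x_N}(E_X(B))$ and $\pi_{x_N}(Q_B^X)=Q_{B/x_N}^{X/x_N}$ exactly. The obstacle you flag therefore never arises, and the ``filtration'' fallback --- which as written is too vague to count as a proof --- is not needed. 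Finally, a caveat about the statement as printed: with the literal $E(B)=\{x_j\in X\setminus B: x_j\notin\spa\{x_i\in B:i<j\}\}$ the asserted set is not even linearly independent (for $X=((1,0),(0,1),(1,1))$ both $B=\{x_1,x_3\}$ and $B=\{x_2,x_3\}$ give $Q_B=1$); the condition should read $x_j\in\spa\{\cdots\}$, which is the definition your Tutte-polynomial degree count tacitly assumes.
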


\subsection{The structure of splines and vector partition functions}

 \begin{Theorem}[Theorems~11.35 and 11.37 in \cite{concini-procesi-book}]
\label{Proposition:Dlocalpieces}
\Xintro
 On each big cell, $T_X$ agrees with polynomial that is 
 contained in $\Dcal(X)$.
 These polynomials are pairwise different.
 Furthermore, the space $\Dcal(X)$ is spanned by the local pieces of $T_X$ and their partial derivatives.
 \end{Theorem}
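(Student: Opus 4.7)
The plan is to establish the three claims in succession, using the fundamental derivative recursion $p_x(D)\,T_X = T_{X\setminus x}$, which holds in the sense of distributions on $U$ and pointwise on any open set where both sides are smooth. This recursion follows immediately from integration by parts applied to the description of $T_X$ as the push-forward of Lebesgue measure on $\R^N_{\ge 0}$ under $X : \R^N \to U$; equivalently, the Laplace transform of $T_X$ is $\prod_{x\in X}\langle x,\xi\rangle^{-1}$, and differentiation corresponds to multiplication by $\langle x,\xi\rangle$.

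For the piecewise-polynomial claim, fix a big cell $\cfrak$. By definition, no cone $\cone(Y)$ with $Y\subseteq X$ of rank less than $d$ meets $\cfrak$, so the combinatorial type of $\Pi_X(u)$ is constant for $u \in \cfrak$; consequently $\vol_{N-d} \Pi_X(u)$ agrees with a polynomial $T_{X,\cfrak}(u)\in \sym(V)$ throughout $\cfrak$ (via any triangulation whose vertices depend linearly on $u$). To place $T_{X,\cfrak}\in \Dcal(X)$, I iterate the recursion: for any cocircuit $C$, $p_C(D)\,T_{X,\cfrak} = T_{X\setminus C,\cfrak}$. Since $\rank(X\setminus C)<d$, strong regularity of $\cfrak$ gives $\cfrak \cap \cone(X\setminus C) = \emptyset$, so $\Pi_{X\setminus C}(u)=\emptyset$ for $u\in\cfrak$ and hence $T_{X\setminus C,\cfrak}\equiv 0$. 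Thus $T_{X,\cfrak}$ is annihilated by every generator of $\cJ(X)$.

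For distinctness, I would compare two adjacent big cells $\cfrak_1, \cfrak_2$ across a shared wall in an admissible hyperplane $H=\spa(Y)$. Representing $T_X$ via the inverse Laplace transform $T_X(u) = (2\pi i)^{-d}\int_{\xi_0 + i\R^d} e^{\langle u,\xi\rangle}\prod_{x\in X}\langle x,\xi\rangle^{-1}\,d\xi$, with the contour position $\xi_0$ chosen separately for each cell to guarantee absolute convergence, the difference $T_{X,\cfrak_1}-T_{X,\cfrak_2}$ is an iterated residue of the integrand along $H$. This residue is a non-zero lower-dimensional multivariate spline associated with the sublist spanning $H$, so the two local pieces differ.

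For spanning, let $\Dcal'(X)\subseteq \Dcal(X)$ denote the span of the local pieces and their partial derivatives. Theorem~\ref{Proposition:PDduality} and Theorem~\ref{Proposition:HilbertSeriesTuttePolynomialP} give $\dim \Dcal(X) = \dim \Pcal(X) = \vol Z(X)$, so the task reduces to showing $\dim \Dcal'(X) \ge \vol Z(X)$. I would prove this by showing that the pairing $\pair{\cdot}{\cdot}$ restricted to $\Pcal(X)\times \Dcal'(X)$ is non-degenerate in the $\Pcal(X)$ argument: since $(p(D)\,\partial^\alpha T_{X,\cfrak})(0)=0$ for all $\cfrak,\alpha$ is equivalent to $p(D)\,T_{X,\cfrak}\equiv 0$ for every $\cfrak$, the task becomes showing that $p(D)\,T_X\equiv 0$ (as a piecewise-polynomial function) forces $p\in \cJ(X)$. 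Via the Laplace transform, $p(D)\,T_X$ has transform $p(\xi)\prod_x\langle x,\xi\rangle^{-1}$, and its vanishing on every big cell — interpreted through the Jeffrey--Kirwan / Brion--Vergne residue formalism — forces $p$ to lie in the ideal generated by products of $\langle x,\xi\rangle$ over cocircuits, \ie $p\in \cJ(X)$. The main obstacle is precisely this last residue-theoretic step, which in the non-unimodular setting requires careful bookkeeping of higher-order poles; the unimodular case, where every pole is simple, is considerably easier and a natural stepping stone.
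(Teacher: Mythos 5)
The paper does not prove this theorem; it cites it verbatim from De~Concini--Procesi (Theorems~11.35 and~11.37 of their book), so there is no internal proof to compare against. Assessing your attempt on its own terms: your first step is correct and is indeed the standard argument — the derivative recursion $p_x(D)\,T_X = T_{X\setminus x}$, iterated over a cocircuit $C$, kills the local piece on a big cell $\cfrak$ because $\cone(X\setminus C)$ is lower-dimensional and hence disjoint from the (strongly regular) cell $\cfrak$, and a polynomial vanishing on an open set vanishes identically.

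Two genuine gaps remain, however. \textbf{Distinctness.} Your wall-crossing argument only shows that \emph{adjacent} big cells have different local pieces. The theorem asserts that \emph{all} nonzero local pieces are pairwise distinct, and your argument does not exclude two non-adjacent cells agreeing: along a chain $\Omega_1\to\Gamma_1\to\cdots\to\Omega_2$ the individual jumps are each nonzero, but nothing you say prevents their signed sum from vanishing. Closing this requires more structure — e.g.\ the Jeffrey--Kirwan representation $T_X^{\Omega}=\sum_{B:\,\Omega\subseteq\cone(B)}(\ldots)$ as a sum over bases, with the summands linearly independent and the index set determining $\Omega$ — which is precisely what De~Concini--Procesi use. \textbf{Spanning.} Your reduction via Theorem~\ref{Proposition:PDduality} to showing that $p\in\Pcal(X)$ with $p(D)T_X^{\Omega}=0$ for all $\Omega$ must vanish is the right move, but the final ``residue-theoretic step'' you invoke is, as you yourself note, exactly the hard content of the DCP theorem, not something you can assume. (Also a small slip: in the non-unimodular case $\dim\Pcal(X)=\dim\Dcal(X)=\tutte_X(1,1)=\abs{\BB(X)}$, which is generally \emph{smaller} than $\vol Z(X)=\aritutte_X(1,1)$; the dimension count you need is the equality $\dim\Pcal(X)=\dim\Dcal(X)$ from duality, not the identification with the zonotope volume.) So the overall strategy is sound and the first part is done, but parts two and three each need a real argument rather than a sketch.
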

 It is not difficult to see that
 \begin{align}
 \label{eq:boxsplinebytx}
   B_X(u) = \sum_{A\subseteq X}  (-1)^{\abs{A}} T_X( u - \sum_{x\in A} x ). 
 \end{align}
 One can use this fact to  deduce the following result.
 \begin{Corollary}
\label{Proposition:boxlocalpieces}
 The box spline $B_X$ agrees with a polynomial in $\Dcal(X)$ on each alcove.
\end{Corollary}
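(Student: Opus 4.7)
The plan is to combine the identity \eqref{eq:boxsplinebytx} with Theorem~\ref{Proposition:Dlocalpieces} and the fact that $\Dcal(X)$ is a vector space. Fix an alcove $\cfrak$. It is enough to show that for every $A \subseteq X$, the function $u \mapsto T_X(u - \sum_{x\in A} x)$ agrees with a polynomial in $\Dcal(X)$ on $\cfrak$; then the right-hand side of \eqref{eq:boxsplinebytx} is a finite $\R$-linear combination of such polynomials on $\cfrak$, and hence lies in $\Dcal(X)$.

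The key geometric observation is that each alcove is contained in a single big cell. Indeed, every admissible hyperplane (through the origin) is in particular an affine admissible hyperplane, so the boundary of any big cell is a union of affine admissible hyperplanes. Since an alcove is by definition a connected component of the complement of all affine admissible hyperplanes, it cannot meet the boundary of a big cell and is therefore contained in a single big cell.

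Next, translation by a lattice vector $\lambda \in \Lambda$ permutes the set of affine admissible hyperplanes (because each such hyperplane is of the form $H + \mu$ with $H$ admissible and $\mu \in \Lambda$). Consequently, $\lambda$ permutes the alcoves. For any $A \subseteq X$, the vector $\sum_{x\in A} x$ lies in $\Lambda$, so $\cfrak - \sum_{x \in A} x$ is again an alcove and, by the previous paragraph, lies in some big cell $\Omega_A$. By Theorem~\ref{Proposition:Dlocalpieces}, $T_X$ agrees on $\Omega_A$ with a polynomial $f_A \in \Dcal(X)$, and hence $u \mapsto T_X(u - \sum_{x\in A} x)$ agrees on $\cfrak$ with $u \mapsto f_A(u - \sum_{x\in A} x)$.

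The only remaining point is that $\Dcal(X)$ is invariant under translations by arbitrary vectors, which will be the main (mild) obstacle to check. This holds because $\Dcal(X)$ is defined as the joint kernel of constant-coefficient differential operators in $\cJ(X)$, and such operators commute with translation; thus if $f_A \in \Dcal(X)$ then so is its translate. Combining these observations, the sum \eqref{eq:boxsplinebytx} expresses $B_X|_\cfrak$ as a finite $\R$-linear combination of elements of $\Dcal(X)$, completing the proof.
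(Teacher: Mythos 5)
Your proof is correct and fills in exactly the argument the paper indicates by saying equation \eqref{eq:boxsplinebytx} can be used to deduce the corollary: you show each translated copy of $T_X$ is polynomial on the fixed alcove (by relating alcoves to big cells and noting lattice translations permute alcoves), and then use translation-invariance of $\Dcal(X)$ (since constant-coefficient operators commute with translation) to conclude. Since the paper leaves the proof implicit, this is a faithful elaboration rather than a different approach.
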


\begin{Theorem}[\cite{szenes-vergne-2003} and Theorem 13.52 in \cite{concini-procesi-book}]
\label{Theorem:LocalPiecesOverlap}
\XintroLattice
Let $\Omega$ be a big cell. Then the vector partition function $\vpf_X$ agrees with
a quasipolynomial $\vpf_X^\Omega  \in \DM(X)$ on 
 $(\Omega - Z(X)) \cap \Lambda$.
\end{Theorem}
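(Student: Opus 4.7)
The plan is to define $\vpf_X^\Omega$ as the quasipolynomial determined by the local piece of $\vpf_X$ on the big cell $\Omega$, and then to establish two things: (i) that $\vpf_X^\Omega$ lies in $\DM(X)$, and (ii) that it continues to agree with $\vpf_X$ throughout the enlarged region $(\Omega - Z(X)) \cap \Lambda$. The main combinatorial tool is the recursion $\nabla_x \vpf_X = \vpf_{X\setminus x}$, valid for every $x \in X$, which follows from the bijection between non-negative integer solutions of $Xw = u$ having $w_x = 0$ and non-negative integer solutions of $(X\setminus x) w' = u$. Iterating yields $\nabla_Y \vpf_X = \vpf_{X\setminus Y}$ for every sublist $Y \subseteq X$; when $Y$ is a cocircuit the right-hand side is supported in the proper subcone $\cone(X\setminus Y)$.

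For part (i), I would work with the \emph{deep interior} $\Omega_Y := \{u \in \Omega : u - Z(Y) \subseteq \Omega\}$, which is a nonempty open subset of $\Omega$ because big cells are invariant under positive scaling, so shrinking $\Omega$ by the compact set $Z(Y)$ still leaves an open region. On $\Omega_Y \cap \Lambda$, every lattice translate appearing in $\nabla_Y \vpf_X^\Omega(u)$ lies inside $\Omega \cap \Lambda$, where $\vpf_X^\Omega$ and $\vpf_X$ coincide by construction. Hence $\nabla_Y \vpf_X^\Omega = \nabla_Y \vpf_X = \vpf_{X\setminus Y}$ on $\Omega_Y \cap \Lambda$. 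Since $\Omega$ is disjoint from $\cone(X\setminus Y)$ when $Y$ is a cocircuit (the latter has rank strictly less than $d$), the function $\vpf_{X\setminus Y}$ vanishes on $\Omega_Y \cap \Lambda$. A quasipolynomial vanishing on a set as large as $\Omega_Y \cap \Lambda$ (which contains arbitrarily large lattice blocks by scale-invariance) must vanish identically, so $\nabla_Y \vpf_X^\Omega \equiv 0$ on $\Lambda$ for every cocircuit $Y$, placing $\vpf_X^\Omega$ in $\DM(X)$.

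Part (ii) is the principal difficulty, since one must extend agreement from $\Omega \cap \Lambda$, where it holds by definition of $\vpf_X^\Omega$, to the strictly larger set $(\Omega - Z(X)) \cap \Lambda$. The naive strategy of propagating via difference equations fails, because $\nabla_Y(\vpf_X - \vpf_X^\Omega) = \vpf_{X\setminus Y}$ need not vanish on the enlarged region. The cleanest conceptual route is the Szenes--Vergne Laplace/residue representation: $\vpf_X(u)$ is expressed as a contour integral of $e^{\langle z, u \rangle} \prod_{x \in X} (1 - e^{-\langle z, x \rangle})^{-1}$ whose contour depends on $\Omega$, and summing the residues at the poles lying inside that contour produces the quasipolynomial $\vpf_X^\Omega(u)$; this representation reproduces $\vpf_X$ without wall-crossing corrections precisely for $u \in (\Omega - Z(X)) \cap \Lambda$. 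A more combinatorial alternative would be induction on $|X|$, combining $\nabla_x \vpf_X = \vpf_{X\setminus x}$ with the induction hypothesis applied to the big cells of $X\setminus x$ that refine $\Omega$, and then carefully tracking how the overlap regions behave under deletion.

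The hard part is therefore (ii): pinning down $(\Omega - Z(X)) \cap \Lambda$ as the exact region of validity, rather than the easier but smaller region $\Omega_X \cap \Lambda$ which the difference-equation method gives for free. This requires either genuine analytic input via the residue/contour-deformation formula, or a delicate inductive argument that keeps track of how the geometry of big cells and zonotopes interacts with the deletion of vectors.
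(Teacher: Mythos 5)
The paper does not actually prove this theorem: it is cited from Szenes--Vergne and from Theorem~13.52 in De~Concini--Procesi's book, so there is no ``paper's own proof'' to compare against. I will therefore comment on your proposal on its own terms.

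Your part (i) is correct and standard. The recursion $\nabla_x \vpf_X = \vpf_{X\setminus x}$ is the right starting point; your ``deep interior'' set $\Omega_Y$ is precisely where $\nabla_Y \vpf_X^\Omega = \nabla_Y \vpf_X$ by construction; and for a cocircuit $Y$ the right-hand side $\vpf_{X\setminus Y}$ is supported in the lower-dimensional cone $\cone(X\setminus Y)$, which is disjoint from the big cell $\Omega$ by the very definition of strong regularity. Since $\Omega_Y$ is a full-dimensional cone, a quasipolynomial vanishing on $\Omega_Y \cap \Lambda$ vanishes identically, so $\nabla_Y \vpf_X^\Omega = 0$ for every cocircuit $Y$, i.e.\ $\vpf_X^\Omega \in \DM(X)$. (One small thing to mind: you tacitly assume at the outset that $\vpf_X$ is quasipolynomial on $\Omega$; that is a known prior result, but it should be acknowledged as an input rather than implicitly absorbed into the definition of $\vpf_X^\Omega$.)

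The genuine gap is part (ii), which you correctly identify as the substance of the theorem but do not prove. Showing that the quasipolynomial continues to agree with $\vpf_X$ on the strictly larger set $(\Omega - Z(X)) \cap \Lambda$ is exactly the ``overlap of local pieces'' that makes the result useful (e.g.\ for Theorem~\ref{Theorem:ModifiedBrionVergne} and Corollary~\ref{Corollary:KhovanskiiPukhlikov} in this paper), and neither of the two routes you gesture at is carried out. Invoking the Szenes--Vergne Jeffrey--Kirwan residue formula is a legitimate strategy, but the entire content of the step you need---that the residue sum attached to $\Omega$ reproduces $\vpf_X$ without wall-crossing corrections precisely on $(\Omega - Z(X)) \cap \Lambda$---is asserted, not argued; the same holds for the proposed induction on $\abs{X}$, where the key difficulty is exactly how the overlap regions behave under deletion and contraction of a vector. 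As written, the proposal establishes that the local piece lies in $\DM(X)$ but leaves the extended region of validity, which is what the theorem actually claims, as a plan rather than a proof.
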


\begin{Remark}
  Dahmen and Micchelli observed that  
 \begin{align}
 \label{eq:DahmenMicchelli}
   T_X(u) = B_X *_d  \vpf_X (u):= \sum_{\lambda\in \Lambda } B_X(u - \lambda)\vpf_X(\lambda)  
 \end{align}
 (cf.~\cite[Proposition 17.17]{concini-procesi-book}).
 The symbol $*_d$ stands for %
 (semi-)discrete convolution.
 \end{Remark}

  \section{Results in the unimodular case}
  \label{Section:TUMresults}
  In this section we will review previously known results in the case where the list
  $X$ is unimodular.

Recall that the splines $B_X$ and $T_X$ are piecewise polynomial (Theorem~\ref{Proposition:Dlocalpieces} and Corollary~\ref{Proposition:boxlocalpieces}).
  The splines are obviously smooth in the interior of the regions of polynomiality. 
  This is in general not the
  case where two regions of polynomiality overlap.  
  The following theorem characterises the differential operators with constant coefficients  that leave the splines continuous.
 \begin{Theorem}[\cite{lenz-todd-online-2014}]
  \label{ML-Corollary:internalPcontinuous}
\XintroLatticeUnimod
  Then
   \begin{align}
    \Pcal_-(X) &= \{ p \in \Pcal(X) %
      : p(D) B_X \text{ is a continuous function} \}.
   \end{align}
  \end{Theorem}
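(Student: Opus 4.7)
My plan is to reduce continuity of $p(D) B_X$ to a local jump condition on each affine admissible hyperplane (wall), and then to combine the convolutional identity for the box spline with a deletion--contraction induction on $|X|$. By Corollary~\ref{Proposition:boxlocalpieces}, $B_X$ agrees on each alcove with a polynomial in $\Dcal(X)$, so $p(D) B_X$ is continuous if and only if, for every pair of adjacent alcoves, the polynomial pieces of $p(D) B_X$ agree on their common wall.

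The key algebraic tool is the identity
\begin{equation}
 p_x(D)\, B_X \;=\; \nabla_x\, B_{X\setminus x},
\end{equation}
obtained by differentiating the convolutional recursion $B_X(u) = \int_0^1 B_{X\setminus x}(u - wx)\,dw$, which iterates to $p_Y(D)\, B_X = \nabla_Y\, B_{X\setminus Y}$ for any sublist $Y \subseteq X$. Since $\nabla_Y$ is a finite combination of translations, it preserves continuity, so $p_Y(D) B_X$ is continuous whenever $B_{X\setminus Y}$ is; the latter holds precisely when $X\setminus Y$ is coloop-free.

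For the forward inclusion $\Pcal_-(X)\subseteq\{\,p\in\Pcal(X) : p(D)B_X \text{ is continuous}\,\}$ I would induct on $|X|$ via Proposition~\ref{Proposition:DeletionContractionP}. Picking any non-zero $x\in X$ and any $p\in\Pcal_-(X)$, choose a lift $\tilde{p}\in\Pcal_-(X)$ of $\pi_x(p)\in\Pcal_-(X/x)$; by exactness, $p - \tilde{p} = p_x\cdot q$ for some $q\in\Pcal_-(X\setminus x)$. The first summand gives $p_x q(D)B_X = \nabla_x\, q(D) B_{X\setminus x}$, which is continuous by the induction hypothesis together with the fact that $\nabla_x$ preserves continuity. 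For the second summand, continuity is transferred from the contracted box spline $B_{X/x}$ via the Fubini push-forward of $B_X$ along $\spa(x)$, with the unimodularity of $X$ ensuring that lattice arithmetic on $U$ matches that on $U/\spa(x)$. For the reverse inclusion I would conclude by comparing dimensions: $\dim\Pcal_-(X) = |\Zcal_-(X)|$ by Theorem~\ref{Proposition:HilbertSeriesTuttePolynomialP}, and an independent tabulation of the independent wall-jump constraints on $\Pcal(X)$ must yield the same number, forcing equality.

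The principal obstacle is the second case of the inductive step: the exact sequence in Proposition~\ref{Proposition:DeletionContractionP} is not canonically split, so the lift $\tilde{p}$ is defined only modulo $p_x\cdot\Pcal_-(X\setminus x)$, and its continuity must be extracted from the contraction in a way that is independent of this ambiguity. A careful choice of lift adapted from the basis in Proposition~\ref{Proposition:Pbasis}, or alternatively a direct Boysal--Vergne-style wall-crossing formula (as will be developed in Section~\ref{Section:WallCrossingProofs}) that expresses the jump of $B_X$ across a wall with cocircuit $C$ as an explicit polynomial multiple of $p_C$, would bypass this difficulty and identify the simultaneous annihilator of all cocircuit jump polynomials inside $\Pcal(X)$ as precisely $\Pcal_-(X)$.
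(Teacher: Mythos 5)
Your opening identity, $p_Y(D)B_X = \nabla_Y B_{X\setminus Y}$, obtained by differentiating the box-spline recursion $B_X(u)=\int_0^1 B_{X\setminus x}(u-tx)\,dt$, is correct and cleanly shows that any individual $p_Y$ with $X\setminus Y$ coloop-free gives a continuous $p_Y(D)B_X$; but you cannot rely on such $p_Y$'s to span $\Pcal_-(X)$, as the paper explicitly warns before Corollary~\ref{Corollary:InternalPbasis}. That forces you into the deletion--contraction induction, and there the argument is circular at the contraction step. Writing $p=\tilde p + p_x q$ with $q\in\Pcal_-(X\setminus x)$ and $\tilde p\in\Pcal_-(X)$ a lift of $\pi_x(p)$, the term $p_x q(D)B_X=\nabla_x\,q(D)B_{X\setminus x}$ is indeed handled by induction on $|X|$, but $\tilde p$ is again an arbitrary element of $\Pcal_-(X)$ of the same degree as $p$ --- $p$ itself is one such lift --- so nothing has been reduced. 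The ``Fubini push-forward'' you invoke does not close this gap: the natural Fubini relation is $B_{X/x}(\bar u)=\int_{\R} B_{X\setminus x}(u+tx)\,dt$ (up to normalisation), which relates the contraction to the \emph{deletion}, not to $B_X$; and even so, it only gives continuity of a fibre-integral of $\tilde p(D)B_{X\setminus x}$, from which pointwise continuity cannot be extracted.

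The reverse inclusion is also not established: you assert that the wall-jump constraints on $\Pcal(X)$ cut out a subspace of dimension exactly $|\Zcal_-(X)|$, but this is the content of the theorem, not a fact one may cite. What is actually needed, and what the paper does for the generalisation (Theorem~\ref{Proposition:internalContinuousCharacterisation}), is an explicit expression for the jump across a wall. Lemma~\ref{Lemma:WallCrossing}, derived from the Boysal--Vergne residue formula, writes the jump of $p(D)T_X$ across a wall $H$ as $c_X\,p_{m(H)-1}(D)V_{12}$ plus a term vanishing on $H$, where $p_{m(H)-1}$ is the top coefficient of $p$ in the normal coordinate; continuity at lattice points then forces $p_{m(H)-1}=0$ via Lemma~\ref{Lemma:LatticeConeVanishes}, and conversely $p_{m(H)-1}\neq 0$ produces a genuine jump because the local pieces of $T_{X\cap H}$ span the top-degree part of $\Dcal(X\cap H)$. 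Your instinct that a wall-crossing formula is what is really needed is right, but as proposed your argument replaces it with a circular induction and an unproven dimension count, so the proof does not go through without importing essentially the full content of Section~\ref{Section:WallCrossingProofs}.
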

  Note that because of \eqref{eq:boxsplinebytx}, a differential operator $p(D)$ with constant coefficients
  leaves $B_X$ continuous if and only if it
  leaves $T_X$ continuous.
Theorem~\ref{ML-Corollary:internalPcontinuous} ensures that the derivatives of $B_X$ that appear in the following theorem exist.
\begin{Theorem}[\cite{lenz-interpolation-online-2013}, conjectured in \cite{holtz-ron-2011}]
\label{ML-Theorem:weakHoltzRon}
\XintroLatticeUnimod
Let $f$ be a real-valued function on $\Zcal_-(X)$, 
the set of interior lattice points of the zonotope defined by $X$.

Then the space $\Pcal_-(X)\subseteq\R[s_1,\ldots, s_d]$ contains a unique polynomial $p$ \st 
  $p(D)B_X|_{ \Zcal_-(X)}=f$. %
\end{Theorem}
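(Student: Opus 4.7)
The plan is to show that the linear map
\[
\Psi \colon \Pcal_-(X) \longrightarrow \R^{\Zcal_-(X)},\qquad p \longmapsto \bigl(p(D)B_X\bigr)\big|_{\Zcal_-(X)},
\]
is an isomorphism. Well-definedness is immediate from Theorem~\ref{ML-Corollary:internalPcontinuous}: for $p \in \Pcal_-(X)$ the function $p(D)B_X$ is continuous on $U$, so its value at any interior lattice point of $Z(X)$ is unambiguously defined without any choice of limit direction. The source and target have the same dimension, since in the unimodular case $\dim\Pcal_-(X) = \abs{\Zcal_-(X)}$, as recorded after Theorem~\ref{Proposition:HilbertSeriesTuttePolynomialP} (going back to \cite{holtz-ron-2011}). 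It therefore suffices to prove injectivity of $\Psi$.

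The first step toward injectivity is a boundary observation. For $p \in \Pcal_-(X)$ the function $p(D)B_X$ is continuous on $U$ and supported in $Z(X)$; approaching a point of $\partial Z(X)$ from outside gives the value $0$, and by continuity the same value is attained from inside. Hence $p(D)B_X$ vanishes on $\partial Z(X)$, and in particular on every lattice point of $\partial Z(X)$. So if $\Psi(p) = 0$, then $p(D)B_X$ actually vanishes on the entire lattice set $Z(X)\cap\Lambda$, and trivially outside $Z(X)$.

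It remains to show that $p\in\Pcal_-(X)$ and $p(D)B_X \equiv 0$ on $Z(X)\cap\Lambda$ force $p = 0$. I would prove this by induction on $\abs{X}$, using the deletion-contraction short exact sequence \eqref{eq:exactSequencePInt} of Proposition~\ref{Proposition:DeletionContractionP}. For a non-coloop $x\in X$, one constructs the commuting diagram
\[
\begin{array}{ccccccccc}
0 & \to & \Pcal_-(X\setminus x)[1] & \xrightarrow{\cdot p_x} & \Pcal_-(X) & \xrightarrow{\pi_x} & \Pcal_-(X/x) & \to & 0 \\
  &     & \downarrow\Psi_{X\setminus x} &     & \downarrow\Psi_X &     & \downarrow\Psi_{X/x} &  & \\
0 & \to & \R^{\Zcal_-(X\setminus x)} & \longrightarrow & \R^{\Zcal_-(X)} & \longrightarrow & \R^{\Zcal_-(X/x)} & \to & 0
\end{array}
\]
with exact rows, the geometric mechanism on the right being the convolution identity $B_X(u) = \int_0^1 B_{X\setminus x}(u-tx)\,\di t$ together with the fibration of $\Zcal_-(X)$ into $x$-slices. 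The base case $\abs{X}=d$ is vacuous, since unimodularity then forces $X$ to be a lattice basis, $Z(X)$ is a half-open unit parallelepiped with $\Zcal_-(X)=\emptyset$, and $\Pcal_-(X)=0$; injectivity of $\Psi_X$ then follows from the five lemma applied to the diagram once $\Psi_{X\setminus x}$ and $\Psi_{X/x}$ are known to be injective. The main obstacle is the commutativity of this diagram --- specifically, matching the algebraic splitting of $\Pcal_-(X)$ (multiplication by $p_x$ versus lift through $\pi_x$) with the combinatorial splitting of $\Zcal_-(X)$ into interior slices that project bijectively onto $\Zcal_-(X/x)$ and a residual contribution isomorphic to $\Zcal_-(X\setminus x)$. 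Unimodularity is crucial here: it guarantees that $\pi_x$ restricts to a bijection between each lattice slice in $\Lambda$ and $\Lambda/\Z x$, so that interior lattice points of $Z(X)$ correspond cleanly to interior lattice points of the smaller zonotopes.
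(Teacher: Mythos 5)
Your skeleton --- well-definedness of $\Psi$ from Theorem~\ref{ML-Corollary:internalPcontinuous}, the dimension count $\dim\Pcal_-(X)=\abs{\Zcal_-(X)}$, and the observation that continuity together with $\supp(p(D)B_X)\subseteq Z(X)$ forces $p(D)B_X$ to vanish on $\partial Z(X)$, so that $\Psi(p)=0$ implies $p(D)B_X|_\Lambda\equiv 0$ --- is sound, and those steps are correct as stated. Note, however, that the paper does not itself prove this theorem; it cites it from \cite{lenz-interpolation-online-2013}. Within the present paper's toolkit the shortest argument is quite different from yours and essentially one line: by Corollary~\ref{Corollary:InternalPbasis} the set $\{f_z:z\in\Zcal_-(X)\}$ is a basis of $\Pcal_-(X)$, and by Theorem~\ref{Theorem:MainTheorem} one has $f_z(D)B_X|_\Lambda=\delta_z$, so $p:=\sum_{z\in\Zcal_-(X)}f(z)\,f_z$ is the unique interpolant.

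For the route you chose, the genuine gap is exactly the one you flag. You write a bottom row
\begin{equation*}
 0\to\R^{\Zcal_-(X\setminus x)}\longrightarrow\R^{\Zcal_-(X)}\longrightarrow\R^{\Zcal_-(X/x)}\to 0
\end{equation*}
without defining its maps and then appeal to the five lemma, but the whole content of the induction step lies in defining those maps, verifying exactness, and checking that both squares commute. The left square is tractable: $D_xB_X=\nabla_x B_{X\setminus x}$ together with your boundary observation shows the map should be $g\mapsto(\tilde g-\tau_x\tilde g)|_{\Zcal_-(X)}$, where $\tilde g$ is the extension of $g$ from $\Zcal_-(X\setminus x)$ to $\Lambda$ by zero --- but its injectivity still has to be proved. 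The right square is the serious problem: you need $\Psi_{X/x}\circ\pi_x$ to factor through $\Psi_X$, i.e.\ for every $p\in\Pcal_-(X)$ and $\bar u\in\Zcal_-(X/x)$ the value $\pi_x(p)(D)B_{X/x}(\bar u)$ must be a universal linear functional of the numbers $p(D)B_X(u)$ over $u\in\pi_x^{-1}(\bar u)\cap Z(X)\cap\Lambda$. The natural candidate is the fiber sum coming from $\sum_{k\in\Z}B_X(u+kx)=B_{X/x}(\pi_x u)$, but $\pi_x(p)(D)$ contains no derivative in the $x$-direction while $p(D)$ in general does, and it is not at all clear that the $x$-derivative contributions telescope away in that sum; nor have you shown that the resulting map is surjective. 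Until these points are established, the diagram you draw does not exist and the argument does not close, so the proposal is a plausible outline rather than a proof.
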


Let $z \in U$. As usual, the exponential is defined as $e^z:=\sum_{k\ge 0} \frac{z^k}{k!} \in \R[[s_1,\ldots, s_d]]$.
We define the ($z$-shifted) \emph{Todd operator}
 \begin{align}
  \todd(X,z) :=  e^{-p_z} \prod_{x\in X} \frac{p_x}{1 - e^{-p_x}} \in \R[[s_1,\ldots, s_d]].
 \end{align}
The Todd operator was introduced by Hirzebruch in the 1950s 
\cite{hirzebruch-1956}  
 and plays a fundamental role in the Hirzebruch--Riemann--Roch theorem for complex algebraic varieties.
 It can be expressed in terms of
the \emph{Bernoulli numbers}  $B_0= 1$, $B_1 = - \frac 12 $, $B_2 = \frac 16$,$\ldots $ 
Recall that they 
are defined by the equation
 $\frac{s}{e^s-1} = \sum_{k\ge 0} \frac{B_k}{k!} s^k$. %
One should note that 
 $e^{z}\frac{z}{e^z-1} = \frac{z}{1-e^{-z}} = \sum_{k\ge 0} \frac{B_k}{k!} (-z)^k$.
For $z\in \Zcal_-(X)$, we can fix a list $S\subseteq X$ \st $z=\sum_{x\in S} x$, since $X$ is unimodular. Let $T:=X\setminus S$. 
Then we can write the Todd operator as
$\todd(X,z) =\prod_{x\in S} \frac{ p_x}{ e^{p_x} -1}  
\prod_{x\in T}  \frac{ p_x}{ 1 - e^{ -p_x }  }$.

Recall that there is a decomposition $\sym(U) = \Pcal(X) \oplus \cJ(X)$ (cf.~Proposition~\ref{Proposition:JPdecomposition}).
Let 
 $\psi_X : \Pcal(X) \oplus \cJ(X) \to \Pcal(X) $
 denote the projection. 
 Note that this is a graded linear map and that $\psi_X$ maps  to zero any homogeneous polynomial  whose degree is at least $N-d+1$.
  This implies that there is a canonical extension 
$ \psi_X : \R[[s_1,\ldots, s_d ]] \to \Pcal(X)$  given by $\psi_X(\sum_i( g_i)) :=
 \sum_i \psi_X(  g_i )$, where $g_i$ denotes a homogeneous polynomial of degree~$i$.
Let
\begin{align}
 f_z = f_z^X :=\psi_X(\todd(X,z)).
\end{align}
\begin{Example}
For $X=(1,1)$ we obtain 
 $\todd((1,1),1) = (1 + B_1 s + \ldots )(1 - B_1 s + \ldots ) = 1 + 0s + \ldots\;$
 Hence $f_1^{(1,1)}=1 \in \Pcal_-(1,1)= \R$. Note that $\Pcal(1,1)= \spa\{1,s\}$ and $\cJ(1,1) = \ideal\{s^2\}$.
\end{Example}
\begin{Theorem}[{\cite{lenz-todd-online-2014}}]%
\label{Theorem:MainTheorem}
\XintroLatticeUnimod
 Let $z$
 be a lattice point in the interior of 
 the zonotope $Z(X)$.
 Then $f_z\in \Pcal_-(X)$, $\todd(X,z)(D)B_X$ extends continuously on $U$, and
 \begin{align}
 \label{eq:MainThm}
    f_z (D)  B_X |_\Lambda = 
    \todd(X,z) (D) B_X |_\Lambda = \delta_z  %
.
 \end{align}
Here, $\delta_z: \Lambda \to \{ 0,1 \}$ denotes the function that takes the value 1 at z and is zero elsewhere. 
\end{Theorem}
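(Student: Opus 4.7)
\medskip

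The plan is to establish the identity first for the full Todd operator, then descend to the polynomial representative $f_z$ via the decomposition $\sym(U) = \Pcal(X) \oplus \cJ(X)$ from Proposition~\ref{Proposition:JPdecomposition}; the membership $f_z \in \Pcal_-(X)$ will drop out at the end using the characterisation of $\Pcal_-(X)$ as the subspace of $\Pcal(X)$ whose operators preserve continuity of the box spline (Theorem~\ref{ML-Corollary:internalPcontinuous}).

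The first step is to prove $\todd(X,z)(D) B_X \big|_\Lambda = \delta_z$ together with the continuous extension to $U$. The classical Khovanskii--Pukhlikov formula for unimodular $X$ reads $\todd_X(D) T_X = \vpf_X$ on $\Lambda$, with $\todd_X := \prod_{x \in X} \tfrac{p_x}{1-e^{-p_x}}$. Combining this with identity~\eqref{eq:boxsplinebytx}, which expresses $B_X$ as an alternating sum of translates of $T_X$, and with the elementary recurrence $(1-\tau_x) \vpf_X = \vpf_{X \setminus x}$ (obtained by decomposing a representation $Xw=u$ according to whether $w_x=0$ or $w_x \ge 1$), one gets $\todd_X(D) B_X\big|_\Lambda = \vpf_\emptyset = \delta_0$ after iterating over $x \in X$. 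Since $e^{-p_z}$ acts as translation by $z$ on sufficiently regular functions, applying the $z$-shift produces $\delta_z$ on $\Lambda$ and a globally continuous function on $U$.

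Next, by the canonical extension of $\psi_X$ to $\R[[s_1,\ldots,s_d]]$, we may write $\todd(X,z) = f_z + q$ with $q \in \cJC(X)$. The local polynomial pieces of $B_X$ lie in $\Dcal_\CC(X) = \cJC(X)^\perp$ (Corollary~\ref{Proposition:boxlocalpieces} combined with the pairing-duality of Theorem~\ref{Proposition:PDduality}), so $q(D)$ annihilates $B_X$ on every alcove. Once one upgrades this to the global identity $q(D) B_X \equiv 0$, it follows that $f_z(D) B_X = \todd(X,z)(D) B_X$ everywhere, hence is continuous, with lattice restriction equal to $\delta_z$ by the first step. The membership $f_z \in \Pcal_-(X)$ then follows at once from Theorem~\ref{ML-Corollary:internalPcontinuous}.

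The main obstacle is the global vanishing of $q(D) B_X$: while $q \in \cJC(X)$ kills each local piece, it could in principle produce distributional contributions on the walls of the polyhedral decomposition. The cleanest route is an induction on $|X|$ via deletion--contraction: the factorisation $\todd(X,z) = \tfrac{p_x}{e^{p_x}-1} \cdot \todd(X\setminus x, z')$ for a suitable splitting $z = z' + x$ (available because $z \in \Zcal_-(X)$), combined with the exact sequence~\eqref{eq:exactSequencePInt} of Proposition~\ref{Proposition:DeletionContractionP}, reduces the problem to $X \setminus x$ and $X/x$. An attractive alternative is to exploit Theorem~\ref{ML-Theorem:weakHoltzRon}: by that theorem there is a unique element $g \in \Pcal_-(X)$ with $g(D) B_X \big|_{\Zcal_-(X)} = \delta_z$, so one may take this $g$ as the definition and verify $g = \psi_X(\todd(X,z))$ by matching classes in $\sym(U)/\cJ(X) \cong \Pcal(X)$ using the Khovanskii--Pukhlikov values.
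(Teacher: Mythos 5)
The central gap is the continuity claim, which is the whole content of the interiority hypothesis and is never actually established. Your first step derives, via $(1-\tau_x)\vpf_X=\vpf_{X\setminus x}$ and \eqref{eq:boxsplinebytx}, that $\todd(X,0)(D)B_X|_\Lambda=\delta_0$; but this is a statement about (one-sided) values at lattice points, not about continuity, and indeed $\todd(X,0)(D)B_X$ is \emph{not} continuous on $U$ in general — that is precisely why Theorem~\ref{Theorem:MainTheoremBoundary} must be formulated with the directional limits $\lim_w$. Your transition ``since $e^{-p_z}$ acts as translation \dots\ applying the $z$-shift produces a globally continuous function on $U$'' does not hold: applying $e^{-p_z}(D)$ piecewise to $B_X$ is not the same as globally translating the (discontinuous) piecewise-polynomial function $\todd(X,0)(D)B_X$, because the local polynomial pieces near $u$ and near $u-z$ are different polynomials, so continuity neither holds at $z=0$ nor ``transports''. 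What is actually needed — and what the paper does when proving the generalised Theorem~\ref{Theorem:ModifiedBrionVergne} — is to compare the two one-sided limits across a wall through $u\in\Lambda$ and show both equal $\vpf_X(u-z)$; this rests on Theorem~\ref{Theorem:LocalPiecesOverlap}, which gives agreement of $\vpf_X$ with $\vpf_X^\Omega$ on the \emph{enlarged} set $(\Omega-Z(X))\cap\Lambda$, and the hypothesis $z\in\Zcal_-(X)$ enters exactly here, by guaranteeing that $u-z$ lies in $(\Omega_1-Z(X))\cap(\Omega_2-Z(X))$ for both adjacent big cells. That overlap argument is entirely absent from your proposal; as written, the argument never really uses $z\in\interior Z(X)$ and would equally ``prove'' the statement for boundary $z$, where it is false.

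Your fallback via Theorem~\ref{ML-Theorem:weakHoltzRon} is circular: that theorem produces a unique $g\in\Pcal_-(X)$ with $g(D)B_X|_{\Zcal_-(X)}=\delta_z$, but identifying $g$ with $f_z:=\psi_X(\todd(X,z))$ by ``matching values'' presupposes $f_z\in\Pcal_-(X)$, since the evaluation $p\mapsto p(D)B_X|_{\Zcal_-(X)}$ is a bijection only on $\Pcal_-(X)$ and is very far from injective on $\Pcal(X)$ (dimension $\vol Z(X)$ versus $|\Zcal_-(X)|$). The deletion--contraction sketch is a more promising direction, but it is not carried far enough to make visible where interiority of $z$ does the work, which is the crux.
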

Using \eqref{eq:DahmenMicchelli}, the following variant of the
 Khovanskii--Pukhlikov formula \cite{pukhlikov-khovanski-1992} follows immediately.
 \begin{Corollary}[{\cite{lenz-todd-online-2014}}]
   \label{Corollary:KhovanskiiPukhlikovModified}
     Let $X\subseteq \Lambda \subseteq U \cong \R^d$ be a 
      list of vectors that is unimodular and spans $U$.
      Let $u\in \Lambda$ and 
      $ z \in \Zcal_-(X) $. Then
   \begin{align}
    \abs{\Pi_X(u - z)\cap \Lambda } = 
     \vpf_X(u - z) = \todd(X,z)(D) T_X(u)  = f_z(D)  T_X(u). 
   \end{align}
\end{Corollary}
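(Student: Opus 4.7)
The plan is to apply the differential operator $\todd(X,z)(D)$ to both sides of the Dahmen--Micchelli convolution identity~\eqref{eq:DahmenMicchelli} and then invoke Theorem~\ref{Theorem:MainTheorem}. For $u\in\Lambda$ this gives, formally,
\[
\todd(X,z)(D)\,T_X(u) \;=\; \sum_{\lambda\in\Lambda}\bigl[\todd(X,z)(D)\,B_X\bigr](u-\lambda)\;\vpf_X(\lambda).
\]
Termwise differentiation is legitimate because $\supp\vpf_X\subseteq\cone(X)\cap\Lambda$ and $\supp B_X\subseteq Z(X)$, so only finitely many summands are nonzero; moreover, although $\todd(X,z)$ is a formal power series, only its homogeneous components of degree at most the degree of the relevant local piece of $B_X$ contribute, so on each region of polynomiality it acts as a genuine polynomial differential operator.

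By Theorem~\ref{Theorem:MainTheorem}, $\todd(X,z)(D)\,B_X$ extends continuously to $U$ and restricts to the Kronecker function $\delta_z$ on $\Lambda$. Since $u-\lambda\in\Lambda$ for every lattice point $\lambda$, the sum on the right collapses to the single contribution $\lambda=u-z$, yielding
\[
\todd(X,z)(D)\,T_X(u) \;=\; \vpf_X(u-z) \;=\; \abs{\Pi_X(u-z)\cap\Lambda},
\]
which is the first identity of the corollary.

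For the remaining equality $f_z(D)\,T_X(u)=\todd(X,z)(D)\,T_X(u)$, I would use the decomposition $\sym(U)=\Pcal(X)\oplus\cJ(X)$ of Proposition~\ref{Proposition:JPdecomposition}, extended to the completion $\R[[s_1,\ldots,s_d]]$ via the canonical extension of $\psi_X$ introduced before the statement of Theorem~\ref{Theorem:MainTheorem}. By definition $f_z=\psi_X(\todd(X,z))$, so $\todd(X,z)-f_z$ lies in $\cJ(X)$ modulo terms of sufficiently high degree, and those high-degree terms annihilate the local polynomial pieces of $T_X$ for trivial reasons. By Theorem~\ref{Proposition:Dlocalpieces} every local piece of $T_X$ lies in $\Dcal(X)$, which is by definition annihilated by $\cJ(X)$; hence $(\todd(X,z)-f_z)(D)\,T_X$ vanishes on every big cell. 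Theorem~\ref{ML-Corollary:internalPcontinuous}, together with the assertion $f_z\in\Pcal_-(X)$ from Theorem~\ref{Theorem:MainTheorem}, ensures that both $f_z(D)\,T_X$ and $\todd(X,z)(D)\,T_X$ are in fact continuous functions, so the equality on big-cell interiors propagates to the lattice point $u$.

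The main subtlety lies in the piecewise nature of the splines: a priori $\todd(X,z)(D)\,T_X(u)$ need not even be well-defined at the singular point $u\in\Lambda$. This difficulty is resolved entirely by the earlier Theorems~\ref{Theorem:MainTheorem} and~\ref{ML-Corollary:internalPcontinuous}, which supply both the continuity needed to evaluate at lattice points and the explicit identity $\todd(X,z)(D)\,B_X|_\Lambda=\delta_z$ that collapses the convolution sum. Once these are available, the corollary reduces to the two short computations above.
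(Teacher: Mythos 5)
Your proof is correct and takes essentially the same route as the paper, which simply notes that the corollary ``follows immediately'' from Theorem~\ref{Theorem:MainTheorem} by applying the Dahmen--Micchelli convolution identity \eqref{eq:DahmenMicchelli}: differentiating $T_X=B_X *_d \vpf_X$ under the (finite) sum and using $\todd(X,z)(D)B_X|_\Lambda=\delta_z$ collapses the convolution to $\vpf_X(u-z)$. One small economy worth noting: Theorem~\ref{Theorem:MainTheorem} already asserts $f_z(D)B_X|_\Lambda=\todd(X,z)(D)B_X|_\Lambda=\delta_z$ with the same continuity, so the convolution argument applies verbatim to $f_z$ and the last equality comes for free; your separate pass through the $\Pcal(X)\oplus\cJ(X)$ decomposition and Theorem~\ref{Proposition:Dlocalpieces} is a valid but unnecessary detour (it is, however, exactly the mechanism the paper later uses in step~(b) of the proof of Theorem~\ref{Theorem:ModifiedBrionVergne}).
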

 Here is an extension of Theorem~\ref{Theorem:MainTheorem}
 to the case were $z$ is allowed to lie in the boundary of the zonotope.
 In this case, $f_z\in \Pcal(X)\setminus \Pcal_-(X)$, so $f_z (D) B_X|_\Lambda$
 may not be well-defined and we have to use the limit construction explained in Subsection~\ref{Subsection:ContinuityLimits}.
\begin{Theorem}[{\cite{lenz-todd-online-2014}}]
\label{Theorem:MainTheoremBoundary}
\XintroLatticeUnimod
Let $w$ be a short affine regular vector and let $z \in \Zcal(X,w)$. Then
 \begin{align}
\lim_{w}  f_z(\Dpw) B_X |_\Lambda = \lim_{w}  \todd(X,z)(\Dpw) B_X |_\Lambda = \delta_{z}. 
\end{align}
\end{Theorem}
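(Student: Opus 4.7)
The plan is to reduce Theorem~\ref{Theorem:MainTheoremBoundary} to Theorem~\ref{Theorem:MainTheorem} via the piecewise-limit construction of Subsection~\ref{Subsection:ContinuityLimits}. First I observe that the two displayed identities coincide: since $f_z = \psi_X(\todd(X,z))$, the difference $\todd(X,z) - f_z$ lies in $\cJ(X)$ by Proposition~\ref{Proposition:JPdecomposition}, every polynomial piece of $B_X$ belongs to $\Dcal(X)$ by Corollary~\ref{Proposition:boxlocalpieces}, and $\Dcal(X)$ is annihilated by $\cJ(X)$ by the defining property. So the two operators, applied to $B_X$, agree on each alcove, hence also under $\lim_w$. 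Thus it suffices to prove $\lim_w f_z(\Dpw) B_X|_\Lambda = \delta_z$.

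Unfolding the definition, for any $u \in \Lambda$ one has $\lim_w f_z(\Dpw) B_X(u) = f_z(D) B_X^{\cfrak_w(u)}(u)$, where $\cfrak_w(u)$ is the unique alcove whose closure contains $u$ and meets $u + \eps w$ for sufficiently small $\eps > 0$. If $u \notin \Zcal(X,w)$, then $u + \eps w \notin Z(X)$ for small $\eps > 0$, so $\cfrak_w(u)$ lies outside $Z(X)$, whence $B_X^{\cfrak_w(u)} \equiv 0$ and the value is $0 = \delta_z(u)$. If $z \in \Zcal_-(X)$ (the interior case), then Theorem~\ref{Theorem:MainTheorem} gives $f_z \in \Pcal_-(X)$, Theorem~\ref{ML-Corollary:internalPcontinuous} gives continuity of $f_z(D)B_X$ on $U$, and the limit reduces to $f_z(D) B_X(u) = \delta_z(u)$ by Theorem~\ref{Theorem:MainTheorem}.

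The remaining case, with $z \in \Zcal(X,w) \cap \partial Z(X)$ and $u \in \Zcal(X,w)$, is the core of the argument. The plan is to exploit that $z \mapsto f_z$ is a polynomial map into $\Pcal(X)$ of degree at most $N - d$: the formula $\todd(X,z) = e^{-p_z}\prod_{x \in X}\frac{p_x}{1 - e^{-p_x}}$ is polynomial in $p_z$, and $\psi_X$ kills every homogeneous component of degree exceeding $N - d$. For fixed $u$ and $w$, the assignment $z \mapsto f_z(D) B_X^{\cfrak_w(u)}(u)$ is therefore a polynomial of bounded degree in $z$, and one propagates the interior identity toward the boundary through this polynomial dependence.

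The main obstacle is that $\delta_z(u)$ is not itself polynomial in $z$, so naive polynomial extension cannot succeed. I expect the correct resolution combines the extended Khovanskii--Pukhlikov identity (Corollary~\ref{Corollary:KhovanskiiPukhlikovModified}) with the Dahmen--Micchelli convolution \eqref{eq:DahmenMicchelli} and Theorem~\ref{Theorem:DMdefinedbyDelta}: restricted to $\Zcal(X,w)$, the family $\{\lim_w f_{z}(\Dpw) B_X : z \in \Zcal(X,w)\}$ should form a basis dual to $\{\delta_{z'} : z' \in \Zcal(X,w)\}$ inside $\DM(X)$, and a wall-crossing analysis for $B_X$ across the facets of $\partial Z(X)$ --- comparing the polynomial pieces on adjacent alcoves with the interior formula --- lets one verify this duality directly, pinning down the values at boundary lattice points and completing the proof.
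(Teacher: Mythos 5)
The first two paragraphs are sound: the reduction of the $\todd(X,z)$ version to the $f_z$ version via $\Pcal(X)\oplus\cJ(X)$, the observation that alcoves cannot straddle $\partial Z(X)$ so that $u\notin\Zcal(X,w)$ forces the relevant local piece of $B_X$ to vanish, and the reduction of the interior case $z\in\Zcal_-(X)$ to Theorems~\ref{Theorem:MainTheorem} and~\ref{ML-Corollary:internalPcontinuous} are all correct. But those two paragraphs only dispose of the easy cases, and you say so yourself.

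The actual content of the theorem is the remaining case $z\in\Zcal(X,w)\cap\partial Z(X)$ with $u\in\Zcal(X,w)$, and here the proposal does not close. Paragraph three notices (correctly) that $z\mapsto f_z$ is a polynomial map of bounded degree, and paragraph four then (correctly) concedes that $\delta_z(u)$ is not polynomial in $z$, so this observation by itself cannot force the value on the boundary. What follows is not a proof but a wish: you assert that ``a wall-crossing analysis for $B_X$ across the facets of $\partial Z(X)$\ldots lets one verify this duality directly,'' but no wall-crossing lemma is stated, no comparison of adjacent local pieces is carried out, and the duality you invoke ($\{\lim_w f_z(\Dpw)B_X\}$ dual to $\{\delta_{z'}\}$ on $\Zcal(X,w)$) is essentially a restatement of the conclusion rather than a tool that yields it. Nothing in Theorem~\ref{Theorem:DMdefinedbyDelta} or \eqref{eq:DahmenMicchelli} by itself pins the boundary values of the discontinuous function $f_z(D)B_X$ at a particular one-sided limit.

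The route that actually works, and the one the paper uses when it proves the generalisation (Theorem~\ref{Theorem:ModifiedBrionVergne}, see its proof and Corollary~\ref{Corollary:KhovanskiiPukhlikov}), is a direct computation rather than a duality argument. One starts from the Khovanskii--Pukhlikov / Brion--Vergne identity $\lim_w\todd(X,0)(\Dpw)T_X(u)=\vpf_X^\Omega(u)$, uses that $e^{-p_z}(D)$ implements the shift by $-z$ on the polynomial pieces (Taylor), deduces $\lim_w\todd(X,z)(\Dpw)T_X(u)=\vpf_X^\Omega(u-z)=\vpf_X(u-z)$ when $z\in\Zcal(X,w)$ by Theorem~\ref{Theorem:LocalPiecesOverlap}, transfers to $B_X$ via the lattice translate expansion \eqref{eq:boxsplinebytx}, and recognises $\sum_{A\subseteq X}(-1)^{\abs A}\vpf_X^{\Omega_A}(u-a_A-z)$ as $\delta_z(u)$ (it is $\nabla_X$ applied to the partition function). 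The projection $\psi_X$ enters exactly as in your first paragraph, killing the $\cJ(X)$ part of the Todd series. Your paragraphs one and two already supply some of these pieces, but the boundary computation still has to be done; as written the proposal stops short of it.
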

\begin{Corollary}[{\cite{lenz-todd-online-2014}}]
\label{Corollary:KhovanskiiPukhlikov}
\XintroLatticeUnimod
Let $w \in \cone(X)$ be a short affine regular and let $z \in \Zcal(X,w)$. 
 Let $u\in \Lambda$ and let $\Omega\subseteq \cone(X)$ be a big cell \st $u$ is contained in its closure. 
 Let $\vpf_X^\Omega$ be the quasipolynomial that agrees with $\vpf_X$ on $\Omega$. 
 Then 
  \begin{equation*}
  \label{eq:KV}
    \abs{ \Pi_X( u - z ) \cap \Lambda } = \vpf_X^\Omega ( u - z ) = \lim_w \todd(X,z)(\Dpw) T_X(u) = \lim_w f_z(\Dpw) T_X(u).
   \end{equation*}
\end{Corollary}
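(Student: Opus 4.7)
The plan is to derive this corollary from Theorem~\ref{Theorem:MainTheoremBoundary} via the Dahmen--Micchelli semi-discrete convolution identity~\eqref{eq:DahmenMicchelli}, in direct analogy with the derivation of Corollary~\ref{Corollary:KhovanskiiPukhlikovModified} from Theorem~\ref{Theorem:MainTheorem}. The only new feature is the appearance of the direction $w$ and of the big cell $\Omega$ on the vector-partition side, which are handled by the boundary version of the main theorem together with Theorem~\ref{Theorem:LocalPiecesOverlap}.

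Concretely, first I would fix $w$ short affine regular with $u+\varepsilon w \in \Omega$ for all sufficiently small $\varepsilon > 0$, and start from
\begin{equation*}
T_X(u) = \sum_{\lambda\in\Lambda} B_X(u-\lambda)\,\vpf_X(\lambda).
\end{equation*}
This sum is finite since $B_X$ is supported on $Z(X)$, and only $\lambda \in u - Z(X) \subseteq \clos(\Omega) - Z(X)$ contribute. Applying $\lim_w f_z(\Dpw)$ termwise in $u$ is legitimate because for small $\varepsilon>0$ the point $u+\varepsilon w$ lies in a single alcove on which $T_X$ and every shifted box spline $B_X(\cdot - \lambda)$ is polynomial. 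By linearity,
\begin{equation*}
\lim_w f_z(\Dpw) T_X(u) = \sum_{\lambda\in\Lambda} \bigl(\lim_w f_z(\Dpw) B_X(u - \lambda)\bigr)\,\vpf_X(\lambda).
\end{equation*}
Theorem~\ref{Theorem:MainTheoremBoundary}, applied pointwise at $y := u - \lambda \in \Lambda$, says the bracketed factor equals $\delta_z(u - \lambda)$, so the sum collapses to the single term $\lambda = u - z$, producing $\vpf_X(u - z)$. The same computation works verbatim with $\todd(X,z)$ in place of $f_z$.

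By definition $\vpf_X(u-z) = \abs{\Pi_X(u-z)\cap\Lambda}$. The remaining equality $\vpf_X(u-z) = \vpf_X^\Omega(u-z)$ comes from Theorem~\ref{Theorem:LocalPiecesOverlap}: since $u \in \clos(\Omega)$ and $z \in Z(X)$, the point $u-z$ lies in $\clos(\Omega - Z(X))$, where $\vpf_X$ agrees with the quasipolynomial $\vpf_X^\Omega$ (directly on the interior, and on the boundary by continuous extension of a quasipolynomial). The main obstacle is the careful justification of the commutation of $\lim_w$ with the finite sum in the display above: one must check that the choice of $w$ (short, affine regular, and pointing into $\Omega$ from $u$) selects a single alcove simultaneously for $T_X$ and for each of the finitely many nonzero translates $B_X(\cdot - \lambda)$, which is ensured by affine regularity of $u+\varepsilon w$ for small $\varepsilon$. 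A secondary subtlety, the boundary case $u \in \partial\Omega$, is handled precisely by passing to the quasipolynomial $\vpf_X^\Omega$.
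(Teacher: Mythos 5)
The paper itself does not prove this corollary; it cites it from \cite{lenz-todd-online-2014}. Your route — start from the Dahmen--Micchelli semi-discrete convolution \eqref{eq:DahmenMicchelli}, apply $\lim_w f_z(\Dpw)$ term by term, and collapse the sum using Theorem~\ref{Theorem:MainTheoremBoundary} — is exactly the mechanism the paper indicates for the parallel interior case when it writes ``Using \eqref{eq:DahmenMicchelli}, the following \ldots follows immediately'' above Corollary~\ref{Corollary:KhovanskiiPukhlikovModified}, so your approach matches the implicit one. For contrast, the paper's proof of the non-unimodular generalisation Theorem~\ref{Theorem:ModifiedBrionVergne} goes the other way: it invokes the Brion--Vergne formula (Theorem~\ref{Theorem:BV}) directly for $T_X$ rather than passing through a box-spline identity, and then patches together the big cells using Theorem~\ref{Theorem:LocalPiecesOverlap}. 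Both roads are valid; yours re-uses the box-spline deconvolution result, theirs re-uses the known Todd-operator result for the multivariate spline.

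Two small points in your write-up deserve tightening. First, you assert ``$z\in Z(X)$'', but the hypothesis only gives $z\in \Zcal(X,w)=(Z(X)-w)\cap\Lambda$, i.e.\ $z+w\in Z(X)$. The claim $z\in Z(X)$ does hold, but needs the observation that the alcove containing $z+w$ lies entirely inside $Z(X)$ (alcoves do not cross faces of $Z(X)$) and has $z$ in its closure because $w$ is short. Second, the phrase ``on the boundary by continuous extension of a quasipolynomial'' is not quite the right argument: $\vpf_X$ and $\vpf_X^\Omega$ are functions on the discrete set $\Lambda$, and Theorem~\ref{Theorem:LocalPiecesOverlap} only asserts agreement on $(\Omega - Z(X))\cap\Lambda$, not on a closure. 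The cleaner way is to show directly that $u-z$ lies in $(\Omega-Z(X))\cap\Lambda$: since $w$ is short, the alcove containing $u+w$ has $u$ in its closure and, meeting $\Omega$, is contained in $\Omega$, so $u+w\in\Omega$; combined with $z+w\in Z(X)$ this gives $u-z=(u+w)-(z+w)\in\Omega - Z(X)$, and Theorem~\ref{Theorem:LocalPiecesOverlap} then applies with no limiting argument. Your implicit assumption that $u+\varepsilon w\in\Omega$ for small $\varepsilon$ is the correct reading of the corollary (it is spelled out explicitly in the general version, Theorem~\ref{Theorem:ModifiedBrionVergne}(i)), and without it the last equality $\vpf_X^\Omega(u-z)=\lim_w f_z(\Dpw)T_X(u)$ can indeed fail when $u$ sits on a wall, so making it explicit is sound rather than a gap.
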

\begin{Remark}
 De Concini, Procesi, and Vergne 
 proved the case $z=0$ of Theorem~\ref{Theorem:MainTheoremBoundary}  in \cite{deconcini-procesi-vergne-2013}.
 They refer to it is a as a \emph{deconvolution formula}.
The original Khovanskii--Pukhlikov formula is the case $z=0$ in \eqref{eq:KV}. %
 An explanation of the Khovanskii--Pukhlikov formula that is easy to read is contained 
  in the book by Beck and Robins 
  \cite[Chapter 10]{beck-robins-ComputingTheContinuousDiscretely}.
\end{Remark}
  \begin{Corollary}[{\cite{lenz-todd-online-2014}}]
  \label{Corollary:DahmenMicchelli}
  \XintroLatticeUnimod
  Then 
    $\sum_{z \in \Zcal_-(X)} B_X(z) f_z = 1$. 
   This implies formula \eqref{eq:DahmenMicchelli} for $u\in \Lambda$.
 \end{Corollary}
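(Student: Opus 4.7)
The plan is to apply the duality of Theorem~\ref{ML-Theorem:weakHoltzRon} to show that both $\sum_{z \in \Zcal_-(X)} B_X(z)\, f_z$ and the constant polynomial $1$ represent the same element of $\Pcal_-(X)$. The inclusion $1 \in \Pcal_-(X)$ is immediate: by \eqref{equation:CentralP} one has $p_{\emptyset} = 1 \in \Pcal(Y)$ for every sublist $Y$, so by \eqref{equation:InternalP} also $1 \in \Pcal_-(X)$. Each $f_z$ lies in $\Pcal_-(X)$ by Theorem~\ref{Theorem:MainTheorem}, so the proposed linear combination is in $\Pcal_-(X)$ too.

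By Theorem~\ref{ML-Theorem:weakHoltzRon}, the evaluation map $p \mapsto (p(D)B_X)|_{\Zcal_-(X)}$ is a linear isomorphism from $\Pcal_-(X)$ onto the space of real-valued functions on $\Zcal_-(X)$, so it is enough to check that the two candidates have equal images. The image of $1$ is obviously $B_X|_{\Zcal_-(X)}$. For the proposed sum, Theorem~\ref{Theorem:MainTheorem} gives $f_z(D)B_X|_\Lambda = \delta_z$, and hence for every $\lambda \in \Zcal_-(X)$
\[
 \Bigl(\sum_{z \in \Zcal_-(X)} B_X(z)\, f_z\Bigr)(D)\,B_X(\lambda) \;=\; \sum_{z \in \Zcal_-(X)} B_X(z)\, \delta_z(\lambda) \;=\; B_X(\lambda).
\]
Since both candidates have the same image $B_X|_{\Zcal_-(X)}$, the uniqueness part of Theorem~\ref{ML-Theorem:weakHoltzRon} forces the identity $\sum_{z} B_X(z)\, f_z = 1$.

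To derive \eqref{eq:DahmenMicchelli}, I would apply this operator identity to $T_X$ and evaluate at an arbitrary $u \in \Lambda$. By Corollary~\ref{Corollary:KhovanskiiPukhlikovModified}, $f_z(D)T_X(u) = \vpf_X(u-z)$ for every $z \in \Zcal_-(X)$, hence
\[
 T_X(u) \;=\; \sum_{z \in \Zcal_-(X)} B_X(z)\, \vpf_X(u-z);
\]
substituting $\lambda = u-z$ and using that $B_X$ is supported on the zonotope $Z(X)$ recovers the convolution $T_X(u) = \sum_{\lambda \in \Lambda} B_X(u-\lambda)\, \vpf_X(\lambda)$.

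The main obstacle I expect is the behaviour on $\partial Z(X)$: the sum on the left is restricted to \emph{interior} lattice points, so to turn the derived identity into \eqref{eq:DahmenMicchelli} verbatim one needs $B_X$ to vanish on boundary lattice points, or else needs to reconcile the boundary contributions. The right way to handle this is to fix the appropriate limit convention for $B_X$ from Subsection~\ref{Subsection:ContinuityLimits}, or equivalently to reprove the key identity using the extended $f_z$ from Theorem~\ref{Theorem:MainTheoremBoundary} summed over $\Zcal(X,w)$ for a short affine regular vector $w$; the identities $\lim_w f_z(\Dpw)B_X|_\Lambda = \delta_z$ there allow the same duality argument to go through in the boundary case.
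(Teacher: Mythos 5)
Your proof is correct, and it is a genuinely different argument from the one this paper gives for the generalised statement (Corollary~\ref{Corollary:DahmenMicchelliGeneral}). The paper's proof \emph{assumes} the Dahmen--Micchelli convolution formula \eqref{eq:DahmenMicchelli} and the improved Brion--Vergne formula to conclude that $F := \sum_z B_X(z)\,\tilde f_z$ acts on every local piece of $T_X$ as the identity, and then invokes the duality between $\Pcal_\CC(X)$ and $\Dcal_\CC(X)$ (Theorems~\ref{Proposition:Dlocalpieces} and~\ref{Proposition:PDduality}) to deduce $F = 1$. Your proof instead uses the Holtz--Ron interpolation isomorphism $\Pcal_-(X) \to \R^{\Zcal_-(X)}$, $p \mapsto (p(D)B_X)|_{\Zcal_-(X)}$ from Theorem~\ref{ML-Theorem:weakHoltzRon}: both $1$ and $\sum_z B_X(z) f_z$ have image $B_X|_{\Zcal_-(X)}$, so they agree. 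This route has the advantage of not taking \eqref{eq:DahmenMicchelli} as input, so the ``this implies'' clause of the Corollary is a genuine, non-circular deduction; on the other hand, it cannot be transported to the non-unimodular setting, since Theorem~\ref{ML-Theorem:weakHoltzRon} has no analogue there (cf.~Remark~\ref{Remark:NoBoxGeneralisation}), which is exactly why the paper proves Corollary~\ref{Corollary:DahmenMicchelliGeneral} by the $\Dcal$-space route. You also correctly flag the one delicate point in the last step: passing from the sum over $\Zcal_-(X)$ to the sum over all of $\Lambda$ requires $B_X$ to vanish at the boundary lattice points of $Z(X)$, which in turn requires either that $B_X$ be continuous (no coloops in $X$) or a limit convention as in Subsection~\ref{Subsection:ContinuityLimits}; your suggestion of working with $\Zcal(X,w)$ and Theorem~\ref{Theorem:MainTheoremBoundary} is the right fallback for the coloop case.
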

  \enlargethispage{4pt}
Recall that there is a homogeneous basis for the space $\Pcal(X)$ (Proposition~\ref{Proposition:Pbasis}).
 For the internal space $\Pcal_-(X)$, there is no similar construction.
 In general this space is not spanned by polynomials of type $p_Y$ for some 
 $Y\subseteq X$~\cite{ardila-postnikov-errata-2012}.
In the unimodular case, the polynomials $f_z$ form inhomogeneous bases for both spaces.
  \begin{Corollary} [{\cite{lenz-todd-online-2014}}]
  \label{Corollary:InternalPbasis}
   Let $X\subseteq \Lambda \subseteq U \cong \R^d$ be a list of vectors that is unimodular
     and spans $U$.
  Then  $\{ f_z : z \in \Zcal_-(X) \}$ is a basis for $\Pcal_-(X)$.
  \end{Corollary}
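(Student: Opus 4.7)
The plan is very short: combine Theorem~\ref{Theorem:MainTheorem} with Theorem~\ref{ML-Theorem:weakHoltzRon} to deduce both that the $f_z$ are elements of $\Pcal_-(X)$ and that they are linearly independent, and then use a dimension count to upgrade linear independence to a basis.

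First, by Theorem~\ref{Theorem:MainTheorem}, for each $z \in \Zcal_-(X)$ we have $f_z \in \Pcal_-(X)$, so the set $\{f_z : z \in \Zcal_-(X)\}$ is contained in $\Pcal_-(X)$. Second, Theorem~\ref{Theorem:MainTheorem} also gives the fundamental evaluation identity
\begin{align}
f_z(D) B_X |_\Lambda = \delta_z
\end{align}
for every $z \in \Zcal_-(X)$. Suppose $\sum_{z \in \Zcal_-(X)} c_z f_z = 0$ for some real coefficients $c_z$. Applying this polynomial as a differential operator to $B_X$ and restricting to $\Lambda$ (all derivatives involved exist by Theorem~\ref{ML-Corollary:internalPcontinuous}, since $\Pcal_-(X)$ is closed under linear combinations), we obtain
\begin{align}
0 = \sum_{z \in \Zcal_-(X)} c_z\, f_z(D) B_X|_\Lambda = \sum_{z \in \Zcal_-(X)} c_z \delta_z,
\end{align}
which forces $c_{z'} = 0$ for every $z' \in \Zcal_-(X)$ by evaluating at $z'$. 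Hence $\{f_z : z \in \Zcal_-(X)\}$ is linearly independent.

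Finally, Theorem~\ref{ML-Theorem:weakHoltzRon} asserts that the evaluation map $p \mapsto (p(D) B_X)|_{\Zcal_-(X)}$ is an isomorphism from $\Pcal_-(X)$ onto the space of all real-valued functions on $\Zcal_-(X)$. Consequently $\dim \Pcal_-(X) = |\Zcal_-(X)|$, and the cardinality of our linearly independent family matches this dimension, so the family is a basis. There is essentially no obstacle here: the nontrivial content has already been established in Theorem~\ref{Theorem:MainTheorem} (which ensures $f_z \in \Pcal_-(X)$ and provides the Kronecker-delta interpolation property) and in Theorem~\ref{ML-Theorem:weakHoltzRon} (which supplies the dimension); the proof is just the formal assembly of these two ingredients.
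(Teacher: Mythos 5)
Your proof is correct. Note, however, that this paper does not reprove Corollary~\ref{Corollary:InternalPbasis}; it cites it to \cite{lenz-todd-online-2014}, so the appropriate comparison is with the proof of the general analogue, Proposition~\ref{Proposition:InhomogeneousInternalBasis}. There your route does not survive: your dimension count rests on Theorem~\ref{ML-Theorem:weakHoltzRon}, which Remark~\ref{Remark:NoBoxGeneralisation} explains has no analogue once $X$ is not unimodular, and your linear-independence step applies $\sum c_z f_z$ to $B_X$ using the Kronecker-delta identity of Theorem~\ref{Theorem:MainTheorem}, another unimodular-only tool. The paper instead obtains independence and membership in the internal space from the central-space basis (Proposition~\ref{Proposition:InhomogeneousBasisCentralPeriodic}, which is built on $T_X$ and $\vpf_X$ rather than $B_X$), the continuity characterisation (Theorem~\ref{Proposition:internalContinuousCharacterisation}), and the improved Brion--Vergne formula (Theorem~\ref{Theorem:ModifiedBrionVergne}), and it gets the matching dimension from the arithmetic Tutte polynomial via a deletion-contraction induction (Theorem~\ref{Proposition:PeriodicInternalTutteEval} together with Proposition~\ref{Proposition:ZonotopeArithmeticTutte}). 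So your argument is the shorter and more elementary one available in the unimodular case, whereas the paper's heavier machinery is what permits the generalisation; a minor further simplification of your own write-up is that you need only the equality $\dim \Pcal_-(X) = |\Zcal_-(X)|$ from \cite{holtz-ron-2011}, not the full interpolation statement of Theorem~\ref{ML-Theorem:weakHoltzRon}.
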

We also obtain a new basis for the central space $\Pcal(X)$. Let
 $w\in U$ be a \emph{short affine regular vector}, \ie a vector whose Euclidian length 
  is close to zero that is not contained in any hyperplane generated by sublists of $X$.
 Let 
   $\Zcal(X,w) := (Z(X) - w ) \cap \Lambda$.
   It is known that $\dim \Pcal(X) = \abs{\Zcal(X,w)} = \vol(Z(X))$ \cite{holtz-ron-2011}.
  \begin{Corollary}[{\cite{lenz-todd-online-2014}}]
  \label{Corollary:newPbasis}
 Let $X\subseteq \Lambda \subseteq U\cong \R^d$ be a list of vectors that is unimodular and spans $U$. 
 Then
  $\{ f_z : z \in \Zcal(X,w)  \}$ is a basis for $\Pcal(X)$.
  \end{Corollary}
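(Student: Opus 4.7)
The plan is to prove this by a straightforward dimension count combined with a linear independence argument that follows almost immediately from Theorem~\ref{Theorem:MainTheoremBoundary}. The cardinality $|\Zcal(X,w)|$ is equal to $\vol(Z(X)) = \dim \Pcal(X)$, as recalled just before the statement, so it suffices to establish that the set $\{f_z : z \in \Zcal(X,w)\}$ is linearly independent in $\Pcal(X)$.

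To prove linear independence, suppose we have a relation
\begin{equation*}
\sum_{z \in \Zcal(X,w)} c_z f_z = 0 \quad \text{in } \Pcal(X),
\end{equation*}
with coefficients $c_z \in \R$. Applying the differential operator on the left-hand side to the box spline $B_X$ in the sense of the limit $\lim_w(\cdot)(\Dpw)$ introduced in Subsection~\ref{Subsection:ContinuityLimits}, restricting to $\Lambda$, and using linearity of that limit operation together with Theorem~\ref{Theorem:MainTheoremBoundary}, I obtain
\begin{equation*}
0 = \lim_w \Bigl(\sum_{z \in \Zcal(X,w)} c_z f_z\Bigr)(\Dpw) B_X \Big|_\Lambda = \sum_{z \in \Zcal(X,w)} c_z \, \delta_z.
\end{equation*}
Since the functions $\delta_z$ for $z \in \Zcal(X,w)$ are obviously linearly independent (their supports are distinct singletons in $\Lambda$), this forces $c_z = 0$ for every $z \in \Zcal(X,w)$.

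Combining linear independence with the dimension equality $|\Zcal(X,w)| = \dim \Pcal(X)$ then shows that $\{f_z : z \in \Zcal(X,w)\}$ is a basis. The only subtle point, and arguably the main conceptual obstacle, is that for $z$ in the boundary of $Z(X)$ the operator $f_z(D)$ need not leave $B_X$ continuous, so one cannot simply evaluate $f_z(D) B_X$ at lattice points; this is precisely why we use the one-sided limit $\lim_w$ and why Theorem~\ref{Theorem:MainTheoremBoundary} (rather than the easier Theorem~\ref{Theorem:MainTheorem}) is needed. Once that limit framework is in place the rest of the argument is formal.
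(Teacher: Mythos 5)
Your proof is correct. Note that the paper does not actually prove Corollary~\ref{Corollary:newPbasis} in the text---it is quoted from a prior reference---so the natural comparison is with the proof the paper gives for its generalization, Proposition~\ref{Proposition:InhomogeneousBasisCentralPeriodic}. Both arguments share the same skeleton: a dimension count $|\Zcal(X,w)| = \vol(Z(X)) = \dim\Pcal(X)$ reduces the claim to linear independence. Where you differ is in how independence is established. You apply the operators to the \emph{box spline} $B_X$ and use Theorem~\ref{Theorem:MainTheoremBoundary}, whose conclusion $\lim_w f_z(\Dpw) B_X|_\Lambda = \delta_z$ makes independence entirely trivial, since the $\delta_z$ have disjoint singleton supports. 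The paper's proof of the generalized proposition instead applies $\tilde f_z(\Dpw)$ to the \emph{multivariate spline} $T_X$, getting $\vpf_X(\cdot - z)$ (supported in $\cone(X)+z$ and equal to $1$ at $z$), and then deduces independence from a triangularity-of-the-evaluation-matrix argument. Your delta-function route is cleaner here, but it leans on a fact special to the unimodular setting; the support/triangularity argument is what survives the passage to $\Pper(X)$, where the restriction of $\tilde f_z(\Dpw)T_X$ to $\Zcal(X,w)$ is no longer a delta function. Your closing remark about needing the one-sided limit $\lim_w$ (rather than pointwise evaluation) when $z$ lies on the boundary of $Z(X)$ is exactly right and shows you understand why Theorem~\ref{Theorem:MainTheoremBoundary}, not Theorem~\ref{Theorem:MainTheorem}, is the correct input.
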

  \begin{Remark}
  \label{Remark:Pspacerelevant}
It is known that for $f\in \cJ(X)$, $f(D)B_X = f(D)T_X = 0 $. On the other hand, if $f\in \Pcal(X)$, 
then $f(D) B_X \neq 0$ and $f(D) T_X \neq 0$. 
Hence $\Pcal(X)$ can be seen as the space of 
relevant differential operators on $B_X$ and $T_X$ with constant coefficients.
\end{Remark}

  \subsection*{How we will generalise these results}

  In the remainder of this article, we will generalise most of the results that were mentioned in this section to the general case, \ie 
  the case where the list $X$
  is contained in a lattice or a finitely generated abelian group and $X$ is not necessarily unimodular.

  As stated in the introduction, a generalisation of the Khovanskii--Pukhlikov formula
  (essentially Corollary~\ref{Corollary:KhovanskiiPukhlikov}) is known:
 the  Brion--Vergne formula (Theorem~\ref{Theorem:BV}).
  We will use it to generalise 
  Corollary~\ref{Corollary:KhovanskiiPukhlikovModified}
  to Theorem~\ref{Theorem:ModifiedBrionVergne}.
  The main difference with the original Brion--Vergne formula is that we 
  use differential operators that leave the spline continuous so that there is no need to use limits.
  The Brion--Vergne formula uses a generalised Todd operator
  (Definition~\ref{Definition:PeriodicTodd}).
  Again, for each interior lattice point $z$ of the zonotope, we  will define  a 
  differential operator $\tilde f_z(D)$ 
  (formula \eqref{eq:psiperdefinition})
  and these differential operators will all sum to $1$, \ie
  Corollary~\ref{Corollary:DahmenMicchelli} will be generalised to Corollary~\ref{Corollary:DahmenMicchelliGeneral}.

  An operator that turns a local piece of $T_X$ into a local piece of $\vpf_X$ must map elements of 
  $\Dcal(X)$ to elements of $\DM(X)$.  
  In the unimodular case it is sufficient to take an element of $\Pcal(X)$ that defines 
  a map  $\Dcal(X)\to \Dcal(X)$ and then restrict to $\Lambda$ since in this case, restriction to $\Lambda$
  defines an isomorphism $\Dcal(X)\to \DM(X)$.
  In general, the Todd operator must turn polynomials into quasipolynomials. This motivates the
  definition of the central periodic $\Pcal$-space $\Pper(X)$ (Definition~\ref{Definition:PeriodicCentralP} which generalises \eqref{equation:CentralP}).

  There is also an internal periodic $\Pcal$-space (Definition~\ref{Definition:internalPeriodicPsimple} which generalises \eqref{equation:InternalP}).
  It can be characterised as the set of differential operators contained in  the  central periodic $\Pcal$-space that leave $T_X$ continuous 
  (Theorem~\ref{Proposition:internalContinuousCharacterisation} generalises Theorem~\ref{ML-Corollary:internalPcontinuous}).

  We will define a pairing between $\Pper_\CC(X)$ and $\DM_\CC(X)$ in  \eqref{eq:PperDMduality} that agrees with the pairing
  between $\Pcal(X)$ and $\Dcal(X)$
  defined in
  \eqref{eq:pairing} in the unimodular case.
  The spaces $\Pper_\CC(X)$ and $\DM_\CC(X)$ are in fact dual under this pairing  (Theorem~\ref{Proposition:PperDMduality}) in the same way as $\Pcal(X)$ and $\Dcal(X)$
   are dual (Theorem~\ref{Proposition:PDduality}).

 The central  periodic  space has two bases: a homogeneous basis
  (Proposition~\ref{Proposition:HomogeneousBasisCentralPeriodic}, generalising Proposition~\ref{Proposition:Pbasis}) and an
  inhomogeneous basis (Proposition~\ref{Proposition:InhomogeneousBasisCentralPeriodic}, generalising  Corollary~\ref{Corollary:newPbasis}).
  The internal space has an inhomogeneous basis (Proposition~\ref{Proposition:InhomogeneousInternalBasis}, generalising
  Corollary~\ref{Corollary:InternalPbasis}).

  Theorem~\ref{Proposition:HilbertSeriesTuttePolynomialP} that connects the Hilbert series of the $\Pcal$-spaces with the Tutte polynomial
  of the underlying matroid can also  be generalised:  the Hilbert series 
  of the periodic $\Pcal$-spaces are evaluations of the Tutte polynomial of the arithmetic matroid defined by the list $X$
  (Theorems~\ref{Proposition:CentralPeriodicPHilbertTutte} and~\ref{Proposition:PeriodicInternalTutteEval}).

 There are also short exact sequences for both types of periodic $\Pcal$-spaces:  Proposition~\ref{Proposition:DeletionContractionP}
 will be generalised to 
 Propositions~\ref{Proposition:DeletionContractionCentralPeriodic}
 and~\ref{Proposition:DeletionContractionInternalPeriodic}.

  We do not have generalisations of 
  Theorems~\ref{ML-Theorem:weakHoltzRon} and~\ref{Theorem:MainTheorem}. The reason for this is explained in
  Remark~\ref{Remark:NoBoxGeneralisation}.

 \section{Generalised toric arrangements and arithmetic matroids}
 \label{Section:MoreBackground}
 
 In this section we will review some facts about 
 finitely generated abelian groups, generalised toric arrangements, and arithmetic matroids.
 The vertices of the toric arrangement  will appear in the definition of the central periodic $\Pcal$-space and the 
 arithmetic matroid captures the combinatorics of this space.

\subsection{Finitely generated abelian groups}
 Let $G$ be a group.  For a subset $A\subseteq G$, $\sg{A}$ denotes the subgroup of $G$ generated by $A$.

 If $X\subseteq \Lambda$ is unimodular, then for any $x\in X$, the quotient $\Lambda/\sg{x}$
 is still a lattice and $\sg{X/x} = \Lambda/\sg{x}$.
 For arbitrary $X\subseteq \Lambda$, this is in general not the case.
 Some deletion-contraction proofs later will require us to consider quotients.
  Therefore, it is natural for us to work 
 with $X\subseteq G$, where $G$ denotes a finitely generated abelian group.

 Let $G$ be a finitely generated abelian group.
  Let $G_t:= \{ h\in G : \text{there exists } k \in \Z_{>0} \text{ \st } k \cdot h=0\}$ denote the \emph{torsion subgroup} of $G$.
    By the fundamental theorem of finitely generated abelian groups, $G/G_t$ is isomorphic to $\Z^d$ for some $d$.
     $d$ is called the \emph{rank} of the group $G$.
     It is natural to associate with $G$ the lattice     
     $\Lambda := G / G_t = G\otimes_\Z \Z$ and the Euclidian vector space $U := \Lambda \otimes_\Z \R = G \otimes_\Z \R$.   
     So choosing a finitely generated abelian group is more general than the setting in Section~\ref{Section:Notation}, where we haven chosen a 
     vector space $U\cong \R^d$ and a lattice $\Lambda \subseteq U$.
     In Section~\ref{Section:Notation} we required that $X$ generates $U$. 
     In the case $X\subseteq G$, the suitable generalisation is that $X$ generates a subgroup of finite index.
     Recall that  the \emph{index} of a subgroup $H\subseteq G$ is defined as $\abs{G/H}$.

  Warning: working with finitely generated abelian groups instead of lattices makes some of the statements 
  appear rather complicated.
  A reader who is not interested in the proofs may always assume that $X$ is contained in a lattice.
  In fact, most of the proofs also work in this setting.
  Deletion-contraction is used only  in the proof of   Theorem~\ref{Proposition:PeriodicInternalTutteEval}
  and Proposition~\ref{Proposition:InhomogeneousInternalBasis} and  of course in the statement of the short exact sequences  
  (Propositions~\ref{Proposition:DeletionContractionCentralPeriodic}
 and~\ref{Proposition:DeletionContractionInternalPeriodic}).  
  The results involving vector partition functions all assume $X\subseteq \Lambda$ as all the previous work on this topic has
  been done in this setting. 

\subsection{Generalised toric arrangements}

 We  will now define generalised toric arrangements, which are arrangements of (generalised) subtori
 on a (generalised) torus.

 As usual, $S^1 := \{ z \in \CC : \abs{z} = 1 \}$. Recall that $G\cong \Lambda \oplus G_t$ denotes a finitely generated abelian group.
 Consider the abelian group $T( G ) = \hom( G, S^1 )$.  We can identify $G$ with
  $\hom( T(G), S^1)$. This is a special case of Pontryagin duality between compact and discrete abelian groups.
  
The group $T(G)$ is canonically isomorphic to the group of homomorphisms $G\to (\R/\Z)$.
  Let $\phi :  G \to (\R/\Z)$ be such a homomorphism.
 This defines an element $e_\phi\in T(G)$ via $e_\phi (g) :=
 e^{2\pi i \phi(g)}$.

 Note that
 $\hom(\Lambda, S^1) \times \hom(G_t, S^1)= \hom(G, S^1)$. An isomorphism is given by the map
 that sends $(e_{\phi_1},e_{\phi_2})$ to $ e_\phi(a,b) := e_{\phi_1}(a) e_{\phi_2}(b) $. 
 Since $\hom(\Lambda, S^1)$ is a compact torus and $\hom(G_t, S^1)\cong G_t$ is a finite abelian group, 
 $T(G)$ is topologically the disjoint union of $\abs{G_t}$ copies of the $d$-dimensional compact  torus.
 
 Choosing a basis for $\Lambda$ is equivalent to choosing an isomorphism
 $\hom(\Lambda,S_1) \cong (S^1)^d $. Given a basis $s_1,\ldots, s_d$,
 one can map $e_\phi \in T(G)$ to $( e_\phi(\lambda_1), \ldots, e_\phi(\lambda_d) ) \in ( S^1 )^d $.

 Every $x\in X$ defines a (possibly disconnected)  hypersurface in $T(G)$:
 \begin{align}
  H_x := \{ e_\phi \in T(G) : e_\phi (x) = 1 \}.
 \end{align}

 \begin{Definition}[toric arrangements]
 \XintroAbelianGroup
  The set $\{ H_x : x \in X \} $ is called the \emph{generalised toric arrangement}
  defined by $X$.
  
  The set $ T(G) \cap  \bigcup  \left(\bigcap_{ x\in B} H_x \right)$ where the union runs over all bases $B\subseteq X$
   is a finite set.  
   It is called the set of \emph{vertices of the toric arrangement} 
   and denoted by  $\Vcal(X)$. By basis, we mean a set of cardinality $d$ that generates a subgroup of finite index. 
   The intersection with $T(G)$ ensures that $\Vcal(X) = T(G)$ if $\rank(G)=0$.
\end{Definition}

Note that if $X\subseteq \Z$, then $\Vcal(X)$ is a set of roots of unity.
 See Figure~\ref{Figure:toricarr} on page~\pageref{Figure:toricarr} for a two-dimensional example and 
 Example~\ref{Example:toriarrwithtorsion} for a toric arrangement on the torus $T(\Z\oplus \Z/3\Z)$.

  If $G$ is isomorphic to a lattice $\Lambda$, everything is a bit simpler. In particular, the torus $T(G)$ will be connected.
  We denote the dual lattice of $\Lambda$ by $V\supseteq \Gamma :=
   \{ v \in V : v(\lambda) \in \Z \text{ for all } \lambda\in\Lambda\}$.
   Note that if we identify 
   $U$ and $V$ with $\R^d$ and a basis for $\Lambda$ is given by the columns of a  $(d\times d)$-matrix, 
   then the rows of the inverse of this matrix form a basis for $\Gamma$.

 Recall that a vector $x \in U$  defines a hyperplane $H_x = 
 \{ v \in V : v(x) = 0  \}$.
  The set $H_x^p = \{ v \in V : v(x) \in \Z  \}$ is a periodic
  arrangement of countably 
   many shifts of the hyperplane $H_x$.
   Note that $\gamma(x)\in \Z$ for all $\gamma\in \Gamma$ if $x\in \Lambda$.
  This implies that for all $x\in X$, $\Gamma$ acts on $H_x^p$ by translation. 
  The quotient
   $H_x^t :=H_x^p/\Gamma = \{   v \in V/\Gamma : v(x) = 0 \}$ is a (possibly disconnected) 
   hypersurface in the torus $V/\Gamma  \cong (\R/\Z)^d$.
  The toric arrangement defined by $X$ is then   the set $\{ H_x^t : x \in X \}$.

 In Section~\ref{Section:PeriodicDuality}, we will use the \emph{algebraic torus} 
 $T_\CC(G) := \hom(G, \CC^*) \cong T(G) \times \hom(G, \R_{> 0})$.
 Note that if one defines a toric arrangement as a family of subsets of $T_\CC(G)$, the set of vertices $\Vcal(X)$ will 
 still be contained in $T(G)$. For this reason, it does not make a big difference for us whether we work with $T(G)$ or $T_\CC(G)$.
 The compact torus is better suited for drawing pictures and the algebraic torus has nicer
 algebraic properties that we will use in
 Section~\ref{Section:PeriodicDuality}.

\smallskip
The following remark and proposition show that toric arrangements appear naturally in the theory of vector partition functions.
\begin{Remark}
 The Laplace transform of the vector partition function $\vpf_X$ can be interpreted 
 as a rational function on the torus $T(G)$ that maps $e_\phi$ to $\frac{1}{\prod_{x\in X} (1 - e_\phi(x))}$.
 The set of poles of this function 
is precisely the toric arrangement defined by the list $X$.
 
 For the multivariate spline $T_X$ there is an analogous statement: 
 the  Laplace transform  is the rational function on the vector space $V$ that maps $v$ to $\frac{1}{\prod_{x\in X} v(x)}$.
 The set of its poles is the central hyperplane arrangement  defined by the list $X$ 
 (see \eg \cite{concini-procesi-book}).
\end{Remark}

 Let $e_\phi\in \Vcal(X)$.  We define a sublist 
 $X \supseteq X_\phi := ( x \in X : e_\phi(x) = 1 ) = X \cap H_x$. 
 This is the maximal sublist of $X$ such that $ \bigcap_{x\in  X_\phi} H_x = \{e_\phi\}$. Note that by construction, $X_\phi$ always generates a subgroup
 of finite index.

 \begin{Proposition}[Section 16.1 in \cite{concini-procesi-book}]
 \label{Proposition:DMdecomposition}
  \XintroLattice 
 Then  $\DM_\CC(X) = \bigoplus_{e_\phi \in \Vcal(X)} e_{\phi} \Dcal_\CC(X_\phi)|_\Lambda$.
  
  So in particular, if $X$ is unimodular, then $\DM_\CC(X)=\Dcal_\CC(X)|_\Lambda$.
 \end{Proposition}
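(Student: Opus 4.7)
The plan is to establish both inclusions by exploiting a conjugation identity for the shift action. For any character $e_\psi$ of $\Lambda$, a direct calculation from $\tau_x(e_\psi g)=e_\psi(-x)e_\psi\tau_x g$ shows that the difference operator $\nabla_x=1-\tau_x$ restricted to $e_\psi\cdot\sym(V)|_\Lambda\subseteq\Ccal_\CC[\Lambda]$ is conjugate, via multiplication by $e_\psi$, to the constant-coefficient differential operator $\widetilde\nabla_{x,\psi}:=1-e_\psi(x)^{-1}e^{-p_x(D)}$ on $\sym(V)$ (using the Taylor formula $\tau_x=e^{-p_x(D)}$ on polynomials). Two regimes matter: if $x\in X_\psi$ (so $e_\psi(x)=1$) then $\widetilde\nabla_{x,\psi}$ factors as $p_x(D)$ times a formal power-series operator whose constant term is $1$, so it has the same annihilator on $\sym(V)$ as $p_x(D)$; if $x\notin X_\psi$ then the constant term $1-e_\psi(x)^{-1}$ is nonzero and $\widetilde\nabla_{x,\psi}$ is invertible on $\sym(V)$.

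For the inclusion $\supseteq$, fix $e_\phi\in\Vcal(X)$, $f\in\Dcal_\CC(X_\phi)$, and a cocircuit $C$ of $X$; set $C_\phi:=C\cap X_\phi$. Because $X_\phi$ has rank $d$ by the vertex condition, every basis of $U$ contained in $X_\phi$ is also a basis of $X$, so $C$ meets every such basis and $X_\phi\setminus C_\phi$ has rank $<d$. Thus $C_\phi$ contains a cocircuit $C'$ of $X_\phi$, and $p_{C'}\in\cJ(X_\phi)$ annihilates $f$. By the setup, $\nabla_C(e_\phi f)$ equals $e_\phi$ times a commuting product of the operators $\widetilde\nabla_{x,\phi}$ applied to $f$; this product factors as $p_{C'}(D)$ times a further constant-coefficient operator, and since all constant-coefficient operators commute and $p_{C'}(D)f=0$, the whole expression vanishes.

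For the reverse inclusion, any $h\in\DM_\CC(X)$ is a quasipolynomial by Theorem~\ref{Theorem:DMdefinedbyDelta} and hence decomposes uniquely as $h=\sum_\psi e_\psi g_\psi$ over finitely many rational characters $\psi$ with polynomial coefficients $g_\psi\in\sym(V)$. Since $\DM_\CC(X)$ is finite-dimensional and shift-invariant, and the translations act semisimply on quasipolynomials with this decomposition as eigenspace decomposition, each summand $e_\psi g_\psi$ itself lies in $\DM_\CC(X)$. By the setup, the condition $\nabla_C(e_\psi g_\psi)=0$ for every cocircuit $C$ of $X$ is equivalent to $p_{C\cap X_\psi}(D)g_\psi=0$ for every such $C$. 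If $\rank(X_\psi)<d$, then any hyperplane of $X$ containing $\spa(X_\psi)$ yields a cocircuit $C$ with $C\cap X_\psi=\emptyset$, which forces $g_\psi=0$. Otherwise $e_\psi\in\Vcal(X)$, and every cocircuit $C'$ of $X_\psi$ is realised as $C\cap X_\psi$ for some cocircuit $C$ of $X$ (take the complement in $X$ of the hyperplane $X\cap\spa(X_\psi\setminus C')$), so the resulting system of conditions is exactly $g_\psi\in\Dcal_\CC(X_\psi)$.

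The most delicate step is the spectral decomposition argument: that $\DM_\CC(X)$ is preserved under the projections onto exponential-polynomial components, equivalently, that each $e_\psi g_\psi$ lies in $\DM_\CC(X)$ individually. This is a standard fact about shift-invariant finite-dimensional spaces of quasipolynomials, following from semisimplicity of the translation action on the space of all quasipolynomials, but it is the one step that needs to be argued outside the purely combinatorial framework of cocircuits and the vertex set $\Vcal(X)$.
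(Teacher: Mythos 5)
Your overall strategy is the right one, and both inclusions are handled correctly except for one step. The conjugation identity $\nabla_x(e_\psi g) = e_\psi\bigl(1 - e_\psi(-x)e^{-p_x(D)}\bigr)g$ and the dichotomy between $x\in X_\psi$ (factor $p_x(D)$ times a unit) and $x\notin X_\psi$ (invertible) are exactly the mechanism that makes the proof work, and your reduction of the cocircuit conditions for $e_\psi g_\psi$ to the conditions $p_{C\cap X_\psi}(D)g_\psi = 0$, together with the case analysis on $\rank(X_\psi)$, is correct. The paper gives no proof and simply cites De Concini--Procesi, so there is no ``house'' argument to compare against; what follows concerns the correctness of yours.

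The flaw is precisely where you flag the delicacy: your justification is wrong. The translation action on quasipolynomials is \emph{not} semisimple, and $h = \sum_\psi e_\psi g_\psi$ is \emph{not} an eigenspace decomposition. On the summand $e_\psi\sym(V)|_\Lambda$ the operator $\tau_\lambda$ acts as $e_\psi(-\lambda)$ times the unipotent operator $e^{-p_\lambda(D)}$, so $e_\psi g_\psi$ is an eigenvector only when $g_\psi$ is constant; in general the components $e_\psi\sym(V)|_\Lambda$ are \emph{generalized} eigenspaces of each $\tau_\lambda$. Already for $X=(1,1)$, $\tau_1$ acting on $\spa\{1,t\}|_\Z\subseteq\DM(X)$ is a nontrivial Jordan block. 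The fact you need is therefore not semisimplicity but the following standard replacement: $\DM_\CC(X)$ is finite-dimensional (Theorem~\ref{Theorem:DMdefinedbyDelta}), and it is a $\CC[\Lambda]$-submodule of $\Ccal_\CC[\Lambda]$ annihilated by the zero-dimensional ideal $\dJC_\CC(X)$, whose variety is $\Vcal(X)$ (Lemma~\ref{Lemma:VarietyVertices}). The primary decomposition $\dJC_\CC(X) = \bigcap_{e_\phi\in\Vcal(X)}\dJC_\CC(X)_\phi$ and the Chinese Remainder Theorem produce idempotents in $\CC[\Lambda]/\dJC_\CC(X)$, hence commuting projectors on $\DM_\CC(X)$ whose images are exactly the joint generalized eigenspaces of the $\tau_\lambda$'s; restricted to quasipolynomials these are the $e_\psi$-components of your decomposition. (Equivalently, for a single well-chosen $\tau_\lambda$ the Jordan decomposition on the finite-dimensional $\DM_\CC(X)$ gives the generalized eigenspace projections as polynomials in $\tau_\lambda$, which then preserve the shift-invariant subspace.) With this correction, your argument goes through; as written, the reliance on a false semisimplicity claim leaves a genuine gap at the one step you yourself identified as load-bearing.
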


 \subsection{Arithmetic matroids}
   
   We assume that the reader is familiar with the  definition of a matroid (see \eg \cite{concini-procesi-book,MatroidTheory-Oxley}).
  An \emph{arithmetic matroid} is a pair $ (M, \multari ) $, where
  $M$ is a matroid on the ground set $A$ and  
 $\multari : 2^A \to \Z_{\ge 0}$ is a function that satisfies certain axioms \cite{branden-moci-2014,moci-adderio-2013}. 
 The function $\multari$ is called the \emph{multiplicity function}.
 
The prototype of an arithmetic matroid is the one that is
  canonically associated with a finite list $X$ of elements of a finitely generated abelian group $G$.
 Given a sublist $S\subseteq X$, the \emph{rank} $\rank(S)$ of $S$ is defined to be the rank of the group $\sg{S}$.
 Let $G_S\subseteq G$ be the maximal subgroup of $G$ \st the index $\abs{G_S/\sg{S}}$ is finite.
 Then we define $\multari(S) :=  \abs{G_S/\sg{S}}$.

If the list $X$ is contained in a lattice, then one can equivalently define 
 $\rank(S):=\dim \spa(S)$ and 
 $\multari(S):= \abs{   (\spa(S)\cap \Lambda) / \sg{S}  }$ for $S\subseteq X$.
 Note that in this case if $S\subseteq X$ is linearly independent, then $\multari(S)$ is equal to the number of lattice points in the half-open
  parallelepiped $\{ \sum_{s\in S} \lambda_s  s  :  0 \le \lambda_s  <  1 \}$.

   The \emph{arithmetic Tutte polynomial} \cite{moci-adderio-2013, moci-tutte-2012}
   is defined as
 \begin{align}
  \aritutte_X (\alpha, \beta) = \sum_{ S \subseteq X } \multari(S) (\alpha-1)^{ d - \rank(S)  }  
  (\beta - 1)^{ \abs{S} - \rank(S)  }.
 \end{align}
 Note that if $X\subseteq \Lambda$ is unimodular, then the multiplicity function is constant and equal to $1$. Hence the arithmetic Tutte polynomial 
 and the Tutte polynomial are equal in this case.

 We call an element $x\in X$ a \emph{coloop} if $\rank(S \cup x)= \rank(S) + 1$ for all $S\subseteq X \setminus x$.
 Recall that in matroid theory an element of rank $0$ is called a \emph{loop}. If the matroid is represented by a list of vectors, loops are always
 represented by the vector $0$.
 It is important to note that in the case of arithmetic matroids there can be elements of rank $0$ that are non-zero, namely elements of the torsion subgroup.
  
 An important property is the following deletion-contraction identity (Lemma 5.4 in \cite{moci-adderio-2013}).
 If the arithmetic matroid $(M,\multari)$ is represented by the list $X$ and $x \in X$, then 
 the lists $X\setminus x$ and $X/x$ (as defined in Subsections~\ref{Subsection:ZonotopalSpaces} and~\ref{Subsection:DeletionContraction}) represent the arithmetic matroids obtained by
 deleting and contracting $x$, respectively. 
 Let $x\in X$ be a vector that is 
 neither torsion nor a coloop. Then
 \begin{equation}
 \label{equation:AriTutteDelCon}
  \aritutte_X(\alpha,\beta) = \aritutte_{X\setminus x}(\alpha,\beta) + \aritutte_{X/x}(\alpha,\beta). 
 \end{equation}

 Simple matroids capture the combinatorial structure of central hyperplane arrangements (see \eg \cite{stanley-2007}). 
 In a similar way, arithmetic matroids describe the combinatorial structure of toric arrangements.
 For example,
 the characteristic polynomial of the  toric arrangement defined by a list $X$ is equal to 
 $(-1)^d \aritutte_X(1 - q, 0)$ (\cite[Theorem 5.6]{moci-tutte-2012}).
 Toric arrangements also appear naturally in the theory of vector partition functions.
 The following result is a discrete analogue of a special case  of Theorem~\ref{Proposition:HilbertSeriesTuttePolynomialP}.
 \begin{Proposition}
   \label{Proposition:dimensionDM}
   \XintroLattice 
 Then
  $\dim(\DM_\CC(X)) = \aritutte_X(1,1)$. 
   \end{Proposition}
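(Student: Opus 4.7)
The plan is to combine the direct-sum decomposition of $\DM_\CC(X)$ given in Proposition~\ref{Proposition:DMdecomposition} with the evaluation of the Hilbert series of $\Dcal(X)$ from Theorem~\ref{Proposition:HilbertSeriesTuttePolynomialP}, and then re-sum the result by a standard double-counting of pairs (basis, torus vertex). Since $X\subseteq \Lambda$, the group $G=\Lambda$ is torsion-free, so the torus $T(\Lambda)$ is connected and the multiplicity function is simply $\multari(S)=\abs{(\spa(S)\cap\Lambda)/\sg{S}}$.

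First, I would observe that Proposition~\ref{Proposition:DMdecomposition} gives
\[
 \dim \DM_\CC(X) \;=\; \sum_{e_\phi \in \Vcal(X)} \dim \Dcal_\CC(X_\phi).
\]
Next, evaluating the formula $\hilb(\Dcal(X_\phi),q) = q^{\abs{X_\phi}-d}\tutte_{X_\phi}(1,q^{-1})$ from Theorem~\ref{Proposition:HilbertSeriesTuttePolynomialP} at $q=1$ yields $\dim\Dcal_\CC(X_\phi) = \tutte_{X_\phi}(1,1)$, which is exactly the number of bases of the matroid underlying $X_\phi$ (since at $(1,1)$ only the terms $(\alpha-1)^0(\beta-1)^0$ survive in $\tutte$). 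Note that every basis of $X_\phi$ has rank $d$ (as $X_\phi$ generates a subgroup of finite index in $\Lambda$), so it is also a basis of $X$.

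The crucial combinatorial step is to switch the order of summation in
\[
 \sum_{e_\phi \in \Vcal(X)} \#\{B \subseteq X_\phi : B \text{ is a basis of } X\}
 \;=\; \sum_{B \text{ basis of } X} \#\{e_\phi \in \Vcal(X) : B \subseteq X_\phi\}.
\]
For a fixed basis $B \subseteq X$, the condition $B\subseteq X_\phi$ means $e_\phi(x)=1$ for every $x\in B$, i.e.\ $e_\phi$ lies in the subgroup $\{e_\phi\in T(\Lambda) : e_\phi|_{\sg{B}}\equiv 1\} = \hom(\Lambda/\sg{B},S^1)$. Because $B$ is a basis and $\Lambda$ is torsion-free, $\Lambda/\sg{B}$ is a finite group of order $[\Lambda:\sg{B}]=\multari(B)$, and Pontryagin duality gives $\abs{\hom(\Lambda/\sg{B},S^1)}=\multari(B)$. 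Moreover, each such $e_\phi$ lies automatically in $\Vcal(X)$ since $B$ itself is a basis. Therefore the inner count equals $\multari(B)$.

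Putting everything together and using that $\aritutte_X(1,1)=\sum_{B\text{ basis}}\multari(B)$ (again only the terms with $d-\rank(S)=\abs{S}-\rank(S)=0$ survive), we obtain
\[
 \dim\DM_\CC(X) \;=\; \sum_{B \text{ basis}} \multari(B) \;=\; \aritutte_X(1,1).
\]
The main (modest) obstacle is the counting identity $\#\{e_\phi\in\Vcal(X) : e_\phi|_{\sg{B}}\equiv 1\}=\multari(B)$; the argument via Pontryagin duality on $\Lambda/\sg{B}$ is clean here because there is no torsion, but one has to verify that every such $e_\phi$ is indeed a vertex of the toric arrangement, which follows immediately from the fact that $B$ itself is a basis of the matroid.
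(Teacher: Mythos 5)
Your proof is correct, but it takes a genuinely different route from the paper's. The paper's proof is a two-line citation: it invokes Theorem~\ref{Theorem:DMdefinedbyDelta} (which identifies $\dim\DM(X)$ with $\vol(Z(X))$, the volume of the zonotope) and then Proposition~\ref{Proposition:ZonotopeArithmeticTutte} ($\vol(Z(X))=\aritutte_X(1,1)$, a result of D'Adderio--Moci). You instead start from the direct-sum decomposition $\DM_\CC(X)=\bigoplus_{e_\phi\in\Vcal(X)} e_\phi\Dcal_\CC(X_\phi)|_\Lambda$ of Proposition~\ref{Proposition:DMdecomposition}, reduce to counting bases of each $X_\phi$ via Theorem~\ref{Proposition:HilbertSeriesTuttePolynomialP}, and then double-count pairs $(B,e_\phi)$, using Pontryagin duality on $\Lambda/\sg{B}$ to get the multiplicity $\multari(B)$. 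In effect you re-derive the $\beta=1$ specialisation of the identity $\aritutte_X(1,\beta)=\sum_{e_\phi\in\Vcal(X)}\tutte_{X_\phi}(1,\beta)$, which the paper only invokes later (as Lemma~6.1 of \cite{moci-tutte-2012}) in the proof of Theorem~\ref{Proposition:CentralPeriodicPHilbertTutte}. The trade-off: the paper's argument is shorter and black-boxes the combinatorics into the zonotope volume formula; your argument is self-contained on the algebraic side, stays entirely within the $\DM$-decomposition framework used throughout Section~\ref{Section:ArithmeticMatroids}, and makes the role of the toric arrangement explicit, at the cost of needing the double-count and the observation that $\abs{\{e_\phi\in\Vcal(X):B\subseteq X_\phi\}}=\multari(B)$ for each basis $B$. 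Both are sound; your route is closer in spirit to the proof of Theorem~\ref{Proposition:CentralPeriodicPHilbertTutte} than to the paper's own citation-style proof.
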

 \begin{proof}
  This follows directly from  Corollary~3.4 in  \cite{moci-adderio-ehrhart-2012}
 (Proposition~\ref{Proposition:ZonotopeArithmeticTutte} below) and Theorem~\ref{Theorem:DMdefinedbyDelta}.
 \end{proof}
 Theorem~6.3 in \cite{moci-tutte-2012} states a stronger result, \ie a relationship between 
 the Hilbert series of $\DM_\CC(X)$ and $\aritutte_X(1,q )$.
 However, the result in \cite{moci-tutte-2012} is slightly incorrect, \ie it only holds if one uses a special grading on $\DM_\CC(X)$.

 \section{The improved Brion--Vergne formula}
 \label{Section:GeneralCase}
 
 In this section and the next two, we will discuss the new results that are contained in this paper. 
  We will first introduce the space $\Pper(X)$, a space of differential operators
  with periodic coefficients, before proving analogues of some of the results in Section~\ref{Section:TUMresults}, in particular an improved version of the
  Brion--Vergne formula.

 Recall that for a vertex of the toric arrangement $e_\phi\in \Vcal(X)$, we have defined the sublist 
 $X \supseteq X_\phi := ( x \in X : e_\phi(x) = 1 ) = X \cap H_x$.

 \begin{Definition}
 \label{Definition:PeriodicCentralP}
\XintroLattice
 We define the periodic coefficient analogue of the central $\Pcal$-space, the  
\begin{align*}
  \text{\emph{central periodic $\Pcal$-space} } \Pper(X) &:= 
  \bigoplus_{e_\phi\in \Vcal(X)} e_\phi p_{X\setminus  X_\phi}  \Pcal(X_\phi) %
   \subseteq \bigoplus_{e_\phi\in \Vcal(X)} e_\phi \sym(U).
\end{align*}
  \end{Definition}

  \begin{Remark}
 Let $p\in \Pcal(X)$. Then $p(D)$ obviously defines 
 a map $\Dcal(X) \to \Dcal(X)$.
 Now let $p\in \Pper(X)$. It is slightly less obvious that
 $p(D)$ (followed by restriction to $\Lambda$) defines
 a map $\Dcal(X)\to \DM_\R(X)$. This is a consequence of the decomposition 
 $\DM_\CC(X) = \bigoplus e_\phi \Dcal_\CC(X_\phi)|_\Lambda$ in Proposition~\ref{Proposition:DMdecomposition}.
 The relationship between $\Pper(X)$ and $\DM(X)$ will be explained in more detail in Section~\ref{Section:PeriodicDuality}.

    Note that even though the spaces
    $\Pper_\CC(X) = \bigoplus_{e_\phi\in \Vcal(X)} e_\phi p_{X\setminus X_\phi} \Pcal_\CC(X_\phi)$ and $\DM_\CC(X)
     = \bigoplus_{e_\phi\in \Vcal(X)} e_\phi \Dcal_\CC(X_\phi)|_\Lambda$ look quite similar, there is an important difference between them:
     both, $e_\phi$ and $f\in \Dcal(X_\phi)|_\Lambda$ are functions defined on $\Lambda$, so $e_\phi f \in \DM(X)$ is a function on $\Lambda$ as well.
     For $e_\phi p \in e_\phi p_{X\setminus X_\phi} \Pcal(X_\phi)$ however,  the situation is different.
     The polynomial $p$ is contained in $\sym(U)$, so it is a differential operator on $\sym(V)$ and $e_\phi$ is still a function on $\Lambda$. So the term $e_\phi p$
     can be thought of as function that assigns a differential operator with complex coefficients acting on $\sym(V)$ to each point in~$\Lambda$.
  \end{Remark}
  
 \begin{Definition}[Periodic Todd operator]
 \label{Definition:PeriodicTodd}
  Let $X\subseteq U \cong \R^d$ be a finite list of vectors and
  let $z\in U$. Then we define the ($z$-shifted) \emph{periodic Todd operator}
  \begin{align}
    \toddper(X, z) :=  \sum_{e_\phi \in \Vcal(X)} 
    e_{\phi} \cdot e_{\phi}( - z) e^{-p_z} \prod_{x \in X} \frac{ p_x }{ 1 - e_\phi(-x) e^{-p_x}  }.
  \end{align}
\end{Definition}

$\toddper(X,z)$ can be thought of as a map $\Lambda \to \R[[s_1,\ldots, s_d]]$. The term  $e_{\phi}$ is a map $\Lambda \to S^1$, whereas $e_{\phi}(-z) \in S^1$.
  Note that if the list $X$ is unimodular, 
  then $\Vcal(X) = \{ 1 \} $. This implies $\toddper(X,z) = \todd(X,z)$ and $\Pper(X) = \Pcal(X)$.

 The following theorem first appeared in \cite[p.~802]{brion-vergne-1997}. 
 In \cite[Theorem 3.3]{deconcini-procesi-vergne-2013}, the notation is more similar to ours.
\begin{Theorem}[Brion--Vergne formula]
\label{Theorem:BV}
\XintroLattice
 Let $u\in \cone(X) \cap \Lambda$ and let $w \in U$   be a short affine regular vector 
 \st $ u + w \in \cone(X) $.
   Then
  \begin{align}
    \lim_w \toddper(X,0)(\Dpw) T_{X} (u) = \vpf_X(u).
  \end{align}
\end{Theorem}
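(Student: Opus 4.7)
The plan is to reduce this to an identity of Laplace transforms together with a local residue computation at each vertex of the toric arrangement. The Laplace transform of $T_X$ (on a suitable half-space of $V_\CC$) equals $\prod_{x \in X} v(x)^{-1}$, while the Laplace transform of $\vpf_X$ (on the corresponding domain on the algebraic torus $T_\CC(G)$) equals $\prod_{x \in X} (1 - e^{-v(x)})^{-1}$. These two are related by the classical factor $\prod_{x \in X} p_x/(1 - e^{-p_x})$. The Laplace transform of $\vpf_X$, however, has poles not only at the identity of $T_\CC(G)$ but at every vertex $e_\phi \in \Vcal(X)$ of the toric arrangement, and $\toddper(X,0)$ is designed precisely to account for the local contribution at each such vertex.

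First I would invoke the structural decomposition $\DM_\CC(X) = \bigoplus_{e_\phi \in \Vcal(X)} e_\phi \Dcal_\CC(X_\phi)|_\Lambda$ from Proposition~\ref{Proposition:DMdecomposition}, combined with Theorem~\ref{Theorem:LocalPiecesOverlap}: near $u$, the partition function coincides with a quasipolynomial $\vpf_X^\Omega \in \DM_\CC(X)$, which therefore admits a canonical decomposition
\begin{align*}
\vpf_X^\Omega = \sum_{e_\phi \in \Vcal(X)} e_\phi q_\phi, \qquad q_\phi \in \Dcal_\CC(X_\phi)|_\Lambda.
\end{align*}
The target is to show that the $\phi$-summand of $\lim_w \toddper(X,0)(\Dpw) T_X$, evaluated at $u$, reproduces exactly $e_\phi(u) q_\phi(u)$.

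For fixed $\phi$, I would split the product in the $\phi$-summand of $\toddper(X,0)$ according to whether $x \in X_\phi$ (where $e_\phi(x) = 1$ and $p_x/(1 - e_\phi(-x) e^{-p_x})$ reduces to the classical Todd factor $p_x/(1 - e^{-p_x})$) or $x \in X \setminus X_\phi$ (where $1 - e_\phi(-x) e^{-p_x}$ is invertible in $\sym(U_\CC)[[p_x]]$). The factors from $X \setminus X_\phi$ can be reorganised so that they produce precisely the multiplier $p_{X \setminus X_\phi}$ built into the definition of $\Pper(X)$ times an invertible power series; the remaining contribution is essentially the unimodular Khovanskii--Pukhlikov formula (cf.\ Corollary~\ref{Corollary:KhovanskiiPukhlikov}) applied to the sublist $X_\phi$, twisted by the character $e_\phi$. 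Summing over $\phi$ reconstructs $\vpf_X^\Omega(u)$.

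The main technical obstacle is the treatment of the limit $\lim_w$: the point $u$ is typically singular, and $T_X$ is only piecewise polynomial, so different local pieces yield different values when a differential operator acts. The hypothesis $u + w \in \cone(X)$ singles out the correct big cell $\Omega$, ensuring that the Todd operator is applied to the polynomial piece of $T_X$ associated to $\Omega$ and that the evaluation at $u$ matches $\vpf_X^\Omega(u) = \vpf_X(u)$. This matching of a global combinatorial quantity with a sum of local analytic residues is the crux of the original Brion--Vergne argument; the detailed residue calculus can be carried out as in \cite{brion-vergne-1997}, or via the Berline--Vergne style local Euler--Maclaurin formulas used in \cite{deconcini-procesi-vergne-2013}.
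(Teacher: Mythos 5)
The paper does not prove Theorem~\ref{Theorem:BV}; it is quoted as a known result from \cite{brion-vergne-1997} and \cite{deconcini-procesi-vergne-2013}, and is then used as a black box in the proof of Theorem~\ref{Theorem:ModifiedBrionVergne}. There is therefore no ``paper's own proof'' to compare against, and the relevant question is only whether your sketch would stand on its own.

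As a standalone argument it has a concrete gap. After isolating the factor $p_{X\setminus X_\phi}$ at a vertex $e_\phi$, you assert that the remaining contribution is ``essentially the unimodular Khovanskii--Pukhlikov formula applied to the sublist $X_\phi$, twisted by $e_\phi$.'' But $X_\phi$ need not be unimodular: for $X=(1,2,4)$ and $e_\phi = 1$ one has $X_\phi = X$, which is not unimodular, so Corollary~\ref{Corollary:KhovanskiiPukhlikov} does not apply to it. Applying the correct statement to $X_\phi$ would require the full periodic Todd operator for $X_\phi$, which is the result you are trying to prove (or a nontrivial induction on the multiplicity function that you do not set up). Moreover, the component $e_\phi q_\phi$ of $\vpf_X^\Omega$ in the decomposition $\DM_\CC(X) = \bigoplus_\phi e_\phi \Dcal_\CC(X_\phi)|_\Lambda$ is a local quasipolynomial piece, not the vector partition function $\vpf_{X_\phi}$, so even a unimodular $X_\phi$ would not let you quote the Khovanskii--Pukhlikov formula directly: the content of the Brion--Vergne identity at $\phi$ is precisely the statement that the Todd factor over $X_\phi$ converts the appropriate derivative of $T_X^\Omega$ into $q_\phi$. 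That is exactly the local residue/Euler--Maclaurin step you ultimately defer to \cite{brion-vergne-1997} and \cite{deconcini-procesi-vergne-2013}. In short, your sketch correctly identifies the Laplace-transform/toric-arrangement structure behind the theorem, but the key step is waved through, and the one concrete reduction it proposes (to the unimodular case on $X_\phi$) is not available in general.
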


Recall that
we have defined  a projection map
 $\psi_X : \R[[s_1,\ldots, s_d]] \to \Pcal(X)$ earlier.
  Now we require a projection 
  $ \psiper_X : \bigoplus_{e_\phi\in \Vcal(X)}  e_\phi 
  \R[[s_1,\ldots, s_d]] \to \bigoplus_{e_\phi\in \Vcal(X)}  e_\phi 
  \R[s_1,\ldots, s_d]$ that maps $\toddper(X,z)$ to $\Pper(X)$.
 
 Let $f \in \bigoplus_{e_\phi\in \Vcal(X)}  e_\phi \R[[s_1,\ldots, s_d]] $. Then 
 $f$ can be written uniquely as $f= \sum_{e_\phi \in \Vcal(X)} e_\phi f_\phi$
 for some $f_\phi \in \R[[s_1,\ldots, s_d]]$. We define 
 \begin{equation}
 \label{eq:psiperdefinition}
  \psiper_X( f ) :=  \sum_\phi e_\phi   \psi_X ( f_\phi ) 
 \quad \text{ and } \quad \tilde f_z := \psiper_X ( \toddper(X,z)).
  \end{equation}

\begin{Remark}
 Note that $1 - e_\phi(-x) e^{-x}$ is invertible as a formal power series
 if and only if $ e_\phi(-x)\neq 1$ (a formula for the inverse is given on p.~516 of \cite{deconcini-procesi-vergne-2013}).
 This implies that $p_{X\setminus X_\phi}$ divides $T^\phi$, the $e_\phi$ component
 of $\toddper(X)$. Hence $\psi_X(T^\phi)\in p_{X\setminus X_\phi} \Pcal(X_\phi)$.
 This implies that $\tilde f_z \in \Pper(X)$ for any $z\in U$.
 \end{Remark}

\begin{Remark}
We can also define 
   $\psiper_X(f(\lambda, \cdot)) :=  
   \psi_X(\sum_\phi e_\phi(\lambda)  f_\phi(\cdot)))$
   for fixed $\lambda\in \Lambda$ if we complexify all the vector spaces.
\end{Remark}
We will be  able to prove the  following result using Theorem~\ref{Theorem:BV}.
\begin{Theorem}[Improved Brion-Vergne formula]
\label{Theorem:ModifiedBrionVergne}
\XintroLattice

 \begin{asparaenum}[(i)]
  \item
   Let $w \in U$ be a short affine regular vector, $u\in \cone(X) \cap \Lambda$ and 
   let $z \in \Lambda$ \st $u-z+w\in \cone(X)$. 
   Let $\Omega$ denote the   big cell  whose closure contains $u$ and $u+\eps w$ for some small $\eps >0$.
   Let $i_X^\Omega$ denote  the quasipolynomial that agrees with $\vpf_X$ on $(\Omega - Z(X)) \cap \Lambda$.
   Then
  \begin{equation}
    \lim_w \tilde f_z(\Dpw) T_{X} (u)  = i_X^\Omega( u - z ).
  \end{equation}
 Furthermore,  if $z\in \Zcal(X,w)$, then $i_X^\Omega(u-z) = i_X(u-z)$. 
  \item 
 If $z\in \Zcal_-(X)$, then $\tilde f_z(D) T_X$ is continuous
      in $\Lambda$ and
   the following formula holds:
  \begin{equation}
   \tilde f_z(D) T_{X}(u) = \vpf_X( u - z ).
  \end{equation}
 \end{asparaenum}
\end{Theorem}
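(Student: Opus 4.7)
The plan is to reduce Theorem~\ref{Theorem:ModifiedBrionVergne} to the Brion--Vergne formula (Theorem~\ref{Theorem:BV}) in two moves: first shift the identity by $z$, and then replace the periodic Todd operator by its projection $\tilde f_z \in \Pper(X)$. Throughout, I would use the defining relation $\lim_w p(\Dpw) T_X(u) = p(D) T_X^\Omega(u)$, where $T_X^\Omega$ is the polynomial local piece of $T_X$ on the big cell $\Omega$ whose closure contains $u + \eps w$. Because $e^{-p_z}$ acts on $\sym(V_\CC)$ as translation by $z$ (Taylor's formula) and each $e_\phi$ is a character of $\Lambda$, so that $e_\phi(u)\,e_\phi(-z) = e_\phi(u-z)$, a summand-by-summand computation should yield the identity $\toddper(X,z)(D)\,g(u) = \toddper(X,0)(D)\,g(u-z)$ for every polynomial $g \in \sym(V_\CC)$ and all $u,z \in U$. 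Specialising to $g = T_X^\Omega$ and combining with the polynomial form of Theorem~\ref{Theorem:BV}---the quasipolynomial identity $\toddper(X,0)(D)\,T_X^\Omega = i_X^\Omega$, extended from the limit statement via Zariski density of $\Omega \cap \cone(X) \cap \Lambda$---would give $\lim_w \toddper(X,z)(\Dpw)\,T_X(u) = i_X^\Omega(u-z)$.

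The second move is to pass from $\toddper(X,z)$ to its projection $\tilde f_z$. Decomposing $\toddper(X,z) = \sum_{e_\phi \in \Vcal(X)} e_\phi T_\phi$ with $T_\phi \in \R[[s_1,\ldots,s_d]]$, I would argue as follows: the space $\Dcal(X)$ has top degree $N-d$, so every polynomial of degree $>N-d$ acts trivially on $T_X^\Omega \in \Dcal(X)$, and by definition $\cJ(X)$ annihilates $\Dcal(X)$. Since $\psi_X$ projects $\R[[s_1,\ldots,s_d]]$ onto $\Pcal(X)$ precisely modulo these two pieces, one has $T_\phi(D)\,T_X^\Omega = \psi_X(T_\phi)(D)\,T_X^\Omega$ for every $\phi$. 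Multiplication by $e_\phi(u)$ followed by summation then gives $\toddper(X,z)(\Dpw)\,T_X(u) = \tilde f_z(\Dpw)\,T_X(u)$, completing the proof of the first equation of part~(i). The supplement $i_X^\Omega(u-z) = i_X(u-z)$ when $z \in \Zcal(X,w)$ would be immediate from the defining property of $i_X^\Omega$ as the quasipolynomial coinciding with $i_X$ on $(\Omega - Z(X)) \cap \Lambda$, because $z + w \in Z(X)$ places $u - z$ in this set.

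For part~(ii) I would exploit that $z \in \Zcal_-(X)$ lies in the \emph{interior} of $Z(X)$ to make the conclusion of part~(i) independent of $\Omega$. Concretely, for any big cell $\Omega$ whose closure contains $u$ and any direction $w$, choosing $\eps > 0$ small enough that $z + \eps w \in Z(X)$ yields $u - z + (z + \eps w) = u + \eps w \in \Omega$, witnessing that $u - z \in (\Omega - Z(X)) \cap \Lambda$ and forcing $i_X^\Omega(u-z) = \vpf_X(u-z)$. Since the right-hand side does not depend on $\Omega$, the function $\tilde f_z(\Dpw)\,T_X(u)$ is independent of $w$: hence $\tilde f_z(D)\,T_X$ extends continuously to all of $\Lambda$ and equals $\vpf_X(\cdot - z)$ there.

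The main obstacle is the ``polynomial form'' of Brion--Vergne invoked in the first paragraph: Theorem~\ref{Theorem:BV} is stated as a limit of values at lattice points in $\cone(X)$, whereas the argument requires the \emph{identity} $\toddper(X,0)(D)\,T_X^\Omega = i_X^\Omega$ of quasipolynomials on all of $U$, so that evaluation at $u-z$ remains meaningful even when $u - z$ falls outside the cone. Establishing this extension carefully---keeping track of the characters $e_\phi$ contributing to each side---is the only nontrivial step; the remainder is a formal manipulation given the definitions.
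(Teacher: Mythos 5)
Your argument is correct and takes the same two-step route as the paper's proof: unfold $\lim_w$ to the local piece $T_X^\Omega$, shift by $z$ via $e^{-p_z}$ and the characters $e_\phi(u)e_\phi(-z)=e_\phi(u-z)$, apply Theorem~\ref{Theorem:BV}, then pass from $\toddper(X,z)$ to $\tilde f_z$ using that each graded piece of $T_\phi - \psi_X(T_\phi)$ lies in $\cJ(X)$ and hence annihilates $\Dcal(X)$; part (ii) likewise matches, with $z$ interior to $Z(X)$ forcing $u-z\in(\Omega - Z(X))\cap\Lambda$ for every relevant $\Omega$. You are right to single out the promotion of Theorem~\ref{Theorem:BV} to a quasipolynomial identity on all of $\Lambda$ as the one subtle step---the paper invokes it tacitly---and the density argument you sketch (essentially Lemma~\ref{Lemma:LatticeConeVanishes}) is the correct way to close that gap.
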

 Note that the theorem only states that $\tilde f_z(D) T_{X}$ is  continuous on $\Lambda$ and not on all of $U$.
 There are two reasons for this: $\tilde f_z$ is \textit{a priori} defined only on $\Lambda$ and there are many different ways of 
 extending $\tilde f_z$ to $U$.  Furthermore, if one extends $\tilde f_z(D)$ to $U$, then 
 $\tilde f_z(D) T_{X}$ will usually be discontinuous at the non-lattice points where two regions of polynomiality overlap
  (cf.~Figure~\ref{Figure:BoxSplineValuesC}).

\begin{Example}
 Let $X = (1,2)$ (cf.~Example~\ref{Example:onetwo}). 
 Then $\Pper(X) = \spa \{1,s,  (-1)^\lambda s \}$,
 $f_1 = 1 + \frac s2  - (-1)^\lambda \frac s2$, and $f_2 = 1 - \frac s2 + (-1)^\lambda \frac s2$.
 Theorem~\ref{Theorem:ModifiedBrionVergne} correctly predicts that 
 $f_1(D)T_X(u)|_\Z = \vpf_X(u-1)$. $\vpf_X(u)$ is equal to  $\frac u2+1$ for even $u$ and $\frac{u+1}{2}$ for odd $u$. 
\end{Example}

\begin{Corollary}
  \label{Corollary:DahmenMicchelliGeneral}
\XintroLattice
  Then 
    $\sum_{z \in \Zcal_-(X)} B_X(z) \tilde f_z = 1$. 
 \end{Corollary}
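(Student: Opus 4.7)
The strategy is to apply the improved Brion--Vergne formula of Theorem~\ref{Theorem:ModifiedBrionVergne}(ii) term-by-term to the Dahmen--Micchelli convolution identity \eqref{eq:DahmenMicchelli}, and then to promote the resulting pointwise identity on $T_X$ to the desired operator equality in $\Pper(X)$.

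First I would rewrite \eqref{eq:DahmenMicchelli} via the substitution $z = u - \lambda$ as
\begin{equation*}
  T_X(u) = \sum_{z \in \Zcal_-(X)} B_X(z)\, \vpf_X(u - z) \quad \text{for every } u \in \Lambda,
\end{equation*}
where the sum is restricted to $\Zcal_-(X)$ using that $B_X$ vanishes at lattice points outside $\interior Z(X)$. Substituting $\vpf_X(u - z) = \tilde f_z(D) T_X(u)$ from Theorem~\ref{Theorem:ModifiedBrionVergne}(ii) then yields $g(D) T_X(u) = T_X(u)$ for every $u \in \Lambda$, where $g := \sum_{z \in \Zcal_-(X)} B_X(z)\, \tilde f_z \in \Pper(X)$.

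The main obstacle is upgrading this pointwise identity to the equality $g = 1$ in $\Pper(X)$. To that end, I would decompose $g - 1 = \sum_{e_\phi \in \Vcal(X)} e_\phi\, h_\phi$ with $h_\phi \in p_{X\setminus X_\phi} \Pcal(X_\phi)$, and observe that $p_{X\setminus X_\phi} \Pcal(X_\phi) \subseteq \Pcal(X)$ (since $p_{X\setminus X_\phi}\cdot p_Z = p_{(X\setminus X_\phi)\cup Z}$ and $\rank(X\setminus ((X\setminus X_\phi)\cup Z)) = \rank(X_\phi\setminus Z) = d$). For each big cell $\Omega \subseteq \cone(X)$, let $T_X^\Omega \in \Dcal(X)$ denote the polynomial that agrees with $T_X$ on $\Omega$. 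Then at every $u \in \Omega \cap \Lambda$,
\begin{equation*}
  0 \;=\; g(D) T_X(u) - T_X(u) \;=\; \sum_{e_\phi \in \Vcal(X)} e_\phi(u)\cdot (h_\phi(D) T_X^\Omega)(u).
\end{equation*}
Since $\Omega$ is an unbounded polyhedral cone and the characters $e_\phi$ are linearly independent as functions on $\Lambda$, this forces each polynomial $h_\phi(D) T_X^\Omega$ to vanish identically. As the local pieces $T_X^\Omega$ span $\Dcal(X)$ by Theorem~\ref{Proposition:Dlocalpieces}, we obtain $h_\phi(D) f = 0$ for every $\phi$ and every $f \in \Dcal(X)$; the duality of Theorem~\ref{Proposition:PDduality} between $\Pcal(X)$ and $\Dcal(X)$ then forces $h_\phi = 0$, whence $g = 1$.
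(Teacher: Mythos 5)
Your proposal is correct and follows the same overall architecture as the paper's proof: feed Theorem~\ref{Theorem:ModifiedBrionVergne}(ii) into the Dahmen--Micchelli identity \eqref{eq:DahmenMicchelli} to obtain $(g-1)(D)T_X \equiv 0$ on $\Lambda$, then promote this to $g=1$ using the fact that the local pieces of $T_X$ span $\Dcal(X)$ (Theorem~\ref{Proposition:Dlocalpieces}) together with the duality of Theorem~\ref{Proposition:PDduality}. The one genuine difference lies in how the quasipolynomial $g-1$ is "sliced": you decompose it as $\sum_{e_\phi}e_\phi h_\phi$ and separate the summands by linear independence of characters, whereas the paper instead restricts $F-1$ to the cosets of a sublattice $\Lambda^0$ on which $F-1$ becomes an honest polynomial in $\Pcal_\CC(X)$, and then kills each such polynomial by duality. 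Your character-slicing is essentially Fourier-dual to the paper's coset-slicing and yields the same conclusion, but the sentence "the characters $e_\phi$ are linearly independent as functions on $\Lambda$, [so] this forces each polynomial $h_\phi(D)T_X^\Omega$ to vanish" compresses two separate steps: the hypothesis is vanishing only on $\Omega\cap\Lambda$, not on all of $\Lambda$, so you first need an argument in the spirit of Lemma~\ref{Lemma:LatticeConeVanishes} (e.g.\ pass to cosets of a sublattice on which the $e_\phi$ are constant, then use that a polynomial vanishing on a full-dimensional cone intersected with a lattice coset is zero), and only afterwards invoke independence of characters on the finite quotient. The paper's coset approach packages these into a single move and is slightly more economical, but both routes are valid.
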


  \begin{proof}[Proof of Theorem~\ref{Theorem:ModifiedBrionVergne}]
  \begin{asparaenum}[(i)]
   \item
   The second statement follows from Theorem~\ref{Theorem:LocalPiecesOverlap}.
   We will prove the first statement in two steps: (a)
   Let $T_X^\Omega$ denote the polynomial that agrees with $T_X$ on $\Omega$. Then
    \begin{align}
     \lim_w \toddper(X, z)(\Dpw) T_X(u) &=
    \sum_{e_\phi \in \Vcal(X)} 
    e_{\phi}(u) \cdot e_{\phi}( - z) e^{-p_z} \prod_{x \in X} \frac{ p_x }{ 1 - e_\phi(-x) e^{-p_x}  } T_X^\Omega(u) 
    \notag\\ &=
    \sum_{e_\phi \in \Vcal(X)} 
    e_{\phi}(u-z)  \prod_{x \in X} \frac{ p_x }{ 1 - e_\phi(-x) e^{-p_x}  } T_X^\Omega(u-z) 
    \notag\\ &= i_X^\Omega(u-z).
    \end{align}
   The last step uses Theorem~\ref{Theorem:BV}. 
   (b) %
       Let $e_\phi\in \Vcal(X)$ and let $f_\phi$ be the formal power series that is the $e_\phi$ part of $\toddper(X,z)$.
       For $i\in \N$, the degree $i$ part of $ j_\phi := f_\phi - \psi_X(f_\phi) $ is contained in $ \cJ(X) $.
       By Theorem~\ref{Proposition:Dlocalpieces}, this implies that $j_\phi$ annihilates all  the local pieces of $T_X$.
       Hence $\lim_w \toddper(X,z)(\Dpw) T_X (u) =  \lim_w f_z(\Dpw) T_X(u)$.  
\item
If $u$ lies in the interior of a big cell, then $T_X$ agrees with a polynomial in a small neighbourhood of $u$ and nothing needs to be shown.
Now suppose that $u$ lies in the intersection of the closures of two big cells $\Omega_1$ and $\Omega_2$.
 Let $w_1$ and $w_2$ be two affine regular vectors \st $u+\eps w_i \in \Omega_i$ for sufficiently small $\eps>0$.
 Let $i_X^{\Omega_1}$ and $i_X^{\Omega_2}$ denote the corresponding quasipolynomials as in (i).
 Using  (i)   we obtain
 \begin{equation*}
   \lim_{w_1} \tilde f_z(\Dpw) T_X%
   (u) = i_X^{\Omega_1}(u-z) = \vpf_X( u - z ) = i_X^{\Omega_2}(u-z) =  \lim_{w_2} \tilde f_z(\Dpw) T_X%
   (u). 
  \end{equation*}
 The second and third equalities follow from Theorem~\ref{Theorem:LocalPiecesOverlap} and the fact that 
  $ u - z \in ( \Omega_1 - Z(X) ) \cap ( \Omega_2 - Z(X) ) $. 
 Hence $\tilde f_z(D)T_X$ is continuous in $u$. This implies that we can drop the limit and 
 $\tilde f_z(D) T_{X}(u) = \vpf_X( u -z)$.
\qedhere
  \end{asparaenum}
 \end{proof}

\begin{proof}[Proof of Corollary \ref{Corollary:DahmenMicchelliGeneral}]
 Let $u \in  \Lambda$. 
 Note that by formula \eqref{eq:DahmenMicchelli} and Theorem~\ref{Theorem:ModifiedBrionVergne}, for all $u\in \Lambda$
 \begin{align}
   \left(\sum_{z \in \Zcal_-(X)} B_X(z) \tilde f_z \right)(D) T_X(u) &=
   \sum_{ z \in \Zcal_-(X)} B_X(z)  \vpf_X ( u - z )
   = T_X(u).
 \end{align}

 So the actions of $F:=\left(\sum_{z \in \Zcal_-(X)} B_X(z) \tilde f_z \right)$ and $1\in \Pper(X)$ on 
 $T_X$ are the same. Hence $(F-1)(D) T_X = 0 $. We will now show that this implies $F=1$.
 
 One can choose a sublattice $\Lambda^0\subseteq \Lambda$ \st $F-1$ agrees with a polynomial  
 that is contained in $\Pcal_\CC(X)$ on each coset of $\Lambda^0$. 
 Let $p\in \Pcal_\CC(X)$ be one of these polynomials. 
 By assumption, $p(D)$ annihilates all local pieces of $T_X$.
 Hence, by Theorem~\ref{Proposition:Dlocalpieces}, $p$ annihilates all of $\Dcal(X)$.
 It follows from the Duality Theorem (Theorem~\ref{Proposition:PDduality}) that $p=0$.
 Since $(F-1)$ restricted to an arbitrary coset of $\Lambda^0$ is $0$,  $F=1$.
\end{proof}

\begin{Remark}
 The space $\Pper(X)$ is inclusion-maximal with the following property:
 for every $0 \neq p\in \Pper_\R(X)$,
 the differential operator $p(D)$ defines a map $\Dcal_\R(X)\to \DM_\R(X)$ that does not annihilate
  $\Dcal_\R(X)$. In particular, $p(D)$ does not annihilate $T_X$.
 Hence $\Pper(X)$ can be seen as the space of 
 relevant differential operators on $T_X$ with periodic coefficients 
  (cf.~Remark~\ref{Remark:Pspacerelevant}).
\end{Remark}

\begin{Remark}
 \label{Remark:NoBoxGeneralisation}
  Theorem~\ref{ML-Theorem:weakHoltzRon} has no obvious generalisation to the general case.
  Consider the list $X=(1,a)$ for $a\in \Z$ and $a\ge 2$. Then $B_X|_{[1,a]} = \frac 1a$ 
  and the function is
  linear with slope $\pm\frac 1a$ on $[0,1]$ and $[a,a+1]$ and constant on $[1,a]$.
  \begin{equation}
   \Pper(1,a) = \spa\{ 1, s, e^{2 \pi i \lambda/ a }s, \ldots, e^{2 \pi i  (a-1) \lambda/ a } s \}.    
  \end{equation}
  The space $\Pper(1,a)$ is $a+1$ dimensional, 
  but all but one basis element ($1$) send $B_X$ to a function that is 
  zero everywhere on $\{1,\ldots, a-1\}$ except in one point 
  (which one depends on whether we use a limit 
  from the left or the right). Hence there is no 
  subspace of $\Pper(X)$ that contains unique interpolants.
  
  There are however more complicated operators that are inverse to the box spline.
  The following statement is contained in \cite[Theorem~2.29]{deconcini-procesi-vergne-2013}:
  let $w \in \cone(X)$ be a short affine regular vector. Then
  $\lim_w \toddperbox(X)(\Dpw) B_X  = \delta_0$,
  \begin{align}
  \text{where } \toddperbox(X)  &:=
   \sum_{e_\phi \in \Vcal(X)} 
    e_{\phi} \prod_{x \in X} \frac{ p_x }{ 1 - e_\phi(-x) e^{-p_x}}
    \prod_{x\in X\setminus X_\phi}
       \frac{ 1 - e_\phi(-x)\tau_x}{ 1 - \tau_x }. \notag
  \end{align}
  As usual, $\tau_x$ denotes the translation operator defined by $\tau_x(f):= f(\cdot -x)$. See also Example~\ref{Example:OneTwoFour}.
  \end{Remark}

\section{Results on periodic $\Pcal$-spaces and arithmetic matroids}
 \label{Section:ArithmeticMatroids}
 
 In this section we will define and study internal periodic $\Pcal$-spaces and prove further results on central periodic $\Pcal$-spaces.
 We will construct bases for these spaces and show that their Hilbert series are evaluations of the arithmetic Tutte polynomial.

 \subsection{Central periodic $\Pcal$-spaces}
 Let us first recall the connection between the zonotope $Z(X)$ and the arithmetic matroid defined by $X$.
 \begin{Proposition}[Corollary 3.4 in  \cite{moci-adderio-ehrhart-2012}]
 \label{Proposition:ZonotopeArithmeticTutte}
 \XintroLattice 
 Suppose that the fundamental region of $\Lambda$ has volume $1$.
 Then
 \begin{compactenum}
  \item the volume $\vol(Z(X))$ of the zonotope is equal to $\aritutte_X (1, 1)$ and
  \item
   the number $\abs{\Zcal_-(X)}$ of integer points in the interior of the zonotope is equal to $\aritutte_X (0, 1)$.  
 \end{compactenum}
 \end{Proposition}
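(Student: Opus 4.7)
The plan is to deduce both parts from a zonotope decomposition combined with well-known lattice-point identities, after first reading off which subsets $S \subseteq X$ contribute to the two special evaluations of the arithmetic Tutte polynomial.

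For part (1), I would expand
\begin{equation*}
 \aritutte_X(1,1) = \sum_{S \subseteq X} \multari(S)\, 0^{d - \rank(S)}\, 0^{|S| - \rank(S)}.
\end{equation*}
Only those $S$ with $\rank(S) = d$ and $|S| = \rank(S)$ contribute, i.e.\ the bases $B \subseteq X$, each with weight $\multari(B)$. By the definition recalled earlier, $\multari(B) = |\langle B\rangle_{\mathrm{sat}} : \langle B\rangle|$, and since $B$ is a basis of $U$ and the fundamental region of $\Lambda$ has volume~$1$, this index equals $|\det B|$ (the number of lattice points in the half-open parallelepiped spanned by $B$). Invoking the Shephard--McMullen decomposition of the zonotope into half-open parallelepipeds indexed by bases, one obtains
\begin{equation*}
 \vol(Z(X)) = \sum_{B \text{ basis}} |\det B| = \sum_{B \text{ basis}} \multari(B) = \aritutte_X(1,1).
\end{equation*}

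For part (2), I would combine Stanley's lattice-point formula for zonotopes with Ehrhart--Macdonald reciprocity. A similar specialisation gives
\begin{equation*}
 \aritutte_X(0,1) = \sum_{S \subseteq X \text{ indep.}} \multari(S)\,(-1)^{d - |S|},
\end{equation*}
since the factor $0^{|S| - \rank S}$ forces $S$ to be independent. On the other hand, Stanley's formula applied to the dilate $nZ(X)$ yields the Ehrhart quasi-polynomial $L_{Z(X)}(n) = \sum_{S \text{ indep.}} \multari(S)\, n^{|S|}$ (this is most easily proved by extending the Shephard decomposition to the dilates and partitioning lattice points among half-open parallelepipeds). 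By Ehrhart reciprocity applied to the rational polytope $Z(X)$, the number of interior lattice points equals $(-1)^d L_{Z(X)}(-1)$, which rearranges precisely to $\aritutte_X(0,1)$.

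The main obstacle in carrying this out rigorously is not the evaluation of $\aritutte_X$ at the two points, which is routine, but rather establishing Stanley's formula in the correct generality: one has to choose a coherent family of half-open parallelepipeds $\Pi_B$ whose disjoint union is $Z(X)$ and whose lattice points can be counted subset by subset. The cleanest way is to fix a short affine regular vector $w$ and declare $\Pi_B$ to be the parallelepiped spanned by $B$, half-open on the facets not visible from $w$; then the lattice points in $\Pi_B$ are parametrised by independent subsets $S \subseteq B$ together with a coset representative of $\langle B \rangle_{\mathrm{sat}} / \langle S \rangle$, giving a contribution of $\multari(S)$ after summing over $B \supseteq S$. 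Once Stanley's formula is in hand, both identities follow immediately from the evaluations above.
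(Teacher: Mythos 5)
The paper does not prove this Proposition: it quotes it verbatim from D'Adderio--Moci \cite{moci-adderio-ehrhart-2012}, whose Corollary~3.4 is derived exactly along the lines you propose (evaluate the arithmetic Tutte polynomial at $(1,1)$ and at $(0,1)$, then use Shephard's volume decomposition and the Stanley-type Ehrhart formula for zonotopes together with Ehrhart--Macdonald reciprocity). Your top-level argument for both parts is correct, including the computation that $\aritutte_X(1,1)=\sum_{B\text{ basis}}\multari(B)$ and $\aritutte_X(0,1)=\sum_{S\text{ indep.}}\multari(S)(-1)^{d-|S|}$, and the reciprocity step $(-1)^d L_{Z(X)}(-1)=\abs{\Zcal_-(X)}$. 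One small nomenclature point: since $X\subseteq\Lambda$, $Z(X)$ is a lattice polytope, so $L_{Z(X)}$ is an honest polynomial, not just a quasipolynomial.

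The ``cleanest way'' parenthetical at the end, however, mixes up two different parallelepiped decompositions. Taking one \emph{full-dimensional} half-open piece $\Pi_B$ per basis $B$ (with half-openness dictated by a generic $w$) tiles the \emph{shifted} zonotope $Z(X)-w$; this gives $\abs{\Zcal(X,w)}=\sum_B\multari(B)=\vol Z(X)$ and so is the right picture for part~(1), but it does not count lattice points of the \emph{closed} zonotope, and the lattice points of each such $\Pi_B$ are simply a transversal of $\Lambda/\sg{B}$ of size $\multari(B)$ --- they are not further parametrised by independent $S\subseteq B$ together with cosets of $\sg{B}_{\mathrm{sat}}/\sg{S}$. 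Stanley's formula $L_{Z(X)}(n)=\sum_{S\text{ indep.}}\multari(S)n^{\abs{S}}$ instead comes from decomposing the \emph{closed} zonotope into half-open parallelepipeds of \emph{varying} dimension $\abs{S}$, one for each independent set $S\subseteq X$, each contributing $\multari(S)$ lattice points per unit dilate. Since you invoke Stanley's result by name, the proof goes through; just replace that parenthetical sketch with the correct (independent-set-indexed) decomposition if you want the argument to be self-contained.
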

 We will later see that the dimension of the central periodic $\Pcal$-space is equal to $\vol(Z(X))$
 and that the dimension of the internal periodic $\Pcal$-space is equal to  $\abs{\Zcal_-(X)}$.

 It will be useful to have a definition of the space $\Pper(X)$ in the case where the list $X$ is contained in a finitely generated abelian group.
Let $G$ be a finitely generated abelian group and let $X\subseteq G$.
For $y\in X$, we define  $p_y:= y \otimes 1 \in G\otimes \R = U \subseteq  \sym(U)$. %
Then define $p_Y := \prod_{y \in Y} p_y$
 and $\Pcal(X):=  \spa\{ p_Y : Y\subseteq X,\, X\setminus Y \text{ generates a subgroup of finite index} \}$ as in \eqref{equation:CentralP}.

    Let $X_t := X \cap G_t$ be the sublist of $X$ that contains all the torsion elements. 
Note that if $x\in X_t$ then $x\otimes 1 = 0 \in U$. Hence adding or removing torsion elements from $X$ leaves $\Pcal(X)$ unchanged.
  The same is true for $\Vcal(X)$.

  Note that in Definition~\ref{Definition:PeriodicCentralP} there are factors of type $p_{X\setminus X_\phi}$.
  We do not want these to vanish if $X\setminus X_\phi$ contains torsion elements and we want these factors to have degree $\abs{X\setminus X_\phi}$.
Therefore, we add a new variable $s_0$ that keeps track of the torsion elements.

\begin{Definition}
 \label{Definition:PeriodicCentralPGeneral}
 \XintroAbelianGroup
We define the \emph{central periodic $\Pcal$-space}
\begin{align}
\label{eq:centralperiodicPdefinition}
  \Pper(X) &:= 
  \bigoplus_{e_\phi\in \Vcal(X)} e_\phi p_{X\setminus (X_\phi \cup X_t)} s_0^{\tors(\phi) } \Pcal(X_\phi) %
   \subseteq \bigoplus_{e_\phi\in \Vcal(X)} e_\phi \R[s_0] \otimes \sym(U),
\end{align}
where $\tors(\phi) = \tors_X(\phi) := \abs{X_t\setminus X_\phi}$.
  \end{Definition}

The central periodic $\Pcal$-space has both a homogeneous 'matroid-theoretic' basis and an inhomogeneous basis.
The following two results generalise
Proposition~\ref{Proposition:Pbasis} and
Corollary~\ref{Corollary:newPbasis}.

 Recall that $X_t$ denotes the sublist of $X$ that contains all torsion elements.
\begin{Proposition}[Homogeneous basis]
 \label{Proposition:HomogeneousBasisCentralPeriodic}
 \XintroAbelianGroup
  Then the set
   $\Bper(X) := \{  e_\phi  s_0^{\tors(\phi) } p_{ X \setminus (B \cup ( E(B) \cap X_\phi) \cup X_t)} : e_\phi\in 
   \Vcal(X), \, B \in \BB(X_\phi)   \}$
  is a homogeneous basis for $\Pper(X)$.
   Here, $E(B)$ denotes the set of externally active elements in $X$ with respect to the basis $B$.
 \end{Proposition}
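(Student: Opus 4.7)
The plan is to lift the homogeneous basis of Proposition~\ref{Proposition:Pbasis} termwise through the direct sum decomposition
\[
\Pper(X) = \bigoplus_{e_\phi \in \Vcal(X)} e_\phi \cdot p_{X \setminus (X_\phi \cup X_t)} \cdot s_0^{\tors(\phi)} \cdot \Pcal(X_\phi).
\]
For each fixed $\phi$, since $\Pcal(\cdot)$ is unchanged by adding or removing torsion elements and no basis of $X_\phi$ contains a torsion element, I apply Proposition~\ref{Proposition:Pbasis} to the torsion-free list $X_\phi \setminus X_t$, which still spans $U$ because $X_\phi$ generates a finite-index subgroup of $G$. This yields a homogeneous basis $\{Q_B^\phi : B \in \BB(X_\phi)\}$ of $\Pcal(X_\phi)$ with $Q_B^\phi = p_{(X_\phi \setminus X_t) \setminus (B \cup E_{X_\phi \setminus X_t}(B))}$. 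Multiplying by the prefactor $e_\phi s_0^{\tors(\phi)} p_{X \setminus (X_\phi \cup X_t)}$ and taking the union over $\phi \in \Vcal(X)$ produces a basis of $\Pper(X)$; linear independence across different $e_\phi$'s is automatic from the direct sum.

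It remains to identify these products with the elements of $\Bper(X)$. First, I verify the combinatorial identity $E(B) \cap X_\phi = E_{X_\phi \setminus X_t}(B)$: for $x_j \in X_\phi \setminus B$, the condition ``$x_j \notin \spa\{x_i \in B : i < j\}$'' depends only on the relative order of the elements of $B$ and $x_j$, which is inherited consistently across $X$, $X_\phi$, and $X_\phi \setminus X_t$; moreover, any torsion $x_j$ satisfies $p_{x_j} = 0$, which lies in every such span, so torsion elements never appear in $E(\cdot)$. Second, since the index sets $X \setminus (X_\phi \cup X_t)$ and $(X_\phi \setminus X_t) \setminus (B \cup (E(B) \cap X_\phi))$ are disjoint, a short case-split on $y \in X$ (treating separately $y \in X_t$, $y \in X_\phi \setminus X_t$, and $y \in X \setminus (X_\phi \cup X_t)$, and using $B \subseteq X_\phi \setminus X_t$ and $E(B) \cap X_t = \emptyset$) shows that their union is exactly $X \setminus (B \cup (E(B) \cap X_\phi) \cup X_t)$. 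Hence the two $p$-factors collapse to $p_{X \setminus (B \cup (E(B) \cap X_\phi) \cup X_t)}$, matching $\Bper(X)$.

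Homogeneity is then immediate: within each $e_\phi$-summand a basis element is the product of the monomial $s_0^{\tors(\phi)}$ of degree $\tors(\phi)$ with a product of degree-one forms $p_y \in U$, while the periodic coefficient $e_\phi$ carries no $s$-grading. The main obstacle is the bookkeeping around torsion elements in $X_\phi$: naively applying Proposition~\ref{Proposition:Pbasis} directly to $X_\phi$ rather than to $X_\phi \setminus X_t$ would introduce factors $p_y = 0$ and collapse the basis to zero, and the role of the $s_0^{\tors(\phi)}$ factor in Definition~\ref{Definition:PeriodicCentralPGeneral} is precisely to restore the homogeneous degrees lost by excluding these torsion factors. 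Once this convention is in place, the remaining verifications reduce to the two elementary identities above.
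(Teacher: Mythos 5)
Your proof is correct and takes essentially the same route as the paper: apply Proposition~\ref{Proposition:Pbasis} to each summand $\Pcal(X_\phi)$ of the direct-sum decomposition in Definition~\ref{Definition:PeriodicCentralPGeneral} and absorb the prefactor $e_\phi s_0^{\tors(\phi)}p_{X\setminus(X_\phi\cup X_t)}$ into the resulting $p$-monomial. Your careful passage to the torsion-free list $X_\phi\setminus X_t$ before invoking Proposition~\ref{Proposition:Pbasis}, together with the checks $E(B)\cap X_\phi=E_{X_\phi\setminus X_t}(B)$ and $E(B)\cap X_t=\emptyset$, makes explicit a point the paper's one-line proof glosses over (as written there, the factor $p_{X_\phi\setminus(B\cup E(B))}$ would literally vanish whenever $X_\phi$ contains torsion elements, since $p_y=0$ for $y\in X_t$).
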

\begin{proof}[Proof of Proposition~\ref{Proposition:HomogeneousBasisCentralPeriodic}]
 Use Proposition~\ref{Proposition:Pbasis} for each of the direct summands in \eqref{eq:centralperiodicPdefinition} and note that
 $p_{X\setminus (X_\phi \cup X_t)}  p_{X_\phi \setminus (B\cup E(B))} =  p_{ X \setminus (B \cup ( E(B) \cap X_\phi) \cup X_t)}$.
\end{proof}

Note that there is a natural decomposition 
$\Pper(X) = \bigoplus_{ i \ge 0} \bigoplus_{e_\phi \in \Vcal(X)} e_{\phi} P_{i,\phi}$, where
 each of the spaces $P_{i,\phi}\subseteq \R[s_0] \otimes \sym(U)$ contains only homogeneous polynomials of degree $i$.
 This allows us to define the Hilbert series $\hilb( \Pper(X), q) = \sum_{i\ge 0} 
 \left(\bigoplus_{e_\phi \in \Vcal(X)} \dim P_{i,\phi} \right) q^i$.
The following theorem and Theorem~\ref{Proposition:PeriodicInternalTutteEval} below generalise Theorem~\ref{Proposition:HilbertSeriesTuttePolynomialP}.
 \begin{Theorem}
 \label{Proposition:CentralPeriodicPHilbertTutte}
  \XintroAbelianGroupN
 Then
  \begin{equation}
    \hilb(\Pper(X) , q) = q^{ N  - d } \aritutte_{X }(1, q^{-1}).
    \end{equation}
    In particular, if $X$ is contained in a lattice $\Lambda$ whose fundamental region has volume $1$, then the dimension of $\Pper(X)$ is equal to the volume of the zonotope $Z(X)$.
 \end{Theorem}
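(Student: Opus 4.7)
The plan is to compute $\hilb(\Pper(X), q)$ directly from the homogeneous basis $\Bper(X)$ supplied by Proposition~\ref{Proposition:HomogeneousBasisCentralPeriodic} and match it term by term against $q^{N-d}\aritutte_X(1,q^{-1})$. Each basis element factors as $e_\phi \cdot s_0^{\tors(\phi)} \cdot p_{X\setminus(X_\phi \cup X_t)} \cdot Q_B$, where $(e_\phi, B)$ ranges over $\Vcal(X) \times \BB(X_\phi)$ and $Q_B$ is the standard basis element of $\Pcal(X_\phi)$ from Proposition~\ref{Proposition:Pbasis}. Since the sets $X_t \setminus X_\phi$ and $X \setminus (X_\phi \cup X_t)$ are disjoint with union $X \setminus X_\phi$, the three prefactors together contribute degree exactly $N - |X_\phi|$, independent of any torsion bookkeeping. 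Invoking Theorem~\ref{Proposition:HilbertSeriesTuttePolynomialP} on $\Pcal(X_\phi)$ (the hypothesis holds even when $X_\phi$ contains torsion elements, since they are realised as zero vectors in $U$ and hence as loops in the matroid) and summing, I would obtain
\begin{equation*}
\hilb(\Pper(X), q) \;=\; \sum_{e_\phi \in \Vcal(X)} q^{N-|X_\phi|} \cdot q^{|X_\phi|-d}\,\tutte_{X_\phi}(1, q^{-1}) \;=\; q^{N-d} \sum_{e_\phi \in \Vcal(X)} \tutte_{X_\phi}(1, q^{-1}).
\end{equation*}

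At this point the proof reduces to the purely combinatorial identity $\sum_{e_\phi \in \Vcal(X)} \tutte_{X_\phi}(1, y) = \aritutte_X(1, y)$. Expanding the left-hand side via $\tutte_M(1, y) = \sum_{S,\,\rank S = d}(y-1)^{|S|-d}$ and the right-hand side directly from the definition of the arithmetic Tutte polynomial, then swapping the order of summation on the left, I would need to show that for every full-rank sublist $S \subseteq X$,
\begin{equation*}
|\{e_\phi \in \Vcal(X) : S \subseteq X_\phi\}| \;=\; \multari(S).
\end{equation*}
But $S \subseteq X_\phi$ is equivalent to $e_\phi$ being trivial on $\sg{S}$, so the left-hand side equals $|\hom(G/\sg{S}, S^1)|$. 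Since $\rank S = d$ forces $G/\sg{S}$ to be a finite abelian group with $G_S = G$, Pontryagin duality identifies this count with $|G/\sg{S}| = \multari(S)$; any such $e_\phi$ automatically lies in $\Vcal(X)$ because $S$ contains a basis of $X$. The volume statement follows by setting $q = 1$ and invoking Proposition~\ref{Proposition:ZonotopeArithmeticTutte}.

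The hard part is not any single step but rather the torsion bookkeeping: one has to check that the auxiliary variable $s_0$, the shift $\tors(\phi)$, and the loop contributions of torsion elements to $\tutte_{X_\phi}$ all line up so that every torsion-dependent exponent is absorbed cleanly into the factor $q^{N - |X_\phi|}$. In the torsion-free case everything collapses to the simpler calculation with $\tors(\phi) = 0$ and $X_t = \emptyset$ throughout, and the combinatorial identity above is the only nontrivial ingredient.
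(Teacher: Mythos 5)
Your proof is correct and follows the same skeleton as the paper's: decompose $\Pper(X)$ over the vertices $\Vcal(X)$, observe that the prefactors at $e_\phi$ contribute degree $|X\setminus X_\phi|$, apply Theorem~\ref{Proposition:HilbertSeriesTuttePolynomialP} to $\Pcal(X_\phi)$, and reduce to the identity $\sum_{e_\phi\in\Vcal(X)}\tutte_{X_\phi}(1,y)=\aritutte_X(1,y)$. The one genuine divergence is in how that final identity is handled: the paper simply cites Lemma~6.1 of \cite{moci-tutte-2012}, while you reprove it from scratch by expanding $\tutte_{X_\phi}(1,y)$ as a sum over full-rank sublists, swapping the summation, and identifying $\abs{\{e_\phi\in\Vcal(X):S\subseteq X_\phi\}}=\abs{\hom(G/\sg{S},S^1)}=\abs{G/\sg{S}}=\multari(S)$ via Pontryagin duality for finite abelian groups. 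That reproof is sound — in particular you correctly note that $\rank S=d$ forces $G_S=G$ and that such an $e_\phi$ automatically lies in $\Vcal(X)$ because $S$ contains a basis — and it makes the argument self-contained, which is a modest improvement in exposition at the cost of a few extra lines.

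One small point worth pinning down if you write this up: when you invoke Theorem~\ref{Proposition:HilbertSeriesTuttePolynomialP} for $X_\phi$, the torsion elements of $X_\phi$ are loops (they map to $0\in U$), so strictly speaking $\Pcal(X_\phi)=\Pcal(X_\phi\setminus X_t)$ and the theorem is applied to the loop-free list. The bookkeeping still works because $\tutte_{X_\phi}(1,q^{-1})=q^{-k}\tutte_{X_\phi\setminus\text{loops}}(1,q^{-1})$ where $k$ is the number of loops, so $q^{|X_\phi|-d}\tutte_{X_\phi}(1,q^{-1})=q^{|X_\phi\setminus\text{loops}|-d}\tutte_{X_\phi\setminus\text{loops}}(1,q^{-1})$; your phrase ``torsion elements are realised as zero vectors and hence as loops'' gestures at this, but making the cancellation explicit would close the loop (so to speak).
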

\begin{proof}[Proof of Theorem~\ref{Proposition:CentralPeriodicPHilbertTutte}]

It is known that  $\aritutte_X(1, \beta) = \sum_{\phi \in \Vcal(X)} \tutte_{X_\phi}(1, \beta)$.
 This is Lemma~6.1 in \cite{moci-tutte-2012}. 
 Note that in this equation, $\tutte_{X_\phi}$ denotes the Tutte polynomial of the matroid defined by $X_\phi$, \ie the torsion part of all elements of $X$ is ignored 
 and elements of $G_t$ count as loops.
Hence 
 using Theorem~\ref{Proposition:HilbertSeriesTuttePolynomialP} we obtain
\begin{align*}
  \hilb( \Pper(X), q) &= \sum_{ \phi \in \Vcal(X)  } q^{ \abs{X\setminus X_\phi}}
  \hilb(\Pcal(X_\phi),q) =  \sum_{ \phi \in \Vcal(X)  }  q^{\abs{X\setminus X_\phi} +  \abs{X_\phi} - d} \tutte_{X_\phi}(1, q^{-1})
  \\
  &=  q^{N-d} \aritutte_{X}(1, q^{-1}). \qedhere
\end{align*}
\end{proof}
 \begin{Proposition}[Inhomogeneous basis]
 \label{Proposition:InhomogeneousBasisCentralPeriodic}
  \XintroLattice
 Let $w$ be a short affine regular vector. Then
  $\{ \tilde f_z : z\in \Zcal(X,w) \}$ is a basis for $\Pper(X)$.
 \end{Proposition}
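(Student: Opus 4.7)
The plan is to compare dimensions and then prove linear independence by exploiting the action on the spline $T_X$. For the dimension count, Theorem~\ref{Proposition:CentralPeriodicPHilbertTutte} gives $\dim \Pper(X) = \aritutte_X(1,1)$, which by Proposition~\ref{Proposition:ZonotopeArithmeticTutte} equals $\vol(Z(X))$, and by Theorem~\ref{Theorem:DMdefinedbyDelta} equals $|\Zcal(X,w)|$. Hence the proposed set has the right size and it suffices to establish linear independence.

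Suppose $\sum_{z \in \Zcal(X,w)} c_z \tilde f_z = 0$ in $\Pper(X)$. I would apply this relation to $T_X$ and evaluate at each $z' \in \Zcal(X,w)$ using the $\lim_w$ construction. Theorem~\ref{Theorem:ModifiedBrionVergne}(i) yields
\begin{equation*}
  \lim_w \tilde f_z(\Dpw) T_X(z') = i_X^{\Omega_{z'}}(z' - z),
\end{equation*}
where $\Omega_{z'}$ denotes the big cell containing $z' + \epsilon w$ for small $\epsilon > 0$. The key technical check is that $z' - z \in (\Omega_{z'} - Z(X)) \cap \Lambda$, which allows Theorem~\ref{Theorem:LocalPiecesOverlap} to identify $i_X^{\Omega_{z'}}(z' - z)$ with $\vpf_X(z' - z)$. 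Writing $z' - z = (z'+w) - (z+w)$ and noting that $z + w \in Z(X)$ (by definition of $\Zcal(X,w)$) while $z' + w$ remains in $\overline{\Omega_{z'}}$ for $w$ short enough, the containment holds. The relation thus becomes
\begin{equation*}
  \sum_{z \in \Zcal(X,w)} c_z \, \vpf_X(z' - z) = 0 \qquad \text{for every } z' \in \Zcal(X,w).
\end{equation*}

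To finish, I would argue that the square matrix $M := (\vpf_X(z' - z))_{z', z \in \Zcal(X,w)}$ is invertible. Introduce the partial order $z \preceq z' \iff z' - z \in \cone(X)$. Since $\Pi_X(v)$ is empty for $v \notin \cone(X)$, the entry $\vpf_X(z' - z)$ vanishes whenever $z \not\preceq z'$, while on the diagonal $\vpf_X(0) = 1$. Ordering $\Zcal(X,w)$ by any linear extension of $\preceq$ renders $M$ triangular with unit diagonal, hence invertible, and every $c_z$ must vanish.

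The main technical obstacle is verifying rigorously that the evaluation $\lim_w \tilde f_z(\Dpw) T_X(z')$ really equals $\vpf_X(z' - z)$ for \emph{all} pairs $(z, z') \in \Zcal(X,w)^2$. Two subtleties must be handled: first, that the short vector $w$ be chosen so that the segment from $z'$ to $z' + w$ does not cross any affine admissible hyperplane other than those through $z'$ itself, ensuring $z' + w \in \overline{\Omega_{z'}}$; second, that the specialization $u = z'$ of the quasi-polynomial identity in Theorem~\ref{Theorem:ModifiedBrionVergne}(i) remain valid even though $z'$ need not lie in $\cone(X)$. The latter is settled by observing that both sides are quasi-polynomials in $u$ which agree on an open subset of $U$ and therefore coincide everywhere on $\Lambda$. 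This is the natural periodic-coefficient analogue of the argument used to establish Corollary~\ref{Corollary:newPbasis} in the unimodular case.
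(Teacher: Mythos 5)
Your proof is correct and follows essentially the same route as the paper's: the dimension count $\dim\Pper(X) = |\Zcal(X,w)|$ via the arithmetic Tutte polynomial, followed by linear independence obtained from the fact that $\lim_w \tilde f_z(\Dpw) T_X|_{\Zcal(X,w)} = \vpf_X(\cdot - z)|_{\Zcal(X,w)}$ has support in $\cone(X)+z$ and takes value $1$ at $z$. Your explicit unitriangular-matrix phrasing (ordering $\Zcal(X,w)$ by a linear extension of the cone order $z \preceq z'$) is a spelled-out version of the paper's support/evaluation argument.
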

\begin{proof}[Proof of Proposition~\ref{Proposition:InhomogeneousBasisCentralPeriodic}]
By definition, each $\tilde f_z$ is contained in $\Pper(X)$.
It is known that $\abs{\Zcal(X,w)} = \vol(Z(X)$ (\eg Proposition 13.3 in \cite{concini-procesi-book}).
Hence it follows from Proposition~\ref{Proposition:ZonotopeArithmeticTutte} and Theorem~\ref{Proposition:CentralPeriodicPHilbertTutte}
that $\dim \Pper(X)= \abs{\Zcal(X,w)}$.

Note that the real vector space of all functions $\{ f : \Zcal(X,w) \to \R \} $ is equal to
\begin{align}
   \spa\{ \lim_w \tilde f_z(\Dpw) T_X|_{\Zcal(X,w)} : z \in \Zcal(X,w) \}.
\end{align}
This follows from the fact that for $z\in \Zcal(X,w)$, the support of $\vpf_X(\cdot - z)|_{\Zcal(X,w)} = \lim_w \tilde f_z(\Dpw) T_X|_{\Zcal(X,w)}$
is contained $\cone(X)+z $ and this function assumes the value one at $z$ (cf.~Theorem~\ref{Theorem:ModifiedBrionVergne}).
We can deduce that the set $ \{ \tilde f_z: z \in \Zcal_-(X, w) \} $ is linearly independent.
\end{proof}

\subsection{Internal periodic $\Pcal$-spaces}

The elements of $\Pper(X)$ can be thought of as functions that assign to each $g\in G$ a polynomial
 in $\sym_\CC(U)$. For $p\in \Pper(X)$, we will write $p(g,\cdot)$
to denote this ``local'' part of $p$.

\begin{Definition}[internal periodic $\Pcal$-space]
\label{Definition:internalPeriodicPsimple}
\XintroLattice
Then we define the  \emph{internal periodic $\Pcal$-space}
\begin{equation}
\label{Def:internalperiodicPlattice}
 \Pper_-(X) := \{  p  \in  \Pper(X)  :   D_{\eta_H}^{  m(H)  -  1  }  p(\lambda, \cdot)  =  0 \text{ for all }  H \in \Hcal(X) \text{ and  all }  \lambda \in H \cap \Lambda \}
\end{equation}
where $\Hcal(X)$ denotes the set of all hyperplanes $H$ that are spanned by a sublist of $X$.
  For $H\in \Hcal(X)$,
 $\eta_H \in V$ denotes a normal vector and $m(H):= \abs{X\setminus H}$.
\end{Definition}

 \begin{Example}
 \label{Example:InternalSpaceColoops}
  Let $X= ((2,0),(0,2))\subseteq \Z^2$. The set of vertices of the toric arrangement $\Vcal(X)$ consists of 
  the four maps that send $ (a,b) \in \Z^2$ to 
  $1$, $(-1)^a$, $(-1)^b$, and $(-1)^{a+b}$, respectively. $\Pcal(X)= \bigoplus_{e_\phi\in \Vcal(X)} e_\phi \R$.
  The ``differential'' equations for $\Pper_-(X)$ are $p(0,\cdot) = p((1,0),\cdot) = p((0,1),\cdot)=0$.   
  Hence $ \Pper_-(X) = \spa\{  1 -  (-1)^a - (-1)^b    +  (-1)^{a+b}   \} $.
 \end{Example}

In some proofs, we will require a more general definition, where the list $X$ is contained in a finitely generated abelian group $G$. 
Before making this definition, we have to generalise the notion of a hyperplane.

Recall that we have associated with $G$ a vector space $U=G\otimes \R$
  and a lattice $\Lambda= G\otimes \R \cong G/G_t$, where $G_t\subseteq G$ denotes the torsion subgroup.
  $X\otimes 1$ denotes the image of $X$ under the projection $G\twoheadrightarrow \Lambda$.
  Then we define 
 $\clos_G(Y):= (\spa(X\otimes 1) \cap \Lambda)\times G_t$.
 Note that the isomorphism $\Lambda \oplus G_t \cong G$ is not canonical and that the image of $\lambda\in \Lambda$ in $G$ can vary by a torsion element under different isomorphisms.
 However, $\clos_G(Y)$ can be seen as an element of $G$ in a canonical way. 
We define the set of \emph{generalised hyperplanes} as 
 \begin{equation}
 \label{eq:generalhyperplanes}
 \Hcal(X) := \{ \clos_G(Y) : Y\subseteq X,\, \rank(Y) = d - 1 \}.
 \end{equation}
 
%

 Let $H\in \Hcal(X)$.
 As before, we define $m(H) := \abs{X\setminus H}$ and 
  $\eta_H \in V$ denotes a normal vector for the hyperplane $\spa(H\otimes 1)\subseteq U$.
  Note that  $D_\eta$ acts on $\R[s_0] \otimes\sym(U)$ in the natural way, we just ignore the $s_0$ when differentiating.
 
\begin{Definition}[internal periodic $\Pcal$-space, general definition]
\label{Definition:internalPeriodicP}
\XintroAbelianGroup
Then we define the \emph{internal periodic $\Pcal$-space} as the space 
 \begin{align*}
   \Pper_-(X) := \{  p \in \Pper(X) :  D_{\eta_H}^{m(H)-1}  p(g, \cdot) = 0 \text{ for all } H \in \Hcal(X) \text{ and for all } g \in H  \},
   \label{eq:internalDiffEqs}
 \end{align*}
 where $\Hcal(X)$ denotes the set of generalised hyperplanes as defined in \eqref{eq:generalhyperplanes}.
\end{Definition}
%
%
%

 \begin{Example}
 \label{Example:internal}
 Let $ X = (( 2, \bar 0 ))  \subseteq \Z\oplus \Z/2\Z$.
  Then $\Hcal(X)= \{ \{(0,\bar 0),(0,\bar 1)\}$ and
  $\Pper_-(X) = \spa \{ 1 - (-1)^a, (-1)^{ \bar b } - (-1)^{ a + \bar b } \}$.
 \end{Example}

  \begin{Example}
  \label{Example:internalzwo}
  Let $( (2, \bar 0), (0,\bar 1) ) = X \subseteq \Z\oplus \Z/2\Z$.
  Then $\Hcal(X)= \{ \{(0,\bar 0),(0,\bar 1)\}$ and
  $\Pper(X) = \spa\{ 1,  (-1)^a, (-1)^{\bar b} s_0, (-1)^{a+\bar b} s_0 \},$
  $\Pper_-(X) = \spa \{ 1 - (-1)^a, (-1)^{ \bar b }s_0  - (-1)^{ a + \bar b } s_0 \}$.
  $\aritutte_X(\alpha,\beta) =  2 (\alpha-1) + 4 + (\alpha-1)(\beta-1) + 2(\beta-1) = \alpha\beta + \alpha + \beta + 1$.
 \end{Example}

The following result is the periodic analogue of 
Theorem~\ref{ML-Corollary:internalPcontinuous}.
\begin{Theorem}
\label{Proposition:internalContinuousCharacterisation}
\XintroLattice
Then
   \begin{align}
    \Pper_-(X) &= \{ p\in \Pper(X) %
      : p(D) T_X \text{ is continuous in } \Lambda \}.
      \end{align}
\end{Theorem}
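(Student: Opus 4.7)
The plan is to compare the two spaces using the wall-crossing formula of Boysal--Vergne (to be established in Section~\ref{Section:WallCrossingProofs}), which describes the polynomial jumps of $T_X$ across admissible hyperplanes. The underlying analytic fact is that $T_X$ is of class $C^{m(H)-2}$ across each $H\in\Hcal(X)$: writing $\ell_H\in\sym(V)$ for a linear form vanishing on $H$, the difference $T_X^{\Omega_+}-T_X^{\Omega_-}$ between the two local polynomial pieces on adjacent big cells separated by $H$ factors as $\ell_H^{m(H)-1}\cdot r$ for some $r\in\sym(V)$.

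For the forward inclusion, I would fix $p\in\Pper_-(X)$, a lattice point $\lambda\in\Lambda$, and an admissible hyperplane $H\ni\lambda$ (if no such $H$ exists, $T_X$ is smooth near $\lambda$ and nothing needs to be shown). The jump of $p(D)T_X$ at $\lambda$ across $H$ equals $p(\lambda,D)(T_X^{\Omega_+}-T_X^{\Omega_-})\bigl|_\lambda$, where $p(\lambda,\cdot)\in\sym_\CC(U)$ is the constant-coefficient operator obtained by evaluating the periodic coefficients of $p$ at $\lambda$. Inserting the factorisation $\ell_H^{m(H)-1}\cdot r$ and applying Leibniz, only the term in which all $m(H)-1$ copies of $D_{\eta_H}$ extracted from $p(\lambda,D)$ differentiate $\ell_H^{m(H)-1}$ can survive the evaluation at $\lambda$; this term is a nonzero scalar multiple of $\bigl(D_{\eta_H}^{m(H)-1}p(\lambda,\cdot)\bigr)(D)\,r\bigl|_\lambda$, which vanishes because $D_{\eta_H}^{m(H)-1}p(\lambda,\cdot)=0$ by hypothesis. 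Varying $H$ and $\lambda$ yields continuity of $p(D)T_X$ on all of $\Lambda$.

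For the reverse inclusion, the wall-crossing analysis read backwards shows that the family of scalar continuity conditions at all $\lambda\in H\cap\Lambda$ forces the polynomial identity $D_{\eta_H}^{m(H)-1}p(\lambda,\cdot)=0$; the key is to exploit the orthogonality of the characters $e_\phi\in\Vcal(X)$ in the decomposition $\Pper_\CC(X)=\bigoplus_\phi e_\phi p_{X\setminus (X_\phi\cup X_t)}s_0^{\tors(\phi)}\Pcal_\CC(X_\phi)$, so that continuity constraints over distinct characters decouple by a discrete Fourier argument on $H\cap\Lambda$. As a dimension check, Theorem~\ref{Proposition:PeriodicInternalTutteEval} and Proposition~\ref{Proposition:ZonotopeArithmeticTutte} give $\dim\Pper_-(X)=\aritutte_X(0,1)=\abs{\Zcal_-(X)}$, while Theorem~\ref{Theorem:ModifiedBrionVergne}(ii) and Proposition~\ref{Proposition:InhomogeneousInternalBasis} supply the basis $\{\tilde f_z:z\in\Zcal_-(X)\}$ of $\Pper_-(X)$ lying inside the continuity space, providing a consistency check and an alternative route to equality once a matching upper bound on the dimension of the continuity space is secured.

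The main obstacle is the reverse direction: extracting a polynomial identity from scalar continuity conditions requires careful bookkeeping of how the periodic coefficients $e_\phi$ and the translations of $\lambda$ over $H\cap\Lambda$ interact so as to rule out cancellation between different characters, and one needs to verify that no pathological $p\in\Pper(X)\setminus\Pper_-(X)$ escapes detection by producing jumps that cancel only on $\Lambda$ but not as a polynomial identity on each local piece.
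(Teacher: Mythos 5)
Your forward inclusion is essentially the paper's argument and is sound: the Boysal--Vergne wall-crossing formula (Lemma~\ref{Lemma:WallCrossing}) puts the jump in the form $c_X t_1^{m(H)-1}V_{12}+t_1^{m(H)}h$, and since each local piece $p_\lambda=p(\lambda,\cdot)\in\Pcal_\CC(X)$ has $s_1$-degree at most $m(H)-1$ (a fact you implicitly use and that is worth making explicit), the only term surviving evaluation at $\lambda\in H$ is the one where the full $s_1^{m(H)-1}$ coefficient $p_{m(H)-1}$ of $p_\lambda$ kills $t_1^{m(H)-1}$ --- precisely what your hypothesis annihilates.

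The reverse inclusion is where the gap is, and it is a real one. Your proposal to ``decouple by a discrete Fourier argument on $H\cap\Lambda$'' does not go through as stated, because the quantity appearing in the continuity condition at $\lambda$ is $\bigl(p_{m-1}(\lambda)(D)V_{12}(\lambda)\bigr)(\lambda)$, and \emph{both} factors vary with $\lambda$: not only the periodic local operator $p_{m-1}(\lambda)$ but also the wall-jump polynomial $V_{12}(\lambda)$, which changes from one big cell of the arrangement $X\cap H$ inside $H$ to the next and is not $\Lambda$-periodic. Orthogonality of the characters $e_\phi$ therefore does not separate the constraints cleanly. The paper's actual argument takes a different, more direct route: fix a single $\lambda$ with $p_{m-1}(\lambda)\neq 0$, show that $p_{m-1}(\lambda)\in\Pcal(X\cap H)$ by inspecting generators $p_Y$, invoke the duality $\Pcal(X\cap H)\leftrightarrow\Dcal(X\cap H)$ (Theorems~\ref{Proposition:PDduality} and~\ref{Proposition:Dlocalpieces}) to find a big cell $\Omega'\subseteq H$ with $p_{m-1}(\lambda)(D)T_{X\cap H}^{\Omega'}\not\equiv 0$, and then use Lemma~\ref{Lemma:LatticeConeVanishes} (a polynomial vanishing on $C\cap(\Lambda'+\lambda)$ for a full-dimensional cone $C$ vanishes identically) to produce a lattice point $\mu\in\Omega'\cap(\Lambda'+\lambda)$; since $p$ is $\Lambda'$-periodic, $p_\mu=p_\lambda$ and the jump at $\mu$ is nonzero, a contradiction. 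The two missing ingredients in your sketch are exactly the claim $p_{m-1}\in\Pcal(X\cap H)$ and the sublattice-plus-cone vanishing lemma. Finally, your proposed dimension-count fallback is circular: Lemma~\ref{Lemma:fzInternalIndependent} (containment of $\tilde f_z$ in $\Pper_-(X)$) and hence Proposition~\ref{Proposition:InhomogeneousInternalBasis} and Theorem~\ref{Proposition:PeriodicInternalTutteEval} all depend, directly or through Proposition~\ref{Proposition:DeletionContractionInternalPeriodic}, on the theorem you are trying to prove, so they cannot be used to bound the continuity space from above.
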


  \begin{Theorem}
  \label{Proposition:PeriodicInternalTutteEval}
  \XintroAbelianGroupN
  Then
  \begin{equation}
    \hilb(\Pper_-(X) , q) = q^{ N - d} \aritutte_{X}(0, q^{-1}).
    \end{equation}
    In particular, if $X$ is contained in a lattice, the dimension of $\Pper_-(X)$ is equal to the number of interior lattice points of the zonotope $Z(X)$.
 \end{Theorem}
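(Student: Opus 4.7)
The approach is induction on $N = |X|$ using the deletion--contraction short exact sequence for $\Pper_-(X)$ from Proposition~\ref{Proposition:DeletionContractionInternalPeriodic}. For $x \in X$ that is neither a torsion element nor a coloop, the sequence
\[
0 \to \Pper_-(X\setminus x)[1] \to \Pper_-(X) \to \Pper_-(X/x) \to 0
\]
yields the Hilbert series recursion $\hilb(\Pper_-(X), q) = q\,\hilb(\Pper_-(X\setminus x), q) + \hilb(\Pper_-(X/x), q)$.

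Writing $F(X, q) := q^{N-d}\aritutte_X(0, q^{-1})$ and using $\rank(X\setminus x) = d$, $\rank(X/x) = d-1$, and $|X\setminus x| = |X/x| = N-1$, the arithmetic Tutte polynomial recursion~\eqref{equation:AriTutteDelCon} immediately gives $F(X, q) = q F(X\setminus x, q) + F(X/x, q)$, matching the Hilbert series recursion. So the inductive step goes through whenever such an $x$ exists.

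The main obstacle is the base case: lists $X$ in which every element is a coloop or a torsion element, where the short exact sequence is unavailable. My plan is to first peel off torsion elements: if $x \in X$ is a torsion element, I analyze how $\Vcal(X)$, the factor $s_0^{\tors(\phi)}$ from Definition~\ref{Definition:PeriodicCentralPGeneral}, and $\aritutte_X$ all transform under removal of $x$, and verify that both sides of the claimed identity transform compatibly. This reduces the problem to a list $X$ of $d$ non-torsion coloops, i.e., an independent set with $N = d$. In this setting, the unique basis $B = X$ has empty external activity, so by Proposition~\ref{Proposition:HomogeneousBasisCentralPeriodic} the space $\Pper(X)$ is concentrated in degree $0$ and indexed by $\Vcal(X)$; simultaneously $q^{N-d}\aritutte_X(0, q^{-1})$ collapses to the nonnegative integer $\sum_{S\subseteq X} \multari(S)(-1)^{d-|S|}$. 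A direct dimension count for $\Pper_-(X)$, using the hyperplane equations $D_{\eta_H}^{m(H)-1}p(\lambda,\cdot)=0$ from Definition~\ref{Definition:internalPeriodicP} on each of the $d$ coordinate hyperplanes generated by $d-1$ members of $X$, then matches this integer.

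Finally, the concluding claim about the number of interior lattice points of the zonotope is immediate from Proposition~\ref{Proposition:ZonotopeArithmeticTutte}, which identifies $\aritutte_X(0, 1)$ with $|\Zcal_-(X)|$.
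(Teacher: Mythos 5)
Your inductive step via the deletion--contraction sequence (Proposition~\ref{Proposition:DeletionContractionInternalPeriodic}) together with~\eqref{equation:AriTutteDelCon} is exactly what the paper does, and it is fine. The gap is in your base case. You propose to first peel off torsion elements of $X$ one at a time and check that the two sides of the identity transform compatibly, reducing to a list of $d$ non-torsion coloops. But removing a single torsion element $x_t$ does not produce a clean recursion on the arithmetic side: writing
$\aritutte_X(0,q^{-1})=\sum_{S\subseteq X\setminus x_t}\bigl[\multari(S)+\multari(S\cup x_t)(q^{-1}-1)\bigr](-1)^{d-\rank S}(q^{-1}-1)^{|S|-\rank S}$,
the correction factor $\multari(S\cup x_t)/\multari(S)=1/[\sg{x_t}:\sg{S}\cap\sg{x_t}]$ genuinely depends on $S$ (for instance, when $S$ contains another torsion element with nontrivial intersection of cyclic groups), so $\aritutte_X$ is not a function of $\aritutte_{X\setminus x_t}$ alone. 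Correspondingly, on the spline side the weight $s_0^{\tors(\phi)}$ shifts by one only for those vertices $e_\phi$ with $e_\phi(x_t)\ne 1$, which couples the torsion elements together rather than letting you remove them independently. So ``verify that both sides transform compatibly'' is not a step you can carry out without essentially redoing the full molecule computation.

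The paper's base case is Lemma~\ref{Lemma:PspacesMolecules}, and it handles all coloops and torsion elements at once rather than peeling: it writes down an explicit set $\Gamma(X)$ of products of differences of characters, checks that each generator vanishes on all generalised hyperplanes, proves linear independence, and then matches the Hilbert series to $q^{N-d}\aritutte_X(0,q^{-1})$ by splitting the arithmetic matroid as $X_f\oplus X_t$ and evaluating $\aritutte_{X_t}(0,q)$ via Pontryagin duality for $T(G_t)$ together with inclusion--exclusion (the $\mu_i=\nu_i$ identity). Crucially, to conclude that $\Gamma(X)$ spans, the lemma does not do a ``direct dimension count'' as you suggest; it invokes the a priori upper bound $\dim\Pper_-(X)\le\aritutte_X(0,1)$ of Lemma~\ref{Lemma:InternalDimensionInequality}, which is itself established by the weaker left-exact sequence~\eqref{eq:leftexact} and an auxiliary induction with Lemma~\ref{Lemma:RankOne} as its own base case. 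Your proposal is missing both this explicit molecule analysis (including the torsion combinatorics) and the upper-bound argument; as written, the base case is not complete.
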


Here is a generalisation of Corollary~\ref{Corollary:InternalPbasis}.
\begin{Proposition}[Inhomogeneous basis]
\label{Proposition:InhomogeneousInternalBasis}
 \XintroLattice
 Then $\{ \tilde f_z : z \in \Zcal_-(X) \}$ is a basis for 
$\Pper_-(X) $.
 \end{Proposition}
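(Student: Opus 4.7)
The plan is to verify three things: (a) each $\tilde f_z$ with $z\in\Zcal_-(X)$ lies in $\Pper_-(X)$, (b) the number of these elements equals $\dim\Pper_-(X)$, and (c) the $\tilde f_z$'s are linearly independent. The first two are almost immediate from results already stated, so the real content is in (c), and I expect that to be the main obstacle.

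For (a), by construction $\tilde f_z\in\Pper(X)$, and Theorem~\ref{Theorem:ModifiedBrionVergne}(ii) asserts that for $z\in\Zcal_-(X)$ the function $\tilde f_z(D)T_X$ is continuous on $\Lambda$ and satisfies $\tilde f_z(D)T_X(u)=\vpf_X(u-z)$. Theorem~\ref{Proposition:internalContinuousCharacterisation} then places $\tilde f_z$ in $\Pper_-(X)$. For (b), Theorem~\ref{Proposition:PeriodicInternalTutteEval} gives $\dim\Pper_-(X)=\aritutte_X(0,1)$, and Proposition~\ref{Proposition:ZonotopeArithmeticTutte} identifies this with $\abs{\Zcal_-(X)}$, so the proposed set has the right cardinality.

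For (c), my plan is to exhibit a triangular evaluation pairing, in the same spirit as the proof of Proposition~\ref{Proposition:InhomogeneousBasisCentralPeriodic}. Since $0\notin\conv(X)$, the cone $\cone(X)$ is pointed, so the relation $z\preceq z'\iff z'-z\in\cone(X)$ is a partial order on $\Lambda$; fix any total order $\leq$ on the finite set $\Zcal_-(X)$ refining $\preceq$. For $z,z'\in\Zcal_-(X)$, apply $\tilde f_z(D)$ to $T_X$ and evaluate at $z'$: by Theorem~\ref{Theorem:ModifiedBrionVergne}(ii) the result is $\vpf_X(z'-z)$. The support of $\vpf_X$ lies in $\cone(X)\cap\Lambda$, so the value vanishes unless $z\preceq z'$; and for $z=z'$ we get $\vpf_X(0)=\abs{\Pi_X(0)\cap\Z^N}=1$ because $\cone(X)$ is pointed. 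Hence the square matrix $\bigl[\tilde f_z(D)T_X(z')\bigr]_{z,z'\in\Zcal_-(X)}$ is triangular with $1$'s on the diagonal, so nonsingular, forcing linear independence of the $\tilde f_z$.

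Combining (a), (b), (c), we conclude that $\{\tilde f_z:z\in\Zcal_-(X)\}$ is a basis of $\Pper_-(X)$. The main obstacle, as anticipated, is the linear independence step; the only delicate point there is to ensure that the evaluation $\tilde f_z(D)T_X(z')$ at the lattice point $z'$ is unambiguous, which is exactly what the continuity statement in Theorem~\ref{Theorem:ModifiedBrionVergne}(ii) guarantees, and to use the pointedness of $\cone(X)$ (a standing assumption to make $T_X$ and $\vpf_X$ well-defined) to obtain a genuine partial order.
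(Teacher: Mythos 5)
Your proposal is correct and follows essentially the same route as the paper. Parts (a) and (b) are exactly what the paper does (via Theorems~\ref{Theorem:ModifiedBrionVergne}(ii), \ref{Proposition:internalContinuousCharacterisation}, \ref{Proposition:PeriodicInternalTutteEval} and Proposition~\ref{Proposition:ZonotopeArithmeticTutte}); for (c), the paper instead deduces linear independence by observing that $\Zcal_-(X)\subseteq\Zcal(X,w)$ and that $\{\tilde f_z:z\in\Zcal(X,w)\}$ is already known to be a basis for $\Pper(X)$ (Proposition~\ref{Proposition:InhomogeneousBasisCentralPeriodic}), but since that proposition is itself proved by the very same triangular support argument you inline ($\vpf_X(\,\cdot-z)$ supported in $\cone(X)+z$, value $1$ at $z$), your version is just a self-contained restatement of the same idea and correctly avoids needing the inclusion $\Zcal_-(X)\subseteq\Zcal(X,w)$.
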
 
 
\begin{Remark}
 In contrast to the central periodic space, the internal periodic space in general does not have a decomposition $\Pper_-(X) =
  \bigoplus_{\phi} e_\phi P_\phi$ for some $P_\phi\subseteq \R[s_0] \otimes \sym(U)$ 
  (\eg Example~\ref{Example:InternalSpaceColoops}). This and the fact that we do not have   statement analogous to Proposition~\ref{Proposition:HomogeneousBasisCentralPeriodic}
  make it a lot more difficult to handle this space.

Therefore, the proofs of the results in this subsection are considerably longer than the ones in the previous subsection.
For the proof of Theorem~\ref{Proposition:internalContinuousCharacterisation} we will use a residue 
formula for the jump of the multivariate spline across a wall that is due to Boysal--Vergne (see Section~\ref{Section:WallCrossingProofs}).
Theorem~\ref{Proposition:PeriodicInternalTutteEval} requires the most work.
We will prove it inductively using the exact sequence in Proposition~\ref{Proposition:DeletionContractionInternalPeriodic} below.
In its proof, we will use the ``$\subseteq$''-part of Proposition~\ref{Proposition:InhomogeneousInternalBasis} that is fairly simple (Lemma~\ref{Lemma:fzInternalIndependent}).
The rest of Proposition~\ref{Proposition:InhomogeneousInternalBasis} will then follow via a dimension argument (see Section~\ref{Section:DeletionContraction}).
\end{Remark}

 \begin{Remark}
   The definition of the internal periodic $\Pcal$-space was inspired by the representation
   of the $\Pcal$-spaces as
    an inverse systems of  power ideals given in \cite{ardila-postnikov-2009, holtz-ron-2011}.

    If $X\subseteq \Lambda$ is unimodular, then $\Pper_-(X)=\Pcal_-(X)$ and
  the description of this space in \eqref{Def:internalperiodicPlattice} is the same as the description of  
  the internal $\Pcal$-space as  an inverse system (or kernel) of
  a power ideal  in these two papers.
 \end{Remark}

\begin{Remark}
For each $g\in G$ there is a ``local'' version of the periodic $\Pcal$-spaces at $g$,
\ie $\Pper(X)_g:= \{ p(g,\cdot)  : p \in \Pper(X)\}\subseteq\sym(U_\CC)$ and
$\Pper_-(X)_g:= \{ p(g,\cdot)  : p \in \Pper_-(X)\}$.
It is easy to see that $\Pper(X)_g \subseteq \Pcal_\CC(X)$ for any $g \in G$.
The space $\Pper_-(X)_g$ is a \emph{semi-internal}
space in the sense of \cite{holtz-ron-xu-2012,lenz-hzpi-2012}. However, 
in general $\Pper_-(X)_g$ is not equal to one of the specific types of semi-internal
spaces that were studied in these two papers.
\end{Remark}

\section{Duality between $\DM(X)$ and $\Pper(X)$} 
 \label{Section:PeriodicDuality}
 
 \subsection{Overview}
 The goal of this section is to prove that $\Pper_\CC(X)$ and $\DM_\CC(X)$ are dual in analogy with 
  Theorem~\ref{Proposition:PDduality}. 
  We will first   define a pairing that induces this duality. If $X$ is unimodular, this pairing agrees with the one defined in \eqref{eq:pairing}.
  Then we will show that  $\Pper_\CC(X)$  is canonically isomorphic to $\CC[\Lambda]/\dJC(X)$.
  We will   see that one can also obtain the pairing using this isomorphism and a canonical pairing  $\CC[\Lambda] / \dJC(X) \times \DM_\CC(X) \to \CC$.

\begin{Definition}
\XintroLattice 
 Let $p= \sum_{e_\phi\in \Vcal(X)} e_\phi p_{X\setminus X_\phi} p_\phi \in \Pper_\CC(X)$ and $f= \sum_{e_\phi\in \Vcal(X)} e_\phi f_\phi \in \DM_\CC(X)$.
 Then we define
 \begin{equation}
  \discpairP{p}{f} :=  \sum_{e_\phi \in \Vcal(X)} \pair{p_\phi}{ f_\phi}.  
 \end{equation}
\end{Definition}
Note that \textit{a priori}, the function $f$ above is not a polynomial, but a function $\Lambda\to \CC$.
However, by Proposition~\ref{Proposition:DMdecomposition},
 the functions $f_\phi$ are all restrictions to $\Lambda$ of polynomials in $\Dcal_\CC(X_\phi)$.
 Therefore, we can identify them in a unique way with polynomials
in $\Dcal_\CC(X_\phi)$.

 \begin{Theorem}
 \label{Proposition:PperDMduality}
 \XintroLattice
 Then the  spaces $\Pper_\CC(X)$ and $\DM_\CC(X)$ are dual under the pairing $\discpairP{\cdot}{\cdot}$, \ie the map
 \begin{equation}
 \label{eq:PperDMduality}
 \begin{split}
  \DM_\CC(X) &\to \Pper_\CC(X)^* \\
  f & \mapsto \discpairP{\cdot}{f}
 \end{split}
  \end{equation}
is an isomorphism.
\end{Theorem}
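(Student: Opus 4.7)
The plan is to exploit the block-diagonal structure built into both spaces. By Definition~\ref{Definition:PeriodicCentralPGeneral}, $\Pper_\CC(X) = \bigoplus_{e_\phi \in \Vcal(X)} e_\phi p_{X \setminus X_\phi} \Pcal_\CC(X_\phi)$ as a direct sum, and by Proposition~\ref{Proposition:DMdecomposition}, $\DM_\CC(X) = \bigoplus_{e_\phi \in \Vcal(X)} e_\phi \Dcal_\CC(X_\phi)|_\Lambda$ as a direct sum. The pairing $\discpairP{\cdot}{\cdot}$ is, by construction, block-diagonal with respect to these decompositions: given $p = \sum_\phi e_\phi p_{X\setminus X_\phi} p_\phi$ and $f = \sum_\phi e_\phi f_\phi$, only the ``same-$\phi$'' blocks contribute, and on each block the pairing is $(p_\phi, f_\phi) \mapsto \pair{p_\phi}{f_\phi}$, which is exactly the classical pairing between $\Pcal_\CC(X_\phi)$ and $\Dcal_\CC(X_\phi)$ from \eqref{eq:pairing}. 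So the theorem should reduce to applying Theorem~\ref{Proposition:PDduality} to each $X_\phi$ and assembling the pieces.

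First I would check that the pairing is well-defined and that the natural identifications are isomorphisms on each block. Multiplication by the nonzero polynomial $p_{X\setminus X_\phi}$ is injective in the polynomial ring $\sym(U_\CC)$, so the map $p_\phi \mapsto e_\phi p_{X\setminus X_\phi} p_\phi$ identifies $\Pcal_\CC(X_\phi)$ with the $\phi$-summand of $\Pper_\CC(X)$. Likewise, restriction to the Zariski-dense lattice $\Lambda$ identifies $\Dcal_\CC(X_\phi)$ with $\Dcal_\CC(X_\phi)|_\Lambda$. Under these identifications, $\discpairP{\cdot}{\cdot}$ becomes the orthogonal sum of the pairings $\pair{\cdot}{\cdot}$ on $\Pcal_\CC(X_\phi) \times \Dcal_\CC(X_\phi)$ indexed by $\phi \in \Vcal(X)$.

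Next I would verify injectivity of the map $f \mapsto \discpairP{\cdot}{f}$. If $f = \sum_\phi e_\phi f_\phi \in \DM_\CC(X)$ pairs to zero against every element of $\Pper_\CC(X)$, then specialising $p$ to $e_\phi p_{X\setminus X_\phi} q$ for arbitrary $q \in \Pcal_\CC(X_\phi)$ forces $\pair{q}{f_\phi} = 0$ for all $q \in \Pcal_\CC(X_\phi)$. By Theorem~\ref{Proposition:PDduality} applied to $X_\phi$, this implies $f_\phi = 0$ in $\Dcal_\CC(X_\phi)$, and hence as a function on $\Lambda$. Since this holds for every $\phi$, we have $f = 0$.

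Finally, I would compare dimensions. Theorem~\ref{Proposition:PDduality} gives $\dim \Pcal_\CC(X_\phi) = \dim \Dcal_\CC(X_\phi)$ for each $\phi$, so
\begin{equation*}
\dim \Pper_\CC(X) = \sum_{\phi} \dim \Pcal_\CC(X_\phi) = \sum_{\phi} \dim \Dcal_\CC(X_\phi) = \dim \DM_\CC(X) = \dim \Pper_\CC(X)^*.
\end{equation*}
An injective linear map between finite-dimensional spaces of equal dimension is an isomorphism, which completes the proof. There is no genuine obstacle here: the only point that requires care is that both decompositions are indexed compatibly by $\Vcal(X)$ and that both identifications ($p_{X\setminus X_\phi} \cdot$ and restriction to $\Lambda$) are bijections on each block. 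Once this bookkeeping is in place, the result is a direct-sum repackaging of the classical duality.
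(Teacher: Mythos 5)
Your proposal is correct and follows essentially the same route as the paper: both arguments rest on the block-diagonality of $\discpairP{\cdot}{\cdot}$ with respect to the $\Vcal(X)$-decompositions of $\Pper_\CC(X)$ and $\DM_\CC(X)$, followed by an appeal to the classical duality of Theorem~\ref{Proposition:PDduality} on each block, together with Proposition~\ref{Proposition:DMdecomposition}. The paper leaves the final assembly implicit ("easily deduced"), whereas you spell it out via injectivity plus a dimension count; for finite-dimensional spaces this is interchangeable with the "direct sum of isomorphisms" phrasing the paper has in mind.
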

\begin{proof}
 It follows from the definition that $\discpairP{e_\phi p}{e_\varphi f}= \pair{p}{0} +\pair{0}{f} =  0$ for 
 $e_\phi \neq e_\varphi$, $p\in p_{X\setminus X_\phi} \Pcal_\CC(X_\phi)$, and $f\in \Dcal_\CC(X_\phi)$.
 The statement can then easily be deduced from Theorem~\ref{Proposition:PDduality}, taking into account Proposition~\ref{Proposition:DMdecomposition}.
\end{proof}

  There is a natural pairing
 $\discpair{}{} : \Z[\Lambda]/\dJ(X) \times \DM(X) \to \Z$ defined by $\discpair{\lambda}{f}:= f(\lambda)$ for $\lambda\in \Lambda$.
 This pairing can be extended   to a pairing   $\discpair{}{} : \CC[\Lambda]/\dJC(X) \times \DM_\CC(X) \to \CC$.

\begin{Theorem}
\label{Proposition:DMquotientduality}
  \XintroLattice
 Let $w \in U$ be an affine regular vector.
  Then the set $\{  \bar\lambda : \lambda \in \Zcal(X,w) \} \subseteq \CC[\Lambda]/\dJC(X) $ is a basis for the vector space
  $\CC[\Lambda]/\dJC(X) $.

 Furthermore, the pairing $\discpair{\cdot}{\cdot}$
 induces a  duality between the two spaces,
  \ie the map $\DM_\CC(X) \to  (\CC[\Lambda]/\dJC(X))^*$, $f\mapsto \discpair{\cdot}{f}$ is an isomorphism.
\end{Theorem}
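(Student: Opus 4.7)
The plan is to establish the duality assertion first and then deduce the basis statement as a consequence. Let $\Phi \colon \DM_\CC(X) \to (\CC[\Lambda]/\dJC(X))^*$ be the map $f \mapsto \discpair{\cdot}{f}$; this is well-defined because $\dJC(X)$ annihilates $\DM_\CC(X)$ by definition. Injectivity of $\Phi$ is immediate: if $\Phi(f)=0$ then $f(\lambda)=\discpair{\bar\lambda}{f}=0$ for every $\lambda\in\Lambda$, so $f\equiv 0$.

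The substantive content lies in showing that $\Phi$ is surjective. Given $\psi\in (\CC[\Lambda]/\dJC(X))^*$, define $g\colon\Lambda\to\CC$ by $g(\lambda):=\psi(\bar\lambda)$; the aim is to verify that $g\in\DM_\CC(X)$, meaning $p(\nabla)g=0$ for every $p\in\dJC(X)$. A direct computation in the group algebra shows that if $p=\sum_\mu c_\mu\,\mu$, then
\[
(p(\nabla)g)(x) \;=\; \sum_\mu c_\mu\,g(x-\mu) \;=\; \psi\!\left(\bar x\cdot\overline{\tau(p)}\right),
\]
where $\tau\colon \CC[\Lambda]\to\CC[\Lambda]$ is the $\CC$-algebra involution induced by $\mu\mapsto-\mu$. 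Thus the key algebraic step is the invariance $\tau(\dJC(X))=\dJC(X)$. This follows from the identity $\nabla_{-y} = -\tau_{-y}\,\nabla_y$ in $\CC[\Lambda]$, which yields $\tau(\nabla_Y) = (-1)^{|Y|}\tau_{-\sum_{y\in Y}y}\,\nabla_Y$ for every cocircuit $Y$; since $\tau_{-\sum_{y\in Y}y}$ is a unit in $\CC[\Lambda]$, the element $\tau(\nabla_Y)$ still lies in $\dJC(X)$. Consequently $\overline{\tau(p)}=0$ and $(p(\nabla)g)(x)=0$ for all $x$, so $g\in\DM_\CC(X)$ and $\Phi(g)=\psi$.

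Once $\Phi$ is an isomorphism we obtain $\dim_\CC(\CC[\Lambda]/\dJC(X))^* = \dim_\CC\DM_\CC(X) = |\Zcal(X,w)|$ by Theorem~\ref{Theorem:DMdefinedbyDelta}. Since the algebraic dual of an infinite-dimensional vector space is strictly larger than the space itself, this forces $\CC[\Lambda]/\dJC(X)$ to be finite-dimensional of dimension $|\Zcal(X,w)|$. To conclude the basis statement it only remains to prove linear independence of $\{\bar\lambda:\lambda\in\Zcal(X,w)\}$ in the quotient: if $\sum_\lambda c_\lambda\,\lambda\in\dJC(X)$, then pairing with an $f_\mu\in\DM_\CC(X)$ satisfying $f_\mu|_{\Zcal(X,w)}=\delta_\mu$ (which exists by Theorem~\ref{Theorem:DMdefinedbyDelta}) gives $c_\mu=0$.

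The main obstacle is the invariance property $\tau(\dJC(X))=\dJC(X)$; once this is in hand the remainder of the argument is essentially formal. A pleasant feature of this approach is that it sidesteps any explicit reduction procedure for representatives of $\CC[\Lambda]/\dJC(X)$, which would otherwise be the main combinatorial difficulty.
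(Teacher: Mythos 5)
Your argument is correct and gives a self-contained proof of a result the paper disposes of by a bare citation to Proposition~13.16 and Theorem~13.19 of De Concini--Procesi. The skeleton --- prove $\Phi$ is an isomorphism onto the full dual, deduce from the finiteness of $\dim\DM_\CC(X)$ that $\CC[\Lambda]/\dJC(X)$ is finite-dimensional of the same dimension, then obtain linear independence of $\{\bar\lambda:\lambda\in\Zcal(X,w)\}$ by pairing against the dual basis supplied by Theorem~\ref{Theorem:DMdefinedbyDelta} --- is clean, and the identification of the key algebraic point as the $\tau$-invariance $\tau(\dJC(X))=\dJC(X)$, proved through $\tau(\nabla_Y)=(-1)^{|Y|}\tau_{-\sum_{y\in Y}y}\nabla_Y$, is exactly the right leverage. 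What this route buys over the citation is explicitness: it exhibits the duality, the finite-dimensionality of $\CC[\Lambda]/\dJC(X)$, and the basis as outputs of a single short argument, rather than as imports.

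One ordering defect should be repaired. The well-definedness of $\Phi$ is not ``immediate by definition'': for $p=\sum_\mu c_\mu\,\mu$ one has $(p(\nabla)f)(0)=\sum_\mu c_\mu f(-\mu)$, whereas $\discpair{p}{f}=\sum_\mu c_\mu f(\mu)=(\tau(p)(\nabla)f)(0)$, so the vanishing of $\discpair{p}{f}$ for $p\in\dJC(X)$ and $f\in\DM_\CC(X)$ already uses the inclusion $\tau(\dJC(X))\subseteq\dJC(X)$. You establish this only inside the surjectivity step; as written, the opening sentence claiming well-definedness is formally unjustified. Promote the $\tau$-invariance to a standalone first observation (it is also what underlies the paper's later computation $\discpair{1-x}{f}=(\nabla_{-x}f)(0)$ in the proof of Theorem~\ref{Theorem:pairingisomorphism}), and the whole argument reads cleanly.
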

\begin{proof}
 This follows from Proposition~13.16 and Theorem~13.19 in \cite{concini-procesi-book}.
\end{proof}

\begin{Theorem}
\label{Theorem:pairingisomorphism}
\XintroLattice
 There exists a canonical isomorphism $L : \Pper_\CC(X) \to \CC[\Lambda]/\dJC(X)$ \st
 for $p\in \Pper_\CC(X)$ and $f\in \DM_\CC(X)$,
 $\discpairP{p}{f}=\discpair{L(p)}{f}$.
\end{Theorem}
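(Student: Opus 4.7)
The plan is to define $L$ abstractly via the two duality theorems that have already been established in the paper. By Theorem~\ref{Proposition:PperDMduality}, the assignment $\alpha : \Pper_\CC(X) \to \DM_\CC(X)^*$, $\alpha(p) := \discpairP{p}{\cdot}$, is an isomorphism of finite-dimensional complex vector spaces. By Theorem~\ref{Proposition:DMquotientduality}, the analogous assignment $\beta : \CC[\Lambda]/\dJC(X) \to \DM_\CC(X)^*$, $\beta(q) := \discpair{q}{\cdot}$, is likewise an isomorphism.

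I would then simply set $L := \beta^{-1}\circ\alpha$. This is automatically a linear isomorphism, and by construction
\begin{equation}
  \discpair{L(p)}{f} \;=\; \beta(L(p))(f) \;=\; \alpha(p)(f) \;=\; \discpairP{p}{f}
\end{equation}
for every $p\in \Pper_\CC(X)$ and every $f\in \DM_\CC(X)$, which is exactly the identity required. Uniqueness of $L$ with this property follows from non-degeneracy of $\discpair{\cdot}{\cdot}$ (part of Theorem~\ref{Proposition:DMquotientduality}): if $L'$ were another such map, then for each $p$ the class $L(p)-L'(p)\in \CC[\Lambda]/\dJC(X)$ would annihilate every element of $\DM_\CC(X)$ under $\discpair{\cdot}{\cdot}$, hence vanish. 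It is in this sense that $L$ deserves to be called canonical: it is uniquely characterised by the compatibility of the two pairings.

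Essentially all of the substantive work has already been done in establishing the two dualities; the present statement is really just the resulting comparison isomorphism. The only potential obstacle is that one might prefer an explicit description of $L$, for instance a formula on each summand $e_\phi\, p_{X\setminus X_\phi}\,\Pcal_\CC(X_\phi)$ of $\Pper_\CC(X)$ arising from a natural ring map $\sym_\CC(U)\to \CC[\Lambda]$ (sending $p_x$ to something like $\nabla_x$, respectively $1-e_\phi(-x)\tau_x$ on the $\phi$-component, compatibly with the decomposition $\DM_\CC(X)=\bigoplus_\phi e_\phi\,\Dcal_\CC(X_\phi)|_\Lambda$ of Proposition~\ref{Proposition:DMdecomposition}). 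Verifying that such a concrete formula agrees with $\beta^{-1}\circ\alpha$ would amount to checking it on a basis of each $\Pcal_\CC(X_\phi)$ and using Theorem~\ref{Proposition:PDduality}; however this extra step is not needed for the statement as phrased, so I would record it only if required later.
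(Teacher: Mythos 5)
Your argument is correct but takes a genuinely different, and much more abstract, route than the paper. You let the two duality theorems do all the work: from Theorem~\ref{Proposition:PperDMduality} and Theorem~\ref{Proposition:DMquotientduality} you get (non-canonically stated, canonically defined) isomorphisms onto $\DM_\CC(X)^*$, compose one with the inverse of the other, and observe that the resulting $L$ is the unique map satisfying the required pairing compatibility. One small point to tighten: Theorem~\ref{Proposition:PperDMduality} actually gives the isomorphism in the direction $\DM_\CC(X)\to \Pper_\CC(X)^*$, which is the transpose of your $\alpha$; you should note that since both spaces have the same finite dimension (Proposition~\ref{Proposition:dimensionDM} together with Theorem~\ref{Proposition:CentralPeriodicPHilbertTutte}) this is equivalent to non-degeneracy of $\discpairP{\cdot}{\cdot}$, hence to $\alpha$ being an isomorphism too, and likewise for $\beta$. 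With that inserted, the proof is complete and has no circularity, as both duality theorems are established before this one.

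The paper instead constructs $L$ explicitly via \eqref{eq:isomorphismL}, as $L(p)=\sum_\phi \kappa^\phi(i_{\log}^\theta(\tau_\theta(j_\phi(p_\phi))))$, built from the primary decomposition of $\dJC(X)$, the Chinese Remainder Theorem, the inhomogeneous cocircuit ideals $\cJC(X_\phi,\theta)$, and the logarithmic isomorphism of Proposition~\ref{Proposition:LogIso}; the content of the proof is then the direct verification, on generators $p_Y$, that this concrete $L$ intertwines $\discpairP{\cdot}{\cdot}$ with $\discpair{\cdot}{\cdot}$. Since by your uniqueness argument any $L$ with the stated property agrees with the paper's, the two constructions coincide. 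What your shorter proof does not deliver is the explicit formula, which is exactly what the paper uses afterwards: it makes $L$ algorithmically computable (Remark~\ref{Remark:PairingEval}, Examples~\ref{Example:ZPpairing} and~\ref{Example:OneTwoFourIso}) and it ties into the equivariant $K$-theory interpretation mentioned after Corollary~\ref{Corollary:PbasisExponentials}. So your proof is a valid and economical proof of the statement as phrased, but the explicit description is not an optional extra in the paper's overall development.
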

 \begin{Corollary}
 \label{Corollary:PbasisExponentials}
  \XintroLatticeUnimod
  Let $w$ be an affine regular vector. 
  Then $\{ \psi_X(e^z): z\in \Zcal(X,w)    \}$ 
  is a basis for $\Pcal(X)$.
  
  Furthermore,  $L(\psi_X(e^z))=z$
   and this 
 induces a bijection 
 between this  basis and the basis in 
 Theorem~\ref{Proposition:DMquotientduality}.
 \end{Corollary}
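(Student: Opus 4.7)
The plan is to deduce the corollary from Theorems~\ref{Proposition:DMquotientduality} and~\ref{Theorem:pairingisomorphism} by identifying the image of $\psi_X(e^z)$ under $L$. In the unimodular setting we have $\Vcal(X)=\{1\}$, hence $\Pper_\CC(X)=\Pcal_\CC(X)$ and $\DM_\CC(X)=\Dcal_\CC(X)|_\Lambda$, and the pairing $\discpairP{\cdot}{\cdot}$ collapses to $\pair{\cdot}{\cdot}$ from \eqref{eq:pairing}. Theorem~\ref{Theorem:pairingisomorphism} then gives an isomorphism $L : \Pcal_\CC(X) \to \CC[\Lambda]/\dJC(X)$, and Theorem~\ref{Proposition:DMquotientduality} supplies the basis $\{\bar\lambda : \lambda \in \Zcal(X,w)\}$ of the target. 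Once we know $L(\psi_X(e^z))=z$, applying $L^{-1}$ to this basis yields the claimed basis of $\Pcal_\CC(X)$ together with the bijection.

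The key computation is the identity $L(\psi_X(e^z))=z$. By the defining property of $L$, it suffices to check, for every $f \in \DM_\CC(X)=\Dcal_\CC(X)|_\Lambda$, that
\begin{equation}
\pair{\psi_X(e^z)}{f} \;=\; \discpair{z}{f} \;=\; f(z).
\end{equation}
Since $e^z = \psi_X(e^z) + j$ for some $j \in \cJC(X)$ and $\Dcal_\CC(X)$ is by definition the orthogonal complement of $\cJC(X)$ under $\pair{\cdot}{\cdot}$, we have $\pair{\psi_X(e^z)}{f}=\pair{e^z}{f}$. Expanding $e^z = \sum_k p_z^k/k!$ and using the Taylor expansion of $f\in\Dcal_\CC(X)$, the right-hand side equals
\begin{equation}
\sum_{k\ge 0} \frac{1}{k!}(p_z(D)^k f)(0) \;=\; f(z),
\end{equation}
and $f(z)=\discpair{z}{f}$ because $z\in\Lambda$ and $f\in\DM_\CC(X)$ is viewed as a function on $\Lambda$. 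By the duality in Theorem~\ref{Proposition:PperDMduality}, this matching of pairings against every $f$ forces $L(\psi_X(e^z))=z$.

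To finish, Theorem~\ref{Proposition:DMquotientduality} tells us $\{\bar z : z \in \Zcal(X,w)\}$ is a basis of $\CC[\Lambda]/\dJC(X)$, so applying $L^{-1}$ shows that $\{\psi_X(e^z) : z \in \Zcal(X,w)\}$ is a basis of $\Pcal_\CC(X)$, and the bijection in the statement is exactly the one induced by~$L$. Finally, each $z\in\Zcal(X,w)\subseteq\Lambda\subseteq\R^d$ is real, so $e^z\in\R[[s_1,\dots,s_d]]$ and $\psi_X(e^z)\in\Pcal(X)$; a $\CC$-basis of $\Pcal_\CC(X)=\Pcal(X)\otimes_\R\CC$ consisting of real vectors is automatically an $\R$-basis of $\Pcal(X)$, giving the stated basis over $\R$. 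The only subtle point is the identification $\pair{e^z}{f}=f(z)$ for $f\in\Dcal_\CC(X)$, which is a direct Taylor-expansion argument and should pose no real obstacle; everything else is formal transport along the isomorphism~$L$.
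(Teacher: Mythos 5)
Your proof is correct and follows essentially the same route as the paper: collapse to the unimodular/single-vertex case, use Taylor's theorem to show $\pair{e^z}{f}=f(z)$ and hence $\discpair{L(\psi_X(e^z))}{f}=\discpair{z}{f}$, conclude $L(\psi_X(e^z))=z$ by nondegeneracy of the pairing, and transport the basis of Theorem~\ref{Proposition:DMquotientduality} back through the isomorphism $L$. The added remark about the real versus complex base field is a harmless extra detail the paper leaves implicit.
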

 \begin{Remark}[K-Theory]
  Recent work of De~Concini--Procesi--Vergne and Cavazzani--Moci relates the zonotopal spaces studied in this paper
  with geometry.
 
 $\Dcal(X)$ and $\sym(U)/\cJ(X) \cong \Pcal(X)$ can be realised as equivariant cohomology of certain differentiable manifolds
 and $\DM(X)$ and can be realised as equivariant $K$-theory \cite{deconcini-procesi-vergne-2010b, deconcini-procesi-vergne-2011, deconcini-procesi-vergne-infinitesimal-2013}.

 The space  $\Z[\Lambda]/ \dJ(X)$ can also be realised as equivariant $K$-theory of a certain manifold  \cite[Theorem 5.4]{cavazzani-moci-2013}.
 The  complexification of this space is by  Theorem~\ref{Theorem:pairingisomorphism} isomorphic $\Pper_\CC(X)$.
 \end{Remark}

\subsection{The details}
 The construction of the map $L$ in Theorem~\ref{Theorem:pairingisomorphism} requires a few concepts from commutative algebra that we 
 will now recall.
 
 In this section we will work with the algebraic torus $T_\CC(\Lambda)= \hom(\Lambda, \CC^*  )$, which will allow us to use algebraic techniques such as primary decomposition.
 Recall that the algebraic torus 
 is an algebraic variety that is isomorphic to $ \{ (\alpha_1,\beta_1,\ldots, \alpha_d,\beta_d ) \in \CC^{2d} : \alpha_i \beta_i = 1 \} $.
 Its  coordinate ring is $\CC[\Lambda]$.
 Let $T_\CC(\Lambda)\ni e_\phi : \Lambda \to \CC^*$ and $f=\sum_{\lambda\in \Lambda} \nu_\lambda \lambda \in \CC[\Lambda]$.
 Then $f(e_\phi) := \sum_{\lambda\in \Lambda} \nu_\lambda e_\phi(\lambda)$.
 The choice of a basis $s_1,\ldots, s_d$ for $\Lambda$ induces isomorphisms $T_\CC(\Lambda)\cong (\CC^*)^d$ via $e_\phi \mapsto (e_\phi(s_1),\ldots,
  e_\phi(s_d))$ and  $\CC[\Lambda] \cong \CC[ a_1^{\pm 1}, \ldots, a_d^{\pm 1}]$ via $\Lambda \ni \sum_{i=1}^d \nu_i s_i \mapsto \prod_{i=1}^d  a_i^{\nu_i}
  \in \CC[ a_1^{\pm 1}, \ldots, a_d^{\pm 1}]$.
  Under this identification $f(e_\phi)$ is equal to the evaluation of the Laurent polynomial 
  $f\in \CC[ a_1^{\pm 1}, \ldots, a_d^{\pm 1}]$ at the point $(e_\phi(s_1),\ldots, e_\phi(s_d)) \in (\CC^*)^d$.
 
 As usual, the \emph{subvariety} defined by an ideal $I\subseteq \CC[\Lambda]$ is the set 
 $\var(I) := \{ e_\phi  \in T_\CC(\Lambda) : f(e_\phi)=0 \text{ for all } f\in I \} $. 
 Recall that an ideal $I\subseteq \CC[\Lambda]$ is \emph{zero-dimensional} if one of the 
 following two equivalent conditions is satisfied:
 $\CC[\Lambda]/I$ is finite-dimensional or the variety $\var(I)\subseteq T_\CC(\Lambda)$ is a finite set. 

\begin{Lemma}
\label{Lemma:VarietyVertices}
\XintroLattice
The ideal $\dJC(X)\subseteq \CC[\Lambda]$ defines a zero-dimensional subvariety of $T_\CC(\Lambda)$ that coincides with 
 the set  of vertices of the toric arrangement $\Vcal(X)$.
\end{Lemma}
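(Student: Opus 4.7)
The plan is to compute the variety $\var(\dJC(X))$ explicitly by evaluating the generators $\nabla_Y$ at an arbitrary point $e_\phi\in T_\CC(\Lambda)$, and then to translate the resulting combinatorial condition into the defining condition of $\Vcal(X)$ via matroid duality.

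First, I would unpack the definition of $\nabla_Y$ inside the group ring: under the identification of $\CC[\Lambda]$ with the coordinate ring of $T_\CC(\Lambda)$, the element $\nabla_\lambda = 1-\lambda$ is a regular function whose value at $e_\phi$ is $1 - e_\phi(\lambda)$. Hence for any sublist $Y\subseteq X$,
\[
\nabla_Y(e_\phi) = \prod_{y\in Y}\bigl(1 - e_\phi(y)\bigr),
\]
which vanishes precisely when $e_\phi(y)=1$ for some $y\in Y$. Therefore $e_\phi\in\var(\dJC(X))$ if and only if the set $X_\phi = \{x\in X : e_\phi(x)=1\}$ meets every cocircuit of $X$.

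Next I would invoke the standard matroid-theoretic fact that a subset $S\subseteq X$ meets every cocircuit of the matroid on $X$ if and only if $X\setminus S$ contains no cocircuit, equivalently $X\setminus S$ is coindependent, equivalently $S$ itself has full rank $d$. Applied to $S = X_\phi$, this gives: $e_\phi\in\var(\dJC(X))$ iff $\rank(X_\phi)=d$, i.e.\ $X_\phi$ contains a basis $B$ of $X$. Recalling the definition of the toric arrangement, $e_\phi$ lies in $\bigcap_{x\in B} H_x$ precisely when $e_\phi(x)=1$ for every $x\in B$, i.e.\ when $B\subseteq X_\phi$. Consequently $\var(\dJC(X)) = \Vcal(X)$.

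Finally, zero-dimensionality is essentially automatic once the equality is established: for each basis $B\subseteq X$, the intersection $\bigcap_{x\in B}H_x$ is the kernel of a homomorphism $\Lambda\to (\CC^*)^d$ with finite cokernel (because $B$ spans $U$), so it is a finite set; $\Vcal(X)$ is a finite union of such intersections. The main (mild) obstacle is the matroid-duality step in the second paragraph, but this is a textbook fact about the orthogonality of circuits and cocircuits, so no real difficulty is expected.
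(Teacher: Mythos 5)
Your proof is correct and follows essentially the same route as the paper: both evaluate the generators $\nabla_C$ at $e_\phi$ to reduce the question to whether $X_\phi$ meets every cocircuit, and then translate this into $X_\phi$ containing a basis; where you cite the matroid-duality fact as known, the paper proves the same equivalence directly by a short hyperplane-contradiction argument. (One minor slip: the finiteness argument should speak of the kernel of the dual map $T_\CC(\Lambda)\to(\CC^*)^d$ induced by $B$, not a homomorphism $\Lambda\to(\CC^*)^d$; the conclusion is unaffected.)
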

\begin{proof}
Let $f,g\in \CC[\Lambda]$ and $e_\phi\in T_\CC(\Lambda)$. It is important to note that
$(fg)(e_\phi) = f(e_\phi)g(e_\phi)$.
 Then it is immediately clear that $\Vcal(X)\subseteq \var(\dJ(X))$: a vertex of the toric arrangement is annihilated by 
 some basis and every cocircuit intersects this basis. 

 Now let $e_\phi \in \var(\dJ(X)) \subseteq T_\CC(\Lambda)$.
 Hence $\nabla_C(e_\phi)= 0$ for all cocircuits $C\subseteq X$.
 Since $\CC$ is an integral domain, this implies that $e_\phi$ annihilates
 at least one factor of each cocircuit. Let $Y\subseteq X$ be the list of elements that are annihilated by $e_\phi$. Suppose that $Y$ is contained in some hyperplane $H$.
 Then $e_\phi$ does not annihilate an element of the cocircuit $X\setminus H$. This is a contradiction.
 Hence $e_\phi$ annihilates a basis $B$. This basis defines a vertex of the toric arrangement.
\end{proof}

  \begin{Theorem}[Chinese remainder theorem, \eg {\cite[Exercise 2.6]{eisenbud-1995}}]
  Let $R$ be a commutative ring, and let $Q_1,\ldots, Q_m \subseteq R$ be ideals  \st $Q_i + Q_j = R$ for all $i\neq j$. 
  Then $R/\bigcap_i Q_i \cong \prod_{i=1}^m R/Q_i$. The isomorphism is given by the product of the $m$ canonical projection maps.
 \end{Theorem}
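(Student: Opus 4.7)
The plan is to exhibit the map in the statement explicitly as the product of canonical projections $\phi \colon R \to \prod_{i=1}^m R/Q_i$, $r \mapsto (r+Q_1,\ldots,r+Q_m)$, and then to verify its kernel and surjectivity separately. The kernel computation is immediate: $\phi(r) = 0$ iff $r \in Q_i$ for every $i$, iff $r \in \bigcap_i Q_i$. Hence the first isomorphism theorem already yields an injection $R/\bigcap_i Q_i \hookrightarrow \prod_i R/Q_i$, and everything reduces to showing that $\phi$ is surjective.

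For surjectivity, I would reduce to a basis-style statement: since $\prod_i R/Q_i$ is generated as an $R$-module by the $m$ tuples $\mathbf{e}_i = (0,\ldots,0,1,0,\ldots,0)$ with the $1$ in position $i$, it suffices to find, for each $i$, a preimage $e_i \in R$ of $\mathbf{e}_i$. Concretely, I need $e_i \equiv 1 \pmod{Q_i}$ and $e_i \in Q_j$ for every $j \neq i$. Once such $e_i$ exist, an arbitrary tuple $(r_1+Q_1,\ldots,r_m+Q_m)$ is the image of $\sum_i r_i e_i$.

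To construct $e_i$, fix $i$ and for each $j \neq i$ invoke the hypothesis $Q_i + Q_j = R$ to pick $a_{ij} \in Q_i$ and $b_{ij} \in Q_j$ with $a_{ij} + b_{ij} = 1$. Set
\[
e_i := \prod_{j \neq i} b_{ij}.
\]
Then $e_i \in \bigcap_{j \neq i} Q_j$ as a product of elements each belonging to one of those ideals. Modulo $Q_i$, each factor satisfies $b_{ij} = 1 - a_{ij} \equiv 1 \pmod{Q_i}$, so $e_i \equiv 1 \pmod{Q_i}$, as required.

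The only genuine content in the argument is the construction of the $e_i$, which is where the pairwise coprimality $Q_i + Q_j = R$ is used in an essential way; without it one cannot separate $Q_i$ from the intersection of the remaining $Q_j$'s, and indeed the statement fails in that generality. Everything else is formal manipulation of the short exact sequence $0 \to \bigcap_i Q_i \to R \to \prod_i R/Q_i \to 0$ induced by $\phi$.
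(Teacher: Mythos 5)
The paper does not prove this statement; it is cited as an exercise from Eisenbud. Your proof is the standard one: compute the kernel to get injectivity via the first isomorphism theorem, then construct the "orthogonal idempotent"-style elements $e_i = \prod_{j\neq i} b_{ij}$ from the pairwise coprimality to establish surjectivity. The argument is correct and complete, and it is exactly the classical proof one would find in Eisenbud or Atiyah--Macdonald.

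One minor stylistic point: you describe $\prod_i R/Q_i$ as "generated as an $R$-module by the $m$ tuples $\mathbf{e}_i$". That is true but slightly indirect — the more direct statement is that once you have the $e_i$, the element $\sum_i r_i e_i$ manifestly maps to $(r_1 + Q_1, \ldots, r_m + Q_m)$, which you do say in the next sentence. The module-theoretic framing is not wrong, just not load-bearing.
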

 
 The following related result follows from \cite[Exercise 4.\S2.11]{cox-little-oshea-2005} (see also
 {\cite[Theorem 2.13 and Chapter 3]{eisenbud-1995}}).
 \begin{Theorem}[Primary decomposition]
 \label{Theorem:primdec}
   Let $J \subseteq \CC[\Lambda]$ be a zero-dimensional ideal with $\var(J) = \{ p_1,\ldots, p_m\}$.

   Let $Q_i = \{f\in \CC[\Lambda] : \text{ there exists } u \in \CC[\Lambda],\, u(p_i) \neq 0 \text{  \st  } u f \in I   \}$.
   Then $ J =Q_1\cap \ldots \cap Q_m$ is the primary decomposition, so in particular, $\var(Q_i)=\{p_i\}$ and
   $ \CC[\Lambda] / I \cong \CC[\Lambda] / Q_1 \times \ldots \times \CC[\Lambda] / Q_m$.
   \end{Theorem}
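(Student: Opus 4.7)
The plan is to derive this as a specialisation of the general structure theory of zero-dimensional ideals in Noetherian rings, using the Chinese remainder theorem stated just above together with Hilbert's Nullstellensatz. Since $\CC[\Lambda]$ is isomorphic to the Laurent polynomial ring $\CC[a_1^{\pm 1},\ldots,a_d^{\pm 1}]$, it is Noetherian, and the zero-dimensionality of $J$ makes the quotient $\CC[\Lambda]/J$ a finite-dimensional $\CC$-algebra; this puts us in the Artinian setting where primary decomposition is particularly well-behaved.

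First, I would identify for each $p_i \in \var(J)$ the corresponding maximal ideal $\mathfrak{m}_i = \{f \in \CC[\Lambda] : f(p_i) = 0\}$, which is maximal because $\CC[\Lambda]/\mathfrak{m}_i \cong \CC$. By the Nullstellensatz applied to the finitely generated $\CC$-algebra $\CC[\Lambda]$, the minimal primes over $J$ are exactly $\mathfrak{m}_1,\ldots,\mathfrak{m}_m$, and they are all maximal. Second, I would recognise the ideal $Q_i$ in the statement as the contraction $J \cdot S_i^{-1}\CC[\Lambda] \cap \CC[\Lambda]$, where $S_i = \CC[\Lambda] \setminus \mathfrak{m}_i$; the condition ``there exists $u$ with $u(p_i) \neq 0$ and $uf \in J$'' expresses precisely that the image of $f$ in the localisation $\CC[\Lambda]_{\mathfrak{m}_i}$ lies in the extended ideal. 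Standard commutative algebra then gives that $Q_i$ is $\mathfrak{m}_i$-primary, so in particular $\var(Q_i) = \{p_i\}$.

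Third, I would prove the equality $J = Q_1 \cap \cdots \cap Q_m$. The inclusion $J \subseteq Q_i$ is immediate (take $u = 1$), giving $J \subseteq \bigcap_i Q_i$. For the reverse inclusion, I would pass to $R = \CC[\Lambda]/J$, which is Artinian, and note that $R$ decomposes as a finite product of local Artinian rings indexed by its maximal ideals; unwinding this decomposition, the image of $\bigcap_i Q_i$ in $R$ is zero, which gives $\bigcap_i Q_i \subseteq J$. Fourth, to apply the Chinese remainder theorem I must verify pairwise comaximality $Q_i + Q_j = \CC[\Lambda]$ for $i \neq j$; this follows from $\var(Q_i + Q_j) = \var(Q_i) \cap \var(Q_j) = \{p_i\} \cap \{p_j\} = \emptyset$ together with the Nullstellensatz, which forces the sum to contain $1$. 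The isomorphism $\CC[\Lambda]/J \cong \prod_i \CC[\Lambda]/Q_i$ is then immediate from the Chinese remainder theorem.

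The main obstacle, conceptually, is the reverse inclusion in Step~3: translating the abstract statement ``$R$ Artinian implies $R \cong \prod R_{\mathfrak{m}_i}$'' into the concrete assertion that the intersection of the contractions of the localisations equals $J$. In practice this is the content of Theorem~2.13 of Eisenbud together with Exercise~4.\S2.11 of Cox--Little--O'Shea, so the honest approach in the paper is simply to cite these references rather than reprove the theorem; the value of the sketch above is to confirm that the displayed formula for $Q_i$ does produce the primary decomposition and is not merely an isomorphism up to reordering of components.
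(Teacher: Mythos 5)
Your sketch is correct and is exactly the standard argument underlying the references the paper cites for this theorem (Cox--Little--O'Shea, Exercise~4.\S2.11; Eisenbud, Theorem~2.13 and Chapter~3); the paper itself gives no proof beyond those citations, and you rightly observe this at the end. The only remark worth adding is that the paper's statement has a typo ($uf\in I$ and $\CC[\Lambda]/I$ should read $uf\in J$ and $\CC[\Lambda]/J$), which you have silently and correctly repaired.
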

 
 \smallskip
 Let $J\subseteq \CC[\Lambda] $ be an ideal \st $\var(J)$ contains the point $e_\phi$.
 We say that $\theta \in V_\CC=U_\CC^*$ represents $e_\phi$ if
 $e^{2\pi \theta(\lambda)} = e_\phi(\lambda)$ for all $\lambda\in \Lambda$.
 Note that the  map $\theta$ is not uniquely determined by this condition.
 
Let $\lambda \in \Lambda$. Note that $e_\phi(-\lambda) \lambda - 1 \in \CC[\Lambda]$ vanishes at the point $e_\phi$. 
By Hilbert's Nullstellensatz, this  implies that $e_\phi(-\lambda) \lambda \in \sqrt{J}$, or put differently,
 $t_\lambda := e_{\phi}(-\lambda) \lambda - 1 \in \CC[\Lambda]/J$ is nilpotent. 
This implies that the following term is a finite sum: $\log(1+ t_\lambda) := t_\lambda - t_\lambda^2/2 + t_\lambda^3/3 - \ldots $

 Let $i^\theta : \Lambda \to \CC[ \Lambda ] / J$ be the map given by
  $i^\theta(\lambda) := \log(1+ t_\lambda) ) + \theta(\lambda)$.  
This map is additive since $1 + t_{\lambda_1 + \lambda_2} = (1+t_{\lambda_1})(1+t_{\lambda_1})$.
 It can be extended to a map $i^\theta : \sym(U) \to \CC[\Lambda]/J$.
 The following result follows from Proposition~5.23 in \cite{concini-procesi-book}.
 \begin{Proposition}
 \label{Proposition:LogIso}
  Let $J\subseteq \CC[\Lambda] $ be an ideal \st $\var(J)$ contains a unique point $e_\phi$. Let $\theta\in V_\CC$ be a map that represents $e_\phi$.
  Let $i^\theta : \sym(U) \to \CC[\Lambda]/J$ be the map defined above 
  and let $I := \ker( i^\theta ) $. 
  Then $ i^\theta$ induces an isomorphism $i_{\log}^\phi : \sym(U)/I \to \CC[\Lambda]/J$ and $\var(I) = \{\theta\}$.
    
 We will call the map $ i_{\log}^\theta$  the \emph{logarithmic isomorphism}.
  \end{Proposition}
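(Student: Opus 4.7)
My plan is to identify $i^\theta$ as a comparison between two local Artinian $\CC$-algebras, and verify the statement in three steps: $i^\theta$ is a well-defined $\CC$-algebra homomorphism, it is surjective, and the variety of its kernel is the single point $\theta$.

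First I would check well-definedness. Since $J$ is zero-dimensional with $\var(J) = \{e_\phi\}$, the Nullstellensatz forces the maximal ideal at $e_\phi$ to be nilpotent modulo $J$, so each $t_\lambda = e_\phi(-\lambda)\lambda - 1$ is nilpotent in $\CC[\Lambda]/J$ and the series $\log(1+t_\lambda)=\sum_{k\ge 1}(-1)^{k+1}t_\lambda^k/k$ terminates. Additivity of $\lambda \mapsto \log(1+t_\lambda) + \theta(\lambda)$ then follows from the formal identity $\log((1+t_{\lambda_1})(1+t_{\lambda_2})) = \log(1+t_{\lambda_1}) + \log(1+t_{\lambda_2})$ combined with the multiplicativity $1 + t_{\lambda_1 + \lambda_2} = (1 + t_{\lambda_1})(1 + t_{\lambda_2})$ in $\CC[\Lambda]/J$; the resulting additive map on $\Lambda \subset U$ extends canonically to a $\CC$-algebra map on $\sym(U)$ by the universal property of the symmetric algebra.

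For surjectivity, I would invert the logarithm on the nilpotent part: setting $y_\lambda := i^\theta(\lambda) - \theta(\lambda) = \log(1+t_\lambda)$, the truncated exponential recovers $t_\lambda = e^{y_\lambda} - 1$ as an element of the image of $i^\theta$. Since $\lambda = e_\phi(\lambda)(1 + t_\lambda)$ in $\CC[\Lambda]/J$ and the elements $\lambda \in \Lambda$ span $\CC[\Lambda]/J$ as a $\CC$-vector space, all of $\CC[\Lambda]/J$ lies in the image, yielding the induced isomorphism $i_{\log}^\theta : \sym(U)/I \to \CC[\Lambda]/J$.

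For the variety statement, I would compose $i^\theta$ with evaluation at $e_\phi$: since $t_\lambda(e_\phi) = 0$, also $\log(1+t_\lambda)$ evaluates to $0$, so the composite $\sym(U) \stackrel{i^\theta}{\to} \CC[\Lambda]/J \stackrel{\mathrm{ev}_{e_\phi}}{\to} \CC$ sends $\lambda \mapsto \theta(\lambda)$, which is precisely evaluation at $\theta \in V_\CC$. Hence $I$ is contained in the maximal ideal of polynomials vanishing at $\theta$, so $\theta \in \var(I)$. Uniqueness follows because $\CC[\Lambda]/J$ is a local ring---its only maximal ideal corresponds to the single point $e_\phi \in \var(J)$---and locality transports to $\sym(U)/I$ under the isomorphism, forcing $\var(I)$ to consist of one point. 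The main bookkeeping obstacle will be matching the additive shift $\theta(\lambda)$ on the $\sym(U)$ side with the multiplicative character $e_\phi$ on the $\CC[\Lambda]/J$ side, so that the composite indeed lands at $\theta$ rather than at some translate of it.
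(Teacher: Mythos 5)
Your argument is correct and, unlike the paper, self-contained: the paper dispatches this proposition with a single citation to Proposition~5.23 of De~Concini--Procesi and gives no internal argument, so your three-step verification (well-definedness from nilpotency of $t_\lambda$ via the Nullstellensatz; surjectivity by reconstructing $t_\lambda = e^{y_\lambda}-1$ inside the image; $\var(I)=\{\theta\}$ from locality of $\CC[\Lambda]/J$ together with the computation that evaluation at $e_\phi$ pulls back to evaluation at $\theta$) fills a real expository gap. Two small remarks. First, the surjectivity step needs the image of $i^\theta$ to be a $\CC$-subalgebra so that $\lambda = e_\phi(\lambda)(1+t_\lambda)$ lands in it; this requires reading $\sym(U)$ as $\sym(U_\CC)$ and $i^\theta$ as $\CC$-linear. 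The paper is loose on this point, but its application of the proposition in the proof of Lemma~\ref{eq:logisomap} --- where the isomorphism is stated as $\sym(U_\CC)/\cJC(X_\phi,\theta)\to\CC[\Lambda]/\dJC(X)_\phi$ --- confirms that the complexified reading is intended, and your phrasing ``$\CC$-algebra map'' already matches this. Second, your closing sentence flags the matching of the additive shift $\theta(\lambda)$ with the character $e_\phi$ as a remaining bookkeeping concern, but you have in fact already discharged it: $t_\lambda(e_\phi)=0$ forces $\log(1+t_\lambda)$ to evaluate to $0$, so the composite $\sym(U)\to\CC[\Lambda]/J\to\CC$ is exactly $p\mapsto p(\theta)$ with no residual translate, as you computed.
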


 By Lemma~\ref{Lemma:VarietyVertices},
 the ideal $\dJC(X)$ defines a zero-dimensional subvariety of $T_\CC(\Lambda)$, the set of vertices of the toric arrangement $\Vcal(X)$.
 Hence by Theorem~\ref{Theorem:primdec} there is a  decomposition $\CC[\Lambda] / \dJC(X) \cong \bigoplus_{ e_\phi \in \Vcal(X)} \CC[\Lambda] / \dJC(X)_\phi$.
 Note that while we have explicit descriptions of 
 $\Pper_\CC(X)$,  $\dJC(X)$,  $\cJC(X)$, and to a certain extent also of $\DM_\CC(X)$,
 we do not know an explicit description of the ideals  $\dJC(X)_\phi$ appearing in this decomposition.
 We will however see that quotients of these ideals are isomorphic to quotients of the following ideals.

  \begin{Definition}[Inhomogeneous cocircuit ideal]
   \XintroLattice Let $\theta\in V_\CC= U_\CC^*$. 
   We define the \emph{inhomogeneous continuous cocircuit ideal} %
   \begin{equation}
    \cJC(X,\theta) := \ideal \left\{  \prod_{x\in C} (p_x - \theta(x)) : C\subseteq X \text{ cocircuit} \right\}.
   \end{equation}
  \end{Definition}
  Note that $ \var(\cJC(X)) = \{0\} \subseteq V_\CC$ and  $\var(\cJC(X,\theta)) = \{ \theta\}\subseteq V_\CC$.
 Inhomogeneous cocircuit ideals first appeared implicitly in a paper by  Ben-Artzi and Ron on exponential box splines \cite{benartzi-ron-1988}.

 \begin{Lemma}
 \label{eq:logisomap}
 Let $e_\phi\in \Vcal(X)$. Let $\theta\in V_\CC$ be a representative of $e_\phi$, \ie $e_\phi(\lambda)= e^{2\pi i \theta(\lambda)}$ for all $\lambda\in \Lambda$.
Then the logarithmic isomorphism defines an isomorphism
 $i_{\log}^\theta : \sym(U_\CC) / \cJC( X_\phi, \theta ) \to \CC[ \Lambda ] / \dJC(X)_\phi$.
 \end{Lemma}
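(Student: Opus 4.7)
The plan is to apply Proposition~\ref{Proposition:LogIso} to the primary component $J := \dJC(X)_\phi$ and then to identify the kernel $I$ of the resulting map $i^\theta \colon \sym(U_\CC) \to \CC[\Lambda]/\dJC(X)_\phi$ with $\cJC(X_\phi, \theta)$. That $\var(\dJC(X)_\phi) = \{e_\phi\}$ is immediate from Lemma~\ref{Lemma:VarietyVertices} combined with the primary decomposition in Theorem~\ref{Theorem:primdec}, so Proposition~\ref{Proposition:LogIso} furnishes an isomorphism $\sym(U_\CC)/I \to \CC[\Lambda]/\dJC(X)_\phi$ without further work. The entire content of the lemma is therefore the identification $I = \cJC(X_\phi, \theta)$.

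For the inclusion $\cJC(X_\phi, \theta) \subseteq I$, I would compute $i^\theta$ on the generators. Since $e_\phi(x) = 1$ for $x \in X_\phi$, one has $t_x = x - 1$ and
\[
   i^\theta(p_x) - \theta(x) \;=\; \log(1 + t_x) \;=\; (x - 1)\cdot u_x \;=\; -\nabla_x \cdot u_x,
\]
where $u_x := 1 - (x-1)/2 + (x-1)^2/3 - \cdots$ is a unit in $\CC[\Lambda]/\dJC(X)_\phi$ (the series is finite because $t_x$ is nilpotent there). Hence, for a cocircuit $C$ of $X_\phi$, the generator $\prod_{x\in C}(p_x - \theta(x))$ of $\cJC(X_\phi, \theta)$ maps to $(-1)^{|C|}\nabla_C \prod_{x\in C} u_x$ in $\CC[\Lambda]/\dJC(X)_\phi$. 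The crucial combinatorial step is to verify that $\nabla_C$ itself lies in $\dJC(X)_\phi$: let $V := \spa(X_\phi \setminus C)$; then $V \cap X$ is a hyperplane of the matroid of $X$ (it has rank $d - 1$ and is flat-closed), so $C_X := X \setminus V$ is a cocircuit of $X$ satisfying $C_X \cap X_\phi = C$. Writing $\nabla_{C_X} = \nabla_C \cdot v$ with $v := \prod_{x \in C_X \setminus C}(1 - x)$, the factor $v$ involves only $x \in X \setminus X_\phi$, for which $e_\phi(x) \neq 1$. Thus $v(e_\phi) \neq 0$, while $v \nabla_C = \nabla_{C_X} \in \dJC(X) \subseteq \dJC(X)_\phi$, and the primary property of $\dJC(X)_\phi$ at $e_\phi$ forces $\nabla_C \in \dJC(X)_\phi$.

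The reverse inclusion is extracted by a dimension count. The surjection $\sym(U_\CC)/\cJC(X_\phi, \theta) \twoheadrightarrow \sym(U_\CC)/I \cong \CC[\Lambda]/\dJC(X)_\phi$ already yields $\dim \sym(U_\CC)/\cJC(X_\phi, \theta) \ge \dim \CC[\Lambda]/\dJC(X)_\phi$. Passing to leading forms gives the opposite bound $\dim \sym(U_\CC)/\cJC(X_\phi, \theta) \le \dim \sym(U_\CC)/\cJC(X_\phi) = \dim \Pcal_\CC(X_\phi) = \tutte_{X_\phi}(1, 1)$ by Theorem~\ref{Proposition:HilbertSeriesTuttePolynomialP}. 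Summing over $e_\phi \in \Vcal(X)$ and using the identity $\aritutte_X(1, 1) = \sum_\phi \tutte_{X_\phi}(1, 1)$ cited in the proof of Theorem~\ref{Proposition:CentralPeriodicPHilbertTutte}, together with $\dim \CC[\Lambda]/\dJC(X) = \vol Z(X) = \aritutte_X(1, 1)$ (Theorem~\ref{Proposition:DMquotientduality} and Proposition~\ref{Proposition:ZonotopeArithmeticTutte}), the summed source and target dimensions coincide. Hence every surjection in the chain is an isomorphism, proving $I = \cJC(X_\phi, \theta)$. The main obstacle is the cocircuit-extension argument in the middle paragraph: cocircuits of $X_\phi$ are typically not cocircuits of $X$, and the crux is to augment $C$ by an $e_\phi$-inverting factor drawn from $X \setminus X_\phi$ so that the product lands inside the full ideal $\dJC(X)$.
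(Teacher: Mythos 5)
Your proposal is correct and follows essentially the same route as the paper: show $\cJC(X_\phi,\theta)\subseteq\ker i^\theta$ by extending a cocircuit of $X_\phi$ to a cocircuit of $X$ using elements of $X\setminus X_\phi$ (where $e_\phi$ does not vanish), invoke the primary property from Theorem~\ref{Theorem:primdec} to strip off the extra factors, and then close the gap via the dimension count $\aritutte_X(1,1)=\sum_\phi\tutte_{X_\phi}(1,1)$. The only cosmetic difference is that you explicitly construct the extended cocircuit $C_X$ from the hyperplane $\spa(X_\phi\setminus C)$ and you justify $\dim\sym(U_\CC)/\cJC(X_\phi,\theta)\le\dim\sym(U_\CC)/\cJC(X_\phi)$ by a leading-forms argument, whereas the paper cites~\cite[Proposition~11.16]{concini-procesi-book} for that equality.
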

\begin{proof}
Let us consider the map $i^\theta: \sym(U) \to \CC[ \Lambda ] / \dJC(X)_\phi$. 
Let $C\subseteq X_\phi$ be a cocircuit. Recall that for $x\in X_\phi$,  $t_x:= e_\phi(-x) x - 1 = x -1 \in \CC[\Lambda]/\dJC(X)$. Then
\begin{align}
 i^\theta\left(\prod_{x\in C} ( p_x - \theta(x) )\right) &=
 \prod_{x\in C}  ( t_x - \frac 12 t_x^2 + \frac 13 t_x^2 - \ldots) = f\prod_{x\in C} (x-1)
\end{align}
for some $f\in \CC[\Lambda]$. 
Let $Y\subseteq  X\setminus X_\phi$ be a set \st
$C\cup Y\subseteq X$ is a cocircuit.
Note that $\prod_{x\in Y}(x-1)$ does not vanish at $e_\phi$ and
$f\prod_{x\in C} (x-1) \prod_{x\in Y}(x-1)\in \dJC(X)_\phi$ by Theorem~\ref{Theorem:primdec}.
This implies that $\cJC(X_\phi,\theta)\subseteq \ker i^\theta$. 
Hence we have a canonical surjection $\sym(U_\CC)/ \cJC(X_\phi,\theta) \twoheadrightarrow \sym(U_\CC) / \ker i^\theta$
and $\dim \sym(U_\CC)/ \cJC(X_\phi,\theta) \ge \CC[ \Lambda ] / \dJC(X)_\phi$.

It is known that $\dim(\sym(U_\CC) /  \cJC(X_\phi, \theta)) = \dim (\sym(U_\CC) / \cJC(X_\phi))$ (\eg \cite[Proposition 11.16]{concini-procesi-book})
and that this number is equal to $\tutte_{X_\phi}(1,1)$ (Proposition~\ref{Proposition:JPdecomposition} and Theorem~\ref{Proposition:HilbertSeriesTuttePolynomialP}).
We obtain
\begin{align}
 \aritutte_X(1,1) &=
 \dim \CC[\Lambda] / \dJC(X) 
 = \sum_{e_\phi\in \Vcal(X)} \dim \CC[ \Lambda ] / \dJC(X)_\phi
 \\
&\le   \sum_{e_\phi\in \Vcal(X)} \dim \sym(U_\CC) / \cJC( X_\phi, \theta )
= \sum_{e_\phi\in \Vcal(X)} \tutte_{X_\phi}(1,1)  = \aritutte_X(1,1).
 \notag
 \end{align}
The first equality follows from Proposition~\ref{Proposition:dimensionDM} %
and Theorem~\ref{Proposition:DMquotientduality} and the 
second equality follows from the Chinese Remainder Theorem.
The last equality is \cite[Lemma 6.1]{moci-tutte-2012}.

Hence the canonical surjection must be an isomorphism and $\sym(U_\CC)/ \cJC(X_\phi,\theta)$ is equal to $\sym(U_\CC) / \ker i^\theta$.
Now the statement follows from Proposition~\ref{Proposition:LogIso}.
\end{proof}
 By the Chinese remainder theorem, the map
 \begin{align}
   \alpha : \CC[\Lambda] / \dJC(X) \to \bigoplus_{e_\phi \in \Vcal(X)} \CC[\Lambda] / \dJC(X)_\phi  
 \end{align}
that sends $ f\in \CC[\Lambda] / \dJC(X)$ to $(\pi^\phi(f))_{e_\phi \in \Vcal(X)}$ is an isomorphism  ($\pi^\phi$ denotes the canonical projection).
Hence for each $e_\phi \in \Vcal(X)$, there 
  exists a uniquely determined map 
 $\kappa^\phi : \CC[\Lambda] / \dJC(X)_\phi \to \CC[\Lambda] / \dJC(X)$  
 \st $  \pi^\phi \circ \kappa^\phi = \text{id}$.
 Note that the inverse of the  map $\alpha$ is the isomorphism
 $\sum_{e_\phi \in \Vcal(X)} \kappa^\phi  : \bigoplus_{e_\phi \in \Vcal(X)} \CC[\Lambda]/ \dJC(X_\phi) \to \CC[\Lambda]/ \dJC(X)$.
 
 By Proposition~\ref{Proposition:JPdecomposition}, the map
 $j_\phi : \Pcal_\CC(X_\phi) \to \sym(U_\CC) / \cJC(X_\phi)$ that sends a polynomial $p$ to its class $\bar p$ is an isomorphism.
 We define $\tau_\theta :  \sym(U_\CC) / \cJC( X_\phi ) \to \sym(U_\CC) / \cJC( X_\phi, \theta )  $ 
 by  $\tau_\theta(p_x) :=  p_x- \theta(p_x)$.

 To summarise, we just defined four maps, the first three  are isomorphisms:
 \begin{equation}
 \begin{split}
    \Pcal_\CC(X_\phi) \stackrel{j_\phi}{\longrightarrow}   \sym(U_\CC) / \cJC( X_\phi ) \stackrel{\tau_\theta}{\longrightarrow}  \sym(U_\CC) / \cJC( X_\phi, \theta ) 
    &\stackrel{i^\theta_{\log}}{\longrightarrow}  
    \CC[\Lambda] / \dJC(X)_\phi   \\
    &\stackrel{\kappa^\phi}{\hookrightarrow}   
     \CC[\Lambda] / \dJC(X).
     \end{split}
 \end{equation}
 Note that the map $i_{\log}^\theta \circ \tau_\theta$ depends only on $X$ and $e_\phi$. It is independent of the choice of the representative $\theta$.  
Recall that every $p\in \Pper(X)$ can be written uniquely as $p = \sum_{e_\phi\in \Vcal(X)} e_\phi p_{X\setminus X_\phi} p_\phi$ with $p_\phi \in \Pcal(X_\phi)$.
We are now ready to define the  map $L: \Pper(X) \to \CC[\Lambda]/\dJC(X)$:
\begin{equation}
\label{eq:isomorphismL}
 L(p) := \sum_{e_\phi\in \Vcal(X)} \kappa^\phi ( i_{\log}^\theta ( \tau_\theta ( j_\phi (p_\phi)))).
\end{equation}

 \begin{Remark}
 \label{Remark:PairingEval}
 Here is an %
 an algorithm to calculate $L(p)$:
 \begin{compactenum}[(a)]
  \item Calculate the primary decomposition of $\dJC(X) = \bigcap_{e_\phi \in \Vcal(X)} \dJC(X)_\phi$.
  \label{enum:primdec}
  \item Decompose $p=\sum_{e_\phi \in \Vcal(X)} e_\phi p_{X\setminus X_\phi} p_\phi$.
  Then for each $e_\phi$, consider the class of $p_\phi \in \sym(U_\CC)/\cJC(X_\phi)$ 
  and apply   $\tau_\theta (i_{\log}^\phi) $ to it to obtain 
   $q_\phi \in \CC[\Lambda]/\dJC(X)_\phi$.
  \item Lift each $ q_\phi $ to an element $r_\phi \in \CC[\Lambda]/\dJC(X)$  \label{enum:lifting} using the map $\kappa^\phi$.
  Then $L(p) = \sum_{e_\phi \in \Vcal(X)} r_\phi$.
 \end{compactenum}
 Steps (\ref{enum:primdec}) and (\ref{enum:lifting}) are quite difficult to do by hand even for small examples, but they can 
 easily be done by a computer algebra.
 See Appendix~\ref{appendix:sagesingular} and Examples~\ref{Example:ZPpairing} and~\ref{Example:OneTwoFourIso} for more details.
\end{Remark}

\begin{Lemma}
\label{Lemma:KernelOflocalIdeal}
\XintroLattice Let $e_\phi\in \Vcal(X)$. Then
$\{  f\in \Ccal_\CC[\Lambda] : p(\nabla)f = 0 \text{ for all } p\in \dJC(X)_\phi  \} = e_\phi \Dcal_\CC(X_\phi)|_\Lambda$.
\end{Lemma}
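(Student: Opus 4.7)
The plan is to combine the logarithmic isomorphism (Lemma~\ref{eq:logisomap}) with the primary decomposition of $\dJC(X)$ and the direct sum decomposition of $\DM_\CC(X)$ from Proposition~\ref{Proposition:DMdecomposition} to reduce the claim to the standard fact that $\cJC(X_\phi)$ annihilates $\Dcal_\CC(X_\phi)$. Write $K_\phi$ for the left-hand side and $L_\varphi := e_\varphi \Dcal_\CC(X_\varphi)|_\Lambda$ for the summand at $\varphi \in \Vcal(X)$. I will establish $L_\phi \subseteq K_\phi$ first, then show $K_\phi \cap L_\varphi = 0$ for $\varphi \neq \phi$; combined with the $\CC[\Lambda]$-invariance of each $L_\varphi$, this will give the equality.

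For the main inclusion $L_\phi \subseteq K_\phi$, I will exploit the compatibility of two natural module actions under $i_{\log}^\theta$: on one hand, $\sym(U_\CC)$ acts on exponential polynomials $e^\theta g$ by differential operators, and this action factors through $\sym(U_\CC)/\cJC(X_\phi,\theta)$ since the standard conjugation identity $(p_x-\theta(x))(D)(e^\theta g) = e^\theta\,\partial_x g$ shows that $\cJC(X_\phi,\theta)$ annihilates $e^\theta \Dcal_\CC(X_\phi)$; on the other hand, $\CC[\Lambda]$ acts on $e_\phi g|_\Lambda$ by translation. Since $i_{\log}^\theta$ identifies $\sym(U_\CC)/\cJC(X_\phi,\theta)$ with $\CC[\Lambda]/\dJC(X)_\phi$, the annihilation on the differential side translates into annihilation of $e_\phi \Dcal_\CC(X_\phi)|_\Lambda$ by $\dJC(X)_\phi$.

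For the reverse inclusion $K_\phi \subseteq L_\phi$: since $\dJC(X) \subseteq \dJC(X)_\phi$, one has $K_\phi \subseteq \DM_\CC(X) = \bigoplus_\varphi L_\varphi$. Each $L_\varphi$ is invariant under the $\CC[\Lambda]$-translation action, because $\Dcal_\CC(X_\varphi)$ is translation-invariant as the kernel of a constant-coefficient differential ideal. Decomposing $f \in K_\phi$ as $f = \sum_\varphi f_\varphi$ with $f_\varphi \in L_\varphi$, each $f_\varphi$ individually lies in $K_\phi$ by invariance. To rule out $f_\varphi \neq 0$ for $\varphi \neq \phi$, observe that by the first step applied to $\varphi$, the ideal $\dJC(X)_\varphi$ annihilates $L_\varphi$. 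Since $\dJC(X)_\phi$ and $\dJC(X)_\varphi$ are primary with disjoint varieties $\{e_\phi\}$ and $\{e_\varphi\}$ (Theorem~\ref{Theorem:primdec}), they are comaximal, giving $1 = p + q$ with $p \in \dJC(X)_\phi$ and $q \in \dJC(X)_\varphi$; then $f_\varphi = p(\nabla)f_\varphi + q(\nabla)f_\varphi = 0$.

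The principal obstacle is the action-compatibility underlying the first step: one must verify that, up to the identifications in $i_{\log}^\theta$, the translation action of $\lambda \in \CC[\Lambda]$ on $e_\phi g|_\Lambda$ matches the differential-operator action corresponding to $i^\theta(p_\lambda)$ on $e^\theta g$. This is essentially the content of Proposition~5.23 in \cite{concini-procesi-book} upon which Lemma~\ref{eq:logisomap} is built, and can be checked by expanding $\tau_\lambda$ as the formal exponential $\exp(-\partial_\lambda)$ on polynomials and comparing coefficients modulo $\cJC(X_\phi,\theta)$ and $\dJC(X)_\phi$.
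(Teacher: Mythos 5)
Your proposal is correct, and it reaches the conclusion by a genuinely different route than the paper. The paper's proof is a two-line citation chain: it quotes De Concini--Procesi's Theorem~11.17 (that the distributional kernel of $\cJC(X,\theta)$ is $e_\theta\Dcal(X_\theta)$) and then transfers this to the discrete side through the logarithmic isomorphism (Lemma~\ref{eq:logisomap}) together with their Proposition~5.26, which provides exactly the action-compatibility you isolated as the principal obstacle. Your argument reorganises this: the inclusion $L_\phi\subseteq K_\phi$ is obtained more explicitly from the conjugation identity $(p_x-\theta(x))(D)(e^\theta g)=e^\theta \partial_x g$ plus the intertwining, and the opposite inclusion is replaced entirely by a structural argument — embedding $K_\phi$ into $\DM_\CC(X)$ (using $\dJC(X)\subseteq\dJC(X)_\phi$), decomposing via Proposition~\ref{Proposition:DMdecomposition}, noting the $\CC[\Lambda]$-invariance of each summand $L_\varphi$, and killing the off-diagonal components by comaximality of the primary components $\dJC(X)_\phi$, $\dJC(X)_\varphi$ (which is implicit in the CRT statement of Theorem~\ref{Theorem:primdec}). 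This trades the black box of De Concini--Procesi's Theorem~11.17/Proposition~5.26 for a different black box, namely the direct-sum decomposition of $\DM_\CC(X)$, plus some self-contained commutative algebra. You correctly identified that the only real technical content one must import from De Concini--Procesi is the compatibility of the translation action and the differential-operator action under $i_{\log}^\theta$ (your last paragraph), which is the same input the paper relies on through its Proposition~5.26 citation; the rest of your argument is sound and arguably more illuminating than the paper's terse proof.
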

\begin{proof}
Let $\theta\in V_\CC$ be a vector that represents $e_\phi$.
Then by \cite[Theorem~11.17]{concini-procesi-book}, $\Dcal(X,\theta):=\{ f \text{ distribution on $U$} : p(D) f = 0 \text{ for all } p \in \cJC(X, \theta) \}
= e_\theta \Dcal(X_\theta)$.

On the other hand, by Lemma~\ref{eq:logisomap} and \cite[Proposition~5.26]{concini-procesi-book} the space $\{  f\in \Ccal_\CC[\Lambda] : p(\nabla)f = 0 \text{ for all } p\in \dJC(X)_\phi  \}$
is equal to $\Dcal(X,\theta)|_\Lambda$.
\end{proof}

\begin{proof}[Proof of Theorem~\ref{Theorem:pairingisomorphism}]
Let $ \Pper(X) \ni p = \sum_{e_\phi\in \Vcal(X)} e_\phi p_{X\setminus X_\phi} p_\phi$ with $p_\phi \in \Pcal(X_\phi)$.
We have defined the isomorphism 
$L : \Pper_\CC(X)\to \CC[\Lambda]/\dJC(X)$ in 
\eqref{eq:isomorphismL} by
 $L(p) := \sum_{e_\phi\in \Vcal(X)} \kappa^\phi ( i_{\log}^\theta ( \tau_\theta ( j_\phi (p_\phi))))$.
 So all that remains to be shown is that $\discpairP{p}{f}=\discpair{L(p)}{f}$.
 As usual, we decompose $f\in \DM_\CC(X)$ as 
 $f= \sum_{e_\phi \in \Vcal(X)} e_\phi f_\phi$ with $f_\phi \in \Dcal_\CC(X_\phi)|_\Lambda$.

 First note that by definition, $\discpair{1-x}{f} = f(0) - f(x) = (\nabla_{-x} f)(0)$ and more generally,  for $Y\subseteq X$,
 \begin{equation}
\label{eq:discpairCocircuit}
 \discpair{\prod_{x\in Y} (1-x)}{f} = (\nabla_{-Y} f)(0).
 \end{equation}
 Let us fix a vertex $e_\phi\in \Vcal(X)$ and
 let  $ h_\phi := i_{\log}^\theta ( \tau_\theta ( j_\phi (  1 ))) \in \CC[\Lambda]/\dJC(X)$.
%
 Let $e_\phi \neq e_\varphi \in \Vcal(X)$.
 By the Chinese Remainder Theorem, $\pi^{\varphi} (h_\phi) = 0$. Hence 
 $h_\phi \in \dJC(X)_\varphi$. 
 By Lemma~\ref{Lemma:KernelOflocalIdeal} and \eqref{eq:discpairCocircuit}, this implies that 
 $\discpair{h_\phi}{e_\varphi f_\varphi} =0$.
  Now we have established that $\discpair{L(p)}{f} :=  \sum_{e_\phi \in \Vcal(X)} \discpair{L(e_\phi p_{X\setminus X_\phi} p_\phi)}{e_\phi f_\phi}$.
  
 On the other hand by the Chinese Remainder Theorem, $\pi^{\phi} (h_\phi) = 1$. Hence 
  $h_\phi = 1 + \gamma_\phi$ for some $\gamma_\phi\in \dJC(X)_\phi$.
 By Lemma~\ref{Lemma:KernelOflocalIdeal} and \eqref{eq:discpairCocircuit}, this implies $\discpair{h_\phi}{e_\phi f_\phi} = \discpair{1}{e_\phi f_\phi} = f_\phi(0)$.
 
 Let $x \in \Lambda$. Note that $\tau_{-x} = e^{D_x}$ as operators on $\sym(U)$, where $\tau_{-x}$ acts by translation and $e^{D_x}$ acts as a differential operator.
 This is equivalent to   $\log(\tau_{-x}) = D_x$  (cf.~\cite[equation (5.9)]{concini-procesi-book}).
  Furthermore,
 $( e_\phi( - x ) \tau_{-x} ) (e_\phi f_\phi)(u) = e_\phi(-x)e_\phi ( u + x)  f_\phi( u + x ) 
 =    ( e_\phi  (\tau_{-x} f_\phi)) (u)$.
This implies 
\begin{align}
 \log( e_\phi(-x)\tau_{-x}) )  (e_\phi f_\phi) &= e_\phi  \log( \tau_{-x})   f_\phi  = e_\phi D_x f_\phi. \quad  \text{Hence }
 \\
 \discpair{ L( e_\phi p_{X\setminus X_\phi} p_x ) }{e_\phi f_\phi} &= \discpair{ \kappa^\phi (i_{\log}^\theta ( \tau_\theta ( p_x )))}{ e_\phi f_\phi } = 
 (e_\phi  D_x f_\phi)(0)= (D_x f_\phi)(0)  \notag
\end{align}
  and more generally, for $Y\subseteq X_\phi$,
  \begin{equation*}
  \discpair{ L( e_\phi p_{X\setminus X_\phi} p_Y ) }{e_\phi f_\phi} = ( p_Y(D) f_\phi )(0) = \pair{p_Y}{f_\phi}.
  \qedhere
  \end{equation*}
\end{proof}

\begin{proof}[Proof of Corollary~\ref{Corollary:PbasisExponentials}]
Let $z\in \Zcal(X,w)$. 
Since $X$ is unimodular,  the toric arrangement has only one vertex. This implies
$ \discpairP{p}{f} =  \pair{p}{f}=(p(D)f)(0)$.
By Taylor's Theorem and Theorem~\ref{Theorem:pairingisomorphism}, 
$\discpair{L(\psi_X(e^z))}{f}= \pair{ \psi_X(e^z) }{ f } = e^{z}(D)f = f( z ) = \discpair{z}{f}$.
  Using  Theorem~\ref{Proposition:PperDMduality} we obtain that
 $L(\psi_X(e^z))= z$.

 Since the image of a basis under an isomorphism is also a basis, the set $\{ \psi_X(e^z): z\in \Zcal(X,w) \}\subseteq \Pcal(X)$ is 
 a basis by Theorem~\ref{Proposition:DMquotientduality}.
\end{proof}

\section{Wall crossing and the proof of Theorem~\ref{Proposition:internalContinuousCharacterisation}}
\label{Section:WallCrossingProofs}

In this section we will prove Theorem~\ref{Proposition:internalContinuousCharacterisation}. In the proof we will use
the following  wall-crossing formula of Boysal--Vergne.
\begin{Theorem}[{\cite[Theorem 1.1]{boysal-vergne-2009}}]
\label{Theorem:BoysalVergneWallCrossing}
 \XintroLattice
 
 Let $\Omega_1$ and $\Omega_2$ be two big cells whose closures have a $(d-1)$-dimensional intersection.
 Let $H$ be the hyperplane that contains this intersection.
   The intersection is contained in the closure of a big cell $\Omega_{12}$ of $X\cap H \subseteq H$.
   Let $T_{X\cap H}^{\Omega_{12}}$ denote the polynomial that agrees with $T_{X\cap H}$ on $\Omega_{12}$.
  Let $V_{12}$ be a polynomial that extends the polynomial $T_{X\cap H}^{\Omega_{12}}$ to $U$
  (\eg $V_{12}|_H=T_{X\cap H}^{\Omega_{12}}$ and  $V_{12}$ constant on lines perpendicular to $H$).
 Let $\eta$ be a normal vector for $H$. 
 Suppose that $\eta(\Omega_1)>0$. 
 Then
 \begin{align}
 \label{eq:BoysalVergneWallCrossing}
   (T_X^{\Omega_1} - T_X^{\Omega_2})  &=
   \res_{z = 0} \left( \left( 
   V_{12}(D) \frac{ e^{ t_1 s_1 + \ldots + t_d s_d + \eta  z  } }{\prod_{x\in X\setminus H} ( t_1(x)s_1 + \ldots + t_d(x)s_d + \eta(x) z ) }
   \right)_{ s = 0 } \right).
 \end{align}
\end{Theorem}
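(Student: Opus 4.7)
The plan is to derive the wall-crossing formula using the Laplace transform of $T_X$ together with a contour-deformation / residue argument, in the spirit of the Jeffrey--Kirwan and Szenes--Vergne residue calculi. Recall that the Laplace transform of $T_X$ is $\hat T_X(v) = \prod_{x\in X}\pair{v}{x}^{-1}$ on a suitable tube domain in $V_\CC$, and that on the interior of each big cell $\Omega$ the polynomial $T_X^\Omega$ can be recovered as an inverse Laplace integral along a contour $c_\Omega + iV$, where $c_\Omega$ lies in the chamber of the hyperplane arrangement $\{\pair{v}{x} = 0 : x \in X\}$ dual to $\Omega$.

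First I would introduce coordinates adapted to the wall $H$. Pick $\lambda_0\in U$ with $\eta(\lambda_0) = 1$, let $W := \{v \in V : v(\lambda_0) = 0\}$, and decompose $v = v' + z\eta$ with $v' \in W$ and $z\in \R$. Since $\eta$ vanishes on $H$, the denominator factors as
\[
\prod_{x\in X}\pair{v}{x} \;=\; \prod_{x\in X\cap H}\pair{v'}{x}\,\cdot\,\prod_{x\in X\setminus H}\bigl(\pair{v'}{x} + z\eta(x)\bigr).
\]
Because $\Omega_1$ and $\Omega_2$ lie on opposite sides of $H$, their dual chambers can be chosen to differ only in the $\eta$-component: take $c_{\Omega_1} = c' + \eps\eta$ and $c_{\Omega_2} = c' - \eps\eta$ for small $\eps > 0$ and a common $c'\in W$. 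The difference $T_X^{\Omega_1} - T_X^{\Omega_2}$ then collapses to a $v'$-integral of a $z$-residue, the latter taken at the unique pole lying between the two $z$-contours.

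In the second step I would identify the resulting $v'$-integral with the action of $V_{12}(D)$. Integrating the factor $\frac{e^{\pair{v'}{u}}}{\prod_{x\in X\cap H}\pair{v'}{x}}$ over $v' \in c' + iW$ reproduces, via the lower-dimensional inverse Laplace transform on $H$, the spline $T_{X\cap H}$ evaluated on the big cell $\Omega_{12}$ -- which by hypothesis is the polynomial $V_{12}$. Using the Fourier--Laplace duality $\widehat{V_{12}(D) f} = V_{12}\cdot \hat f$, this integration can be converted into the substitution $s = 0$ preceded by the action of $V_{12}(D_s)$, matching the right-hand side of \eqref{eq:BoysalVergneWallCrossing}; here the $\res_{z=0}$ encodes the pole of order $|X\setminus H|$ at $z = 0$ that is produced when $\prod_{x\in X\setminus H}(\pair{s}{x}+\eta(x)z)$ is restricted to $s = 0$.

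The hard part will be the contour-deformation bookkeeping in Step 1: the $z$-poles of the integrand lie at the $v'$-dependent locations $z = -\pair{v'}{x}/\eta(x)$ for $x\in X\setminus H$, not a priori at $z = 0$, so one must organise the $v'$- and $z$-contour analyses consistently, for instance by means of an iterated residue that first separates the two blocks of variables before collapsing them, or by Szenes--Vergne's total residue on the space transverse to $H$. A related subtlety is that $V_{12}$ is only required to agree with $T_{X\cap H}^{\Omega_{12}}$ on $\Omega_{12}$, so one should verify that the $z$-residue depends only on the germ of $V_{12}$ at the origin (equivalently, only on finitely many of its derivatives), making the formula independent of the chosen extension of the local piece to $U$.
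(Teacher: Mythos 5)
This theorem is not proved in the paper at all: it is imported verbatim from Boysal--Vergne \cite[Theorem 1.1]{boysal-vergne-2009}, and the paper's own contribution at the wall is only Lemma~\ref{Lemma:WallCrossing}, obtained by direct manipulation of the right-hand side of \eqref{eq:BoysalVergneWallCrossing}. So there is no in-paper proof to compare against, and your sketch has to be judged on its own; it is in the right spirit (Laplace transforms and a residue in the direction transverse to $H$, which is indeed the technology of Szenes--Vergne/Jeffrey--Kirwan residues used in this literature), but it has a genuine gap at its central step.

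The gap is the identification of the local pieces with contour integrals and the ensuing ``pole between the two contours'' argument. Moving the real part $c$ of the contour $c+iV$ across a hyperplane $x^\perp$ of the arrangement in $V$ does not carry you from one local piece of $T_X$ to an adjacent one: it replaces $T_X$ by (a sign times) the spline of the list with $x$ flipped to $-x$, supported on a different cone. The analytic continuations $T_X^{\Omega}$ of the local pieces are instead given by Jeffrey--Kirwan residues indexed by the big cells of $U$, and there is in general no bijection between big cells in $U$ and chambers of the arrangement in $V$ (for $X=(e_1,e_2,e_1+e_2)$ there are $3$ big cells but $6$ chambers), so ``the dual chamber $c_\Omega$'' is not well defined; expressing $T_X^{\Omega_1}-T_X^{\Omega_2}$ as a one-variable transverse residue composed with the wall spline is precisely the content of the theorem, so as written the argument is circular exactly where the work lies. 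Concretely, for generic $c'$ the $z$-poles of your integrand sit at $z=-\pair{v'}{x}/\eta(x)$, $x\in X\setminus H$, whose real parts are bounded away from $0$, so shifting the $z$-contour from $c'-\eps\eta$ to $c'+\eps\eta$ crosses no pole; nor is there a ``unique'' pole --- the order-$m(H)$ pole at $z=0$ in the stated formula only materialises after the specialisation $s=0$, a step your contour picture does not supply. You flag this bookkeeping as the hard part, but it is not an implementation detail: it is the theorem. (Your secondary worry, that the formula might depend on the chosen extension $V_{12}$ of $T_{X\cap H}^{\Omega_{12}}$, is legitimate but harmless: any polynomial divisible by a linear form vanishing on $H$ contributes zero residue to the right-hand side, as a short computation in the style of the proof of Lemma~\ref{Lemma:WallCrossing} shows, so only $V_{12}|_H$ matters.)
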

As usual, $s_1,\ldots, s_d$ is a basis for the vector space $U$, $t_1,\ldots, t_d$ is a basis for the dual space, and
$V_{12}(D):=V_{12}\left( \diff{s_1},\ldots,  \diff{s_d}\right)$.
Hence $t_i(x)$ is a real number that depends only on $X\setminus H$.
The  term inside of $\res_{z = 0} \left(\cdot \right)$ on the right hand side of \eqref{eq:BoysalVergneWallCrossing} 
can be considered to be an element of the ring $\R[[t_1,\ldots, t_d, z,z^{-1}]]$. As usual, the residue map $\res_{z=0} : \R[[t_1,\ldots, t_d,z, z^{-1}]] \to \R[[t_1,\ldots, t_d]]$ 
is the map that sends  $f= \sum_{i\in \Z} f_i z^{i}$ to $f_{-1}$ ($f_i \in  \R[[t_1,\ldots, t_d]]$).
The subscript $s=0$ is an abbreviation for $ s_1 = \ldots = s_d = 0 $.
$(T_X^{\Omega_1} - T_X^{\Omega_2})$ is a polynomial in $\R[t_1,\ldots, t_d]$

\begin{Example}
 Consider $X=((1,0),(0,1),(1,1))$. 
 Let $\Omega_1 = \cone\{(1,0),(1,1)\}$ and $\Omega_2= \R^2\setminus \R^2_{\ge 0}$. Then $\Omega_{12}$
 is the ray spanned by $(1,0)$ and $V_{12}=1$.
 \begin{equation*}
  (T_X^{\Omega_1} - T_X^{\Omega_2})  = \res_{z=0} \left( \frac{ e^{ t_1  s_1  + t_2   s_2 + t_2 z     } }{   (t_2 s_2 + z)(t_1 s_1  + t_2 s_2  + z)}  
  \right)_{s=0}
 =
  \res_{z=0} \left( \frac{ e^{ t_2 z   } }{   z^2 }  \right ) = t_2.
 \end{equation*}
\end{Example}

\begin{Lemma}
\label{Lemma:WallCrossing}
We use the same terminology as in Theorem~\ref{Theorem:BoysalVergneWallCrossing}
and assume in addition that $\eta=t_1$.
Let  $V_{12}$ be the polynomial \st 
  $V_{12}|_H=T_{X\cap H}^{\Omega_{12}}$ and  $V_{12}$ is constant on lines perpendicular to $H$.
This implies that $V_{12} \in \R[t_2,\ldots, t_d]$.
Then 
\begin{align}
 (T_X^{\Omega_1} - T_X^{\Omega_2}) &= %
    c_{X} t_1^{m(H)-1} V_{12} + t_1^{m(H)} h  
\end{align}
for some homogeneous polynomial $h\in \R[t_1,\ldots, t_d]$ of degree $ \abs H - d$ and $c_{X}:=\frac{1}{ (m(H)-1)! \prod_{x\in X\setminus H} t_1(x)  } \in \R$.
If $\abs{H}=d-1$, then  $h=0$.

More generally, for a homogeneous polynomial
$p\in \R[t_2,\ldots, t_d] $,
 \begin{align}
 \label{eq:WallCrossingLemma}
  \res_{z = 0} \left( 
   p(D)
    \frac{ e^{ t_1 s_1 + \ldots t_d s_d + t_1  z  } }{\prod_{x\in X\setminus H} ( t_1(x)s_1 + \ldots  + t_d(x)s_d + t_1(x) z ) }    
   \right)_{ s = 0 }
   &=   c_{X} t_1^{m(H)-1} p  + t_1^{m(H)} g  
\end{align}
for   $c_X$ as above and $g \in \R[t_1,t_2,\ldots, t_d]$ that is homogeneous of degree $\deg p  - 1$.
 If $p$ is constant, then $g=0$. 
\end{Lemma}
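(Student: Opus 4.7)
The plan is to prove the general residue identity (equation (51)) by direct computation, and then deduce the first claim by invoking the Boysal--Vergne wall-crossing formula (Theorem~\ref{Theorem:BoysalVergneWallCrossing}) with $p = V_{12}$. The assumption $\eta = t_1$ together with the construction of $V_{12}$ (constant on lines perpendicular to $H$) forces $V_{12} \in \R[t_2, \ldots, t_d]$, so this specialisation is legitimate. Moreover $\deg V_{12} = |X \cap H| - (d-1) = |H| - d + 1$, so the resulting $h$ will have degree $|H| - d$, and in the case $|H| = d - 1$ the polynomial $V_{12}$ is constant, forcing $h = 0$.

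To prove equation (51), I would first exploit the fact that $p \in \R[t_2, \ldots, t_d]$ means $p(D)$ involves only $\partial_{s_2}, \ldots, \partial_{s_d}$, so I can set $s_1 = 0$ before differentiating. After this substitution, writing $\tilde s = (s_2, \ldots, s_d)$ and $\tilde t \cdot \tilde s = \sum_{i \ge 2} t_i s_i$, the integrand of (51) becomes
\begin{equation*}
\frac{e^{t_1 z}\, e^{\tilde t \cdot \tilde s}}{z^{m(H)}\, \textstyle\prod_{x \in X \setminus H} t_1(x) \cdot \tilde B(\tilde s, z)},
\qquad
\tilde B(\tilde s, z) := \prod_{x \in X \setminus H}\!\Bigl(1 + \sum_{i \ge 2} \tfrac{t_i(x)}{t_1(x) z}\, s_i\Bigr).
\end{equation*}
Applying the exponential shift identity $p(\partial_{\tilde s})[e^{\tilde t \cdot \tilde s} G] = e^{\tilde t \cdot \tilde s}\, p(\partial_{\tilde s} + \tilde t)\, G$ with $G = 1/\tilde B$ and Taylor expanding $p(\partial_{\tilde s} + \tilde t) = \sum_\alpha \tfrac{1}{\alpha!}\,(\partial^\alpha p)(\tilde t)\, \partial_{\tilde s}^\alpha$ reduces the computation to understanding $[\partial_{\tilde s}^\alpha (1/\tilde B)]|_{\tilde s = 0}$.

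Inspecting $\tilde B$ directly, this evaluation is a polynomial in the scalars $t_i(x)/t_1(x)$ times $z^{-|\alpha|}$. Taking the residue at $z = 0$, the $\alpha = 0$ term yields $p(\tilde t) \cdot \res_{z=0}\bigl(e^{t_1 z}/z^{m(H)}\bigr)/\prod_x t_1(x) = c_X t_1^{m(H)-1} p$, using $p(\tilde t) = p$. Each $|\alpha| \ge 1$ term contributes a factor $\res_{z=0}\bigl(e^{t_1 z}/z^{m(H) + |\alpha|}\bigr) = t_1^{m(H) + |\alpha| - 1}/(m(H) + |\alpha| - 1)!$, which is divisible by $t_1^{m(H)}$. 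A homogeneity count (the factor $(\partial^\alpha p)(\tilde t)$ has degree $\deg p - |\alpha|$ in $\tilde t$, while the residue carries $t_1^{|\alpha|-1}$ beyond the prescribed $t_1^{m(H)}$) shows the assembled contributions form a single polynomial $g$ homogeneous of total degree $\deg p - 1$; when $p$ is constant every $\partial^\alpha p$ with $|\alpha| \ge 1$ vanishes, so $g = 0$. The main obstacle is the bookkeeping of how $\partial_{\tilde s}^\alpha (1/\tilde B)|_{\tilde s = 0}$ interacts with the residue, but the exponential shift identity turns this into a routine power-series manipulation.
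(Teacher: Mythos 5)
Your proof of equation \eqref{eq:WallCrossingLemma} is correct but takes a genuinely different route from the paper's. The paper proves \eqref{eq:WallCrossingLemma} by induction on $\deg p$: the base case $p=1$ is a one-line residue computation, and for the inductive step it writes $p = q\cdot t_j$, applies the quotient rule to $q(D)$ acting on $G_X$, and expresses the result as $t_j\cdot(\text{term for }q)$ minus a sum over $x\in X\setminus H$ of terms involving $G_{X'_x}$ where $X'_x$ duplicates $x$; the induction hypothesis applied to each piece then yields the claimed form. You instead observe that $p\in\R[t_2,\dots,t_d]$ lets you set $s_1=0$ first, then apply the exponential shift $p(\partial_{\tilde s})\bigl[e^{\tilde t\cdot\tilde s}G\bigr]=e^{\tilde t\cdot\tilde s}p(\partial_{\tilde s}+\tilde t)G$, Taylor-expand $p(\partial_{\tilde s}+\tilde t)$, and take the residue term by term, using that the coefficient of $\tilde s^\alpha$ in $1/\tilde B$ carries exactly a factor $z^{-|\alpha|}$. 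Both arguments are sound, and the deduction of the first statement from the second via Theorem~\ref{Theorem:BoysalVergneWallCrossing} together with the degree count $\deg V_{12}=\abs H - d + 1$ matches the paper. The difference is mainly organizational: the paper's induction is short to write and defers all bookkeeping to the inductive hypothesis, whereas your direct expansion makes the structure of $g$ completely explicit (each $|\alpha|\ge 1$ term contributes $(\partial^\alpha p)(\tilde t)$ of degree $\deg p-|\alpha|$ times $t_1^{|\alpha|-1}$ beyond $t_1^{m(H)}$, summing to a homogeneous polynomial of degree $\deg p-1$) at the cost of carrying the multi-index bookkeeping through in one step.
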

\begin{proof}
 We use induction over the degree of $p$ to prove the second statement.
 Suppose first that $p=1$.
 Then the  term on the left-hand side of \eqref{eq:WallCrossingLemma}
 is equal to
 \begin{align*}
     \res_{ z = 0 } \left( 
     \frac{e^{ t_1 z  }}{  \left(  \prod_{ x\in X\setminus H}  t_1(x)  \right) z^{m(H)} }
   \right) %
   = \frac{ t_1^{m(H) - 1}   }{  (m(H)-1)! \prod_{x\in X\setminus H} t_1(x) } = c_X t_1^{m(H) - 1}.
 \end{align*}
 \begin{equation}
 \text{Let } \quad G_{X} := \frac{ e^{ t_1 s_1 + \ldots  +  t_d s_d +  t_1 z  } }{\prod_{x\in X\setminus H} ( t_1(x)s_1 + \ldots  + t_d(x)s_d + t_1(x) z ) }    
 \quad \text{ and let } 
 \end{equation}
  $ p =  q \cdot t_j\in \R[t_2,\ldots, t_d]$ be a monomial. 
 Recall that $p(D)$ denotes the differential operator obtained from $p$ by replacing $t_i$ by $\diff{s_i}$.
 Using the quotient rule we obtain
  \begin{equation*}
   p(D) G_X
  =
       t_j q(D)      G_X
        - q(D) \sum_{x\in X\setminus H} t_j(x) \frac{ G_X } %
        {( t_1(x)s_1 + \ldots + t_d(x) s_d + \eta(x) z ) }  .
 \end{equation*}
 By induction 
 the residue of $q(D)       G_X |_{s=0}$
  is $c_X t_1^{m(H)-1}q + t_1^{m(H)} g_1$ with $c_X \in \R$ as defined above
  and a homogeneous polynomial $g_1$ of degree $\deg(q)-1$.
 Note that   
 $ G_X / {( t_1(x)s_1 + \ldots + t_d(x) s_d + \eta(x) z ) } = G_{X_x'}$, where $X_x'$ is obtained from $X$ by adding an extra copy of $x$ 
 and that
 the term $t_j(x)$ is  just a real number.
 By induction  the residue  of $q(D)G_{X_x'}|_{s=0}$ is equal to 
 $c_{X_x'} t_1^{m(H)} q + t_1^{m(H)+1} g_x$ for some homogeneous polynomial $g_x \in \R[t_1,\ldots, t_d]$ of degree $\deg(q)-1$.
Hence
 \begin{align}
   \res_{z=0} (G_X)_{s=0} &= t_j (c_X t_1^{m(H)-1}q + t_1^{m(H)} g_1) + \sum_{x\in X\setminus H} t_j(x) (c_{X_x'}   t_1^{m(H)} q + t_1^{m(H)+1} g_x)
 \notag \\
   &=   c_X t_1^{m(H)-1}p  + t_1^{m(H)}  \underbrace{(t_j g_1  + \sum_{x\in X\setminus H} t_j(x) (c_{X_x'}  q + t_1 g_x))}_{\text{homogeneous of degree } \deg p-1}
 \end{align}
Using the fact that  homogeneous polynomials are sums of monomials of the same degree, 
 the  second statement follows.

 The first statement follows easily from the second using Theorem~\ref{Theorem:BoysalVergneWallCrossing}  
  taking into account that $T_{X\cap H}^{\Omega_{12}}$ is a homogeneous polynomial of degree $\abs H - d + 1$.
\end{proof}

\begin{Lemma}
\label{Lemma:LatticeConeVanishes}
\XintroLattice
 Let $\mu\in \Lambda$ and let $\Lambda'\subseteq \Lambda$ be a sublattice.
  Let $C\subseteq U$ be a full-dimensional cone.
  Let $f\in \sym(V)$. 
  Suppose that $f(C\cap (\Lambda' + \mu))=0$. Then $f=0$.
\end{Lemma}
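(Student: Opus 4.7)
The plan is to reduce to the elementary fact that a polynomial on $U$ which vanishes on a set of the form $\mu + \{n_1 u_1 + \cdots + n_d u_d : n_i \ge N\}$, for some basis $u_1,\ldots,u_d$ of $U$ and some $N \in \N$, must vanish identically. It therefore suffices to produce such a set inside $C \cap (\Lambda' + \mu)$.

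To find the vectors, note that since $\Lambda'$ is a full-rank sublattice, its $\Q$-span equals $U$, so $\Q \otimes \Lambda'$ is dense in $U$. The non-empty open set $\interior(C)$ therefore contains $d$ linearly independent points of $\Q \otimes \Lambda'$, and clearing denominators (which preserves membership in $\interior(C)$ by the cone property $\lambda C \subseteq C$ for $\lambda > 0$) produces linearly independent $u_1,\ldots,u_d \in \Lambda' \cap \interior(C)$. For each $u_i$ pick $\eps_i > 0$ such that the open ball of radius $\eps_i$ around $u_i$ is contained in $C$; scaling then gives $B(n_i u_i, n_i \eps_i) \subseteq C$ for all $n_i > 0$. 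Writing $\mu$ as $d$ copies of $\mu/d$ and taking $N$ larger than $\|\mu\|/(d \min_i \eps_i)$, each $n_i u_i + \mu/d$ lies in $C$ whenever $n_i \ge N$; after passing (if necessary) to a convex full-dimensional subcone of $C$, so that $C$ is closed under addition, the sum $\mu + \sum_i n_i u_i = \sum_i (n_i u_i + \mu/d)$ also lies in $C$.

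Consequently $S := \{\mu + \sum_i n_i u_i : n_i \ge N\}$ is contained in $C \cap (\Lambda' + \mu)$, and so $f$ vanishes on $S$. Using $u_1,\ldots,u_d$ as a basis of $U$, $f$ becomes a polynomial in the corresponding coordinates that vanishes on the shifted orthant $\{(n_1,\ldots,n_d) \in \Z^d : n_i \ge N\}$, which forces $f = 0$ by induction on $d$: for $d=1$ a univariate polynomial with infinitely many zeros is zero; in general one fixes $n_1,\ldots,n_{d-1}$, applies the univariate case to conclude that all coefficients (polynomials in the remaining variables) vanish on the corresponding lower-dimensional shifted orthant, and then inducts. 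The only (minor) obstacle is that the translation by $\mu$ could push our semi-lattice out of $C$; this is the reason for choosing the $u_i$ strictly inside $\interior(C)$ and $N$ large enough to absorb $\mu$.
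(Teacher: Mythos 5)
Your proof is correct, but it takes a genuinely different route from the paper's. The paper reduces to dimension one: it fixes a single nonzero $\lambda\in C\cap(\Lambda'+\mu)$, observes that $(rl+1)\lambda$ stays in $\Lambda'+\mu$ for all $r\in\N$ (where $l$ is a positive integer with $l\mu\in\Lambda'$, i.e.\ the order of $\mu$ in $\Lambda/\Lambda'$), deduces that the univariate restriction $k\mapsto f(k\lambda)$ has infinitely many zeros and hence vanishes, and then argues that the set of such rays is dense in $C$ so that continuity forces $f|_C=0$ and thus $f=0$. You instead construct $d$ linearly independent vectors $u_1,\ldots,u_d\in\Lambda'\cap\interior(C)$, show (after shrinking to a convex scale-invariant subcone and taking $N$ large) that the full shifted orthant $\{\mu+\sum_i n_i u_i : n_i\ge N\}$ sits inside $C\cap(\Lambda'+\mu)$, and invoke the elementary multivariate fact that a polynomial vanishing on such a set vanishes identically. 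Both arguments implicitly or explicitly use that $\Lambda'$ has full rank, but you sidestep the need for $\mu$ to have finite order modulo $\Lambda'$ since your grid points $\mu+\sum n_i u_i$ lie in $\Lambda'+\mu$ automatically. Your version is more explicit and avoids the paper's density-of-rays step, which is stated without justification; the price is a slightly longer construction involving the choice of the $u_i$, the radii $\eps_i$, and the convex subcone. The small imprecision in your last paragraph (you should really say that $g(n):=f(\mu+\sum n_i u_i)$ is a polynomial in $n$ vanishing on the shifted orthant, rather than that ``$f$ becomes a polynomial vanishing on the shifted orthant'') is cosmetic and does not affect the argument.
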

\begin{proof}
 Let $0 \neq \lambda \in C\cap (\Lambda' + \mu)$. 
 For $k\in \R$, let $p(k):=f(k\lambda)$. 
 There exists a positive integer $l$ \st $l\mu\in \Lambda'$.
 Hence $(rl+1)\lambda \in \Lambda'+ \mu$ for all $r\in \Z$. This implies that for any $r\in \N$,
 $f( (rl+1)\lambda)=0$. Thus
  $p$ is a univariate polynomial in $k$ with infinitely many zeroes.
 This implies $p(k)=0$ and thus $f(u) = 0$ for any $u\in C$ that can be written as  
  $ k \lambda $ with $ \lambda \in C \cap (\Lambda' + \mu) $ and $k\in \R$.
    Not every  $u\in C$ can be written in this way, but every $u\in C$ is the limit of a sequence of points with this property.   
   Since polynomials are continuous, $f(C)=0$ and as  $C$ is full-dimensional, this implies $f=0$.
\end{proof}

\begin{proof}[Proof of Theorem~\ref{Proposition:internalContinuousCharacterisation}]
Let $\Omega_1$ and $\Omega_2$ be two big cells whose closures have a $(d-1)$-dimensional intersection.
 Let $H$ be the hyperplane that contains this intersection.
 Let $T_X^{\Omega_1}$ and $ T_X^{\Omega_2}$ denote the polynomials that agree with $T_X$ on $\Omega_1$ and $\Omega_2$, respectively.
 Without loss of generality, we may assume that $H$ is the hyperplane perpendicular  to $t_1$.
 Let $\lambda\in H \cap \Lambda$. 
 Let $p \in \Pper(X)$ and let $p_\lambda = p(\lambda,\cdot) \in \Pcal_\CC(X)$ denote the local part at $\lambda$.
 Let $m:=m(H)$. 
 By definition, we can write $p_\lambda$ uniquely as
 \begin{equation}
    p_\lambda  = p_{m-1} s_1^{m-1} + \ldots + p_1 s_1 + p_0  \text{  for some } p_i\in \CC[s_2,\ldots, s_d].
 \end{equation}
  Note that $p \in \Pper_-(X)$ if $p_{m-1} = 0 $ for all hyperplanes $H\in \Hcal(X)$ and $\lambda\in H\cap \Lambda$.
  By Lemma~\ref{Lemma:WallCrossing},
\begin{equation}
  p_\lambda(D)  (T_X^{\Omega_1} - T_X^{\Omega_2}) =   c_X(m(H)-1)! p_{m-1}(D)  V_{12}   + t_1 g
 \end{equation}
 for some $g \in \sym(V)$ and $V_{12}$ as defined in Lemma~\ref{Lemma:WallCrossing}.

 Suppose that $p\in \Pper_-(X)$.
 Then by definition, $p_{m-1} = 0$.
 This implies that $p_\lambda(D)(T_X^{\Omega_1} - T_X^{\Omega_2})(\lambda) = 0$, as $\lambda\in H$ implies $t_1(\lambda)=0$.
 Hence $T_X$ is continuous in $\lambda$ across the wall $H$.
 \smallskip
 
 Now we want to show that if $p_\lambda(D)T_X$ is continuous,  then $p_{m-1} = 0 $.
 Let $\Lambda'\subseteq \Lambda$ be a sublattice \st the restriction of $p$ to a coset of $\Lambda'$ is a polynomial.
 It is sufficient to show that if $p_{m-1} \neq 0$, then there is a $\mu\in (\Lambda' + \lambda) \cap H$ 
 (\ie $p_\lambda=p_\mu$) \st  $p_{m-1}(D)  T_{X\cap H}(\mu) \neq 0$.

 \emph{Claim: $p_{m-1}$ is contained in $\Pcal(X\cap H)$}. %
 Let $p_Y$ be a generator of $\Pcal(X)$. Let $ Y_1 = Y \cap H $ and $ Y_2 = Y \setminus H $. If the polynomial $p_Y$ contributes to the $s_1^{m-1}$ term, then
 $ \abs{Y_2} = m-1 $. 
 This implies that $X\setminus (H\cup Y)$ contains a unique element $y_0$.
 Since $X\setminus Y$ has full rank, $ (X\cap H)  \setminus Y_1$ must span $H$. Hence $p_{Y_1} \in \Pcal(X\cap H)$.
 Furthermore, $p_Y = p_{Y_1} p_{Y_2} = \gamma s_1^{m-1} p_{Y_1} + o(s_1^{m-1})$ for some $\gamma\in \R$. This proves the claim since $p(\lambda,\cdot) \in \Pcal_\CC(X)$.

  The local pieces of $ T_{X \cap H} $ are contained in $\Dcal(X\cap H)$ by Theorem~\ref{Proposition:Dlocalpieces}.
  So by duality (Theorem~\ref{Proposition:PDduality}) and using the fact 
  that the local pieces of $T_{X \cap H}$ span the top degree part of $\Dcal(X\cap H)$, 
  there must be a big cell $\Omega'$ in $H$ \st the corresponding local piece $T_{X\cap H}^{\Omega'}$  
   is not annihilated by $p_{m-1}(D)$.  
  Hence by Lemma~\ref{Lemma:LatticeConeVanishes}, there is
  a point $\mu \in \Omega' \cap (\Lambda' + \lambda) $  \st $ p_{m-1}(D)T_{X\cap H}^{\Omega'}( \mu ) \neq 0 $.
   Hence $p_\lambda(D) T_X$  is discontinuous in $\mu$, which is a contradiction. This finishes the proof.
\end{proof}

\begin{Remark}
 A different 
 approach to prove Theorem~\ref{Proposition:internalContinuousCharacterisation}
 would have been to use a modified version 
 of \cite[Corollary~19]{brion-vergne-1999} that characterises the smoothness of a piecewise polynomial function along a wall 
 in terms of the Laplace transform.

 A result similar to Lemma~\ref{Lemma:LatticeConeVanishes} for arbitrary piecewise-polynomial functions is known~\cite[Theorem~1]{wang-2000}.
 
\end{Remark}

 \section{Deletion-contraction and the proof of Theorem~\ref{Proposition:PeriodicInternalTutteEval}}
 \label{Section:DeletionContraction}

 In this section we will discuss deletion-contraction for finitely generated abelian groups and
 periodic $\Pcal$-spaces
 and then prove an analogue of 
  Proposition~\ref{Proposition:DeletionContractionP} on short exact sequences.
 This will allow us to prove   
 Theorem~\ref{Proposition:PeriodicInternalTutteEval}
 and Proposition~\ref{Proposition:InhomogeneousInternalBasis} that describe properties of the internal periodic $\Pcal$-space.
 
 \subsection{Deletion-contraction.}
  \label{Subsection:DeletionContraction}

Recall that we have defined deletion-contraction for $X\subseteq U$
and $\Pcal(X)\subseteq \sym(U)$ in 
Subsection~\ref{Subsection:ZonotopalSpaces}.
 Now we require deletion and contraction for $X\subseteq G$ and $\Pper(X)\subseteq
 \bigoplus_{ e_\phi\in \Vcal(X) } \sym(U)$.
We are working with finitely generated abelian groups in this section since they are closed under taking quotients.
This is in general not the case for lattices.
 
 Let $x\in X$. As usual, we call the list $X\setminus x$ the \emph{deletion} of $x$ and the image 
 of $X\setminus x$ under the projection $\pi_x : G \to G/x$ is called the \emph{contraction}
 of $x$. It is denoted by $X/x$.

The definition of the projection map $\pi_x : \Pper(X) \to \Pper(X/x)$
 requires a few more thoughts.
 Its definition has two ingredients:
 a projection of the polynomial part and a projection of the torus.
 
 Recall that $U=G\otimes \R$ and that $\Pcal(X)$ is contained in $\sym(G\otimes \R)$.
 The space  $\Pcal(X/x)$ is contained in $\sym((G/x) \otimes \R)$.
 Lemma~\ref{Lemma:tensorquotient} implies that
$\sym((G/x) \otimes \R)$ is canonically isomorphic to 
$\sym( (G \otimes \R)/(x\otimes 1))$. This implies that also in the case
where $X$ is contained in a finitely generated abelian group $G$, 
we can use the 
usual projection map $ \pi_x : \sym( U ) \to \sym( U / x ) $ to project
$\Pcal(X)\to \Pcal(X/x)$.

 Note that a map $e_{\bar\phi} : G/\langle x\rangle \to S^1$  is equivalent to a map $e_\phi : G \to S^1$ that satisfies $e_\phi(x) = 1$.
 This implies that $T(G/\langle x\rangle) \cong H_x$. 
 
 Let $x\in X$ be an element that is not torsion.
 Now we define the projection map  $\pi_x : \Pper(X) \to \Pper(X/x)$ as follows: 
let $e_\phi p_{X\setminus (X_\phi \cup X_t)} s_0^{t_X(\phi)} p_Y$ be a generator of $\Pper(X)$, where $p_Y\in \Pcal(X_\phi)$.
We define $\pi_x$ to be the map that sends this generator to
$0$ if $e_\phi\not\in \Vcal(X) \cap H_x$ and to 
$e_{ \bar \phi } \bar p_{X\setminus (X_\phi \cup X_t \cup \spa(x))} s_0^{t_{X/x}(\phi)} \bar p_Y  $ otherwise.  
 Here,  $\bar p$ denotes the image of $p$ under the projection $  \sym(U) \to \sym(U/x)$.  
Removing $\spa(x)$ in the prefactor is necessary to remove the elements that turn into torsion elements in $X/x$.
 Note that if $e_\phi\not\in \Vcal(X) \cap H_x  $, then $x|p$, hence $\bar p=0$.
 So it makes sense to send the corresponding generators to $0$.

  \begin{Example}
  \label{Example:DeletionContractionPper}
   Let $X=\begin{pmatrix} 1 & 0 & 0 \\ 0 & 2 & 1 \end{pmatrix}$. We contract  the second element and get $X/x = ((1,\bar 0),(0,\bar 1)) \subseteq \Z\oplus \Z_2$.   
  Note that $\Vcal(X) = \{1, (-1)^b \}$ and $\Vcal(X/x) = \{ 1, (-1)^{\bar b} \}$.
      Then $ \Pper(X) = \spa \{1,s_2,  (-1)^b s_2  \} $ and $\Pper(X/x) = \spa\{ 1, (-1)^b s_0 \}$. 
   The following sequence is exact:
   \begin{equation}
    0 \to \spa\{1\} \stackrel{\cdot s_2}{\longrightarrow} \spa \{1,s_2,  (-1)^b s_2  \}  \stackrel{\pi_x}{\longrightarrow}  \spa\{ 1, (-1)^b s_0 \}.
   \end{equation}

  \end{Example}

  \begin{Lemma}
  \label{Lemma:tensorquotient}
    Let $G$ be a finitely generated abelian group and let $H$ be a subgroup.

    Then $(G/H) \otimes \R \cong (G \otimes \R ) / (H\otimes \R)$.
    So in particular, $\Pcal(X/x) \subseteq \sym( (G/ x) \otimes \R) \cong \sym( U / (x\otimes 1) )$.
  \end{Lemma}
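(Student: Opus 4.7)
My plan is to derive the isomorphism from the general fact that the functor $-\otimes_\Z \R$ is exact on the category of abelian groups, since $\R$ is a torsion-free (hence flat) $\Z$-module.

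First, I would consider the short exact sequence of abelian groups
\begin{equation*}
 0 \to H \stackrel{\iota}{\to} G \stackrel{\pi}{\to} G/H \to 0,
\end{equation*}
where $\iota$ is the inclusion and $\pi$ is the canonical projection. Applying the right-exact functor $-\otimes_\Z \R$ gives the exact sequence
\begin{equation*}
 H \otimes \R \stackrel{\iota\otimes 1}{\longrightarrow} G\otimes \R \stackrel{\pi\otimes 1}{\longrightarrow} (G/H)\otimes \R \to 0.
\end{equation*}
By the first isomorphism theorem, this already yields $(G/H)\otimes \R \cong (G\otimes\R)/\image(\iota\otimes 1)$, so the task reduces to identifying $\image(\iota\otimes 1)$ with $H\otimes\R$, i.e.\ to showing $\iota\otimes 1$ is injective.

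For the injectivity, I would argue that $\R$ is flat as a $\Z$-module. This can be seen either by invoking that $\R$ is torsion-free over the principal ideal domain $\Z$ (hence flat), or more concretely by writing $\R$ as a filtered colimit (a direct limit) of its finitely generated free $\Z$-submodules and using that filtered colimits preserve exactness, or by factoring the tensor product through $\Q$ and using that $\Q$ is a localisation (hence flat) and $\R$ is a $\Q$-vector space (hence flat over $\Q$). Any of these arguments gives $\Tor_1^\Z(G/H,\R)=0$, and the long exact Tor sequence then implies that $\iota\otimes 1$ is injective. I expect this flatness step to be the only non-trivial point, though it is entirely standard.

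Combining these two steps yields the canonical isomorphism $(G/H)\otimes \R \cong (G\otimes \R)/(H\otimes \R)$. For the ``in particular'' statement, I would specialise to $H = \langle x\rangle \subseteq G$, so that $(G/\langle x\rangle)\otimes \R \cong (G\otimes\R)/(\langle x\rangle\otimes\R) = U/(x\otimes 1)$. Passing to the symmetric algebras (which is functorial) then gives the canonical identification $\sym((G/x)\otimes\R) \cong \sym(U/(x\otimes 1))$, under which $\Pcal(X/x)$ sits inside $\sym(U/(x\otimes 1))$ as claimed.
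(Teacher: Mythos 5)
Your proof is correct and uses essentially the same approach as the paper: both arguments rest on the flatness of $\R$ as a $\Z$-module, applied to the short exact sequence $0 \to H \to G \to G/H \to 0$. The paper simply cites flatness directly (via Lang) to obtain the full short exact sequence after tensoring, whereas you spell out the right-exactness and the injectivity step separately, but the mathematical content is the same.
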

  \begin{proof}
   Note that $\R$ is a flat $\Z$-module, \ie the functor $\otimes_\Z \R$ is exact
       (this follows for example from Proposition~XVI.3.2 in \cite{lang-algebra-2002}).
   Hence, exactness of the sequence  
  $0 \to H \to G \to G/H \to 0$ implies that the following sequence is exact:
  \begin{align}
   0 \to H \otimes \R \to G  \otimes \R  \to G/H  \otimes \R  \to 0.
  \end{align}
  This implies the statement.
  \end{proof}

  \subsection{Exact sequences}
  Recall that for a graded vector space S, we write $S[1]$ to denote  the vector space with the degree
shifted up by one.
  
 \begin{Proposition}
 \label{Proposition:DeletionContractionCentralPeriodic}
 \XintroAbelianGroup
   Let $x\in X$ be an element that is not torsion.  
  Then the following is an exact sequence of graded vector spaces:
   \begin{align}
    0 \to \Pper( X\setminus x )[1]  \stackrel{\cdot p_x}{\longrightarrow} \Pper(X) \stackrel{\pi_x}{\longrightarrow} \Pper(X/x) \to 0.
   \end{align}
  \end{Proposition}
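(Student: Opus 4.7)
The plan is to mimic the classical deletion-contraction argument (Proposition~\ref{Proposition:DeletionContractionP}) vertex by vertex, exploiting the direct-sum decomposition of $\Pper(X)$ over $\Vcal(X)$ in Definition~\ref{Definition:PeriodicCentralPGeneral} and the explicit basis of Proposition~\ref{Proposition:HomogeneousBasisCentralPeriodic}.

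First I would sort out how the three vertex sets relate when $x$ is non-torsion. Since the bases of $X \setminus x$ are precisely those bases of $X$ that avoid $x$, one has $\Vcal(X \setminus x) \subseteq \Vcal(X)$, and every $e_\phi \in \Vcal(X)$ with $e_\phi(x) \neq 1$ already lies in $\Vcal(X \setminus x)$ because an annihilating basis cannot contain $x$ in that case. Dually, inflation along $G \twoheadrightarrow G/\langle x\rangle$ identifies $\Vcal(X/x)$ canonically with $\Vcal(X)\cap H_x$, and under this identification $(X/x)_{\bar\phi} = \pi_x(X_\phi \setminus x)$ while $(X/x)_t$ picks up the images of $(X \cap \spa(x)) \setminus \{x\}$ in addition to $\pi_x(X_t)$. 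This bookkeeping explains the definition of $\pi_x$ on generators and yields the key degree identity
\begin{equation*}
\tors_{X/x}(\bar\phi) \; = \; \tors_X(\phi) \; + \; \bigl|\,(X \setminus (X_\phi \cup X_t)) \cap \spa(x)\,\bigr|,
\end{equation*}
so that the migration of $p$-factors into $s_0$-factors under $\pi_x$ preserves total degree.

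Next I would split both maps vertex-wise. For $e_\phi \in \Vcal(X) \setminus H_x$ we have $x \in X \setminus (X_\phi \cup X_t)$, so $p_x$ divides the prefactor $p_{X \setminus (X_\phi \cup X_t)}$; factoring it out identifies this summand of $\Pper(X)$ with $p_x$ times the corresponding $e_\phi$-summand of $\Pper(X \setminus x)$ (shifted in degree by one), and the definition of $\pi_x$ kills it. For $e_\phi \in \Vcal(X)\cap H_x$ we have $x \in X_\phi$, and multiplying the classical short exact sequence
\begin{equation*}
0 \to \Pcal(X_\phi \setminus x)[1] \xrightarrow{\cdot p_x} \Pcal(X_\phi) \xrightarrow{\pi_x} \Pcal(X_\phi / x) \to 0
\end{equation*}
through by the prefactor $e_\phi \, p_{X \setminus (X_\phi \cup X_t)} \, s_0^{\tors_X(\phi)}$ and applying the torsion bookkeeping of the first step identifies the three terms with the $e_\phi$-summands of $\Pper(X \setminus x)[1]$, $\Pper(X)$, and $\Pper(X/x)$ respectively. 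Assembling over all vertices yields the desired sequence: injectivity of $\cdot p_x$ is inherited summand-wise because $p_x$ is a non-zero-divisor in $\sym(U)$, surjectivity of $\pi_x$ follows since it is surjective on each $e_\phi \in H_x$-summand by Proposition~\ref{Proposition:DeletionContractionP}, and $\ker(\pi_x) = \image(\cdot p_x)$ follows by combining the two cases.

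The main obstacle is the bookkeeping between the polynomial prefactor $p_{X \setminus (X_\phi \cup X_t)}$ and the torsion marker $s_0^{\tors_X(\phi)}$: elements of $X$ lying in $\spa(x) \setminus (X_\phi \cup X_t)$ are tracked by $p$-factors in $\Pper(X)$ but become torsion in $X/x$ and must migrate into $s_0$-factors in $\Pper(X/x)$. Verifying that the definition of $\pi_x$ carries out exactly this migration (and that $\pi_x$ is well-defined on $\Pper(X)$, which follows once one knows the generators form a basis) is the technical heart of the argument; once it is in place, exactness at each vertex, and hence globally, reduces cleanly to the classical case.
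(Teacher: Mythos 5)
Your approach is genuinely different from the paper's. The paper verifies well-definedness of $\cdot p_x$ and $\pi_x$, surjectivity of $\pi_x$, and $\pi_x\circ(\cdot p_x)=0$ directly on generators, and then settles exactness in the middle by a dimension count: it invokes Theorem~\ref{Proposition:CentralPeriodicPHilbertTutte} together with the arithmetic Tutte deletion-contraction identity \eqref{equation:AriTutteDelCon}. You instead propose to establish exactness vertex-by-vertex by tensoring the classical exact sequence of Proposition~\ref{Proposition:DeletionContractionP} with the prefactor $e_\phi p_{X\setminus(X_\phi\cup X_t)} s_0^{\tors_X(\phi)}$; the bookkeeping you do relating $\Vcal(X)$, $\Vcal(X\setminus x)$, $\Vcal(X/x)$ and the torsion degrees is correct and is indeed what the paper's generator-level computations implicitly rely on. If your vertex-wise scheme worked in all cases it would buy a more structural proof that does not rest on the Hilbert series computation of Theorem~\ref{Proposition:CentralPeriodicPHilbertTutte}.

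However, your Case 2 has a genuine gap: you apply the short exact sequence $0\to\Pcal(X_\phi\setminus x)[1]\to\Pcal(X_\phi)\to\Pcal(X_\phi/x)\to 0$ for every $e_\phi\in\Vcal(X)\cap H_x$, but this sequence fails whenever $x$ is a coloop of the sublist $X_\phi$ — which can happen even when $x$ is not a coloop of $X$. For instance take $X=(1,2)\subseteq\Z$ and $x=2$: for the vertex $e_\phi=-1$ one has $X_\phi=(2)$, and $x$ is a coloop of $X_\phi$. In that case $\Pcal(X_\phi\setminus x)=\Pcal(())=\R$, $\Pcal(X_\phi)=\R$, $\Pcal(X_\phi/x)=\R$, and the map $\cdot p_x$ does not even send $\Pcal(X_\phi\setminus x)$ into $\Pcal(X_\phi)$ (it would raise the degree past the top), so the sequence is not exact. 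Moreover, precisely in this situation $X_\phi\setminus x$ fails to generate a finite-index subgroup, so $e_\phi\notin\Vcal(X\setminus x)$: your identification of $\Pcal(X_\phi\setminus x)$ with the $e_\phi$-summand of $\Pper(X\setminus x)$ is wrong, since that summand is zero. To repair the argument you must split Case 2 further: when $x$ is a coloop of $X_\phi$, the $e_\phi$-component of the sequence should read $0\to 0\to\Pcal(X_\phi)\to\Pcal(X_\phi/x)\to 0$, and one has to prove separately that $\pi_x:\Pcal(X_\phi)\to\Pcal(X_\phi/x)$ is an isomorphism in this case. The paper sidesteps this entirely by only ever invoking Proposition~\ref{Proposition:DeletionContractionP} for $e_\phi\in\Vcal(X\setminus x)$ (where $x$ is automatically not a coloop of $X_\phi$) and proving exactness in the middle by the dimension argument.
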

  \begin{proof}
   \emph{$\cdot p_x$ is well-defined:}  we will show that generators of
   $\Pper(X\setminus x)$ are mapped to generators of $\Pper(X)$.
   Let $e_\phi s_0^{\tors_{X\setminus x}(\phi)}  p_Y \in p_{X \setminus ( X_\phi \cup X_t \cup x)} s_0^{\tors_{X\setminus x}(\phi)} \Pcal(X_\phi \setminus x)$ be a generator. 
   Since $x$ is not torsion, $\tors_X(\phi) = \tors_{X\setminus x}(\phi) $, so the $s_0$ part is fine.
   If $x \in X_\phi$, then 
   $ p_x \Pcal(X_\phi \setminus x)\subseteq \Pcal(X_\phi)$ by
   Proposition~\ref{Proposition:DeletionContractionP}.
   If $x\not\in X_\phi$ then the prefactor is multiplied by $p_x$.
   
   \emph{$\pi_x$ is well-defined:} let $e_\phi p_{X\setminus ( X_\phi \cup X_t )} s_0^{t_X(\phi)} p_Y$ be a generator of $\Pper(X)$.
   If $e_\phi(x) =  1$, then by definition, it is mapped to
   $e_{ \bar \phi } \bar p_{X\setminus (X_\phi \cup X_t \cup \spa(x))} s_0^{t_{X/x}(\phi)} \bar p_Y$. 
   This is a generator of $\Pper(X/x)$ since $\bar p_Y$ is known to be in $\Pper(X_\phi/x)$ by Proposition~\ref{Proposition:DeletionContractionP}. 
   If $e_\phi(x) \neq 1$, then the generator  is mapped to $0$.

   \emph{$\pi_x \circ( \cdot x ) = 0$} is clear.
   
   \emph{Surjectivity of $\pi_x$:} let $h:= e_{\bar\phi} s_0^{t_{X/x}(\bar\phi)}  \bar p_{ X\setminus (X_{ \phi} \cup X_t \cup \spa(x))} \bar p_{  Y}$ be a generator of $\Pper(X/x)$.
   There is a vertex $e_\phi\in \Vcal(X) \cap H_x$ that corresponds to $e_{\bar \phi} \in\Vcal(X/x) $ 
   and $ e_{\phi} s_0^{t_{X}(\phi)}  p_{ X\setminus (X_{ \phi} \cup X_t )} p_{ Y} $ is a generator of $\Pper(X)$ that is contained in the preimage of $h$.

   \emph{Exactness in the middle:}
    It is sufficient to show that $\dim \Pper(X\setminus x) + \dim \Pper(X/x) = \dim \Pper(X)$.  This follows from
  Theorem~\ref{Proposition:CentralPeriodicPHilbertTutte} and the
   deletion-contraction formula for the arithmetic Tutte polynomial \eqref{equation:AriTutteDelCon}.
  \end{proof}

The following lemma is a special case of Lemma~\ref{Lemma:PspacesMolecules}.
It will be used in the proof of Lemma~\ref{Lemma:InternalDimensionInequality},
which will be used to prove Lemma~\ref{Lemma:PspacesMolecules}.
\begin{Lemma}
\label{Lemma:RankOne}
 Let $G$ be a finitely generated abelian group of rank zero, 
 or in other words, a \emph{finite} abelian group.
 Let $X$ be a non-empty finite list of elements of $G$. 
 Then
 \begin{align}
  \dim \Pper_-(X) = \dim \Pper(X)  = \aritutte_X (0,1) = \aritutte_X(1,1).
 \end{align}

\end{Lemma}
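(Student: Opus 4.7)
The plan is to exploit the degeneracy of the rank-zero setting: here $d = \rank(G) = 0$, so $U = G\otimes \R = 0$, the symmetric algebra $\sym(U)$ collapses to $\R$, and every element of $X$ is a torsion element, i.e.\ $X = X_t$. All three equalities should then fall out of Definitions~\ref{Definition:PeriodicCentralPGeneral} and~\ref{Definition:internalPeriodicP} and a one-line evaluation of the arithmetic Tutte polynomial.

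First I would compute $\Pper(X)$ directly. Because $X = X_t$, each $p_y = y\otimes 1 = 0$ in $\sym(U)$, so the only nonzero product $p_Y$ with $Y\subseteq X_\phi$ is $p_\emptyset = 1$; since also $\rank(X_\phi) = 0$, Definition~\ref{Definition:PeriodicCentralPGeneral} gives $\Pcal(X_\phi) = \spa\{1\} \cong \R$. The prefactor $p_{X\setminus(X_\phi \cup X_t)}$ is $p_\emptyset = 1$ because $X = X_t$, and $\tors(\phi) = |X\setminus X_\phi|$. Hence
\[
\Pper(X) = \bigoplus_{e_\phi\in\Vcal(X)} e_\phi\, s_0^{|X\setminus X_\phi|}\,\R,
\]
so $\dim \Pper(X) = |\Vcal(X)|$. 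By the definition of $\Vcal(X)$, the convention $\Vcal(X) = T(G)$ applies when $\rank(G) = 0$, and Pontryagin duality for the finite group $G$ gives $|T(G)| = |G|$.

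Next I would show $\Pper_-(X) = \Pper(X)$. The set of generalised hyperplanes $\Hcal(X)$ is defined in \eqref{eq:generalhyperplanes} as the set of $\clos_G(Y)$ with $\rank(Y) = d-1 = -1$, which is vacuous. The defining conditions in Definition~\ref{Definition:internalPeriodicP} are therefore empty, and the inclusion $\Pper_-(X) \subseteq \Pper(X)$ is an equality. Finally, since $d = 0$ and every $S\subseteq X$ has $\rank(S) = 0$, the arithmetic Tutte polynomial reduces to
\[
\aritutte_X(\alpha,\beta) \;=\; \sum_{S\subseteq X} \multari(S)\,(\beta-1)^{|S|},
\]
so at $\beta = 1$ only $S = \emptyset$ survives and contributes $\multari(\emptyset) = |G_\emptyset/\sg{\emptyset}| = |G|$ (using that $G$ is finite, so $G_\emptyset = G$). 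Thus $\aritutte_X(0,1) = \aritutte_X(1,1) = |G| = \dim \Pper(X) = \dim \Pper_-(X)$.

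No step is really an obstacle; the proof is an unwinding of definitions. The only point that requires mild care is checking that the $s_0$ bookkeeping in Definition~\ref{Definition:PeriodicCentralPGeneral} is what keeps $\Pper(X)$ from collapsing (each vertex contributes the distinct monomial $e_\phi s_0^{|X\setminus X_\phi|}$), and confirming the convention $0^0 = 1$ used when extracting the $S = \emptyset$ term from $\aritutte_X(\alpha,1)$.
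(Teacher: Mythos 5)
Your proof is correct and follows essentially the same route as the paper's: identify $\Vcal(X)=T(G)\cong G$, note $\Hcal(X)=\emptyset$ forces $\Pper_-(X)=\Pper(X)$, observe $\Pcal(X_\phi)=\R$ for every vertex, and evaluate $\aritutte_X$ at $\beta=1$ to get $\multari(\emptyset)=\abs G$. You simply spell out a few steps the paper leaves implicit (the $s_0$ bookkeeping and the $0^0=1$ convention), which is fine.
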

\begin{proof}
 The torus is  $T(G) =\hom(G, S^1) \cong G$. By definition, $\Vcal(X)=T(G)$. Since there are no hyperplanes, $\Pper(X) = \Pper_-(X)$.
 For each $e_\phi\in \Vcal(X)$, $\Pcal(X_\phi)=\R$, hence $\dim \Pper(X)=\dim \Pper_-(X)=\abs G$.

 To finish the proof, note that
   $\aritutte_X (0,1)= \sum_{S\subseteq X} \multari(S) (-1)^0  (0)^{\abs{S}} = \multari(\emptyset) = \abs{G} = \sum_{S\subseteq X} \multari(S) 0^0 (0)^{\abs{S}} = \aritutte_X(1,1)$.
\end{proof}

The following lemma is a weaker version of Proposition~\ref{Proposition:InhomogeneousInternalBasis}.
It will be used in the proof of Proposition~\ref{Proposition:DeletionContractionInternalPeriodic} below,
which will in turn be used to finish the proof of  Proposition~\ref{Proposition:InhomogeneousInternalBasis}.
\begin{Lemma}
\label{Lemma:fzInternalIndependent}
\XintroLattice
Then the set $\{ \tilde f_z : z \in \Zcal_-(X)  \}$ is a linearly independent subset of $\Pper_-(X)$. 
\end{Lemma}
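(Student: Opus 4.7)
The plan is to prove the two parts separately: first show that $\tilde f_z \in \Pper_-(X)$ for each $z \in \Zcal_-(X)$, and then exhibit a triangular evaluation matrix that forces linear independence over $\R$.

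For the first part: by construction $\tilde f_z \in \Pper(X)$, and Theorem~\ref{Theorem:ModifiedBrionVergne}(ii) states precisely that when $z \in \Zcal_-(X)$ the function $\tilde f_z(D) T_X$ is continuous on $\Lambda$. Membership in $\Pper_-(X)$ then follows directly from the characterisation in Theorem~\ref{Proposition:internalContinuousCharacterisation}.

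For the second part, suppose $\sum_{z \in \Zcal_-(X)} c_z \tilde f_z = 0$ for some scalars $c_z$. Applying this operator to $T_X$ and invoking Theorem~\ref{Theorem:ModifiedBrionVergne}(ii) yields
\begin{equation*}
\sum_{z \in \Zcal_-(X)} c_z \, \vpf_X(u - z) = 0 \qquad \text{for every } u \in \Lambda.
\end{equation*}
Since $0 \notin \conv(X)$, I can pick $\ell \in V$ with $\ell(x) > 0$ for all $x \in X$; after a small perturbation I may assume $\ell$ separates the points of the finite set $\Zcal_-(X)$, and I enumerate $\Zcal_-(X) = \{z_1, \ldots, z_n\}$ so that $\ell(z_1) < \cdots < \ell(z_n)$. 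Substituting $u = z_j$ gives the linear system $\sum_i c_{z_i} \vpf_X(z_j - z_i) = 0$. For $i > j$ we have $\ell(z_j - z_i) < 0$, so $z_j - z_i \notin \cone(X)$ and hence $\vpf_X(z_j - z_i) = 0$; and $\vpf_X(0) = 1$ because the only non-negative integer solution of $Xw = 0$ is $w = 0$. The coefficient matrix is thus upper triangular with unit diagonal, hence invertible, forcing all $c_{z_i} = 0$.

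The only non-routine input is Theorem~\ref{Theorem:ModifiedBrionVergne}(ii); the rest is a clean support-triangularity argument on $\vpf_X$ using the cone geometry. No real obstacle is expected: the generic choice of $\ell$ is automatic since $\Zcal_-(X)$ is finite, and the fact that $\vpf_X(0) = 1$ is forced by the standing assumption that $0$ lies outside the convex hull of $X$.
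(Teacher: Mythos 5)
Your proposal is correct and takes essentially the same route as the paper. For containment in $\Pper_-(X)$ you combine Theorem~\ref{Theorem:ModifiedBrionVergne}(ii) with Theorem~\ref{Proposition:internalContinuousCharacterisation}, exactly as the paper does; for linear independence the paper simply cites Proposition~\ref{Proposition:InhomogeneousBasisCentralPeriodic} (whose own proof rests on the observation that $\vpf_X(\cdot - z)$ is supported in $\cone(X)+z$ and equals $1$ at $z$), and your separating-functional/triangularity argument just unfolds that same observation directly and self-containedly.
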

\begin{proof}
 Linear independence follows from Proposition~\ref{Proposition:InhomogeneousBasisCentralPeriodic}.
 Containment in $\Pper_-(X)$
 is a consequence of Theorem~\ref{Proposition:internalContinuousCharacterisation} and 
 Theorem~\ref{Theorem:ModifiedBrionVergne}.
\end{proof}
  \begin{Lemma}
  \label{Lemma:InternalDimensionInequality}
    \XintroAbelianGroup
      Then  $\dim \Pcal_-(X) \le \aritutte_X(0,1)$.
  \end{Lemma}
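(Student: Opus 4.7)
The plan is to argue by induction on $|X|$, using the central deletion-contraction sequence of Proposition~\ref{Proposition:DeletionContractionCentralPeriodic} for the inductive step and handling a residual basis case directly. Since adding or removing a torsion element of $X$ affects neither side of the inequality (torsion elements do not alter $\Hcal(X)$, $m(H)$, or $\multari$ on independent sets, and on $\Pper(X)$ they only shift the $s_0$-grading), we may assume $X$ contains no torsion. The base case $|X|=0$ then forces $G$ to be finite, so Lemma~\ref{Lemma:RankOne} applies.

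For the inductive step, if $|X|>d$ there must exist a non-coloop element $x\in X$, and both $X\setminus x$ and $X/x$ still generate subgroups of finite index (of ranks $d$ and $d-1$ respectively) and satisfy the inductive hypothesis. The arithmetic Tutte recursion~\eqref{equation:AriTutteDelCon} at $(\alpha,\beta)=(0,1)$ gives $\aritutte_X(0,1) = \aritutte_{X\setminus x}(0,1) + \aritutte_{X/x}(0,1)$. Restrict the central sequence
\[
0 \to \Pper(X\setminus x)[1] \xrightarrow{\cdot p_x} \Pper(X) \xrightarrow{\pi_x} \Pper(X/x) \to 0
\]
to $\Pper_-(X)$ and verify two claims: (i) $\pi_x(\Pper_-(X))\subseteq\Pper_-(X/x)$; this follows from the bijection between $\bar H\in\Hcal(X/x)$ and those $H\in\Hcal(X)$ containing $x$, together with the identities $m_{X/x}(\bar H)=m_X(H)$ and the fact that $\eta_H$ descends to $U/(x\otimes 1)$ because $\eta_H(x)=0$. (ii) $\ker(\pi_x|_{\Pper_-(X)})\subseteq p_x\cdot\Pper_-(X\setminus x)$; by exactness, the kernel equals $p_x\cdot\Pper(X\setminus x)$, so it suffices to show that $p_xq\in\Pper_-(X)$ forces $q\in\Pper_-(X\setminus x)$. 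Recasting the defining condition $D_{\eta_H}^{m(H)-1}p(g,\cdot)=0$ as the degree bound $\deg_{\eta_H}p(g,\cdot)\le m(H)-2$, one observes that multiplication by $p_x$ raises $\deg_{\eta_H}$ by $0$ if $x\in H$ and by $1$ if $x\notin H$, which exactly matches the jump $m_X(H)-m_{X\setminus x}(H)$ in the two cases; the bound for $X$ thus transfers cleanly to the bound required for $X\setminus x$ on every $H\in\Hcal(X\setminus x)\subseteq\Hcal(X)$. Combining (i) and (ii) yields $\dim\Pper_-(X)\le\dim\Pper_-(X\setminus x)+\dim\Pper_-(X/x)$ and closes the non-coloop case of the induction via the arithmetic Tutte identity.

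The main obstacle is the residual case $|X|=d$, in which $X$ is a basis of $G/G_t$ and every element is a coloop; here the arithmetic Tutte deletion-contraction fails at $(0,1)$, so a direct argument is needed. In this regime $\Pcal(X_\phi)=\R$ for every $\phi\in\Vcal(X)$, so $\Pper(X)$ is canonically identified with the group algebra $\R[\Vcal(X)]\cong\R[G/\langle X\rangle]$, and for each hyperplane $H_i=\spa(X\setminus x_i)$ the internal conditions become $\sum_\phi c_\phi e_\phi(g)=0$ for $g\in H_i\cap\Lambda$. Character orthogonality reduces these to the vanishing of partial sums of the $c_\phi$ over cosets of the subgroup $N_i=\{\phi:e_\phi|_{H_i\cap\Lambda}=1\}\subseteq\Vcal(X)$, and a Mobius-style inclusion-exclusion on the lattice of subgroups generated by the $N_i$ matches the dimension of the resulting solution space against the expansion $\aritutte_X(0,1)=\sum_{S\subseteq X}\multari(S)(-1)^{d-|S|}$, completing the proof.
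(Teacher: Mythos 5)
Your proof follows the same deletion--contraction strategy as the paper: you restrict the central exact sequence of Proposition~\ref{Proposition:DeletionContractionCentralPeriodic} to the internal spaces, verify (as in the paper's step (a)) that $\pi_x$ carries $\Pper_-(X)$ into $\Pper_-(X/x)$ and that the kernel lands in $p_x\cdot\Pper_-(X\setminus x)$ via the $m_X(H)$ vs.\ $m_{X\setminus x}(H)$ bookkeeping, obtain the dimension inequality, and close the induction through the arithmetic Tutte recursion~\eqref{equation:AriTutteDelCon}. The preliminary torsion reduction is sound (torsion elements affect neither the independent sets, nor $\Hcal(X)$ or $m(H)$, nor the dimensions of the graded pieces of $\Pper_-(X)$), and the observation that $|X|>d$ with no torsion guarantees a non-coloop is correct.

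The gap is in your ``residual case'' $|X|=d$. Where the paper handles coloops by contracting: it invokes Lemma~5.7 of \cite{moci-adderio-2013} (together with the vanishing of $\aritutte$ below full rank) to get $\aritutte_X(0,1)=\aritutte_{X/x}(0,1)$ for a coloop $x$, and then the exact-sequence inequality gives $\dim\Pper_-(X)\le\dim\Pper_-(X/x)$ since $\Pper_-(X\setminus x)=0$, closing the induction on rank --- your proposal instead attempts a direct computation. You correctly identify $\Pper(X)\cong\R[\Vcal(X)]$ and that the internal conditions reduce to vanishing of certain coset sums, but the final assertion that ``a Mobius-style inclusion-exclusion on the lattice of subgroups generated by the $N_i$ matches the dimension'' is not carried out, and it is not a short verification: it is essentially the $\mu_i=\nu_i$ inclusion--exclusion argument that the paper runs in Lemma~\ref{Lemma:PspacesMolecules} (a result the paper proves \emph{after} this lemma, \emph{using} it). As written, your residual case is a plausible sketch rather than a proof, and filling it in would re-derive a nontrivial piece of Lemma~\ref{Lemma:PspacesMolecules}; the paper's coloop-contraction shortcut avoids that work entirely and is the cleaner route.
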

  \begin{Proposition}
  \label{Proposition:DeletionContractionInternalPeriodic}
  \XintroAbelianGroup
  Let $x\in X$ be an element that is neither torsion nor a coloop. %
  Then the following is an exact sequence of graded vector spaces:
   \begin{align}
   \label{eq:exactSequencePperInt}
    0 \to \Pper_-( X\setminus x )[1]  \stackrel{\cdot p_x}{\longrightarrow} \Pper_-(X)
    \stackrel{\pi_x}{\longrightarrow} \Pper_-(X/x) \to 0.
   \end{align}
  \end{Proposition}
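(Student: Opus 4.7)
The plan is to combine the central-case short exact sequence from Proposition~\ref{Proposition:DeletionContractionCentralPeriodic} with a Leibniz-style analysis of how $D_{\eta_H}^{m(H)-1}$ interacts with multiplication by $p_x$, and to close via a dimension count supplied by Lemma~\ref{Lemma:fzInternalIndependent}, Lemma~\ref{Lemma:InternalDimensionInequality}, Proposition~\ref{Proposition:ZonotopeArithmeticTutte}, and the arithmetic Tutte deletion-contraction identity~\eqref{equation:AriTutteDelCon}. The key technical input is a uniform degree bound: for every $p \in \Pper(X)$ and every $H \in \Hcal(X)$, the polynomial $p(g,\cdot)$ has $\eta_H$-degree at most $m_X(H)-1$. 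For a generator $e_\phi\, p_{X \setminus (X_\phi \cup X_t)}\, p_Y$ with $p_Y \in \Pcal(X_\phi)$, the $\eta_H$-degree equals $|X \setminus (X_\phi \cup X_t) \setminus H| + |Y \setminus H|$; the constraint $\rank(X_\phi \setminus Y) = d > \rank(H)$ forces $X_\phi \setminus (Y \cup H) \neq \emptyset$, so $|Y \setminus H| \le |X_\phi \setminus H| - 1$, and since $X_t \subseteq H$ for any $H \in \Hcal(X)$ the two contributions sum to $|X \setminus H| - 1 = m_X(H) - 1$.

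For well-definedness of $\cdot p_x$, the linearity of $p_x$ gives the Leibniz identity
\begin{equation*}
D_{\eta_H}^k(p_x q) \;=\; p_x\, D_{\eta_H}^k q \;+\; k\,\eta_H(x)\, D_{\eta_H}^{k-1} q,
\end{equation*}
which reduces the internal condition for $p_x q$ to conditions on $q \in \Pper_-(X\setminus x)$. Fix $H \in \Hcal(X)$ and $g \in H$. If $x \otimes 1 \in H \otimes 1$, then $H \in \Hcal(X\setminus x)$, $m_X(H) = m_{X\setminus x}(H)$, and $\eta_H(x) = 0$, so the expression collapses to $p_x\, D_{\eta_H}^{m_{X\setminus x}(H)-1} q(g,\cdot) = 0$. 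If $x \otimes 1 \notin H \otimes 1$, then $m_X(H) = m_{X\setminus x}(H) + 1$, the first Leibniz term vanishes by the degree bound applied to $q$, and the second equals $(m_X(H)-1)\,\eta_H(x)\, D_{\eta_H}^{m_{X\setminus x}(H)-1} q(g,\cdot) = 0$ by the internal condition on $q$. For $\pi_x$, the hyperplanes $\bar H \in \Hcal(X/x)$ correspond bijectively to hyperplanes $H \in \Hcal(X)$ containing $x \otimes 1$, with $m_{X/x}(\bar H) = m_X(H)$; since $\eta_H(x) = 0$, the quotient map $\sym(U) \to \sym(U/x)$ commutes with $D_{\eta_H}$, so the internal condition on $p$ pushes through to $\pi_x(p)$ directly.

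Injectivity of $\cdot p_x$ (as $p_x \neq 0$) and the vanishing $\pi_x \circ (\cdot p_x) = 0$ are immediate. For exactness in the middle, take $p \in \Pper_-(X)$ with $\pi_x(p) = 0$; the central-case exact sequence yields $q \in \Pper(X\setminus x)$ with $p = p_x q$, and the task is to show $q \in \Pper_-(X\setminus x)$. Fix $H \in \Hcal(X\setminus x)$ and $g \in H$. If $x \otimes 1 \in H \otimes 1$, the identity $0 = D_{\eta_H}^{m_X(H)-1}(p_x q)(g,\cdot) = p_x\, D_{\eta_H}^{m_{X\setminus x}(H)-1} q(g,\cdot)$ combined with injectivity of multiplication by $p_x$ on $\sym(U)$ does it. If $x \otimes 1 \notin H \otimes 1$, the degree bound annihilates the term $p_x\, D_{\eta_H}^{m_X(H)-1} q$, leaving $(m_X(H)-1)\,\eta_H(x)\, D_{\eta_H}^{m_{X\setminus x}(H)-1} q(g,\cdot) = 0$, and $\eta_H(x) \neq 0$ forces the vanishing. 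To close, Lemma~\ref{Lemma:fzInternalIndependent} together with Proposition~\ref{Proposition:ZonotopeArithmeticTutte} gives $\dim \Pper_-(X) \ge \aritutte_X(0,1)$, while Lemma~\ref{Lemma:InternalDimensionInequality} supplies the reverse inequality; the same holds for $X\setminus x$ and $X/x$. The arithmetic Tutte deletion-contraction~\eqref{equation:AriTutteDelCon}, valid since $x$ is neither torsion nor a coloop, then yields $\dim \Pper_-(X\setminus x) + \dim \Pper_-(X/x) = \dim \Pper_-(X)$, which combined with injectivity of $\cdot p_x$ and the established exactness in the middle forces $\pi_x$ to be surjective. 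The step I expect to require the most care is the degree bound together with the Case~2 analyses that rely on it; once phrased uniformly over all $\phi \in \Vcal(X)$, everything else is bookkeeping driven by the central case.
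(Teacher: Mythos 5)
Your direct Leibniz analysis is largely sound (and your middle-exactness argument coincides with the paper's), but two steps do not hold as written. First, in the well-definedness of $\cdot p_x$ you assert that $x\otimes 1\in H\otimes 1$ implies $H\in\Hcal(X\setminus x)$; this is false. For $X=(e_1,e_2,e_1+e_2)\subseteq\Z^2$ and $x=e_1$ (neither torsion nor a coloop), the hyperplane $H=\clos(\,(e_1)\,)$ lies in $\Hcal(X)$ and contains $x$, but no sublist of $X\setminus x$ spans it. For such $H$ the internal condition on $q\in\Pper_-(X\setminus x)$ that you invoke is simply not part of the definition of $\Pper_-(X\setminus x)$, and your degree bound only gives $D_{\eta_H}^{m_X(H)}q=0$, one order too weak. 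The gap is repairable: if $H\ni x$ but $H\notin\Hcal(X\setminus x)$, then $\rank((X\setminus x)\cap H)\le d-2$, and rerunning your degree computation with this stronger rank restriction shows every generator of the \emph{central} space $\Pper(X\setminus x)$ already has $\eta_H$-degree at most $m_X(H)-2$, so the required vanishing is automatic --- but the case split you wrote is wrong, and note that the paper sidesteps the issue entirely: it never proves $p_x\Pper_-(X\setminus x)\subseteq\Pper_-(X)$ directly, but works with the intermediate object $\Pper_-(X)+p_x\Pper_-(X\setminus x)$ in \eqref{eq:leftexact} and deduces the containment afterwards from the dimension count.

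The second gap is the serious one: your closing dimension argument does not work in the stated generality. The proposition is for a finitely generated abelian group $G$, and even when $G$ is a lattice the contraction $X/x$ lives in $G/\sg{x}$, which in general has torsion. Lemma~\ref{Lemma:fzInternalIndependent} and Proposition~\ref{Proposition:ZonotopeArithmeticTutte}, which you use for the lower bounds $\dim\Pper_-(\,\cdot\,)\ge\aritutte_{\,\cdot\,}(0,1)$, are only available for lists in a lattice (the $\tilde f_z$ come from the Todd operator and the spline, which require $X\subseteq\Lambda$). So you have no lower bound for $\Pper_-(X/x)$ (nor for $\Pper_-(X)$ when $G$ itself has torsion), hence no dimension equality and no surjectivity of $\pi_x$; as written your argument only proves exactness when both $G$ and $G/\sg{x}$ are torsion-free. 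This is precisely why the paper's proof is more roundabout: it first proves the upper bound of Lemma~\ref{Lemma:InternalDimensionInequality} for all finitely generated abelian groups by induction (with the finite-group and molecule base cases, Lemmas~\ref{Lemma:RankOne} and~\ref{Lemma:PspacesMolecules}), then gets equality and exactness for lattice lists via the $\tilde f_z$'s and the inequalities \eqref{equation:InequalitiesPlemma} together with \eqref{equation:AriTutteDelCon}, and finally transports the result to arbitrary $G$ by observing that every such list is a minor of a list in a lattice. You may quote Lemma~\ref{Lemma:InternalDimensionInequality} (its proof only uses the left-exact part, not surjectivity), but not Theorem~\ref{Proposition:PeriodicInternalTutteEval}, which is downstream of this proposition; without some substitute for the lattice-and-minors reduction, the surjectivity of $\pi_x$ remains unproved.
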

  \begin{proof}[Proof of Proposition~\ref{Proposition:DeletionContractionInternalPeriodic} and Lemma~\ref{Lemma:InternalDimensionInequality}]
  This proof is more complicated than the proof of  Proposition~\ref{Proposition:DeletionContractionCentralPeriodic}.
  As we do not know a canonical generating set for the space $\Pper_-(X)$, it is  more difficult to show that 
  $\cdot p_x$ is well-defined and that $\pi_x$ is surjective.
  Here is an outline of the proof:
  \begin{asparaenum}[(a)]
  \item  show that the following sequence is exact for $x\in X$ that is not torsion (but $x$ may be a coloop):
  \begin{align}
   \label{eq:leftexact}
    0 \to \Pper_-( X\setminus x ) [1] \stackrel{\cdot p_x}{\longrightarrow} \Pper_-(X) + p_x \cdot \Pper_-( X\setminus x) 
    \stackrel{\pi_x}{\longrightarrow} \Pper_-(X/x).
   \end{align}
  \item Deduce that $\dim(\Pper_-(X)) \le \aritutte_X(0,1)$, \ie prove Lemma~\ref{Lemma:InternalDimensionInequality}.
  \item Show the exactness of \eqref{eq:exactSequencePperInt} using Lemma~\ref{Lemma:fzInternalIndependent}.
  \end{asparaenum}
  \smallskip
  
\noindent Here are the details of the proof:
   \begin{asparaenum}[(a)]
    \item    
    \emph{$\pi_x$ is well-defined:}
    Obviously, $p_x\cdot\Pper_-(X\setminus x)$ is mapped to zero.
    It follows from 
    Proposition~\ref{Proposition:DeletionContractionCentralPeriodic}
    that $\Pper_-(X)$ is mapped to $\Pper(X/x)$. So we only have to check the differential equations.

     Consider $\bar H\in \Hcal(X/x)$. This corresponds to $H\in \Hcal(X)$ that contains $x$. 
        Let $\bar \lambda \in \bar H$. 
        Let $\bar \eta\in (U/x)^*$ be a normal vector for the hyperplane $\bar H\otimes 1\subseteq U/x$.
        There is a corresponding normal vector $\eta\in U^*=V$ for $H\otimes 1$ that satisfies $\eta(x)=0$.        
 Let $\lambda \in H$ be a representative of $\bar \lambda$.
 The choice of the representative does not matter because $e_\phi(x)\neq 1$ implies that $\pi_x$ maps the $e_\phi$ part to zero and 
 $e_\phi(x)=1$ implies $e_\phi(\lambda + kx)= e_\phi(\lambda)$ for $k\in \Z$.
         Note that $m_X(H) = m_{X/x}( \bar H )$. Hence
         $D_{\eta }^{m(H) - 1 } p(\lambda, \cdot) = 0 $ implies  $D_{ \bar\eta }^{m(H) - 1 } \bar p( \bar \lambda, \cdot) = 0 $
         for $p\in \Pper(X)$.\footnote{For an example, consider Example~\ref{Example:nontrivial2dtorarr} and in particular \eqref{eq:internalProjectionExample}.
         There is only one hyperplane in $\Z/2\Z\oplus \Z$. It corresponds  to  $H= \spa((1,0))$ in $\R^2$ 
         and  representatives for the two points that it contains are  $\lambda_1=(1,0)$ and $\lambda_2=(0,0)$.  
         The normal vector is $\eta = (0,1)$.  
}         

    \emph{Exactness in the middle:}
    Let  $p\in \Pper_-(X) +  p_x \cdot\Pper_-(X\setminus x)$ be an element \st $\pi_x(p) = 0$.
    The case $p\in p_x \cdot\Pper_-(X\setminus x)$ is trivial so suppose that $p\in \Pper_-(X)$.
   Then Proposition~\ref{Proposition:DeletionContractionCentralPeriodic} implies that
   $p = p_x \cdot h$ for some $h\in \Pper(X\setminus x)$.
   We have to show that $h$ is contained in  $\Pper_-(X \setminus x)$, \ie
   we have to check that $h$ satisfies the differential equations. 
 
   Let $H\in \Hcal(X \setminus x)$ and let $\lambda\in H$.
   If $ x \in H$, then $0 = D_\eta^{m_X(H)-1} p_x h(\lambda,\cdot) = p_x   D_\eta^{ m_{X\setminus x}(H)-1 } h(\lambda, \cdot)$.
   If $x\not\in H$, then    $m_{X\setminus x}(H)= m_X(H)-1$, so 
   $D_\eta^{m_X(\eta) - 1} p_x h(\lambda,\cdot) = 0$   implies  $D_\eta^{ m_{X\setminus x}(\eta)-1 }  h(\lambda, \cdot)=0$.

\smallskip
Now we have established the exactness of  \eqref{eq:leftexact}. This implies the following inequality:
  \begin{align}
  \label{equation:InequalitiesPlemma}
   \dim (\Pper_-( X )) \le \dim (\Pper_-( X ) + x \Pper_-(X\setminus x)) \le \dim \Pper(X\setminus x) + \dim \Pper(X/x).
  \end{align}
\item 
   We will now prove  by induction that $\dim(\Pper_-(X)) \le \aritutte_X(0,1)$.
    If $G$ is finite, then we are done by Lemma~\ref{Lemma:RankOne}. 

       Now suppose that $X$ contains only coloops and torsion elements. Let $x$ be a coloop.
   Using \cite[Lemma 5.7]{moci-adderio-2013} and the fact that $\aritutte_X(0,1) = 0$ if $X$ does not span a subgroup of finite index, we obtain
   that $\aritutte_X(0,1) = \aritutte_{X/x}(0,1)$. So in this case, since $\Pper_-(X\setminus x)=0$,  
   we obtain $ \dim\Pper_-( X ) \le \dim \Pper_-(X/x) \le \aritutte_{X/x}(0, 1) = \aritutte_X(0, 1)$ using \eqref{equation:InequalitiesPlemma} and induction.

    Now suppose that the $X$ contains an element $x$ that is neither torsion nor a coloop.
  Then by induction using \eqref{equation:InequalitiesPlemma} and \eqref{equation:AriTutteDelCon}, we obtain $\dim (\Pper_-( X )) \le \aritutte_X(0,1)$.

    \medskip
\item 
     Suppose that $X\subseteq \Lambda$  for some lattice $\Lambda$.
     By
    Lemma~\ref{Lemma:fzInternalIndependent} and Proposition~\ref{Proposition:ZonotopeArithmeticTutte}
    $\dim\Pper_-(X)\ge \aritutte_X(0,1)$. Hence $\dim\Pper_-(X)= \aritutte_X(0,1)$.
    This implies that all the inequalities in \eqref{equation:InequalitiesPlemma} must be equalities.
    Thus
    $x \Pcal_-(X\setminus x)) \subseteq  \Pper_-( X )$ and the projection map $\pi_x$ must be surjective. 
     Hence the sequence \eqref{eq:exactSequencePperInt} is exact.

     We call $Y\subseteq G'$ a \emph{minor} of $X\subseteq G$ if there are sublists $X_1, X_2\subseteq X$ \st $Y = (X\setminus X_1)/X_2$ and $G'=G/\sg{X_2}$.
     By induction, if $X$ is contained in a lattice $\Lambda$, for every minor $Y$ of $X$, we have $\aritutte_Y(0,1)= \dim \Pper_-(Y)$ and the sequence \eqref{eq:exactSequencePperInt} is exact.

Now note that every $X\subseteq G$ ($G$ finitely generated abelian group) is a minor of some $X'\subseteq \Lambda$ ($\Lambda$ lattice). This finishes the proof.
\qedhere
 \end{asparaenum}
  \end{proof}
\begin{Remark}
 If $x\in X$ is a coloop, then the  map $\pi_x$ in  \eqref{eq:leftexact} is not necessarily surjective. 
 For an example consider the case $X=((2,0),(0,2))$ (Example~\ref{Example:InternalSpaceColoops}).
The contraction is studied in Example~\ref{Example:internal}.
In this case $\dim \Pper_-(X) = 1 < 2 = \dim \Pper_-(X/x)$.
 \end{Remark}
  The following lemma will be used in the proof of Theorem~\ref{Proposition:PeriodicInternalTutteEval}.
  \begin{Lemma}[Molecules]
  \label{Lemma:PspacesMolecules}
   \XintroAbelianGroupN
   Suppose that $X$ contains only coloops and torsion elements. Such list are called \emph{molecules} in \cite{moci-adderio-2013}.

  If we choose a suitable isomorphism $G\cong \Z^d \oplus G_t$, then $X$ corresponds to the list 
   $(a_1 e_1,\ldots, a_d e_d, h_1,\ldots, h_k)$ with $h_i\in G_t$ and $a_i\in \Z_{\ge 1}$. As usual, $e_i\in \Z^d$ denotes the $i$th unit vector.
   Let $\xi_{a_\nu}^{j_k} \in T(\Z^d)$ denote the map that sends $e_\nu$ to $e^{2\pi i \frac{j_k}{a_\nu} }$ and all other $e_\mu$ to $0$.
Then
   \begin{align}
    \Vcal(X) &= \{ \xi_{a_1}^{j_1} \cdots \xi_{a_d}^{j_d} g  : 0\le j_i \le a_i-1, \, g\in T(G_t) \}, 
    \label{equation:Vmolecule}
    \displaybreak[2]
    \\
     \Pper(X)&=\bigoplus_{ e_\phi \in \Vcal(X) } e_\phi p_{ X\setminus (X_\phi \cup X_t) }  s_0^{ \tors(\phi) }  \R, \quad\text{ and } 
    \label{equation:centralmolecule}  
     \\
   \label{eq:internalPcoloops}
    \Pper_-(X) &= \spa  
    \{ (\xi_{a_1}^{j_1} - \xi_{a_1}^{j_1-1}) \cdots (\xi_{a_d}^{j_d} - \xi_{a_d}^{j_d-1} ) g s_0^{\tors(\phi) }  : 1 \le j_i \le a_i-1, \, g\in T(G_t) %
      \}.
 \end{align}
   Furthermore, $\hilb(\Pper_-(X),q) = q^{N-d} \aritutte(0,\frac 1q)$.%
  \end{Lemma}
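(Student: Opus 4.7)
The plan is to verify the four claims in order, exploiting that the matroid underlying a molecule is very rigid: every non-torsion element is a coloop of every sublist that contains it. For \eqref{equation:Vmolecule}, I observe that the only bases of $X$ (up to torsion) are $(a_1 e_1, \ldots, a_d e_d)$, and the condition $\phi(a_\nu e_\nu) = 1$ forces $\phi(e_\nu)$ to be an $a_\nu$-th root of unity while leaving the $T(G_t)$-component free. For \eqref{equation:centralmolecule}, I unwind Definition~\ref{Definition:PeriodicCentralPGeneral}: for any $\phi \in \Vcal(X)$, the sublist $X_\phi$ consists of torsion elements plus a subset of $\{a_1 e_1, \ldots, a_d e_d\}$, and each non-torsion element of $X_\phi$ is a coloop of $X_\phi$. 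By \eqref{equation:CentralP} this forces $\Pcal(X_\phi) = \R$, yielding the claimed direct-sum decomposition.

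For \eqref{eq:internalPcoloops}, I would check three things. First, the proposed elements lie in $\Pper(X)$ with the right decomposition: expanding $(\xi_{a_1}^{j_1} - \xi_{a_1}^{j_1-1}) \cdots (\xi_{a_d}^{j_d} - \xi_{a_d}^{j_d-1}) g$ produces a linear combination of characters $\phi \in \Vcal(X)$, each satisfying $\{a_1 e_1, \ldots, a_d e_d\} \subseteq X_\phi$ (so the prefactor $p_{X\setminus(X_\phi\cup X_t)}$ is $1$), and with $\tors(\phi) = \tors(g)$ depending only on $g$, which justifies the single $s_0^{\tors(\phi)}$ factor. Second, for Definition~\ref{Definition:internalPeriodicP} I note that the generalised hyperplanes are $H_\nu = (\spa\{e_\mu : \mu \ne \nu\} \cap \Z^d) \times G_t$ with $m(H_\nu) = 1$, so the requirement reduces to pointwise vanishing of $p(\lambda, \cdot)$ for $\lambda \in H_\nu$; this holds because $(\xi_{a_\nu}^{j_\nu} - \xi_{a_\nu}^{j_\nu-1})(\lambda) = 0$ whenever the $e_\nu$-coordinate of $\lambda$ vanishes. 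Third, linear independence follows by a triangular argument after expansion in the character basis $\{\xi_{a_1}^{k_1} \cdots \xi_{a_d}^{k_d} g\}$. To obtain the spanning property, I count $\prod_\nu(a_\nu - 1) \cdot |T(G_t)|$ proposed elements and note that direct evaluation gives $\aritutte_X(0, 1) = \prod_\nu(a_\nu - 1) \cdot |G_t|$; combined with the upper bound $\dim \Pper_-(X) \le \aritutte_X(0, 1)$ from Lemma~\ref{Lemma:InternalDimensionInequality}, this forces the set to be a basis.

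Finally, for the Hilbert series identity, each basis element has pure degree $\tors(g)$ (the only nonconstant factor is $s_0^{\tors(g)}$), so
\[
\hilb(\Pper_-(X), q) = \prod_\nu(a_\nu - 1) \sum_{g \in T(G_t)} q^{\tors(g)}.
\]
The closed-form identity $\sum_g q^{\tors(g)} = |G_t| \sum_{S \subseteq X_t} q^{|X_t| - |S|}(1-q)^{|S|}/|\sg{S}|$ follows from expanding $q^{\tors(g)} = \prod_{h \in X_t}(q + (1-q)[g(h) = 1])$ and using that the number of $g \in T(G_t)$ with $g|_{\sg{S}} = 1$ is $|G_t|/|\sg{S}|$. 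Matching this against the factorisation $\aritutte_X(\alpha, \beta) = \prod_\nu(a_\nu + \alpha - 1) \cdot |G_t| \sum_{S \subseteq X_t}(\beta-1)^{|S|}/|\sg{S}|$ (which is immediate from the separation of rank contributions of non-torsion and torsion elements) at $(\alpha, \beta) = (0, q^{-1})$ and multiplying by $q^{N-d} = q^{|X_t|}$ completes the proof. The main obstacle throughout is bookkeeping of how torsion interacts with the grading and the character decomposition; the underlying matroidal and toric content is otherwise routine.
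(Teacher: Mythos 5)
Your proof is correct and follows the same overall structure as the paper's: the same observations that $\Pcal(X_\phi)=\R$ for every vertex, that $m(H)=1$ reduces the internal membership condition to pointwise vanishing, that $(\xi_{a_\nu}^{j_\nu}-\xi_{a_\nu}^{j_\nu-1})$ vanishes on the generalised hyperplane $\{e_\nu=0\}\times G_t$, and the appeal to Lemma~\ref{Lemma:InternalDimensionInequality} to close the gap between the exhibited linearly independent set and $\dim\Pper_-(X)$. The one place where you take a genuinely different route is the combinatorial identity underlying the Hilbert series. The paper expands $\aritutte_{X_t}(0,q)=\sum_A\multari_t(A)(q-1)^{|A|}$ coefficient by coefficient, identifies $\multari_t(A)=\abs{\{g\in T(G_t): A\subseteq (X_t)_g\}}$, and finishes with M\"obius inversion/inclusion--exclusion to match the coefficients $\nu_i$ against $\mu_i=\abs{\{g:|(X_t)_g|=i\}}$. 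You instead write $q^{\tors(g)}=\prod_{h\in X_t}\bigl(q+(1-q)[g(h)=1]\bigr)$, expand the product over subsets $S\subseteq X_t$, and sum over $g$, using $\abs{\{g:g|_{\sg S}=1\}}=|G_t|/|\sg S|=\multari_t(S)$; this produces the closed form in one step and makes the factorisation $\aritutte_X=\aritutte_{X_f}\aritutte_{X_t}$ visibly match. The two arguments encode the same duality between subsets $A\subseteq X_t$ and characters $g\in T(G_t)$, but your generating-function version is a bit more transparent and avoids the explicit $\mu_i,\nu_i,n(A)$ bookkeeping. A minor cosmetic difference: you use the ungraded count $\aritutte_X(0,1)=\prod_\nu(a_\nu-1)\cdot|G_t|$ together with linear independence to conclude that $\Gamma(X)$ is a basis, and only then compute the grading, whereas the paper works with the full Hilbert series of $\spa(\Gamma(X))$ throughout; both are fine.
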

\begin{Example}
 \label{Example:molecule}
  Let $X=(\bar 2)\subseteq \Z/4\Z$. Then the arithmetic Tutte polynomial is $\aritutte_X(\alpha, \beta) = 2 \beta + 2 $ 
  and $\Pper(X) = \Pper_-(X) = \spa\{1, g_4 s_0, g_4^2, g_4^3 s_0 \} $, 
  where $g_4^j : \Z / 4 \Z \to S_1$ is defined by $ g_4^j(k) = e^{ \frac{\pi i}{2} jk }$. 
\end{Example}
\begin{proof}
Note that \eqref{equation:Vmolecule} is trivial.
 As $X$ contains only coloops and torsion elements
  $\Pcal(X_\phi) = \R$ for all $e_\phi\in \Vcal(X)$.
  This implies  formula \eqref{equation:centralmolecule}.
  
  \smallskip
 Now let us consider $\Pper_-(X)$. For every $H\in \Hcal(X)$, $m(H)=1$.
 Hence the  differential equations that have to be satisfied do not involve a differential operator.
 We simply have to check $p(\lambda,\cdot)=0$ for all $\lambda$ that are contained in some $H$ (cf.~Example~\ref{Example:InternalSpaceColoops}).

Let $\Gamma(X)$ denote the set on the right-hand side of \eqref{eq:internalPcoloops}.
It is   clear that $\Gamma(X)$ is linearly independent.
Let $\lambda\in H\in \Hcal(X)$.
We can uniquely write $\lambda = \sum_{i=1}^d \nu_i a_i e_i + \sum_{j=1}^k \mu_j g_k$ for some coefficients 
 $\nu_i,\mu_j \in \R$ and $g_j\in G_t$. Since $\lambda$ lies in a hyperplane, at least one of the $\nu_i$ is zero. Then for this $i$,
 $(\xi_{a_i}^{j_i} - \xi_{a_i}^{j_i-1})(\lambda)=0$.  Hence each generator of $\Gamma(X)$ vanishes on $\lambda$. 
This shows that the set $\Gamma(X)$ is   contained in $\Pper_-(X)$.

By Lemma~\ref{Lemma:InternalDimensionInequality}, $\dim\Pper_-(X)\le \aritutte_X(0,1)$. Therefore, it is sufficient to show that
$\hilb(\spa(\Gamma(X)),q)= q^{N-d} \aritutte_X(0,1/q)$. 

 Since $X$ is a molecule, we can split $X$ into a disjoint union of the free elements $X_f\subseteq \Z^d$ and 
the torsion elements $X_t \subseteq G_t$
(cf.~Example 4.9 in \cite{moci-adderio-2013}). 
The arithmetic matroid defined by $X$ can then be seen as a direct sum $X_f\oplus X_t$ of the matroids defined by $X_f$ and $X_t$
and
$\aritutte_X(\alpha,\beta)= \aritutte_{X_f}(\alpha,\beta) \cdot \aritutte_{X_t}(\alpha,\beta)$.
The two matroids have multiplicity functions $\multari_f$ and $\multari_t$ that are defined by the lists $X_f\subseteq \Z^d$ and $X_t\subseteq G_t$, respectively.
Note that $\multari_f(A) = \prod_{ a_ie_i \in A   } a_i$. Hence $\aritutte_{X_f}(0, q ) = \sum_{I\subseteq 2^{[d]}} (-1)^{d- \abs{I}} \prod_{i\in I}  a_i = \prod_{i=1}^d (a_i - 1)$.

Note that $\abs{X_t} = N - d$.
 It is easy to see that%
 \begin{equation}
  q^{N-d}\hilb(\spa(\Gamma(X)), \frac 1q) = \left( \prod_{i=1} (a_i - 1) \right) \sum_{ i = 0 }^{N-d} \mu_i q^{i} 
   = \aritutte_{X_f}(0, q)  \sum_{ i = 0 }^{N-d} \mu_i q^{i},
 \end{equation}
 where $\mu_i = \abs{ \{g \in T(G_t) : \abs{(X_t)_g} = i \} } $. As usual, $ (X_t)_g := (x \in X_t : g(x)= 1)$.

Note that  $ \aritutte_X(0,q) = \aritutte_{X_f}(0,q) %
  \sum_{A\subseteq X_t} \multari_t(A) (q-1)^{ \abs{A}  }$.
 So all that remains to be shown is that
 $\sum_{ i = 0 }^{N-d} \mu_i q^{ N-d - i} = \sum_{A\subseteq X_t} \multari_t(A) (q-1)^{ \abs{A}  }$.
The right-hand side of this equation can be expanded as
\begin{align}
  \aritutte_{X_t}(0,q) &= \sum_{A\subseteq X_t} \multari_t(A) (q-1)^{\abs{A}}  
  = \sum_{A\subseteq X_t} \sum_{i=0}^{\abs{A}} \multari_t(A) q^i (-1)^{\abs{A} - i } \binom{\abs A}{ i }
   \\
  &= \sum_{i=0}^{N-d} q^i \underbrace{\sum_{\abs A \ge i} \multari_t(A) (-1)^{\abs A - i} \binom{\abs A}{ i }}_{\nu_i}.
\end{align}
 We need  to show that $\nu_i = \mu_i$  for all $i$.

By definition, for $A\subseteq X_t$,
$\multari_t(A) = \abs{ G_t / \sg{A} }$. Since $G_t/\sg{A}$ is  finite  and the dual of a finite abelian group is (non-canonically) isomorphic to itself, 
 $\abs{ G / \sg{A}} =  \abs{T( G/ \sg{A})}$. Furthermore, $T( G/ \sg{A}) =
 \{ g \in T(G_t)  : g( x) = 1 \text{ for all } x \in A \} =  \{ g \in T(G_t) : A \subseteq (X_t)_g \} $. Hence 
 \begin{equation}
  \multari_t(A) = \abs{\{ g \in T(G_t) : A \subseteq (X_t)_g \}}.
 \end{equation}
Let $n(A) :=  \abs{\{ g \in T(G_t) : A = (X_t)_g \}}$. 
Using the inclusion-exclusion principle we obtain $n(A)= \sum_{A\subseteq  C \subseteq X_t} (-1)^{\abs C - \abs A} \multari_t(C)$.
Hence 
\begin{align}
 \mu_i &= \sum_{\abs A = i } n(A) = \sum_{\abs A = i } \sum_{A\subseteq  C \subseteq X_t} (-1)^{\abs C - \abs A} \multari_t(C) \\
 &= \sum_{\abs{C} \ge i} \multari_t (C)  (-1)^{\abs C - i } \binom{\abs C}{i}  = \nu_i.
 \notag \qedhere
\end{align}
\end{proof}

\begin{proof}[Proof of Theorem~\ref{Proposition:PeriodicInternalTutteEval}]
 This follows by induction using Lemma~\ref{Lemma:PspacesMolecules} as a base case and
 Proposition~\ref{Proposition:DeletionContractionInternalPeriodic} and  \eqref{equation:AriTutteDelCon} for the induction step.
\end{proof}

\begin{proof}[Proof of Proposition~\ref{Proposition:InhomogeneousInternalBasis}]
Combine Lemma~\ref{Lemma:fzInternalIndependent}, Proposition~\ref{Proposition:ZonotopeArithmeticTutte}, and Theorem~\ref{Proposition:PeriodicInternalTutteEval}.
\end{proof}

%
%
%
%

\section{Examples}
\label{Section:Examples}

\subsection{Main examples}
In this subsection we will continue to study the Zwart--Powell element and we will also consider the list $X=(1,2,4)$.

 \begin{Example}[Zwart--Powell, continued]
 \label{Example:ZPelementB}
This is a continuation of Example~\ref{Example:ZPelementA}.

The toric arrangement in $(\R/\Z)^2$ 
defined by $X$ is shown in Figure~\ref{Figure:toricarr}. 
On the torus $T(\Z^2)$ it has two vertices, $1$ and $e_{\phi_1}(a,b)=(-1)^{a+b}$ . They correspond to the points $(0,0)$ and
$\phi_1:=(1/2, 1/2)$ in $\R^2/\Z^2$.

The continuous zonotopal spaces are
$\Pcal(X)=\R[s_1,s_2]_{ \le 2 }$,
$\Pcal_-(X)=\R[s_1,s_2]_{ \le 1 }$,
and $\Dcal(X)=\R[t_1, t_2]_{ \le 2 }$.

The discrete Dahmen--Micchelli space is $\DM(X) =
\spa\{ 1, t_1, t_2, t_1^2, t_1t_2, t_2^2, e_{\phi_1} \}$.
 The periodic $\Pcal$-spaces are  
\begin{align*} %
 \Pper(X)   &= \spa \{ 1, s_1, s_2,\; s_1^2, s_1s_2,  s_2^2, \; e_{\phi_1} s_1s_2    \},  
 \\
 \Pper_-(X) &= \spa \{ 1, s_1, s_2,  ( 1 -   e_{ {\phi_1} } )  s_1s_2  \}. 
\end{align*}
$\Bper(X)  =  \{ 1, s_2 , s_2 (s_1 +s_2 ), s_1 , s_1 (s_1 +s_2 ), s_1 s_2 ,   e_{\phi_1} s_1 s_2     \}$ is the homogeneous basis for $\Pper(X)$.
The differential equations for  $\Pper_-(X)$ are ($k\in \Z$):
\begin{equation*}
  D_{s_1}^2 p((0,k   ), \cdot) = D_{s_2}^2 p((k,0 ), \cdot) = D_{s_1+s_2}^2 p((k,-k   ), \cdot) = D_{s_1-s_2}^2 p((k,k   ), \cdot) = 0.
\end{equation*}
 The Tutte polynomial is $\tutte_X( \alpha, \beta)= \alpha^2 + \beta^2 + 2 \alpha + 2 \beta$ and
 the arithmetic Tutte polynomial is $\aritutte_X(\alpha,\beta) = \alpha^2 + \beta^2 + 2\alpha + 2\beta + 1$.
 Note that
   $q^2 \tutte_X(1, q^{-1})  = 1 + 2q + 3 q^2 = \hilb(\Pcal(X),q)$, 
   $q^2 \tutte_X(0,q^{-1})   = 1 + 2q  = \hilb(\Pcal_-(X), q)$,   
   $q^2 \aritutte_X(1, q^{-1}) = 1 + 2q + 4 q^2 = \hilb(\Pper(X),q)$,
   and %
$q^2 \aritutte_X(0,q^{-1}) = 1 + 2q + q^2 = \hilb(\Pper_-(X), q)$. 
The periodic Todd operator is
\begin{equation}
\begin{split}
\toddper(X,0) &= %
  \frac{s_1}{1-e^{-s_1}}
 \frac{s_2}{1-e^{-s_2}}
 \frac{s_1+s_2}{1-e^{-s_1-s_2}}
 \frac{-s_1 + s_2}{1-e^{s_1-s_2}}
 \\
 &\quad + e_{\phi_1}  \frac{s_1}{1+e^{-s_1}}
 \frac{s_2}{1+e^{-s_2}}
 \frac{s_1+s_2}{1-e^{-s_1-s_2}}
 \frac{-s_1 + s_2}{1-e^{s_1-s_2}}
\end{split}
\end{equation}
\begin{figure}[tbp]
  \begin{center}
    \subfigure[The toric arrangement corresponding to the Zwart--Powell Example in $\R^2/\Z^2$ \label{Figure:toricarr}]{
     \qquad\includegraphics[width=3cm]{./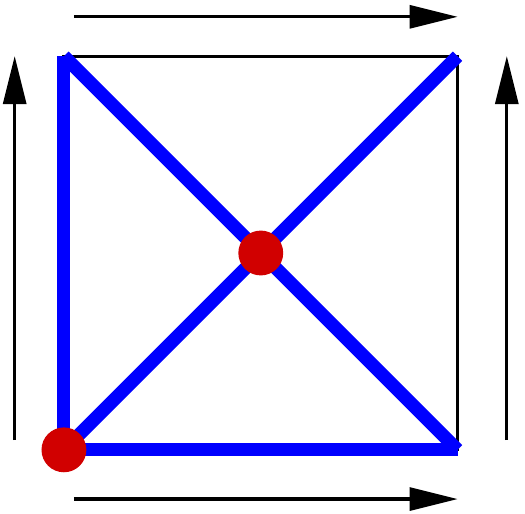} \qquad\qquad
     }
     \quad
    \subfigure[The box spline defined by the list $X = (1,2,4)$ \label{Figure:124boxspline}]{
      \;\includegraphics[width=6cm]{./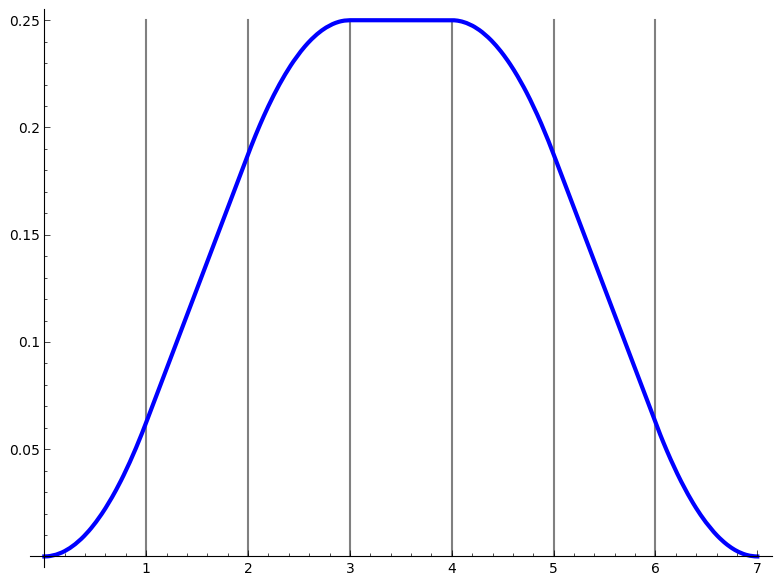}\;
     }
  \caption{A toric arrangement in $\R^2/\Z^2$ and a box spline }
  
  \end{center}
\end{figure} 
The projections of the periodic Todd operators are:
\begin{align*}
  \tilde f_{(0,0)} &= 1 + \frac 12 s_1 + \frac 32 s_2 +
  \frac{3}{4} s_1 s_2 + s_2^2  + \frac 14 e_\phi \\
\tilde f_{(0,1)} &=  1 + \frac 12 s_1 + \frac 12 s_2 + \frac 14 s_1 s_2   -  e_{\phi} \frac{s_1s_2}{4} 
    &
     \tilde f_{(1,1)} &=  1 - \frac 12 s_1 + \frac 12 s_2 - \frac 14 s_1 s_2  +  e_{\phi_1} \frac{s_1s_2}{4} 
    \\
     \tilde f_{(0,2)} &=  1 + \frac 12 s_1 - \frac 12  s_2  - \frac 14 s_1 s_2 +  e_{\phi_1} \frac{s_1s_2}{4} 
&
     \tilde f_{(1,2)} &=  1 - \frac 12 s_1 - \frac 12  s_2 + \frac 14 s_1 s_2 -  e_{\phi_1} \frac{s_1s_2}{4} 
  \end{align*}
\end{Example}

\begin{Example}[Zwart--Powell and the isomorphism {$L : \Pper(X) \to \CC[\Lambda]/ \dJC (X)$}]
 \label{Example:ZPpairing}
 In this example we use the algorithm described in Remark~\ref{Remark:PairingEval}
 to calculate the map $L: \Pper_\CC(X) \to \CC[\Lambda]/ \dJC(X)$. 
Recall that  $\CC[\Lambda] \cong \CC[ a_1^{\pm 1}, a_2^{\pm 1}]$
 and $\sym_\CC(U)\cong \CC[s_1,\ldots, s_d]$.
 \begin{asparaenum}[(a)]
  \item 
  The toric arrangement has two vertices: $\Vcal(X) = \{ (1,1), (-1,-1)  \} \subseteq (\CC^*)^2$.
 The primary decomposition of the discrete cocircuit ideal is
%
%
%
%
 \begin{align*}
 \dJC(X) &= 
\biggl( ( 1 - a_1 )( 1 - a_2 )( 1 - a_1 a_2 ),
  ( 1 - a_1 )( 1 - a_2 )( a_1 -  a_2 ),
  \\&\qquad
    ( 1 - a_1 )( 1 - a_1 a_2 )( a_1 -  {a_2} ),   ( 1 - a_2 )( 1 - a_1  a_2 )( a_1 -  a_2 ) \biggr)
  \\
  &= 
 \underbrace{\big(   (a_1-1)^3, (a_1-1)^2(a_2-1) , (a_1-1)(a_2-1)^2, (a_2-1)^3
 \big)}_{e_\phi=(1,1)}
 \cap  \underbrace{\big( a_1 + 1, a_2 + 1  \big)}_{e_\phi=(-1,-1)}. 
 \end{align*}
\item 
 To begin with, we  consider  the vertex $e_\phi=(-1,-1)$. We  choose the representative $ \theta (u_1,u_2) = \frac 12 ( u_1 + u_2 ) $.
Then $\Pcal_\CC(X_\phi)= \CC \cong \sym_\CC(U) / \cJC(X_\phi) \cong \CC[\Lambda] / \dJC(X)_\phi$.
So $i_{\log}^\theta \circ \tau_\theta \circ j_\phi$ maps $ 1\in \Pcal(X_\phi)$ to $ \bar 1\in  \CC[\Lambda] / \dJC(X)_\phi $.
 Now we consider the vertex $e_\phi=(1,1)$. We  choose the representative $\theta(u_1,u_2)=0$.
 Since $(a_1 - 1)^3$ and $(a_2-1)^3$ are contained in $\dJC(X)$,  we only have to develop the logarithm up to degree $2$.
 Hence 
 $i_{\log}^\theta ( \tau_\theta (j_\phi(s_1)) ) = \log(a_1) = a_1 - 1  - \frac{ (a_1 - 1)^2 }{2} 
   = -\frac{a_1^2}{2} + 2a_1  -  { \frac 32 } $.
  Similarly, $ i_{\log}^{\theta} ( \tau_\theta (j_\phi(s_2) )) =  - \frac{a_2^2}{2} + 2a_2  - \frac 32$.
 Hence, $i_{\log}^{\theta} \circ  \tau_\theta \circ j_\phi$ maps $\Pcal_\CC(X)$ to $\CC[\Lambda]/ \dJC(X)_{(1,1)}$ in the following way:  
 \begin{align*}
   1 &\mapsto 1 & 
   s_1^2 & \mapsto (a_1 - 1)^2
   \\
     s_1 & \mapsto     -\frac{a_1^2}{2} + 2a_1  -\frac 32 
  &
   s_1s_2 & \mapsto  (a_1 - 1)(a_2 - 1)
   \\
   s_2 &\mapsto - \frac{a_2^2}{2} + 2a_2  - \frac 32 &
   s_2^2 &\mapsto  (a_2 - 1)^2
  \end{align*}
\item
Now we have to find the embeddings $\kappa^\phi : \CC[\Lambda]/\dJC(X)_\phi \hookrightarrow  \CC[\Lambda]/\dJC(X)_\phi$.
Note that $  (a_2^2 - 4a_2+7) (a_2+1) - (a_2-1)^3 = 8 $.
Hence  $\kappa^{(1,1)}(1) = 1 + \frac 18( a_2 - 1)^3 =  \frac 18 a_2^3 - \frac 38 a_2^2 + \frac 38 a_2+ \frac 78  $
and
 $\kappa^{(-1,-1)}(1) = 1 - \frac 18(a_2^2-4a_2+7)(a_2+1) = -\frac{1}{8} a_2^{3} + \frac{3}{8} a_2^{2} - \frac{3}{8} a_2 + \frac{1}{8}$.

Hence the map  $L$ maps $\Pper_\CC(X)$ to $\CC[\Lambda]/ \dJC(X) $ in the following way: 
 \begin{align*}
   1 &\mapsto \frac 18 a_2^3 - \frac 38 a_2^2 + \frac 38 a_2+ \frac 78   & 
   s_1^2 & \mapsto  \frac 12 a_2^3 + a_1^2 - \frac 32 a_2^2 - 2 a_1 + \frac 32 a_2 + \frac 12
   \\
     s_1 & \mapsto      - \frac 12 a_2^3 - \frac 12 a_1^2 + \frac 32 a_2^2 + 2 a_1 - \frac 32 a_2 - 1
  &
   s_1s_2 & \mapsto  \frac 12 a_2^3 + a_1 a_2 - \frac 32 a_2^2 - a_1 + \frac 12 a_2 + \frac 12
   \\
   s_2 &\mapsto    - \frac 12 a_2^3 + a_2^2 + \frac 12 a_2 - 1
   &
   s_2^2 &\mapsto  \frac 12 a_2^3 - \frac 12 a_2^2 - \frac 12 a_2 + \frac 12 \\
   &&  \hspace*{-1.2cm}(-1)^{u_1+ u_2} s_1s_2 & \mapsto -\frac{1}{8} a_2^{3} + \frac{3}{8} a_2^{2} - \frac{3}{8} a_2 + \frac{1}{8} 
  \end{align*}  
One can easily check that the coefficients of the terms on the right-hand side always sum to $0$ except in the case of $L(1)$. This must hold
because of Theorem~\ref{Theorem:pairingisomorphism} and the fact that $ \discpairP{s_1}{1} = \discpairP{s_2}{1} = \ldots = 0$.
\end{asparaenum}
 \end{Example}

 \begin{Example}[The list $X=(1,2,4)$]
 \label{Example:OneTwoFour}
   Let $X = (1,2,4)$. Let $\xi_4$ denote the map that sends $k$ to $e^{\frac{\pi i }{2}k}$, \ie $\xi_4$ is a fourth root of unity. Then
   $\Pcal(X)= \spa \{ 1,s,s^2\}$ and $\Pper(X) = \spa\{ 1,s, s^2, \:\xi_4 s^2, \xi_4^3 s^2, \xi_4^2 s, \xi_4^2 s^2  \}$.
   The elements of the internal space must satisfy $D_s^2 f=0$ at the origin.
   Hence $ \Pper_-(X) = \{ 1,s, \xi_2 s,\:   (1 - \xi_4) s^2, ( \xi_4  - \xi_4^3 ) s^2,  ( \xi_4^2  - \xi_4^3 ) s^2 \}  $.
   
   Furthermore, $\Dcal(X) = \spa \{ 1,t, t^2\}$ and $\DM(X) = \spa \{ 1, t, t^2, \xi_4, \xi_4^2, \xi_4^2 t, \xi_4^3   \}$.
 The box spline is shown in Figure~\ref{Figure:124boxspline}.
Formulas for the splines and the vector partition function are: 
 \begin{align*}
      B_X(u) &=\begin{cases}
           \frac{1}{16} u^2 & 0 \le u \le 1
           \\
           \frac 18 u  - \frac{1}{16} & 1 \le u \le 2
           \\
           - \frac1{16} u^2 + \frac 38 u - \frac5{16} & 2 \le u \le 3
           \\
           \frac 14 & 3 \le u \le 4
           \\
           -\frac1{16} u^2 + \frac 12 u - \frac 34 & 4 \le u \le 5
           \\
           -\frac 18 u + \frac{13}{16} & 5 \le u \le 6
           \\
           \frac{1}{16} u^2 - \frac{7}{8}u  + \frac{49}{16} &     6 \le u \le 7        
          \end{cases}
          & \:
            \vpf_X(u) &= \begin{cases}
                 \frac{1}{16} u^2 + \frac 12 u + 1 & u \equiv 0  \mod 4  \\
                 \frac{1}{16} u^2 + \frac 38 u + \frac{9}{16} & u \equiv 1   \mod 4  \\
                 \frac{1}{16} u^2 + \frac 12 u  + \frac{12}{16} & u \equiv 2  \mod 4  \\
                 \frac{1}{16} u^2 + \frac 38 u + \frac{5}{16} & u \equiv 3  \mod 4  \\
                \end{cases}
   \end{align*}
   \begin{align*}
 T_X(u) &= \frac{1}{16} u^2 
 & \;
 \vpf_X(u) &=  \frac{1}{16} u^2 + \frac{7+ \xi_4^2}{16} u + \frac{21 + 7\xi_4^2}{32} + \frac{1}{16} \xi_4  (1 -   i) + 
 \frac{1}{16} \xi_4^3  ( 1 +   i)
 \end{align*}
 The projection of $\toddper(X,0)$ is
   \begin{align*}
     \tilde p_0 
%
      &=
     1 + \frac{7}{2} s + \frac{21}{4} s^2 + \xi_4  (\frac 12 - \frac 12 i) s^2 + \xi_4^3  (\frac 12 + \frac 12 i) s^2 
     + 
     \xi_4^2 (\frac 12s + \frac 74 s^2 ).
\displaybreak[2]
\\
     \toddperbox(X)  &=
     1 + \frac{7}{2} s + \frac{21}{4} s^2 + \xi_4  (\frac 12 - \frac 12 i) s^2 \frac{(1+ i \tau_1)(1+\tau_2)}{ (1- \tau_1)(1-\tau_2)} 
 \\ & \qquad
  + \xi_4^3  (\frac 12 + \frac 12 i) s^2 \frac{(1 - i \tau_1)(1+\tau_2)}{ (1- \tau_1)(1-\tau_2)}   
  +   \xi_2 (\frac 12s + \frac 74 s^2 )  \frac{(1 +  \tau_1)}{ (1- \tau_1) } %
   \end{align*}
is the operator defined in Remark~\ref{Remark:NoBoxGeneralisation}.

 The arithmetic Tutte polynomial is $\aritutte_X(\alpha,\beta) = (\alpha-1) + 7 + 4(\beta-1) + (\beta-1)^2 = \alpha + \beta^2 + 2\beta + 3$.
 Hence $ q^2\aritutte_X(0,q^{-1}) = 1 + 2q + 3q^2 = \hilb(\Pper_-(X),q)$ 
 and $q^2\aritutte_X(1,q^{-1}) = 1 + 2q + 4q^2 = \hilb(\Pper(X),q)$. 
\end{Example}
\begin{Example}[The list $X=(1,2,4)$ and the isomorphism $L$]
\label{Example:OneTwoFourIso}
The primary decomposition of the discrete cocircuit ideal is
 \begin{align}
  \dJC(X) &=
  \ideal \{ (1-a)(1-a^2)(1-a^4)   \} 
 \\ & = \ideal\{ (a-1)^3\} \cap  \ideal\{(a+1)^2 \} \cap \ideal \{ a  - i \} \cap \ideal \{  a+i \}.
  \end{align}
The toric arrangement has four vertices: $\Vcal(X)= \{ 1, i, -1, -i \} \subseteq \CC^*$.
They can be represented by $\theta= 0,\frac 14, \frac 12,\frac 34$.
One obtains that
$i_{\log}^0 ( \tau_0 (j_1 (s))) = -\frac{a^2}{2} + 2a - \frac 32$, 
$i_{\log}^0 ( \tau_0 ( j_1 (s^2))) = a^2 - 2a + 1$, and
$i_{\log}^{\frac 12} ( \tau_{\frac 12} ( j_{-1} ( s))) =  -a-1 $.
For $\theta\in \{\frac 14, \frac 34\}$, the spaces are trivial and $i_{\log}^\theta \circ \tau_\theta \circ j_\phi$
  just maps $1$ to $\bar 1$.
  
 Now if we lift these elements we obtain that the map $L$ maps  $\Pper_\CC(X)$ to $\CC[\Lambda] / \dJC(X)$ in the following way:  
\begin{align*}
    1 &\mapsto \frac{9}{32} a^{6} - \frac{1}{4} a^{5} - \frac{13}{32} a^{4} + \frac{1}{4} a^{3} - \frac{1}{32} a^{2} + \frac{1}{2} a + \frac{21}{32}
    \\
    s & \mapsto -\frac{5}{16} a^{6} + \frac{1}{8} a^{5} + \frac{7}{16} a^{4} + \frac{5}{16} a^{2} - \frac{1}{8} a - \frac{7}{16}
    \\
   s^2 & \mapsto
    \frac{1}{8} a^{6} - \frac{1}{8} a^{4} - \frac{1}{8} a^{2} + \frac{1}{8} 
    \displaybreak[2] \\
     \xi_4^2 s & \mapsto -\frac{5}{32} a^{6} + \frac{1}{4} a^{5} + \frac{1}{32} a^{4} - \frac{1}{4} a^{3} + \frac{13}{32} a^{2} - \frac{1}{2} a + \frac{7}{32}
    \\
     \xi_4^2 s^2 &\mapsto \frac1{16} a^6 - \frac 18 a^5 + \frac{1}{16} a^4 - \frac{1}{16} a^2 + \frac{1}{8} a - \frac{1}{16}
%
%
    \displaybreak[2] \\
     \xi_4 s^2 &\mapsto   
     \left(\frac{1}{16}  i  - \frac{1}{16}\right) a^{6} - \frac{1}{8}  i  a^{5} + \left(-\frac{1}{16}  i  + \frac{3}{16}\right) a^{4} + \frac{1}{4}  i  a^{3} + \left(-\frac{1}{16}  i  - \frac{3}{16}\right) a^{2} - \frac{1}{8}  i  a + \frac{1}{16}  i  + \frac{1}{16}
    \\
      \xi_4^3 s^2 &\mapsto \left(-\frac{1}{16}  i  - \frac{1}{16}\right) a^{6} + \frac{1}{8}  i  a^{5} + \left(\frac{1}{16}  i  + \frac{3}{16}\right) a^{4} - \frac{1}{4}  i  a^{3} +
     \left(\frac{1}{16}  i  - \frac{3}{16}\right) a^{2} + \frac{1}{8}  i  a - \frac{1}{16}  i  + \frac{1}{16}
\end{align*}
\end{Example}

\subsection{Examples involving torsion and deletion-contraction}

\begin{figure}[tbp]
  \begin{center}
   \input{./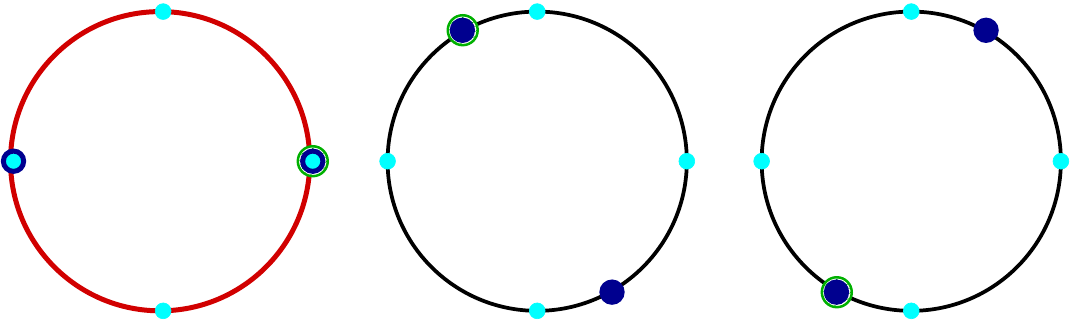_t}
  \end{center}
  \caption{A toric arrangement in $T(\Z \oplus \Z/3\Z)\cong S^1 \times \Z/3\Z$.
  }
 \label{Figure:ToricArrOneD}
\end{figure}

 \begin{Example}[A toric arrangement on a disconnected torus]
\label{Example:toriarrwithtorsion}
\begin{align}
 \text{Let } X = \begin{pmatrix}
            4 & 2 &  1	 & 0 \\
            \bar 0 & \bar 1 & \bar 2 & \bar 1
          \end{pmatrix} = (x_1, x_2, x_3, x_4) \subseteq \Z\oplus \Z/3\Z.
\end{align}

 Note that $T( \Z\oplus \Z/3\Z ) \cong S^1 \times \{ g_3^0, g_3^1, g_3^2   \}$, where
 $(\alpha, g_3^k)$  maps  $(a,\bar b)$ to $\alpha^a \cdot e^{2\pi i\frac{k}{3} }$ for $\alpha\in S^1\subseteq \CC$, $k\in \{0,1,2\}$, $a\in \Z$ and $\bar b\in \Z/3\Z$.

 The corresponding toric arrangement is shown in Figure~\ref{Figure:ToricArrOneD}.
 $x_1$ defines the twelve (small) cyan vertices. $x_2$ defines the six (medium sized) blue vertices and $x_3$ defines the 
 three (large) green vertices.  Note that $\rank(x_4)=0$, hence $x_4$ does not define a vertex but a one-dimensional
 hypersurface, the leftmost  (red) copy of the $S^1$.
\end{Example}

\begin{Example} 
\label{Example:nontrivial2dtorarr}
 Let $X=\begin{pmatrix}
         2 & 4 & 0 & -1 \\ 0 & 1 & 2 & 1
        \end{pmatrix}$.
 Note that $\abs{\Vcal(X)}= 14$ (see Figure~\ref{Figure:nontrivial2dtorarr}) and 
 $\dim\Pper(X) = 23$.  Some of the differential equations that have to be satisfied by the elements of $\Pper_-(X)$
 are $D_{s_1}^2 p(0,\cdot) = D_{s_2}^2 p(0,\cdot) = D_{s_1}D_{s_2} p(0,\cdot) = D_{s_2}^2 p((1,0),\cdot) = D_{s_1}^2 p((0,1),\cdot)=0$.
 We leave it to the reader to calculate $\Pper(X)$ and $\Pper_-(X)$.

 Let $x=(2,0)$ be the first column.
 Then $X/x = ((\bar 0,1), (\bar 0, 2), (\bar 1, 1)) \subseteq \Z/2\Z \oplus \Z$ and
 $\Vcal(X/x) = \{ 1,  (-1)^b, (-1)^{\bar a}, (-1)^{\bar a + b}   \} $.
 The differential equations for the internal space are $D_{s_2}^2 p((\bar 0,0), \cdot) = D_{s_2}^2 p((\bar 1,0),\cdot)= 0$.
 Hence 
 \begin{align*}
   \Pper(X/x) &= \{ 1, \,s_2,\, s_2^2, (-1)^b s_2, \, (-1)^b s_2^2,\,   (-1)^{\bar a}s_2^2,\,  (-1)^{\bar a + b} s_2,\, (-1)^{\bar a + b} s_2^2 \}  \text{ and}  \\
   \Pper_-(X/x) &= \{1, \, s_2,\,   (-1)^{ b }s_2,\,    (-1)^{ \bar a + b } s_2,\,
        s_2^2 - (-1)^b s_2^2,\,  (-1)^{\bar a}s_2^2 -  (-1)^{ \bar a + b } s_2^2 \}.  
 \end{align*}
%
%
%
 In general, it is non-trivial to find preimages of elements of $\Pper_-(X/x)$ in $\Pper_-(X)$. 
 For example,  can you find an element of 
  $ \pi_x^{-1} (    s_2^{2} 
 + (-1)^{\bar a}  
  2   s_2^{2}  
  - (-1)^b  
      s_2^{2} 
 - (-1)^{\bar a+b}
    2   s_2^{2}) $? 
This may help you to do so:
%
%
 \begin{align}
  \tilde f_{(0,1)} &=
  \frac{9}{4}    s_1^{2} + \frac{9}{4}    s_1 s_2 + \frac{1}{4}    s_2^{2} + \frac{5}{2}   s_1 + s_2 + 1
  + (-1)^a (
  -\frac{5}{4}   s_1^{2} + \frac{3}{4}   s_1 s_2 + \frac{1}{2}   s_2^{2} - \frac{1}{2}   s_1 + \frac{1}{2}   s_2)
  \notag
  \\
 & \quad 
 - (-1)^b  
 (-s_1^{2} + \frac{3}{4}   s_1 s_2 + \frac{1}{4}   s_2^{2})
 - (-1)^{a+b}
 (5   s_1^{2} + \frac{13}{4}   s_1 s_2 + \frac{1}{2}   s_2^{2} + 2   s_1 + \frac{1}{2}   s_2)
+ \ldots 
\notag
\\
\label{eq:internalProjectionExample}
  \pi_x( \tilde f_{(0,1)} ) &=
  \frac{1}{4}   s_2^{2} + s_2 + 1
  + (-1)^{\bar a} (
  \frac{1}{2}   s_2^{2} +  \frac{1}{2}   s_2)
  - (-1)^b  
   \frac{1}{4}   s_2^{2} 
  - (-1)^{\bar a+b}
 (  \frac{1}{2}   s_2^{2} +  \frac{1}{2}   s_2)
 \end{align}

\begin{figure}[tbp]
  \begin{center}
   \begin{minipage}{8cm}
    \input{./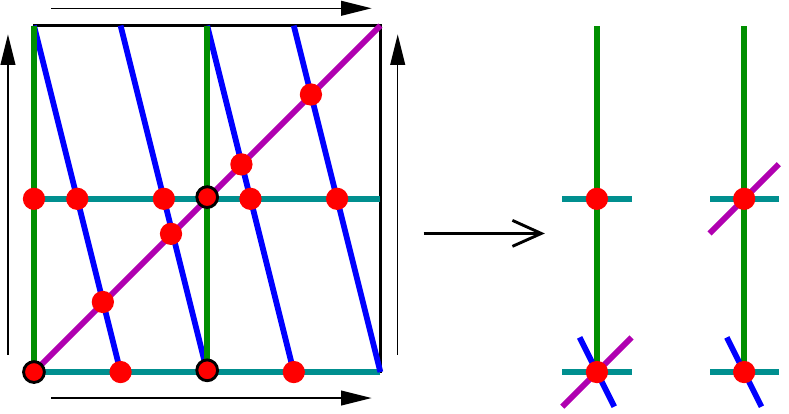_t}
   \end{minipage}
   \begin{minipage}{4.2cm}    
   \includegraphics[width=4cm]{./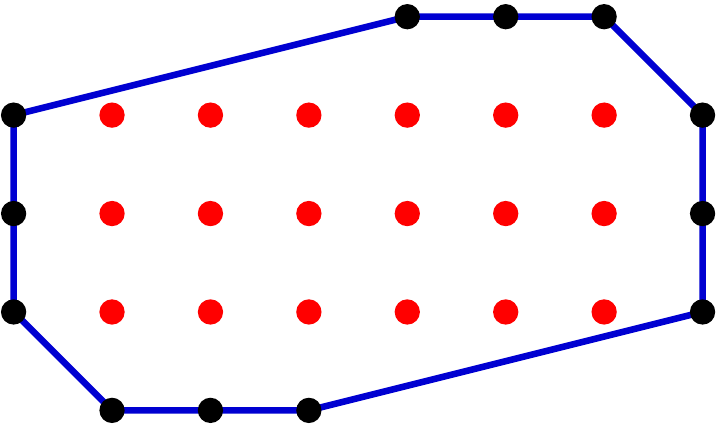}
   \end{minipage}

  \end{center}
  \caption{On the left: the toric arrangement in Example~\ref{Example:nontrivial2dtorarr} drawn in $\R^2/\Z^2$. The three vertices  
  that are circled have a non-trivial $\Pcal$-space attached to it.
  In the middle: the projection to $T(\Z\oplus \Z/2\Z)$.
  On the right: the zonotope.
  }
 \label{Figure:nontrivial2dtorarr}
\end{figure} 
       
\end{Example}

\begin{appendix}
 \section{Commands for sage and Singular}
 \label{appendix:sagesingular}
 In this appendix we explain how Examples~\ref{Example:ZPpairing} and~\ref{Example:OneTwoFourIso} can be calculated using computer algebra programs.
 We use the algorithm described in Remark~\ref{Remark:PairingEval}.
 Most of the calculations can be done in Sage \cite{sage-62} which uses Singular  \cite{singular-316} for some of the calculations.
 
 Here is the code for the Zwart-Powell element (Example~\ref{Example:ZPpairing}):
\begin{verbatim} 
sage: K.<j> = QQ[I]
sage: R.<a,b> = K[]   # the polynomial ring in two variables over the field Q[i]
sage: J = ideal((1-a)*(1-b)*(1-a*b), (1-a)*(1-b)*(a-b), (1-a)*(a-b)*(1-a*b), 
(1-b)*(a-b)*(1-a*b)) # the discrete cocircuit ideal
sage: J.variety() # the points defined by the ideal
[{a: -1, b: -1}, {a: 1, b: 1}] 
sage: [J1,J2] = J.primary_decomposition() # the primary decomposition
sage: J1
Ideal (b^3 - 3*b^2 + 3*b - 1, a*b^2 - 2*a*b - b^2 + a + 2*b - 1, 
a^2*b - a^2 - 2*a*b + 2*a + b - 1, a^3 - 3*a^2 + 3*a - 1) of Multivariate Polynomial Ring
in a, b over Number Field in I with defining polynomial x^2 + 1
sage: J2
Ideal (b + 1, a + 1) of Multivariate Polynomial Ring 
in a, b over Number Field in I with defining polynomial x^2 + 1
sage: f1 = (a-1) - (a-1)**2/2  # the image of s1 under tau and the logarithmic isomorphism
sage: f2 = (b-1) - (b-1)**2/2  # the image of s2 under tau and the logarithmic isomorphism
sage: J1.reduce(f1*f1) # f1*f1 reduced modulo the ideal J1
a^2 - 2*a + 1
sage: J1.reduce(f1*f2)
a*b - a - b + 1
sage: J1.reduce(f2*f2)
b^2 - 2*b + 1
sage: g1 = 1+1/8*(a-1)**3 # the lifting of 1 in C[a,b]/J1 to C[a,b]/J
sage: J.reduce(g1)
1/8*b^3 - 3/8*b^2 + 3/8*b + 7/8
sage: J.reduce(g1*f1)
-1/2*b^3 - 1/2*a^2 + 3/2*b^2 + 2*a - 3/2*b - 1
sage: J.reduce(g1*f2)
-1/2*b^3 + b^2 + 1/2*b - 1
sage: J.reduce(g1*f1**2)
1/2*b^3 + a^2 - 3/2*b^2 - 2*a + 3/2*b + 1/2
sage: J.reduce(g1*f1*f2)
1/2*b^3 + a*b - 3/2*b^2 - a + 1/2*b + 1/2
sage: J.reduce(g1*f2**2)
1/2*b^3 - 1/2*b^2 - 1/2*b + 1/2
\end{verbatim}
 The equation  $  (a_2^2 - 4a_2+7) (a_2+1) - (a_2-1)^3 = 8 $ can be found using the {\tt liftstd} function of Singular  \cite{singular-316}:
\begin{verbatim}
> ring r = 0,(a,b),dp;
> ideal J1 = (1-a)**3, (1-a)**2*(1-b), (1-a)*(1-b)**2, (1-b)**3;
> ideal J2 = a + 1, b + 1;
> matrix T;
> def sm = liftstd(J1 + J2, T);
> sm;
sm[1]=8
> T;
T[1,1]=0
T[2,1]=0
T[3,1]=0
T[4,1]=1
T[5,1]=0
T[6,1]=b2-4b+7
> matrix(J1+J2)
_[1,1]=-a3+3a2-3a+1
_[1,2]=-a2b+a2+2ab-2a-b+1
_[1,3]=-ab2+2ab+b2-a-2b+1
_[1,4]=-b3+3b2-3b+1
_[1,5]=a+1
_[1,6]=b+1
> matrix(J1+J2)*T; // This gives us the equation above
_[1,1]=8
\end{verbatim}
Here is the code for the list   $X=(1,2,4)$ (Example~\ref{Example:OneTwoFourIso}):
\begin{verbatim}
sage: K.<j> = QQ[I]
sage: R = sage.rings.polynomial.multi_polynomial_libsingular.
MPolynomialRing_libsingular(K, 1, ('a',), TermOrder('degrevlex',1))
# we have to tell sage that we want to use Singular
# otherwise, primary decomposition is not available for polynomial rings in one variable
sage: R.inject_variables()
Defining a
sage: R
Multivariate Polynomial Ring in a over Number Field in I with defining polynomial x^2 + 1
sage: J= ideal( (1-a)*(1-a**2)*(1-a**4)) # the discrete cocircuit ideal
sage:  J.primary_decomposition()
[Ideal (a^3 - 3*a^2 + 3*a - 1) of Multivariate Polynomial Ring 
 in a, b over Number Field in I with defining polynomial x^2 + 1,
 Ideal (a^2 + 2*a + 1) of Multivariate Polynomial Ring 
 in a, b over Number Field in I with defining polynomial x^2 + 1,
 Ideal (a + (I)) of Multivariate Polynomial Ring 
 in a, b over Number Field in I with defining polynomial x^2 + 1,
 Ideal (a + (-I)) of Multivariate Polynomial Ring
 in a, b over Number Field in I with defining polynomial x^2 + 1]
sage: R.<a> = K[] # change the implementation of the ring, otherwise CRT_list does not work
sage: J  = ideal( (1-a)*(1-a**2)*(1-a**4)) # the discrete cocircuit ideal
sage: J1 = ideal( (a-1)**3 )  # the ideal corresponding to the vertex 1
sage: J2 = ideal( (a+1)**2 ) # the ideal corresponding to the vertex -1
sage: g1 = CRT_list( [ 1, 0, 0, 0], [ (a-1)**3, (a+1)**2, (a+j), (a-j) ] )
sage: g2 = CRT_list( [ 0, 1, 0, 0], [ (a-1)**3, (a+1)**2, (a+j), (a-j) ] )
sage: g3 = CRT_list( [ 0, 0, 1, 0], [ (a-1)**3, (a+1)**2, (a+j), (a-j) ] )
sage: g4 = CRT_list( [ 0, 0, 0, 1], [ (a-1)**3, (a+1)**2, (a+j), (a-j) ] )
sage: [g1,g2,g3,g4]
[9/32*a^6 - 1/4*a^5 - 13/32*a^4 + 1/4*a^3 - 1/32*a^2 + 1/2*a + 21/32,
 -5/32*a^6 + 1/4*a^5 + 1/32*a^4 - 1/4*a^3 + 13/32*a^2 - 1/2*a + 7/32,
 (-1/16*I - 1/16)*a^6 + 1/8*I*a^5 + (1/16*I + 3/16)*a^4 
 - 1/4*I*a^3 + (1/16*I - 3/16)*a^2 + 1/8*I*a - 1/16*I + 1/16,
 (1/16*I - 1/16)*a^6 - 1/8*I*a^5 + (-1/16*I + 3/16)*a^4 
 + 1/4*I*a^3 + (-1/16*I - 3/16)*a^2 - 1/8*I*a + 1/16*I + 1/16]
sage: f1 = -a**2/2 + 2*a - 3/2 # the image of s under the iota map for vertex 1
sage: f2 = -a -1
sage: f1**2
1/4*a^4 - 2*a^3 + 11/2*a^2 - 6*a + 9/4
sage: J1.reduce(f1**2)
a^2 - 2*a + 1
sage: J1.reduce(f1**3)
0
sage: J2.reduce(f2**2)
0
sage: [g1, J.reduce( g1 * f1), J.reduce( g1 * f1**2) ]    # generators corresponding 
# to the space at vertex 1
[9/32*a^6 - 1/4*a^5 - 13/32*a^4 + 1/4*a^3 - 1/32*a^2 + 1/2*a + 21/32,
 -5/16*a^6 + 1/8*a^5 + 7/16*a^4 + 5/16*a^2 - 1/8*a - 7/16,
 1/8*a^6 - 1/8*a^4 - 1/8*a^2 + 1/8]
sage: [g2, J.reduce(g2 * f2)] # generators corresponding to the space at vertex -1
[-5/32*a^6 + 1/4*a^5 + 1/32*a^4 - 1/4*a^3 + 13/32*a^2 - 1/2*a + 7/32,
 1/16*a^6 - 1/8*a^5 + 1/16*a^4 - 1/16*a^2 + 1/8*a - 1/16]
sage: J.reduce(g2**2) 
-5/32*a^6 + 1/4*a^5 + 1/32*a^4 - 1/4*a^3 + 13/32*a^2 - 1/2*a + 7/32 
# note that this is equal to g2
sage: g3 # generator corresponding to the space at vertex -i
(-1/16*I - 1/16)*a^6 + 1/8*I*a^5 + (1/16*I + 3/16)*a^4 - 1/4*I*a^3 
+ (1/16*I - 3/16)*a^2 + 1/8*I*a - 1/16*I + 1/16
sage: g4 # generator corresponding to the space at vertex i
(1/16*I - 1/16)*a^6 - 1/8*I*a^5 + (-1/16*I + 3/16)*a^4 + 1/4*I*a^3
 + (-1/16*I - 3/16)*a^2 - 1/8*I*a + 1/16*I + 1/16
sage: J.reduce(f2*g2)
1/16*a^6 - 1/8*a^5 + 1/16*a^4 - 1/16*a^2 + 1/8*a - 1/16 
sage: 1/16*6 + 1/8*5 + 1/16*4 - 1/16*2 - 1/8 
1
# < L( e_{-1} s s), e_{-1} t>_\nabla = s(D) t = 1
sage: [g1.substitute({a:1}), (J.reduce(g1*f1)).substitute({a:1}), 
 (J.reduce(g1*f1**2)).substitute({a:1}), g2.substitute({a:1}),  
 (J.reduce(g2*f2**2)).substitute({a:1}), g3.substitute({a:1}), g4.substitute({a:1})]
[1, 0, 0, 0, 0, 0, 0]
 # another check: < L( p ), 1 >_\nabla = 1 iff p = 1
\end{verbatim}

\end{appendix}

\renewcommand{\MR}[1]{} 

\bibliographystyle{amsplain}
\bibliography{../../MasonsConjecture/Mason_Literatur}
%
%



\end{document}